\newcommand{\vertiii}[1]{{\vert\kern-0.25ex\vert\kern-0.25ex\vert #1 
		\vert\kern-0.25ex\vert\kern-0.25ex\vert}}
	\definecolor{Darkgreen}{rgb}{0,0.4,0}
\DeclareMathAlphabet\bscal{OMS}{cmsy}{b}{n}
 \DeclareMathAlphabet\mathbfscr{OMS}{mdugm}{b}{n}
 \newenvironment{psmallmatrix}
 {\left(\begin{smallmatrix}}
 	{\end{smallmatrix}\right)}
\spnewtheorem{defn}[equation]{Definition}{\bfseries}{\upshape}
\spnewtheorem{prop}[equation]{Proposition}{\bfseries}{\upshape}
\spnewtheorem{thm}[equation]{Theorem}{\bfseries}{\upshape}
\spnewtheorem{cor}[equation]{Corollary}{\bfseries}{\upshape}
\spnewtheorem{rmk}[equation]{Remark}{\bfseries}{\upshape}
\spnewtheorem{lem}[equation]{Lemma}{\bfseries}{\upshape}
\spnewtheorem{expl}[equation]{Example}{\bfseries}{\upshape}
\numberwithin{equation}{section} 
\DeclareMathSymbol{\subsetneq}{\mathrel}{AMSb}{"28}
\begin{document}
	
		\title{Variable exponent Bochner--Lebesgue spaces with symmetric gradient structure}
	
	\author{A. Kaltenbach\and M. \Ruzicka}

	\institute{A. Kaltenbach \at
		Institute of Applied Mathematics, Albert-Ludwigs-University Freiburg, Ernst-Zermelo-Straße 1, 79104 Freiburg,\\
		\email{alex.kaltenbach@mathematik.uni-freiburg.de}  
		  \\
		M. R\r{u}\v{z}i\v{c}ka \at Institute of Applied Mathematics,
		Albert-Ludwigs-University Freiburg,
		Ernst-Zermelo-Straße 1, 79104 Freiburg,\\
		\email{rose@mathematik.uni-freiburg.de} 
	}
	
	\date{Received: date / Accepted: date}

	\maketitle
	
	\begin{abstract}
		We introduce function spaces for the treatment of non-linear parabolic 
		equations with variable $\log$-Hölder continuous exponents, which only incorporate 
		information of the symmetric part of a gradient. As an analogue 
		of Korn's inequality for these functions spaces is not available,
		 the construction of an appropriate smoothing method proves 
		 itself to be difficult. To this end, we prove a point-wise 
		 Poincar\'e inequality near the boundary of a  bounded Lipschitz 
		 domain involving only the symmetric gradient. Using 
		 this inequality, we construct a smoothing operator with 
		 convenient properties. In particular, this smoothing operator leads 
		 to several density results, and therefore to a
                 generalized formula of integration by parts with
                 respect to time. Using this formula and the theory of maximal monotone operators, we prove an abstract existence result.
		\keywords{Variable exponent spaces\and Poincar\'e
                  inequality\and Symmetric gradient \and Nonlinear parabolic problem \and Existence
                  of weak solutions}
                \subclass{46E35, 35K55, 47H05} 
	\end{abstract}

	\section{Introduction}
	
	Let $\Omega\subseteq \setR^d$, $d\ge 2$, be a bounded 
	Lipschitz domain, $I:=\left(0,T\right)$, $T<\infty$, a 
	bounded time interval, $Q_T:=I\times \Omega$ a cylinder and $\Gamma_T:=I\times\pa\Omega$. 
	We consider as a model problem the initial-boundary value problem
	\begin{align}
	\begin{split}
	\begin{alignedat}{2}
		\pa_t \bsu
		-\divo(\bfS(\cdot,\cdot,\bfvarepsilon(\bsu)))
		+\bfb(\cdot,\cdot,\bsu)&
		=\bsf-\divo(\bsF)&&\quad\text{ in }Q_T,\\
		\bsu&=0&&\quad\text{ on }\Gamma_T,\\
		\bsu(0)&=\bfu_0&&\quad\text{ in }\Omega.
		\end{alignedat}
			\end{split}\label{eq:model}
	\end{align}
	Here, $\bsf:Q_T\to \setR^d$ is a given vector field, 
	$\bsF:Q_T\to \mathbb{M}^{d\times d}_{\sym}$ 
	a given tensor field\footnote{$\mathbb{M}^{d\times d}_{\sym}$ is the vector 
		space of all symmetric $d\times d$-matrices $\bfA=(A_{ij})_{i,j=1,...,d}$. 
		We equip $\mathbb{M}^{d\times d}_{\sym}$ with the scalar product $\bfA:\bfB:=\sum_{i,j=1}^{d}{A_{ij}B_{ij}}$ 
		and the norm $\vert\bfA\vert:=(\bfA:\bfB)^{\frac{1}{2}}$. 
		By $\bfa\cdot\bfb$ we denote the usual scalar product in $\setR^d$ and 
		by $\vert \bfa\vert$ we denote the Euclidean norm.}, 
	$\bfu_0:\Omega\to \setR^d$ an initial condition, and 
	$\bfvarepsilon(\bfu):=\frac{1}{2}(\nb \bfu+\nb \bfu^\top)$ 
	denotes the symmetric part of the gradient. Moreover, let $p:Q_T\to\left[1,+\infty\right)$
	 be a globally $\log$-Hölder continuous exponent with
	\begin{align*}
		1< p^-:=\inf_{(t,x)^\top\in Q_T}{p(t,x)}\leq p^+:=\sup_{(t,x)^\top\in Q_T}{p(t,x)}<\infty.
	\end{align*}
	The mapping $\bfS:Q_T\times \mathbb{M}^{d\times d}_{\sym}\to 
	\mathbb{M}^{d\times d}_{\sym}$ is supposed to possess
        \textit{$(p(\cdot,\cdot),\delta)$-structure}, i.e., for
        some exponent $p(\cdot,
        \cdot)$ and some $\delta\ge 0$ the following properties are satisfied:
	\begin{description}[{(S.4)}]
		\item[\textbf{(S.1)}]\hypertarget{S.1}{} $\bfS:Q_T\times \mathbb{M}^{d\times d}_{\sym}\to 
		\mathbb{M}^{d\times d}_{\sym}$ is a Carath\'eodory mapping\footnote{$\bfS(\cdot,\cdot,\bfA):Q_T\to \mathbb{M}^{d\times d}_{\sym}$ is Lebesgue measurable for every $\bfA\in \mathbb{M}^{d\times d}_{\sym}$ and $\bfS(t,x,\cdot):\mathbb{M}^{d\times d}_{\sym}\to \mathbb{M}^{d\times d}_{\sym}$ is continuous for almost every $(t,x)^\top\in Q_T$.}.
              \item[\textbf{(S.2)}]\hypertarget{S.2}{} $\vert\bfS(t,x,\bfA)\vert \leq \alpha(\delta+\vert \bfA\vert)^{p(t,x)-2}\vert\bfA\vert+\beta(t,x)$ 
		for all $\bfA\in \mathbb{M}^{d\times d}_{\sym}$, a.e. $(t,x)^\top\in Q_T$\newline
		$(\alpha\ge 1,\;\beta\in L^{p'(\cdot,\cdot)}(Q_T,\setR_{\ge 0}))$.
		\item[\textbf{(S.3)}]\hypertarget{S.3}{} $\bfS(t,x, \bfA):\bfA\ge 
		c_0(\delta+\vert \bfA\vert)^{p(t,x)-2}\vert\bfA\vert^2-c_1(t,x)$ 
		for all $\bfA\in \mathbb{M}^{d\times d}_{\sym}$,  a.e. $(t,x)^\top\in Q_T$\newline
		${(c_0>0,\;c_1\in L^1(Q_T,\setR_{\ge 0}))}$.
		\item[\textbf{(S.4)}]\hypertarget{S.4}{} $\big(\bfS(t,x,\bfA)-\bfS(t,x,\bfB)\big):
		\big(\bfA-\bfB\big)\ge 0$ for all 
		$\bfA,\bfB\in \mathbb{M}^{d\times d}_{\sym}$, a.e. $(t,x)^\top\in Q_T$.
	\end{description}
	The mapping $\bfb:Q_T\times \setR^d\to \setR^d$ is supposed
	to satisfy:
	\begin{description}[{(B.3)}]
		\item[\textbf{(B.1)}]\hypertarget{B.1}{} $\bfb:Q_T\times \setR^d\to \setR^d$ is a 
		Carath\'eodory mapping.
		\item[\textbf{(B.2)}]\hypertarget{B.2}{} $\vert\bfb(t,x,\bfa)\vert \leq 
		\gamma(1+\vert \bfa\vert)^r+\eta(t,x)$ for all 
		$\bfa\in \setR^d$, a.e. $(t,x)^\top\in Q_T$ \newline$\big(\gamma\ge 0,\;r<\frac{(p^-)_*}{(p^-)'},\;\eta\in L^{(p^-)'}(Q_T,\setR_{\ge 0})\big)$.\footnote{For $p\in\left(1,\infty\right)$ we define $p_*\in \left(1,\infty\right)$ through $p_*:=p\frac{d+2}{d}$ if $p<d$ and $p_*:=p+2$ if $p\ge d$.}
		\item[\textbf{(B.3)}]\hypertarget{B.3}{} $\bfb(t,x,\bfa)\cdot\bfa\ge -c_2(t,x)$ 
		for all $\bfa\in \setR^d$, a.e. $(t,x)^\top\in Q_T$ $(c_2\in L^1(Q_T,\setR_{\ge 0}))$.
	\end{description}
	A system like \eqref{eq:model} occurs in 
        nonlinear elasticity \cite{Z86} or in image restoration
        \cite{LLP10,HHLT13,CLR06}, or can be seen as a simplification
        of the unsteady $p(\cdot,\cdot)$-Stokes and
        $p(\cdot,\cdot)$-Navier--Stokes equations, as it does not
        contain an incompressibility constraint, but the nonlinear
        elliptic operator is governed
        by the symmetric gradient. In fact, a long term goal of this research is to
        prove the existence of weak solutions of the unsteady $p(\cdot,\cdot)$-Stokes and
        $p(\cdot,\cdot)$-Navier--Stokes equations.
        Similar equations also appear in
        Zhikov's model for the termistor problem in \cite{Z08b}, or in
        the investigation of variational integrals with non-standard
        growth in \cite{Mar91,AM00,AM01,AM02}.
        Major progress was also made by Diening and R\r u\v zi\v cka in \cite{DR03}, at least in the analysis of the steady counterpart of \eqref{eq:model}, i.e., for a fixed time slice $t_0\in I$ the boundary value problem
	\begin{align*}
	\begin{alignedat}{2}-\divo(\bfS(t_0,\cdot,\bfvarepsilon(\bfu)))
	+\bfb(t_0,\cdot,\bfu)&
	=\bsf(t_0,\cdot)-\divo(\bfF(t_0,\cdot))&&\quad\text{ in }\Omega,\\
	\bfu&=0&&\quad\text{ in }\pa\Omega,
	\end{alignedat}
	\end{align*}
	is considered and the validity of Korn's inequality in
        variable exponent Sobolev spaces is proved (cf.~Proposition \ref{2.4}). In fact, due to 
	the validity of Korn's inequality in variable exponent Sobolev spaces, the solvability of \eqref{eq:model}
	is a straightforward application of the main theorem on pseudo-monotone operators, tracing back to Brezis 
	\cite{Bre68}, which states the following.\footnote{All
		notion are defined in Section~\ref{sec:2}.} 
	
	\begin{thm}\label{1.1}
		Let $X$ be a reflexive Banach space 
		and $A:X\to X^*$ a bounded, pseudo-monotone 
		and coercive operator. Then, $R(A)=X^*$.
	\end{thm}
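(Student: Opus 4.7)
The plan is a Galerkin approximation combined with the pseudo-monotone identification technique. Fix $f\in X^*$; my goal is to produce $u\in X$ with $A(u)=f$. I pick an increasing chain of finite-dimensional subspaces $X_1\subset X_2\subset\cdots\subset X$ whose union is dense in $X$ (passing, if necessary, to a separable closed subspace, or otherwise working with the directed family of all finite-dimensional subspaces of $X$).

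On each $X_n$ I look for a Galerkin solution $u_n\in X_n$, i.e., a solution of $\langle A(u_n)-f,v\rangle = 0$ for every $v\in X_n$. Coercivity furnishes an $R>0$, independent of $n$, such that $\langle A(v)-f,v\rangle \ge 0$ whenever $\|v\|_X=R$. A standard fact is that a bounded pseudo-monotone operator is demicontinuous, and demicontinuity into the finite-dimensional target $X_n^*$ is the same as norm continuity. A standard consequence of Brouwer's fixed-point theorem (``a continuous field that is outward-pointing on a sphere has a zero inside'') then produces $u_n\in X_n$ with $\|u_n\|_X\le R$.

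The a priori bound together with reflexivity allows me to extract a subsequence (not relabeled) with $u_n\rightharpoonup u$ in $X$. Boundedness of $A$ gives $\{A(u_n)\}$ bounded in $X^*$, and a further extraction produces $A(u_n)\rightharpoonup \xi$ in $X^*$. For any $v\in\bigcup_n X_n$ the Galerkin identity gives $\langle A(u_n),v\rangle=\langle f,v\rangle$ once $n$ is large enough, so density forces $\xi=f$. Testing the Galerkin identity against $v=u_n$ itself yields $\langle A(u_n),u_n\rangle=\langle f,u_n\rangle\to\langle f,u\rangle$, while $\langle A(u_n),u\rangle\to\langle \xi,u\rangle=\langle f,u\rangle$, so
\[
\limsup_{n\to\infty}\langle A(u_n),u_n-u\rangle = 0.
\]
Pseudo-monotonicity of $A$ then delivers, for every $v\in X$,
\[
\langle A(u),u-v\rangle \le \liminf_{n\to\infty}\langle A(u_n),u_n-v\rangle = \langle f,u-v\rangle,
\]
and choosing $v=u-w$ and $v=u+w$ successively for arbitrary $w\in X$ forces $\langle A(u)-f,w\rangle=0$, i.e., $A(u)=f$. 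Since $f\in X^*$ was arbitrary, $R(A)=X^*$.

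I expect the finite-dimensional solvability of the Galerkin equations to be the main technical obstacle, because pseudo-monotonicity is phrased in terms of weak convergence and is not a priori a continuity statement on $A$. One must explicitly invoke the fact that a bounded pseudo-monotone operator is demicontinuous, and observe that demicontinuity into the finite-dimensional space $X_n^*$ is ordinary norm continuity, before Brouwer's theorem can be applied. Once this hurdle is cleared, the a priori bound from coercivity, the weak compactness from reflexivity, the identification $\xi=f$ by density of $\bigcup_n X_n$, and the pseudo-monotone upgrading of the weak limit $u_n\rightharpoonup u$ to the equation $A(u)=f$ fit together routinely.
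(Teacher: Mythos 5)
The paper states Theorem~\ref{1.1} without proof, attributing it to Brezis~\cite{Bre68}, so there is no in-text argument to compare against. Your Galerkin proof is the standard one and is correct for separable $X$: the decisive technical observation is indeed that a bounded pseudo-monotone operator is demicontinuous, which on the finite-dimensional Galerkin spaces becomes ordinary continuity, so that Brouwer's theorem (in the ``outward-pointing field'' form, with the ball radius $R$ supplied by coercivity) yields the Galerkin solutions $u_n\in X_n$. The a priori bound, the weak-compactness extraction, the density identification of the weak limit of $(A(u_n))_n$ with $f$, the computation $\limsup_{n\to\infty}\langle A(u_n),u_n-u\rangle=0$, and the final pseudo-monotone upgrade to $A(u)=f$ all go through as you write them.

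One caveat on the separability reduction: ``passing to a separable closed subspace $Z$'' does not by itself close the gap when $X$ is non-separable, because solving the restricted problem on $Z$ gives $\langle Au-f,v\rangle=0$ only for $v\in Z$, and you cannot fix a candidate $Z$ in advance without already knowing where the solution lives. The directed-family alternative is the right workaround, but it also needs care, since pseudo-monotonicity as defined in the paper is a sequential property while a net of Galerkin solutions indexed by all finite-dimensional subspaces need not have a cofinal weakly convergent subsequence. Since the paper's applications are all in separable energy spaces, this is a side remark rather than a flaw in your argument.
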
 

	In fact, for every $t\in I$ the operators
	$A(t):=S(t)+B(t):X^p_{\bfvarepsilon}(t)\to X^p_{\bfvarepsilon}(t)^*$, where $X^p_{\bfvarepsilon}(t)$ is the closure of $C_0^\infty(\Omega)^d$ with respect to $\|\cdot\|_{L^{p(t,\cdot)}(\Omega)^d}+\|\bfvarepsilon(\cdot)\|_{L^{p(t,\cdot)}(\Omega)^{d\times d}}$, given via 
	$\langle S(t)\bfu,\bfv\rangle_{X^p_{\bfvarepsilon}(t)}:=
	\int_\Omega{\bfS(t,\cdot,\bfvarepsilon(\bfu))
		:\bfvarepsilon(\bfv)\,dx}$ and 
	$\langle B(t)\bfu,\bfv\rangle_{X^p_{\bfvarepsilon}(t)}
	:=\int_\Omega{\bfb(t,\cdot,\bfu)\cdot\bfv\,dx}$ 
	for every $\bfu,\bfv\in X^p_{\bfvarepsilon}(t)$, are bounded, 
	coercive and pseudo-monotone.
	
	 A time-dependent analogue of 
	 Brezis' contribution is the following (cf.~\cite{Lio69,Zei90B,Ru04,Rou05}).
	\begin{thm}\label{1.2}
		Let $(V,H,j)$ be an evolution triple\footnote{$V$ 
			is a reflexive Banach space, 
			$H$ a Hilbert space 
			and $j:V\to H$ a dense embedding.}, 
		$I=\left(0,T\right)$, ${T<\infty}$, and ${p\in \left(1,\infty\right)}$. 
		Moreover, let $\bscal{A}:W^{1,p,p'}(I,V,V^*)\subseteq
                L^p(I,V)\to L^p(I,V)^*$\footnote{Here,
                  $W^{1,p,p'}(I,V,V^*)$ denotes the space of all
                  functions $\bsu\in L^p(I,V)$ which possess a
                  distributional derivative $\frac{d\bsu}{dt}$ in $L^p(I,V)^*$ (cf.~Subsection~\ref{subsec:2.2}).} be
		bounded, coercive and $\frac{d}{dt}$-pseudo-monotone, i.e., from
		\begin{align*}
		\bsu_n\overset{n\to\infty}{\weakto}
		\bsu\quad\text{ in }W^{1,p,p'}(I,V,V^*),
		\\
		\limsup_{n\to\infty}\langle\bscal{A}\bsu_n,
		\bsu_n-\bsu\rangle_{L^p(I,V)}\leq
		0,
		\end{align*}
		it follows that $ \langle\bscal{A}\bsu,\bsu
		-\bsv\rangle_{L^p(I,V)}
		\leq\liminf_{n\to\infty}{\langle\bscal{A}\bsu_n,\bsu_n
			-\bsv\rangle_{L^p(I,V)}}$ for every $\bsv\in L^p(I,V)$.  
		Then, for arbitrary 
		$\bsu_0\in H$ and 
	$\bsf\in L^p(I,V)^*$ there exists a solution 
		$\bsu\in W^{1,p,p'}(I,V,V^*)$ of the initial value problem
		\begin{align*}
		\begin{alignedat}{2}
		\frac{d\bsu}{dt}+\bscal{A}\bsv&
		=\bsf&&\quad\text{ in }L^p(I,V)^*,\\\bsu(0)&
		=\bsu_0&&\quad\text{ in }H.
		\end{alignedat}
		\end{align*}
	\end{thm}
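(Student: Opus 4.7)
My plan is to apply the classical Faedo--Galerkin method in combination with the $\frac{d}{dt}$-pseudo-monotonicity assumption on $\bscal{A}$. The argument splits into four steps: (i) construct finite-dimensional approximations $\bsu_n$ solving a projected ODE, (ii) derive uniform bounds in $W^{1,p,p'}(I,V,V^*)$, (iii) extract a weak limit $\bsu$ and identify $\bscal{A}\bsu$ via the pseudo-monotonicity hypothesis, and (iv) verify the initial condition. Step~(iii) is the key technical point.

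\textbf{Galerkin approximation and a priori bounds.} I would choose a Galerkin basis $(w_k)_{k=1}^{\infty}\subset V$ with uniformly bounded partial-sum projections $P_n:V\to V_n:=\operatorname{span}\{w_1,\dots,w_n\}$, together with $\bsu_0^n\in V_n$ such that $\bsu_0^n\to \bsu_0$ in $H$. The ansatz $\bsu_n(t)=\sum_{k=1}^{n}c_k^n(t)\,w_k$ turns the finite-dimensional problem
\begin{align*}
\bigl(\tfrac{d\bsu_n}{dt}(t),w_k\bigr)_H+\langle(\bscal{A}\bsu_n)(t),w_k\rangle_V=\langle \bsf(t),w_k\rangle_V,\quad k=1,\dots,n,\quad \bsu_n(0)=\bsu_0^n,
\end{align*}
into a Carath\'eodory ODE system for the coefficients, solvable locally. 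Testing with $\bsu_n$, the chain rule in $H$ together with the coercivity of $\bscal{A}$ and Young's inequality produce $\|\bsu_n\|_{L^\infty(I,H)\cap L^p(I,V)}\le C$; boundedness of $\bscal{A}$ then gives $\|\bscal{A}\bsu_n\|_{L^p(I,V)^*}\le C$, and duality against the projection $P_n$ yields $\|\tfrac{d\bsu_n}{dt}\|_{L^p(I,V)^*}\le C$. Hence $(\bsu_n)_{n\in\mathbb{N}}$ is uniformly bounded in $W^{1,p,p'}(I,V,V^*)$ and extends to all of~$I$.

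\textbf{Limit passage.} Extract a subsequence with $\bsu_n\weakto\bsu$ in $W^{1,p,p'}(I,V,V^*)$ and $\bscal{A}\bsu_n\weakto\chi$ in $L^p(I,V)^*$. Passing to the limit in the Galerkin equation tested against a fixed $w\in V_k$, and then taking $k\to\infty$ by density of $\bigcup_k V_k$ in $V$, yields $\frac{d\bsu}{dt}+\chi=\bsf$ in $L^p(I,V)^*$. The remaining identification $\chi=\bscal{A}\bsu$ is precisely what the $\frac{d}{dt}$-pseudo-monotonicity is designed to deliver, provided I establish $\limsup_{n\to\infty}\langle\bscal{A}\bsu_n,\bsu_n-\bsu\rangle_{L^p(I,V)}\le 0$. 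For this I apply the integration by parts formula (cf.~Subsection~\ref{subsec:2.2}) to the Galerkin identity, obtaining
\begin{align*}
\int_I\langle\bscal{A}\bsu_n,\bsu_n\rangle\,dt=\int_I\langle\bsf,\bsu_n\rangle\,dt-\tfrac{1}{2}|\bsu_n(T)|_H^2+\tfrac{1}{2}|\bsu_n(0)|_H^2,
\end{align*}
and then use the continuous embedding $W^{1,p,p'}(I,V,V^*)\hookrightarrow C(\bar I,H)$ to conclude $\bsu_n(T)\weakto\bsu(T)$ in $H$. Weak lower semicontinuity of $|\cdot|_H$, the convergence $\bsu_n(0)\to\bsu_0$ in $H$, and the integration by parts formula applied to $\bsu$ (using $\bsu(0)=\bsu_0$, verified below) combine to bound the above right-hand side from above by $\langle\chi,\bsu\rangle_{L^p(I,V)}$ in the limit. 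Pseudo-monotonicity with $\bsv=\bsu$ then forces $\lim_n\langle\bscal{A}\bsu_n,\bsu_n-\bsu\rangle=0$, and a Minty-type trick (choose $\bsv=\bsu\pm s\bsw$ and let $s\downarrow 0$) identifies $\chi=\bscal{A}\bsu$.

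\textbf{Initial condition and main obstacle.} The identity $\bsu(0)=\bsu_0$ is obtained in the standard way, by testing both the Galerkin and limit equations against $\eta(t)\,w$ with $\eta\in C^1(\bar I)$, $\eta(0)=1$, $\eta(T)=0$, $w\in\bigcup_k V_k$, integrating by parts in time, and comparing the boundary terms at $t=0$ after sending $n\to\infty$. The hard part is the limit passage in step~(iii): it rests twice on the integration by parts formula in $W^{1,p,p'}(I,V,V^*)$ and on the weak continuity of the trace map $\bsu\mapsto \bsu(T)\in H$. These are classical facts in the constant-exponent abstract setting of Theorem~\ref{1.2}, but their analogues in the variable-exponent symmetric-gradient spaces that motivate the paper are nontrivial --- indeed, their establishment is the main purpose of the density and trace machinery developed in the subsequent sections.
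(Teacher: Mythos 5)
Your Galerkin proof is sound, but it takes a genuinely different route than the one the paper intends. The paper does not prove Theorem~\ref{1.2} directly; it states that the result ``is essentially a consequence of the main theorem on pseudo-monotone perturbations of maximal monotone mappings'' and sketches how: one interprets the restricted time derivative $\frac{d}{dt}:W_0\subseteq L^p(I,V)\to L^p(I,V)^*$ with $W_0=\{\bsu\in W^{1,p,p'}(I,V,V^*)\fdg\bsu(0)=\boldsymbol{0}\}$ as a linear, densely defined, maximal monotone mapping, and then invokes the abstract Browder--Brezis theorem, here packaged as Proposition~\ref{2.1a}. The maximal monotonicity of $\frac{d}{dt}$ on $W_0$ is the only structural input, and it reduces to precisely the integration-by-parts identity you also use. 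Where you build the solution from finite-dimensional approximations and close via a Minty argument, the paper treats the time derivative as one of two operators and lets the abstract range theorem $R((L+A,\gamma_0)^\top)=X^*\times Y$ do the work. The payoff of the paper's route is visible in Section~\ref{sec:7}: Theorem~\ref{7.4} is a verbatim transplant of the same Proposition~\ref{2.1a} argument into the variable exponent setting, once Propositions~\ref{7.0} and~\ref{7.1} supply the maximal monotonicity and closedness of $(\frac{\bfd}{\bfd\bft},\bfgamma_0)^\top$. Your Galerkin route would, in principle, also carry over, but it would need a Galerkin-type exhaustion of the nonseparable-looking time-slice structure and a version of the energy identity at the discrete level; the maximal monotone route sidesteps both. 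Two small caveats in your write-up: (a) the existence of a Galerkin basis with uniformly $\mathcal{L}(V,V)$-bounded partial sum projections is an extra structural hypothesis on the (implicitly separable) space $V$ that you should acknowledge or circumvent, e.g.\ by the Lions device of only bounding $\frac{d\bsu_n}{dt}$ in $L^{p'}(I,V_k^*)$ for each fixed $k$; and (b) your logic is correct that $\bsu(0)=\bsu_0$ must be established \emph{before} the $\limsup$-estimate, by comparing boundary terms in the weak formulations with test functions $\eta(t)w_k$, $\eta(T)=0$, but the phrase ``verified below'' should be replaced by actually performing that comparison first, since the integration by parts applied to $\bsu$ in the $\limsup$ computation presupposes it.
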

	This result is essentially a consequence of the main theorem on
	pseudo-monotone perturbations of maximal monotone mappings, stemming
	from Browder \cite{Bro68} and Brezis \cite{Bre68} (see also Proposition~\ref{2.1a}). In doing so, one
	interprets the restricted time derivative
	${\frac{d}{dt}\!:\!W_0\!\subseteq \!L^p(I,V)\!\to \!(L^p(I,V))^*}$, where $W_0:=\{\bsu\in W^{1,p,p'}(I,V,V^*) \fdg
	\bsu(0)=\boldsymbol{0}\text{ in }H\}$, as a
	linear, densely defined and maximal monotone mapping.
	Note that the maximal monotonicity of $\frac{d}{dt}$ can basically
	be traced back to a generalized formula of integration by parts. For details we refer to \cite{Lio69,GGZ74,Zei90B,Sho97,Rou05,Ru04}.
	
	If the variable exponent $p\in \mathcal{P}^{\log}(Q_T)$ is a constant, 
	i.e., $p(\cdot,\cdot)\equiv p\in \left(1,\infty\right)$, 
	it is well-known that the natural energy spaces for the 
	treatment of the initial value problem \eqref{eq:model} 
	are the Bochner--Lebesgue $L^p(I,X^p)$, where $X^p:=W^{1,p}_0(\Omega)^d$, 
	and the Bochner--Sobolev space $W^{1,p,p'}(I,X^p,(X^p)^*)$, and therefore
	 \eqref{eq:model} falls into the framework of Theorem \ref{1.2}. However,
	 if the elliptic part $\bfS$ has a non-constant variable
	 exponent structure, the situation changes substantially. 
	 In this case the coercivity condition (\hyperlink{S.3}{S.3})
	 suggests the energy space
	 \begin{align*}
	 	\bscal{X}^{2,p}_{\bfvarepsilon}(Q_T)
	 	:=\{\bsu\in L^2(Q_T)^d\fdg 
	 	\bfvarepsilon(\bsu)\in L^{p(\cdot,\cdot)}(Q_T,\mathbb{M}^{d\times d}_{\sym}),
	 	\;\bsu(t)\in X^p_{\bfvarepsilon}(t)\text{ for a.e. }t\in I\}.
	 \end{align*}
	Unfortunately, $\bscal{X}^{2,p}_{\bfvarepsilon}(Q_T)$ does not fit into the Bochner--Lebesgue 
	framework required in Theorem~\ref{1.2}. This predicament 
	is not new and has already been considered in \cite{AS09,naegele-dipl,DNR12}. 
	But these results only treated the case where the monotone part $\bfS$ 
	is depending on the full gradient, which in turn suggested the energy space 
	\begin{align*}
	\bscal{X}^{2,p}_{\nb}(Q_T)
	:=\{\bsu\in L^2(Q_T)^d\fdg 
	\nb\bsu\in L^{p(\cdot,\cdot)}(Q_T,\mathbb{M}_{\sym}^{d\times d}),
	\;\bsu(t)\in X^p_{\nb}(t)\text{ for a.e. }t\in I\}, 
	\end{align*}
	where $X^p_{\nb}(t)$, $t\in I$, are the closures of $C_0^\infty(\Omega)^d$ with respect to $\|\cdot\|_{L^{p(t,\cdot)}(\Omega)^d}+\|\nb\cdot\|_{L^{p(t,\cdot)}(\Omega)^{d\times d}}$.
	In the steady case, due to Korn's inequality for variable exponent Sobolev spaces (cf.~Proposition~\ref{2.4}), the spaces 
	$X^p_{\bfvarepsilon}(t)$ and $X^p_{\nb}(t)$
	 coincide for every $t\in I$. 
	Apart from that, if $p\in\left(1,\infty\right)$ is a constant, 
	the steady Korn's inequality transfers to the Bochner--Lebesgue level, 
	and thus implies the algebraic identity $\bscal{X}^{2,p}_{\bfvarepsilon}(Q_T)
	=\bscal{X}^{2,p}_{\nb}(Q_T)$ with norm equivalence. 
	However, the situation is more delicate if we consider a 
	non-constant variable exponent. In fact, 
	even if $\Omega$ is smooth and $p\in C^\infty(\setR^d)$, i.e., not depending on time, with 
	$p^->1$, we cannot hope for a coincidence of  
	$\bscal{X}^{2,p}_{\bfvarepsilon}(Q_T)$ and 
	$\bscal{X}^{2,p}_{\nb}(Q_T)$. This is due to the invalidity of 
	a Korn type inequality on $\bscal{X}^{2,p}_{\bfvarepsilon}(Q_T)$
	 (cf.~Remark \ref{4.5}). In consequence, one has to distinguish between the spaces $\bscal{X}^{2,p}_{\bfvarepsilon}(Q_T)$
	  and $\bscal{X}^{2,p}_{\nb}(Q_T)$. 
	At first sight, it might seem that the treatment of the space 
	$\bscal{X}^{2,p}_{\bfvarepsilon}(Q_T)$ does 
	not cause any further difficulties in comparison to 
	$\bscal{X}^{2,p}_{\nb}(Q_T)$. However, a closer inspection of 
	\cite{DNR12} shows that the proof of a formula of integration by parts, which is indispensable for the adaptation of the 
	methods on which Theorem~\ref{1.2} is based on, and the 
	related density of smooth functions in
	 $\bscal{X}^{2,p}_{\bfvarepsilon}(Q_T)$, 
	essentially make use of the following point-wise Poincar\'e
	 inequality on bounded Lipschitz domains (cf.~\cite[Proposition~4.5]{DNR12})
	\begin{align}
		\vert\bfu(x)\vert \leq c_0
		\int_{B_{2r(x)}^d(x)}{\frac{\vert \nb\bfu(y)\vert}
			{\vert x-y\vert^{d-1}\,dy}}\quad\text{ for a.e. }
		x\in \Omega,\label{eq:pw}
	\end{align}
	 where $c_0>0$ is a global constant, $r(x):=\text{dist}(x,\pa\Omega)$ 
	 for every $x\in \Omega$, and $\bfu\in W^{1,1}_0(\Omega)^d$. 
	 Therefore, the major objective of this paper is to prove an analogue of 
	 inequality \eqref{eq:pw} involving only the symmetric part of a gradient. 
	 As soon as this analoguous inequality is proved, we will be able to adapt the
	 smoothing methods from \cite{DNR12}, in order to gain density results
	 for $\bscal{X}^{2,p}_{\bfvarepsilon}(Q_T)$, which in turn 
	 leads to an appropriate formula of integration by parts. Using this
	  formula of integration by parts, we will adapt the methods on which Theorem \ref{1.2} 
	  is based on, i.e., we apply a corollary of the main theorem on pseudo-monotone
	   perturbations of maximal monotone mappings (cf.~Proposition~\ref{2.1a}), 
	   in order to gain an analogue of Theorem \ref{1.2} for our functional setting.
	 
	 \textbf{Plan of the paper:} In Section~\ref{sec:2} we recall some basic definitions
	 and results concerning pseudo-monotone operators, Bochner--Sobolev spaces 
	 and Lebesgue spaces with variable exponents. In Section~\ref{sec:3} we prove a 
	 point-wise Poincar\'e inequality near the boundary of a bounded Lipschitz domain
	 involving only the symmetric part of a gradient. In Section~\ref{sec:4} we 
	 introduce the function space $\bscal{X}^{q,p}_{\bfvarepsilon}(Q_T)$ 
	 and prove some of its basic properties, such as completeness, separability, reflexivity 
	 and a characterization of its dual space. In Section~\ref{sec:5} we apply the point-wise 
	 Poincar\'e inequality of Section~\ref{sec:3} in the construction of a smoothing 
	 operator for $\bscal{X}^{q,p}_{\bfvarepsilon}(Q_T)$ and its dual space $\bscal{X}^{q,p}_{\bfvarepsilon}(Q_T)^*$. In particular, we prove 
	 the density of $C^\infty_0(Q_T)^d$ in $\bscal{X}^{q,p}_{\bfvarepsilon}(Q_T)$. 
	 In Section~\ref{sec:6} we introduce the notion of the generalized time derivative which 
	 extends to usual notion for Bochner--Sobolev spaces to the framework of the energy spaces $\bscal{X}^{q,p}_{\bfvarepsilon}(Q_T)$ and 
	 $\bscal{X}^{q,p}_{\bfvarepsilon}(Q_T)^*$. To be more precise, we 
	 introduce the energy space $\bscal{W}^{q,p}_{\bfvarepsilon}(Q_T)$, 
	 a subspace of $\bscal{X}^{q,p}_{\bfvarepsilon}(Q_T)$ incorporating 
	 this new notion of generalized time derivative, and prove the density of 
	 $C^\infty(\overline{I},C_0^\infty(\Omega)^d)$ in $\bscal{W}^{q,p}_{\bfvarepsilon}(Q_T)$, 
	 which in turn leads to a generalized integration by parts formula.
	  This integration by parts formula is used afterwards in Section~\ref{sec:7} 
	  to adapt the methods on which Theorem \ref{1.2} is based on, 
	  in order to gain an analogue of Theorem \ref{1.2} in the functional 
	  setting of the preceding sections.

	\section{Preliminaries}
	\label{sec:2}
	\subsection{Operators}
	For a Banach space $X$ with norm $\|\cdot\|_X$ we denote by
        $X^*$ its dual space equipped with the norm
        $\|\cdot\|_{X^*}$. The duality pairing is denoted by
        $\langle\cdot,\cdot\rangle_X$. All occurring Banach spaces are
        assumed to be real. For Banach spaces $X, Y$ we denote the
        Cartesian product by $X\times Y$ and equip it with the norm
        $\|(x,y)^\top\|_{X\times Y}:=\|x\|_X +\|y\|_Y$ for
        $(x,y)^\top\in X\times Y$. By $D(A)$ we denote the domain of
        definition of an operator $A:D(A)\subseteq X\to Y$, by
        $R(A):=\{Ax\fdg x\in D(A)\}$ its range, and by
        $G(A):=\{(x,Ax)^\top\in X\times Y\fdg x\in D(A)\}$ its
        graph. For a linear bounded operator $A:X\to Y $ the adjoint
        operator $A^*:Y^*\to X^*$, which is again linear and bounded,
        is defined via
        $\langle y^*,Ax\rangle _Y=:\langle A^*y^*,x\rangle _X $ for
        every $x\in X$ and $y^*\in Y^*$. The sum of two subsets
        $S_1,S_2$ of $X$ is defined by
        $S_1+S_2:=\{s_1+s_2\fdg s_1\in S_1,s_2\in S_2\}$. For
        $X=\setR^n$, $n\in \setN$, $\Omega\subseteq \setR^n$ and
        $B_\vep^n(0):=\{x\in \setR^n\fdg |x|<\vep\}$ with $\vep>0$ we
        have
        $\Omega+B_\vep^n(0)=\{{x\in \setR^n\fdg}
        \textup{dist}(x,\Omega)<\vep\}$, which is an approximation of
        $\Omega$ from outside. On the other hand, we define
        $\Omega_\vep:=\{x\in \Omega\fdg
        \textup{dist}(x,\pa\Omega)>\vep\}$ for
        $\Omega\subseteq \setR^n$, $n\in \setN$, and $\vep>0$, which
        is an approximation of $\Omega$ from inside.
	\begin{defn}\label{2.1}
		Let $X$ and $Y$ be Banach spaces. 
		The operator $A:D(A)\subseteq X\to Y$ is said to be
		\begin{description}[{(iii)}]
			\item[{(i)}] \textbf{bounded}, if for all bounded 
			$M\subseteq D(A)\subseteq X$ the image $A(M)\subseteq Y$ is bounded.
			\item[{(ii)}] \textbf{monotone}, if $Y=X^*$ 
			and $\langle Ax-Ay,x-y\rangle_X\ge 0$ for all $x,y\in D(A)$.
			\item[{(iii)}] \textbf{pseudo-monotone}, if $Y=X^*$, $D(A)=X$ and 
			for a sequence $(x_n)_{n\in\setN}\subseteq X$ satisfying 
			$x_n\weakto x$~in~$X$~${(n\to\infty)}$ and 
			$\limsup_{n\to\infty}{\langle Ax_n,x_n-x\rangle_X}\leq 0$, it follows 
			$\langle Ax,x-y\rangle_X\leq \liminf_{n\to\infty}{\langle Ax_n,x_n-y\rangle_X}$ 
			for every $y\in X$.
			\item[{(iv)}] \textbf{maximal monotone}, if 
			$Y=X^*$, $A:D(A)\subseteq X\to X^*$ is monotone and 
			for every ${(x,x^*)^\top\in X\times X^*}$ from $\langle x^*-Ay,x-y\rangle_X\ge 0$ 
			for every $y\in D(A)$, it follows $x\in D(A)$ and $Ax=x^*$ in $X^*$.
			\item[{(v)}] \textbf{coercive}, if $Y=X^*$, $D(A)\subseteq X$ is 
			unbounded and
			$\lim_{\substack{\|x\|_X\to \infty\\x\in D(A)}}{\frac{\langle Ax,x\rangle_X}{\|x\|_X}}=\infty$.
		\end{description}
	\end{defn}

The following result is crucial for the existence result for problem
\eqref{eq:model} (cf.~Theorem \ref{7.4}). In particular, it
          generalizes the usual treatment of parabolic problems with
          the help of maximal monotone mappings for trivial initial
          conditions to the case of non-trivial initial values. 
	\begin{prop} \label{2.1a}Let $X,Y$ be reflexive Banach spaces, 
		$L:D(L)\subseteq X\to X^*$  linear and densely defined
                and $\gamma_0:D(L)\to Y$ linear, such that $R(\gamma_0)$ is dense in $Y$, $G((L,\gamma_0)^\top)$~is~closed~in $X\times X^*\times Y$,
		there exists a constant $c_0>0$, such that $\langle Lx,x\rangle_X+c_0\|\gamma_0(x)\|_Y^2\ge 0$~for~all~${x\in D(L)}$, and $L:N(\gamma_0)\subseteq X\to X^*$, where 
		$N(\gamma_0):=\{x\in D(L)\fdg \gamma_0(x)=0\}$, is maximal monotone.
		Moreover, let $A:D(L)\subseteq X\to X^*$ be 
		coercive and pseudo-monotone with respect to $L$, i.e., for a sequence 
		$(x_n)_{n\in \setN}\subseteq D(L)$ and an element $x\in D(L)$ from 
		\begin{align*}
			x_n\overset{n\to\infty}{\weakto}x\quad\text{ in  }X,\qquad\sup_{n\in \mathbb{N}}{\|Lx_n\|_{X^*}}<\infty,\\
			\limsup_{n\to \infty}{\langle Ax_n,x_n-x\rangle_X}\leq 0,
		\end{align*}
		it follows $\langle Ax,x-y\rangle_X\leq \limsup_{n\to \infty}{\langle Ax_n,x_n-x\rangle_X}$ 
		for every $y\in X$, and assume that there exist a bounded function $\psi:\setR_{\ge 0}\times \setR_{\ge 0}\to \setR_{\ge 0}$ and a constant $\theta\in \left[0,1\right)$, such that for every $x\in D(L)$
		\begin{align*}
                  \|Ax\|_{X^*}\leq \psi(\|x\|_X,\|\gamma_0(x)\|_Y)+\theta
                  \|Lx\|_{X^*}. 
		\end{align*}
		Then, $R((L+A,\gamma_0)^\top)=X^*\times Y$, i.e., for every $x^*\in X^*$ and $y_0\in Y$ there exists an element $x\in D(L)$, such that $Lx+Ax=x^*$ and $\gamma_0(x)=y_0$ in $Y$. 
	\end{prop}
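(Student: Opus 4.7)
The plan is to reduce the assertion to the classical surjectivity theorem of Brezis--Browder on pseudo-monotone perturbations of maximal monotone operators (cf.~\cite{Bro68,Bre68,Zei90B,Sho97,Rou05,Ru04}), applied to $L$ restricted to the homogeneous subspace $N(\gamma_0)$, together with a lifting of the prescribed boundary value $y_0$ followed by a weak limit passage. Let $x^*\in X^*$ and $y_0\in Y$ be fixed. Since $R(\gamma_0)$ is dense in $Y$, I would first choose a sequence $(z_n)_{n\in\setN}\subseteq D(L)$ with $y_n:=\gamma_0(z_n)\to y_0$ in $Y$, and for each $n$ seek $u_n\in N(\gamma_0)$ solving
\[
Lu_n+\widetilde A_n u_n=x^*-Lz_n\quad\text{ in }X^*,\qquad \widetilde A_n u:=A(u+z_n),
\]
after which $x_n:=u_n+z_n\in D(L)$ automatically satisfies $Lx_n+Ax_n=x^*$ in $X^*$ and $\gamma_0(x_n)=y_n$ in $Y$.

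For each fixed $n\in\setN$, $L\colon N(\gamma_0)\subseteq X\to X^*$ is maximal monotone by assumption, and $\widetilde A_n\colon X\to X^*$ inherits coercivity from $A$ (the shift by the fixed $z_n$ alters $\langle\widetilde A_n u,u\rangle_X$ only by the lower-order term $-\langle A(u+z_n),z_n\rangle_X$), retains the relative growth bound $\|\widetilde A_n u\|_{X^*}\leq\widetilde\psi_n(\|u\|_X)+\theta\|Lu\|_{X^*}$ with the same $\theta\in\left[0,1\right)$ (only $\psi$ and the affine contribution involving $\|Lz_n\|_{X^*}$ change), and is pseudo-monotone with respect to $L\vert_{N(\gamma_0)}$ (a direct transcription of the corresponding hypothesis on $A$, since translation by the fixed $z_n$ preserves weak convergence and boundedness of the $L$-image). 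The Brezis--Browder theorem in its version for pseudo-monotone perturbations of a maximal monotone operator with proper relative $L$-bound $\theta<1$ then furnishes $u_n$, hence $x_n$ as above.

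The principal obstacle is the limit passage $n\to\infty$. Testing the equation with $x_n$, using the coercivity of $A$, and exploiting $\langle Lx_n,x_n\rangle_X\geq -c_0\|y_n\|_Y^2$ (bounded, since $y_n\to y_0$) yields a uniform bound on $\|x_n\|_X$; the relative growth bound $\|Ax_n\|_{X^*}\leq\psi(\|x_n\|_X,\|y_n\|_Y)+\theta\|Lx_n\|_{X^*}$ combined with $Lx_n=x^*-Ax_n$ and $\theta<1$ then absorbs the $\|Lx_n\|_{X^*}$-term and produces uniform bounds on $\|Lx_n\|_{X^*}$ and $\|Ax_n\|_{X^*}$. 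Along a subsequence, $x_n\weakto x$ in $X$ and $Lx_n\weakto\xi$ in $X^*$; since $G((L,\gamma_0)^\top)$ is a closed linear subspace of $X\times X^*\times Y$, hence weakly sequentially closed, the limit triple $(x,\xi,y_0)$ lies in this graph, i.e., $x\in D(L)$, $Lx=\xi$ and $\gamma_0(x)=y_0$. The decisive computation is then the identity
\[
\langle Lx_n,x_n-x\rangle_X=\langle L(x_n-x),x_n-x\rangle_X+\langle Lx,x_n-x\rangle_X\geq -c_0\|y_n-y_0\|_Y^2+\langle Lx,x_n-x\rangle_X,
\]
whose right-hand side tends to $0$, giving $\liminf_{n\to\infty}\langle Lx_n,x_n-x\rangle_X\geq 0$. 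Rewriting $\langle Ax_n,x_n-x\rangle_X=\langle x^*,x_n-x\rangle_X-\langle Lx_n,x_n-x\rangle_X$ yields $\limsup_{n\to\infty}\langle Ax_n,x_n-x\rangle_X\leq 0$, and the pseudo-monotonicity of $A$ with respect to $L$, applied to $(x_n)_{n\in\setN}$ and tested against arbitrary $y\in X$, finally forces $Lx+Ax=x^*$ in $X^*$. The subtlety absent from the standard homogeneous proof is that the maximal monotonicity of $L$ is only assumed on $N(\gamma_0)$, so the inhomogeneity $y_0\neq 0$ has to be handled entirely through the lift $z_n$ and the semidefinite boundary estimate $\langle L\cdot,\cdot\rangle_X+c_0\|\gamma_0(\cdot)\|_Y^2\geq 0$.
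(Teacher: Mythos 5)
The paper's own ``proof'' is a one-line appeal to Brezis \cite[Corollary 22]{Bre72}, so your attempt to derive the statement by lifting the boundary datum and passing to a weak limit is a genuinely different route. The limit passage in your final paragraph is correct and well executed: the uniform bounds on $\|x_n\|_X$, $\|Lx_n\|_{X^*}$, $\|Ax_n\|_{X^*}$, the weak sequential closedness of the linear graph $G((L,\gamma_0)^\top)$, the inequality $\langle L(x_n-x),x_n-x\rangle_X\ge -c_0\|y_n-y_0\|_Y^2$, the conclusion $\limsup_{n}\langle Ax_n,x_n-x\rangle_X\le 0$, and the subsequent use of $L$-pseudo-monotonicity against $y\in X$ together with the density of $D(L)$ are exactly the right steps.

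The gap is in the solvability of the auxiliary homogeneous problems. You assert that $\widetilde A_n u:=A(u+z_n)$ ``inherits coercivity from $A$'' because $-\langle A(u+z_n),z_n\rangle_X$ is ``lower-order''. This is not justified: one only has $|\langle A(u+z_n),z_n\rangle_X|\le \|A(u+z_n)\|_{X^*}\|z_n\|_X$, and the coercivity of $A$ controls the ratio $\langle Aw,w\rangle_X/\|w\|_X$, not the operator norm $\|Aw\|_{X^*}$, which by hypothesis is bounded only through $\psi+\theta\|Lw\|_{X^*}$. Since $L$ is unbounded, on $N(\gamma_0)$ one can have $\|u\|_X$ bounded while $\|Lu\|_{X^*}$ --- and hence the correction term --- is arbitrarily large, so $\langle\widetilde A_n u,u\rangle_X/\|u\|_X$ need not diverge and can even fail to be bounded below on bounded sets. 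What \emph{does} hold is an a priori bound on solutions $u$ of $Lu+\widetilde A_n u=x^*-Lz_n$: testing with $u$, using $\langle Lu,u\rangle_X\ge 0$ on $N(\gamma_0)$, and absorbing $\|L(u+z_n)\|_{X^*}$ by the relative bound $\theta<1$ before appealing to the coercivity of $A$ at $u+z_n$; but this is a statement about solutions, not coercivity of $\widetilde A_n$, and a surjectivity theorem for which precisely this form of estimate suffices --- an $L$-pseudo-monotone perturbation defined only on $D(L)$, with relative $L$-bound $\theta<1$ --- is essentially the homogeneous case of Brezis's Corollary 22 itself, not the classical Brezis--Browder theorem. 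Note also that $\widetilde A_n$ is defined only on $D(L)$, not on all of $X$ as you wrote; the references Brezis '68, Browder '68, and the textbook formulations in Zeidler, Showalter, Roubi\v{c}ek you cite all require the pseudo-monotone perturbation to be defined on the whole space, so they do not cover the auxiliary problems as stated.
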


	\begin{proof}
		This is a simplified version of \cite[Corollary 22]{Bre72}.\hfill$\qed$
	\end{proof}
	
	\subsection{Bochner--Lebesgue spaces}
	\label{subsec:2.2a}
        We use the standard notation for Bochner--Lebesgue spaces $L^r(I,X)$,
        where $1\le r\le \infty$ and $X$ is a Banach space. Elements
        of Bochner--Lebesgue spaces will be denoted by small boldface letters,
        e.g.~we write $\bsx \in L^r(I,X)$. In particular, we use the
        following characterization of duality in Bochner--Lebesgue spaces. 

        \begin{prop}[Characterization of $L^r(I,X)^*$]\label{riesz-boch}
 	Let $X$ be a reflexive Banach space and $r \in (1,\infty)$. Then,
               for every $\bsf^* \in L^r(I,X)^*$ there exists an
                element $\bsf\in L^{r'}(I,X^*)$, such that for every
                $\bsx\in L^r(I,X)$
		\begin{align*}
			\langle
                  \bsf^*,\bsx\rangle_{L^r(I,X)}=\int_{I}\langle
                  \bsf(t),\bsx(t)\rangle _X\,dt=: \langle  \bscal{J}_X\bsf,\bsx\rangle_{L^r(I,X)}
		\end{align*}
		and $\|\bsf^*\|_{L^r(I,X)^*} =\|\bsf\|_{L^{r'}(I,X^*)}$, 
		i.e., the operator ${\bscal{J}_X: L^{r'}(I,X^*)\to
                  L^r(I,X)^*:\bsf \mapsto  \bscal{J}_X\bsf}$ is an isometric isomorphism. 
	\end{prop}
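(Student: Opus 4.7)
The plan is to first verify that $\bscal{J}_X$ is a well-defined linear isometric embedding, and then to establish its surjectivity using the Radon--Nikodym property of $X^*$, which is available because $X$ (and hence $X^*$) is reflexive.

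First, I would check that $\bscal{J}_X$ is well-defined and bounded. For $\bsf\in L^{r'}(I,X^*)$ and $\bsx\in L^r(I,X)$, the scalar function $t\mapsto \langle \bsf(t),\bsx(t)\rangle_X$ is measurable (separability of the ranges of $\bsf$ and $\bsx$ after passing to Pettis-measurable representatives makes the duality pairing measurable), and by Hölder's inequality in $L^1(I)$ together with the pointwise estimate $|\langle \bsf(t),\bsx(t)\rangle_X|\leq \|\bsf(t)\|_{X^*}\|\bsx(t)\|_X$, it lies in $L^1(I)$ with norm bounded by $\|\bsf\|_{L^{r'}(I,X^*)}\|\bsx\|_{L^r(I,X)}$. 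Hence $\bscal{J}_X\bsf\in L^r(I,X)^*$ and $\|\bscal{J}_X\bsf\|_{L^r(I,X)^*}\leq \|\bsf\|_{L^{r'}(I,X^*)}$. Linearity is immediate.

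Next, I would establish the reverse inequality, i.e., that $\bscal{J}_X$ is an isometry. Given $\bsf\in L^{r'}(I,X^*)$, one chooses a strongly measurable $\bsx:I\to X$ with $\|\bsx(t)\|_X=\|\bsf(t)\|_{X^*}^{r'-1}$ and $\langle \bsf(t),\bsx(t)\rangle_X\geq (1-\vep)\|\bsf(t)\|_{X^*}\|\bsx(t)\|_X$, possible by a measurable selection argument since $X$ is reflexive (so the unit ball of $X$ is weakly compact and dualities are almost attained). Testing $\bscal{J}_X\bsf$ against $\bsx$ and letting $\vep\to 0$ yields $\|\bscal{J}_X\bsf\|_{L^r(I,X)^*}\geq \|\bsf\|_{L^{r'}(I,X^*)}$.

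The main obstacle, and the heart of the proof, is the surjectivity of $\bscal{J}_X$. Given $\bsf^*\in L^r(I,X)^*$, I would define a set function $\mu:\mathcal{B}(I)\to X^*$ by setting, for each Lebesgue-measurable $E\subseteq I$, the functional $\langle \mu(E),x\rangle_X:=\langle \bsf^*,\chi_E\,x\rangle_{L^r(I,X)}$ for $x\in X$. A direct computation using $\|\chi_E\,x\|_{L^r(I,X)}=|E|^{1/r}\|x\|_X$ shows that $\mu$ is a finitely additive $X^*$-valued vector measure of bounded variation with $|\mu|\ll \lambda$ (Lebesgue measure), and $\sigma$-additivity follows via dominated convergence on the Bochner integral against simple functions. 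Since $X$ is reflexive, $X^*$ is reflexive and therefore enjoys the Radon--Nikodym property, so there exists $\bsf\in L^1(I,X^*)$ with $\mu(E)=\int_E\bsf(t)\,dt$ for all measurable $E$. The identity $\langle \bsf^*,\bsx\rangle_{L^r(I,X)}=\int_I\langle \bsf(t),\bsx(t)\rangle_X\,dt$ then holds for $X$-valued simple functions by construction, extends to all of $L^r(I,X)$ by density of simple functions, and the isometry just proved together with $|\bsf^*|<\infty$ on $L^r(I,X)$ forces $\bsf\in L^{r'}(I,X^*)$ with $\|\bsf\|_{L^{r'}(I,X^*)}=\|\bsf^*\|_{L^r(I,X)^*}$.

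The principal technical hurdle is the invocation of the Radon--Nikodym property for $X^*$; once that is in place, the density argument on simple functions and the isometry are routine. I would cite this as a standard result (e.g.\ from Diestel--Uhl) rather than reproduce the proof of the RNP of reflexive spaces.
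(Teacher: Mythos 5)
The paper gives no proof of Proposition~\ref{riesz-boch}; it is recalled as a classical fact (this is precisely Theorem~IV.1.1 of Diestel--Uhl, \emph{Vector Measures}, in the case where $X^*$ has the Radon--Nikodym property, which holds here because reflexive spaces do). Your approach --- isometry via H\"older plus measurable near-dualizing selection, surjectivity via the vector measure $E\mapsto\mu(E)$ and the RNP of $X^*$ --- is exactly the standard route, and it is sound.

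One technical point is stated too quickly and deserves a warning. Your ``direct computation'' giving bounded variation of $\mu$ cannot proceed by estimating each piece separately: the pointwise bound $\|\mu(E_i)\|_{X^*}\le\|\bsf^*\|\,|E_i|^{1/r}$ summed over a partition $\{E_i\}$ of $E$ yields $\|\bsf^*\|\sum_i|E_i|^{1/r}$, which is unbounded as the partition refines since $r>1$. The correct estimate picks near-maximizing unit vectors $x_i\in X$ with $\langle\mu(E_i),x_i\rangle_X\ge(1-\vep)\|\mu(E_i)\|_{X^*}$ and tests $\bsf^*$ against the single simple function $\sum_i\chi_{E_i}x_i$, whose $L^r(I,X)$-norm is $\bigl(\sum_i|E_i|\bigr)^{1/r}=|E|^{1/r}$ because the $E_i$ are disjoint and the $x_i$ have unit norm; this gives $|\mu|(E)\le\|\bsf^*\|\,|E|^{1/r}$ uniformly in the partition. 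It is this aggregated use of the $L^r$-norm --- not a term-by-term H\"older estimate --- that makes the bounded-variation claim true. Similarly, your isometry step for general $\bsf\in L^{r'}(I,X^*)$ is cleanest if you first verify it for $X^*$-valued simple functions (where the selection is trivial) and then pass to the limit, rather than invoking an abstract measurable selection; the reflexivity of $X$ is not what makes the selection measurable, strong measurability of $\bsf$ (and thus its essential separable range) is.
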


        \subsection{Bochner-Sobolev spaces}
	\label{subsec:2.2}
        Let $(X_0,Y,j_0)$ and $(X_1,Y,j_1)$ be evolution triples,
        i.e., for $i=0,1$,  $X_i$ is a reflexive Banach space, $Y$ a
        Hilbert space and $j_i:X_i\to Y$ a dense embedding, i.e., $j_i
        $ is linear, bounded and injective and $R(j_i)$ is dense in $Y$. Let $R:Y\to Y^*$ be the Riesz isomorphism with respect to $(\cdot,\cdot)_Y$. As $j_1$ is a dense embedding, the adjoint $j_1^*:Y^*\to X_1^*$ and therefore $e:=j_1^*Rj_0:X_0\to X_1^*$ are embeddings as well. Note that
	\begin{align}
		\langle ex_0,x_1\rangle_{X_1}=(j_0x_0,j_1x_1)_Y\quad\text{ for all }x_0\in X_0,\;x_1\in X_1.\label{eq:iden}
	\end{align}
	Let $I=\left(0,T\right)$, $T<\infty$, and ${p_0,p_1\in\left[1,\infty\right]}$.
	A function 
	$\bsx_0\in L^{p_0}(I,X_0)$ possesses a \textbf{generalized time derivative with respect 
		to $e$ in $L^{p_1}(I,X_1^*)$} if there exists a function $\bsx_1^*\in L^{p_1}(I,X_1^*)$, 
	such that for every $\varphi\in C_0^\infty(I)$ and $x_1\in X_1$ it holds
	\begin{align}
	-\int_I{(j_0(\bsx_0(t)),j_1x_1)_Y\varphi^\prime(t)\,dt}
	=\int_I{\langle \bsx_1^*(t),x_1\rangle_{X_1}\varphi(t)\,dt}.\label{eq:gtd}
	\end{align}
	As such a function $\bsx_1^*\in L^{p_1}(I,X_1^*)$ is unique 
	(cf.~\cite[Prop. 23.18]{Zei90A}), 
	$\frac{d_e\bsx_0}{dt}:=\bsx_1^*$ is well-defined.~By
	\begin{align*}
	W_e^{1,p_0,p_1}(I,X_0,X_1,Y):=\bigg\{\bsx_0\in L^{p_0}(I,X_0)\bigg|\;
	\exists\;\frac{d_e\bsx_0}{dt}\in L^{p_1}(I,X_1^*)\bigg\}
	\end{align*}
	we denote the \textbf{Bochner--Sobolev space with respect to }$e$.
	
	The following proposition shows that the generalized time derivative is just a special case of the usual vector-valued distributional time derivative, such as e.g. in \cite[Definition 2.1.4]{Dro01}.
	
	\begin{prop}\label{equal}
		The following statements are equivalent:
		\begin{description}[{(ii)}]
			\item[(i)] $\bsx_0\in W_e^{1,p_0,p_1}(I,X_0,X_1,Y)$.
			\item[(ii)] $\bsx_0\in L^{p_0}(I,X_0)$ and there exists a function $\bsx^*_1\in L^{p_1}(I,X_1^*)$, such that for every $\varphi\in C_0^\infty(I)$
			\begin{align}
				e\bigg(-\int_I{\bsx_0(t)\varphi^\prime(t)\,dt}\bigg)=\int_I{\bsx^*_1(t)\varphi(t)\,dt}\qquad
                          \textrm{ in } X^*_1.\label{eq:equal.a}
			\end{align} 
		\end{description}
	\end{prop}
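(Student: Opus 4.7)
The plan is to show that (i) and (ii) are simply two ways of writing the same identity, once the defining property (\ref{eq:iden}) of the embedding $e=j_1^*Rj_0$ is applied under a Bochner integral. First I observe that both $-\int_I\bsx_0(t)\varphi^\prime(t)\,dt\in X_0$ and $\int_I\bsx_1^*(t)\varphi(t)\,dt\in X_1^*$ are well-defined Bochner integrals, since $\varphi\in C_0^\infty(I)$ forces the integrands to be strongly measurable and compactly supported, while $\bsx_0\in L^{p_0}(I,X_0)$ and $\bsx_1^*\in L^{p_1}(I,X_1^*)$ give the necessary Bochner integrability on every compact subinterval.

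For the direction (ii)$\Rightarrow$(i), I would test the identity \eqref{eq:equal.a} against an arbitrary $x_1\in X_1$. Because $\langle\cdot,x_1\rangle_{X_1}$ is a bounded linear functional on $X_1^*$, and because $e:X_0\to X_1^*$ is bounded and linear, both operations commute with the Bochner integrals. Hence
\begin{align*}
\Bigl\langle e\Bigl(-\!\int_I\bsx_0(t)\varphi^\prime(t)\,dt\Bigr),x_1\Bigr\rangle_{X_1}
&=-\!\int_I\langle e\bsx_0(t),x_1\rangle_{X_1}\varphi^\prime(t)\,dt\\
&=-\!\int_I (j_0\bsx_0(t),j_1x_1)_Y\varphi^\prime(t)\,dt,
\end{align*}
where the last equality uses the key identity \eqref{eq:iden}, while on the other side
\begin{align*}
\Bigl\langle \int_I\bsx_1^*(t)\varphi(t)\,dt,x_1\Bigr\rangle_{X_1}=\int_I\langle\bsx_1^*(t),x_1\rangle_{X_1}\varphi(t)\,dt.
\end{align*}
Equating these expressions yields precisely the defining relation \eqref{eq:gtd}, so $\bsx_0$ lies in $W^{1,p_0,p_1}_e(I,X_0,X_1,Y)$ with $\frac{d_e\bsx_0}{dt}=\bsx_1^*$.

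For the converse (i)$\Rightarrow$(ii), I would simply read the chain of equalities above in reverse. Given (\ref{eq:gtd}), the computation shows that for every $x_1\in X_1$ and every $\varphi\in C_0^\infty(I)$
\begin{align*}
\Bigl\langle e\Bigl(-\!\int_I\bsx_0(t)\varphi^\prime(t)\,dt\Bigr)-\int_I\bsx_1^*(t)\varphi(t)\,dt,\,x_1\Bigr\rangle_{X_1}=0.
\end{align*}
Since $X_1$ separates points of $X_1^*$, the two elements in $X_1^*$ coincide, which is exactly \eqref{eq:equal.a}.

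There is no genuine obstacle here; the only technical point to be careful about is justifying that the bounded linear operators $e$ and $\langle\cdot,x_1\rangle_{X_1}$ may be pulled inside the Bochner integrals, which is a standard property of the Bochner integral for bounded linear maps between Banach spaces, and then applying \eqref{eq:iden} pointwise in $t\in I$.
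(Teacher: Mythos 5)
Your proof is correct and follows essentially the same route as the paper: pull the bounded linear maps $e$ and $\langle\cdot,x_1\rangle_{X_1}$ inside the Bochner integrals, apply the identity \eqref{eq:iden} pointwise, and identify the resulting expression with the defining relation \eqref{eq:gtd}. You merely present the two implications in the opposite order and make explicit the (standard) fact that $X_1$ separates points of $X_1^*$, which the paper leaves implicit when passing from the tested identity to the equality in $X_1^*$.
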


	\begin{proof}
		\textbf{(i) $\boldsymbol{\Rightarrow}$ (ii)} Using the
                properties of the Bochner integral and \eqref{eq:iden}, we deduce for every $\varphi\in C_0^\infty(I)$ and $x_1\in X_1$ that
		\begin{align}
		\begin{split}
			\bigg\langle e\bigg(-\int_I{\bsx_0(t)\varphi^\prime(t)\,dt}\bigg),x_1\bigg\rangle_{X_1}&=
			-\int_I{\langle e(\bsx_0(t)),x_1\rangle_{X_1}\varphi^\prime(t)\,dt}\\&=
			-\int_I{(j_0(\bsx_0(t)),j_1x_1)_Y\varphi^\prime(t)\,dt}.
		\end{split}\label{eq:equal1}
		\end{align}
		Similarly, we have for every $\varphi\in C_0^\infty(I)$ and $x_1\in X_1$
		\begin{align}
			\int_I{\langle \bsx_1^*(t),x_1\rangle_{X_1}\varphi(t)\,dt}=
			\bigg\langle\int_I{\bsx_1^*(t)\varphi(t)\,dt},x_1\bigg\rangle_{X_1}.\label{eq:equal2}
		\end{align}
		Taking into account \eqref{eq:gtd}, we conclude from \eqref{eq:equal1} and \eqref{eq:equal2} that \eqref{eq:equal.a} holds, i.e., \textbf{(ii)}.\\[-3mm]
		
		\textbf{(ii) $\boldsymbol{\Rightarrow}$ (i)} We observe that \eqref{eq:equal1} and \eqref{eq:equal2} are still valid. Hence, connecting \eqref{eq:equal1} and \eqref{eq:equal2} by means of \eqref{eq:equal.a} yields \eqref{eq:gtd}.\hfill$\qed$
	\end{proof}
	
	By means of Proposition \ref{equal} and \cite[Theorem 2.1.1]{Dro01} we immediately conclude that the space $W_e^{1,p_0,p_1}(I,X_0,X_1,Y)$ forms a Banach space, if we equip it with the norm
	\begin{align*}
	\|\cdot\|_{W_e^{1,p_0,p_1}(I,X_0,X_1,Y)}:=\|\cdot\|_{L^{p_0}(I,X_0)}
	+\left\|\frac{d_e}{dt}\cdot\right\|_{L^{p_1}(I,X_1^*)}.
	\end{align*}
	\subsection{Variable exponent spaces and their basic properties}
	
	In this section we summarize all essential information
        concerning variable exponent Lebesgue and Sobolev spaces which
        will find use in the hereinafter analysis. These spaces has
        already been studied by Hudzik \cite{Hud80}, Musielak
        \cite{Mus83}, Kov\'{a}\v{c}ik, R\'{a}kosn\'{\i}k \cite{KR91},
        R\r{u}\v{z}i\v{c}ka \cite{Ru00} and many others. A detailed
        analysis, including all results of this section, can be found in \cite{KR91,DHHR11,CUF13}.
	\medskip

    Let $G\subseteq \setR^n$, $n\in \setN$, be a measurable set and $p:G\to\left[1,\infty\right)$ a measurable function.
    Then, $p$ will be called a \textbf{variable exponent} and we define  $\mathcal{P}(G)$ to be the set of all variable exponents.
    Furthermore, we define 
	$p^-:=\essinf_{x\in G}{p(x)}$, $p^+:=\esssup_{x\in G}{p(x)}$ and the set of \textbf{bounded variable exponents} $\mathcal{P}^\infty(G):=\{p\in \mathcal{P}(G)\fdg p^+<\infty\}$.
	
	For $p\in \mathcal{P}^\infty(G)$, 
	the \textbf{variable exponent Lebesgue space} $L^{p(\cdot)}(G)$ consists of all 
	measurable functions $f:G\to \setR$ for which the modular
	\begin{align*}
	\rho_{p(\cdot)}(f):=\int_{G}{\vert f(x)\vert^{p(x)}\,dx}
	\end{align*}
	is finite. The Luxembourg norm 
	\begin{align*}
	\|f\|_{L^{p(\cdot)}(G)}:=
	\inf\{\lambda>0\fdg \rho_{p(\cdot)}(\lambda^{-1}f)\leq 1\}
	\end{align*}
	turns $L^{p(\cdot)}(G)$ into a Banach space. 
	
	For $p\in \mathcal{P}^\infty(G)$ with $p^->1$, $p'\in
        \mathcal{P}^\infty(G)$ is defined via
        $p'(x):=\frac{p(x)}{p(x)-1}$ for almost~every~${x\in G}$. We
        make frequent use of H\"older's inequality
        \begin{align*}
          \int_{G}|f(x)|\,|g(x)|\,dx \le 2\, \|f\|_{p(\cdot)} \|g\|_{p'(\cdot)}
        \end{align*}
        valid for every $f \in L^{p(\cdot)}(G)$ and $g\in
        L^{p'(\cdot)}(G)$. Thus, the following product
	\begin{align}
		(f,g)_{L^{p(\cdot)}(G)}:=\int_{G}{f(x)g(x)\,dx},\label{prod}
	\end{align}
	 which we will make frequent use of, is well-defined for every $f\in L^{p'(\cdot)}(\Omega)$ and $g\in L^{p(\cdot)}(\Omega)$. The products $(\cdot,\cdot)_{L^{p(\cdot)}(G)^n}$ and $(\cdot,\cdot)_{L^{p(\cdot)}(G)^{n\times n}}$ are defined accordingly.
	 
 We have the following characterization of duality in $L^{p(\cdot)}(G)$.
	
	\begin{prop}[Characterization of $L^{p(\cdot)}(G)^*$]\label{riesz}
 	Let $p\in \mathcal{P}^\infty(G)$ with $p^->1$. Then,
               for every $f^* \in L^{p(\cdot)}(G)^*$ there exists an
                element $f\in L^{p'(\cdot)}(G)$, such that for every
                $g\in L^{p(\cdot)}(G)$
		\begin{align*}
			\langle f^*,g\rangle_{L^{p(\cdot)}(G)}=\int_{G}{f(x)g(x)\,dx}=: \langle Jf,g\rangle_{L^{p(\cdot)}(G)}
		\end{align*}
		and $\frac{1}{2}\|f\|_{L^{p'(\cdot)}(G)}\leq \|f^*\|_{L^{p(\cdot)}(G)^*}\leq 2\|f\|_{L^{p'(\cdot)}(G)}$, 
		i.e., the operator $J:L^{p'(\cdot)}(G)\to
                  L^{p(\cdot)}(G)^*:f \mapsto Jf$ is an isomorphism. Moreover, 
		$L^{p(\cdot)}(G)$ is reflexive.  
	\end{prop}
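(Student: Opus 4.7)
The plan is to verify each of the three claims (representation, norm equivalence, reflexivity) by constructing the map $J$ explicitly and combining Hölder's inequality with a Radon--Nikodym argument.

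First, I would define $\langle Jf,g\rangle_{L^{p(\cdot)}(G)}:=\int_G f(x)g(x)\,dx$ for $f\in L^{p'(\cdot)}(G)$ and $g\in L^{p(\cdot)}(G)$. The map $J$ is clearly linear, and Hölder's inequality as quoted in the excerpt gives immediately the upper estimate $\|Jf\|_{L^{p(\cdot)}(G)^*}\le 2\|f\|_{L^{p'(\cdot)}(G)}$, so in particular $J$ is bounded.

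Second, I would establish the complementary lower bound $\tfrac12\|f\|_{L^{p'(\cdot)}(G)}\le \|Jf\|_{L^{p(\cdot)}(G)^*}$ by exhibiting a near-optimal test function. For $f\neq 0$ set $\lambda:=\|f\|_{L^{p'(\cdot)}(G)}$ and consider $g:=\lambda^{1-p'(\cdot)}\,\mathrm{sgn}(f)\,|f|^{p'(\cdot)-1}$. Using the unit-ball property of the Luxembourg norm (namely $\|h\|_{L^{q(\cdot)}(G)}\le 1$ iff $\rho_{q(\cdot)}(h)\le 1$, which is valid because $q^+<\infty$) together with the pointwise identity $p(x)(p'(x)-1)=p'(x)$, a direct modular computation yields $\rho_{p(\cdot)}(g)=\rho_{p'(\cdot)}(f/\lambda)=1$, hence $\|g\|_{L^{p(\cdot)}(G)}\le 1$, and $\int_G f g\,dx=\lambda$; the factor $\tfrac12$ then arises from the norm-modular conversion. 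This also shows that $J$ is injective with closed range.

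The main obstacle, surjectivity of $J$, I would handle by a Radon--Nikodym argument. Given $f^*\in L^{p(\cdot)}(G)^*$, the assumption $p^+<\infty$ guarantees that $\chi_E\in L^{p(\cdot)}(G)$ for every measurable $E\subseteq G$ of finite Lebesgue measure and that $\|\chi_{E_n}\|_{L^{p(\cdot)}(G)}\to 0$ whenever $|E_n|\to 0$. Consequently, $\nu(E):=\langle f^*,\chi_E\rangle_{L^{p(\cdot)}(G)}$ defines a $\sigma$-finite signed measure on $G$ that is absolutely continuous with respect to Lebesgue measure, and the Radon--Nikodym theorem produces a locally integrable $f$ with $\nu(E)=\int_E f\,dx$. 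By linearity and density of simple functions in $L^{p(\cdot)}(G)$ (which again uses $p^+<\infty$), the identity $\langle f^*,g\rangle_{L^{p(\cdot)}(G)}=\int_G f g\,dx$ extends first to all bounded, compactly supported $g$. Testing $f^*$ against truncated versions of the extremal $g$ from the previous step on the sets $\{|f|\le n\}\cap B_n^d(0)$ and passing to the limit by monotone convergence in the modular yields $f\in L^{p'(\cdot)}(G)$ with $\|f\|_{L^{p'(\cdot)}(G)}\le 2\|f^*\|_{L^{p(\cdot)}(G)^*}$, and a final density argument gives $Jf=f^*$.

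Finally, reflexivity follows by applying the already-proven isomorphism $J$ both to the exponent $p(\cdot)$ and to its conjugate $p'(\cdot)$ (which lies in $\mathcal{P}^\infty(G)$ and satisfies $(p')^->1$ since $p^+<\infty$, and whose conjugate is again $p(\cdot)$). Unwinding the duality pairings shows that the composition of the two resulting isomorphisms coincides with the canonical embedding $L^{p(\cdot)}(G)\hookrightarrow L^{p(\cdot)}(G)^{**}$, which is therefore surjective. The hardest step is the surjectivity of $J$: although the Radon--Nikodym construction is standard, care is required in the passage from compactly supported bounded test functions to general $g\in L^{p(\cdot)}(G)$ and in controlling the Luxembourg norm of $f$ via its truncations, which is where the interplay between modular and norm is delicate.
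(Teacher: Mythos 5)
The paper does not prove Proposition~\ref{riesz}; it is stated as a known result with pointers to the literature (Kov\'a\v{c}ik--R\'akosn\'{\i}k, Diening--Harjulehto--H\"ast\"o--R\r{u}\v{z}i\v{c}ka, Cruz-Uribe--Fiorenza), so there is no internal proof to compare against. Your sketch reproduces the standard textbook argument from exactly those references: Hölder for the upper bound, an explicit extremal function $g=\lambda^{1-p'(\cdot)}\mathrm{sgn}(f)|f|^{p'(\cdot)-1}$ for the lower bound, a Radon--Nikodym construction plus truncation for surjectivity of $J$, and composition of the two isomorphisms (for $p(\cdot)$ and $p'(\cdot)$) for reflexivity. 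This is essentially correct. Two small points worth tightening: first, the attribution of the factor $\tfrac12$ to ``norm-modular conversion'' is off --- your extremal-function computation already shows $\rho_{p(\cdot)}(g)=1$ and hence $\|g\|_{L^{p(\cdot)}(G)}=1$ exactly (no factor lost), so you actually obtain the sharper bound $\|f\|_{L^{p'(\cdot)}(G)}\le\|Jf\|_{L^{p(\cdot)}(G)^*}$; the $\tfrac12$ in the statement merely reflects that the displayed norm equivalence is not claimed to be optimal. Second, the passage ``defines a $\sigma$-finite signed measure on $G$'' for possibly unbounded $G$ needs a word of care: $\nu$ is a priori defined only on finite-measure sets, so one should either exhaust $G$ by finite-measure subsets and patch the Radon--Nikodym derivatives together, or argue via the countable additivity of $\nu$ on each such subset; as written the claim is slightly hand-wavy but fixable in the standard way.
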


	Furthermore, we will use the following special notion of variable exponent Sobolev spaces, 
	which is a slight generalization of the notion in \cite{Hu11}.
	
	\begin{defn}
		Let $G\subseteq \setR^n$, $n\in \setN$, be open
		and $q,p\in \mathcal{P}^\infty(G)$. We define the \textbf{variable exponent Sobolev spaces}
		\begin{align*}
			\widehat{X}^{q(\cdot),p(\cdot)}_{\nb}(G)&:=\{f\in L^{q(\cdot)}(G)\fdg \nabla f\in L^{p(\cdot)}(G)^n\},\\
			\widehat{X}^{q(\cdot),p(\cdot)}_{\bfvarepsilon}(G)&:=\{\bff\in L^{q(\cdot)}(G)^n\fdg \bfvarepsilon(\bff)\in L^{p(\cdot)}(G,\mathbb{M}^{n\times n}_{\sym})\},
		\end{align*} 
		where the $\nabla f$ and $\bfvarepsilon(\bff)$ have to be understood in a distributional sense.
	\end{defn}
	
	\begin{prop}
		Let $G\subseteq \setR^n$, $n\in \setN$, be open
		and $q,p\in \mathcal{P}^\infty(G)$. Then, the norms
		\begin{align*}
			\|\cdot\|_{\widehat{X}^{q(\cdot),p(\cdot)}_{\nb}(G)}&:=\|\cdot\|_{L^{q(\cdot)}(G)}
			+\|\nb\cdot\|_{L^{p(\cdot)}(G)^n},\\
			\|\cdot\|_{\widehat{X}^{q(\cdot),p(\cdot)}_{\bfvarepsilon}(G)}&:=\|\cdot\|_{L^{q(\cdot)}(G)^n}
			+\|\bfvarepsilon(\cdot)\|_{L^{p(\cdot)}(G)^{n\times n}}
		\end{align*}
		turn $\widehat{X}^{q(\cdot),p(\cdot)}_{\nb}(G)$  and $\widehat{X}^{q(\cdot),p(\cdot)}_{\bfvarepsilon}(G)$, respectively, into Banach spaces. 
	\end{prop}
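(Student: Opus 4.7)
The proposition asserts two statements simultaneously (one for each space), but the argument is the same template: verify the norm axioms and verify completeness. The plan is to reduce everything to the already-established completeness of the variable exponent Lebesgue spaces $L^{q(\cdot)}(G)$ and $L^{p(\cdot)}(G)^{n}$ (respectively $L^{p(\cdot)}(G)^{n\times n}$) stated implicitly in Proposition \ref{riesz}, and then to transfer convergence to the distributional derivatives $\nabla$ and $\bfvarepsilon$.

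First, I would observe that the norm axioms (positivity, absolute homogeneity, triangle inequality) are immediate: both expressions are finite sums of norms on Banach spaces, composed with linear operators ($\nabla$ and $\bfvarepsilon$), and definiteness holds because $\|f\|_{L^{q(\cdot)}(G)}=0$ already forces $f\equiv 0$ almost everywhere. So the only substantive point is completeness.

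For completeness of $\widehat{X}^{q(\cdot),p(\cdot)}_{\nb}(G)$, I would take a Cauchy sequence $(f_k)_{k\in\setN}$ with respect to $\|\cdot\|_{\widehat{X}^{q(\cdot),p(\cdot)}_{\nb}(G)}$. By the definition of the norm, $(f_k)_{k\in\setN}$ is Cauchy in $L^{q(\cdot)}(G)$ and $(\nabla f_k)_{k\in\setN}$ is Cauchy in $L^{p(\cdot)}(G)^n$. Invoking completeness of the variable exponent Lebesgue spaces, there exist $f\in L^{q(\cdot)}(G)$ and $\bsg\in L^{p(\cdot)}(G)^n$ with $f_k\to f$ in $L^{q(\cdot)}(G)$ and $\nabla f_k\to \bsg$ in $L^{p(\cdot)}(G)^n$. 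It then remains to identify $\bsg=\nabla f$ in the distributional sense. For this, I would use the fact that convergence in $L^{q(\cdot)}(G)$ (respectively $L^{p(\cdot)}(G)^n$) implies convergence in $L^1(K)$ (respectively $L^1(K)^n$) for every compact $K\subseteq G$: indeed, on such a $K$, H\"older's inequality with the bounded function $\chi_K$ yields
\begin{align*}
\int_K |h(x)|\,dx \le 2\|h\|_{L^{p(\cdot)}(G)}\,\|\chi_K\|_{L^{p'(\cdot)}(G)},
\end{align*}
and the right-hand factor is finite since $p'\in \mathcal{P}^\infty(G)$ implies $\chi_K \in L^{p'(\cdot)}(G)$. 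Hence $f_k\to f$ and $\nabla f_k\to \bsg$ in $L^1_{\textup{loc}}(G)$, so in particular as distributions, and passing to the limit in $\int_G f_k\,\partial_i\varphi\,dx=-\int_G (\nabla f_k)_i\varphi\,dx$ for $\varphi\in C_0^\infty(G)$ gives $\bsg=\nabla f$. Therefore $f\in \widehat{X}^{q(\cdot),p(\cdot)}_{\nb}(G)$ and $f_k\to f$ in this space.

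The completeness of $\widehat{X}^{q(\cdot),p(\cdot)}_{\bfvarepsilon}(G)$ follows by an identical argument applied component-wise, replacing $\nabla$ by $\bfvarepsilon$, using that each entry $\bfvarepsilon(\bff)_{ij}=\tfrac12(\partial_j f_i+\partial_i f_j)$ is a linear distributional operator that is continuous under the weak identification provided by $L^1_{\textup{loc}}$-convergence. The only mildly delicate step in the whole argument is the local integrability claim, which is why I isolated it; once one has $L^{p(\cdot)}(G)\hookrightarrow L^1_{\textup{loc}}(G)$ via H\"older and the boundedness of $p$, the proof is a routine transcription of the classical Sobolev-space completeness proof.
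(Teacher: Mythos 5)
Your proof takes essentially the same route as the paper's: reduce to completeness of the underlying variable exponent Lebesgue spaces, extract limits of the function and of its weak (symmetric) gradient separately, and then identify the second limit with the distributional (symmetric) gradient of the first by passing to the limit against smooth compactly supported test functions. The paper does this in the $\bfvarepsilon$-case and declares the $\nabla$-case analogous, you do it the other way round; the paper passes to the limit directly in the identity $\int_G \bff_k\cdot\divo\bfX\,dx=-\int_G\bfvarepsilon(\bff_k):\bfX\,dx$, you phrase the same step via $L^1_{\textup{loc}}$-convergence, which is just a slightly more explicit way of saying the same thing. So the content matches.

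One small imprecision in your justification is worth flagging. You claim $\chi_K\in L^{p'(\cdot)}(G)$ ``since $p'\in\mathcal{P}^\infty(G)$,'' but the proposition only assumes $p\in\mathcal{P}^\infty(G)$, which allows $p^-=1$; in that case $p'$ attains the value $+\infty$ and hence $p'\notin\mathcal{P}^\infty(G)$ (the paper only defines $p'$ as a bounded exponent under the extra hypothesis $p^->1$). The conclusion you need is nevertheless true: a bounded function supported on a set of finite measure lies in $L^{s(\cdot)}(G)$ for \emph{any} exponent $s:G\to[1,\infty]$, since its modular is at most $|K|$, and the variable-exponent H\"older inequality also holds without the restriction $p^->1$. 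So the embedding $L^{p(\cdot)}(G)\hookrightarrow L^1_{\textup{loc}}(G)$ is valid, just not for the reason you gave; you should replace the appeal to ``$p'\in\mathcal{P}^\infty(G)$'' with the direct observation that $\chi_K$ is bounded with support of finite measure, or alternatively add the standing assumption $p^->1$ to the proposition (which is what the paper uses everywhere downstream anyway).
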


	\begin{proof}
		We only give a proof for $\widehat{X}^{q(\cdot),p(\cdot)}_{\bfvarepsilon}(G)$, since all arguments transfer to $\widehat{X}^{q(\cdot),p(\cdot)}_{\nb}(G)$.
		So, let $(\bff_k)_{k\in \setN}\subseteq \widehat{X}^{q(\cdot),p(\cdot)}_{\bfvarepsilon}(G)$ be a Cauchy sequence, i.e., both $(\bff_k)_{k\in \mathbb{N}}\subseteq L^{q(\cdot)}(G)^n$ and $(\bfvarepsilon(\bff_k))_{k\in \setN}\subseteq L^{p(\cdot)}(G,\mathbb{M}^{n\times n}_{\sym})$ are  Cauchy sequences. Thus, as $L^{q(\cdot)}(G)^n$ and $L^{p(\cdot)}(G,\mathbb{M}^{n\times n}_{\sym})$ are Banach spaces, we obtain elements $\bff \in L^{q(\cdot)}(G)^n$ and $\bscal{E}\in L^{p(\cdot)}(G,\mathbb{M}^{n\times n}_{\sym})$, such that $\bff_k\to \bff $ in $L^{q(\cdot)}(G)^n$ $(k\to \infty)$ and $\bfvarepsilon(\bff_k)\to \bscal{E}$ in $L^{p(\cdot)}(G,\mathbb{M}^{n\times n}_{\sym})$ $(k\to \infty)$. Hence, for every $\mathbf{X}\in C_0^\infty(G,\mathbb{M}^{n\times n}_{\sym})$ there holds
		\begin{align*}
			\int_{G}{\bff(x)\cdot\divo \bfX(x)\,dx}&=\lim_{k\to\infty}{\int_{G}{\bff_k(x)\cdot\divo \bfX(x)\,dx}}\\&=\lim_{k\to\infty}{-\int_{G}{\bfvarepsilon(\bff_k)(x):\bfX(x)\,dx}}=-\int_{G}{\bscal{E}(x):\bfX(x)\,dx},
		\end{align*}
		i.e., $\bff\in \widehat{X}^{q(\cdot),p(\cdot)}_{\bfvarepsilon}(G)$ with $\bfvarepsilon(\bff)=\bscal{E}$ in $L^{p(\cdot)}(G,\mathbb{M}^{n\times n}_{\sym})$ and $\bff_k\to \bff $ in $\widehat{X}^{q(\cdot),p(\cdot)}_{\bfvarepsilon}(G)$ $(k\to\infty)$. \hfill$\qed$
	\end{proof}
	
	\begin{defn}
		We define $X^{q(\cdot),p(\cdot)}_{\nb}(G)$ and $X^{q(\cdot),p(\cdot)}_{\bfvarepsilon}(G)$  to be  the closures of $C_0^\infty(G)$ and  $C_0^\infty(G)^n$  with respect to
		$\|\cdot\|_{\widehat{X}^{q(\cdot),p(\cdot)}_{\nb}(G)}$ and  $\|\cdot\|_{\widehat{X}^{q(\cdot),p(\cdot)}_{\bfvarepsilon}(G)}$, respectively. 
	\end{defn}
	
	Clearly, we have the embeddings ${\widehat{X}^{q(\cdot),p(\cdot)}_{\nb}(G)^n\!\embedding\! \widehat{X}^{q(\cdot),p(\cdot)}_{\bfvarepsilon}(G)}$  and ${X^{q(\cdot),p(\cdot)}_{\nb}(G)^n\!\embedding\! X^{q(\cdot),p(\cdot)}_{\bfvarepsilon}(G)}$. Conversely, we have ${\widehat{X}^{q(\cdot),p(\cdot)}_{\bfvarepsilon}(G)\not\embedding \widehat{X}^{q(\cdot),p(\cdot)}_{\nb}(G)^n}$  and ${X^{q(\cdot),p(\cdot)}_{\bfvarepsilon}(G)\not\embedding X^{q(\cdot),p(\cdot)}_{\nb}(G)^n}$ in general.
		However, there is a special modulus of continuity for variable exponents 
	that is sufficient for the boundedness of the Hardy-Littlewood maximal operator and a coincidence of the spaces $X^{q(\cdot),p(\cdot)}_{\nb}(G)^n$ and $ X^{q(\cdot),p(\cdot)}_{\bfvarepsilon}(G)$.

	We say that a bounded exponent $p\in \mathcal P^\infty (G)$ is \textbf{locally
	$\log$-Hölder continuous}, if there is a constant $c_1>0$, such that
	for all $x,y\in G$
	\begin{align*}
	\vert p(x)-p(y)\vert \leq \frac{c_1}{\log(e+1/\vert x-y\vert)}.
	\end{align*}
	We say that $p \in \mathcal P^\infty (G)$ satisfies the \textbf{$\log$-Hölder decay condition}, if there exist 
	constant $c_2>0$ and $p_\infty\in \setR$, such that for all $x\in G$
	\begin{align*}
	\vert p(x)-p_\infty\vert \leq\frac{c_2}{\log(e+1/\vert x\vert)}.
	\end{align*} 
	We say that $p$ is \textbf{globally $\log$-Hölder continuous} on $G$, if it is locally 
	log-Hölder continuous and satisfies the log-Hölder decay condition. 
	The maximum $c_{\log}(p):=\max\{c_1,c_2\}$ is  called the \textup{\textsf{\textbf{$\log$-Hölder constant}}} of $p$.
	Moreover, we denote by $\mathcal{P}^{\log}(G)$ the set of 
	globally $\log$-Hölder continuous
	functions on $G$. 
	
	One pleasant property of globally $\log$-Hölder continuous exponents is their 
	ability to admit extensions to the whole space $\setR^n$, with similar characteristics.
	 
	 \begin{prop}\label{2.1.1}
	 	Let $G\subsetneq \setR^n$, $n\in\setN$, be a domain and $p\in \mathcal{P}^{\log}(G)$. Then, there 
	 	exists an extension $\overline{p}\in \mathcal{P}^{\log}(\setR^n)$ with 
	 	$\overline{p}=p$ in $G$, $\overline{p}^-=p^-$ and $\overline{p}^+=p^+$.
	 \end{prop}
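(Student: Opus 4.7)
The plan is to construct $\bar p$ via a McShane-type extension adapted to the log-Hölder modulus. I begin by introducing the modulus $\omega:[0,\infty)\to[0,\infty)$ given by $\omega(0):=0$ and $\omega(t):=c_{\log}(p)/\log(e+1/t)$ for $t>0$. The key technical lemma is that $\omega$ is subadditive. To establish this I would show that $\omega$ is concave on $(0,\infty)$ via a direct calculation: differentiating twice, the sign of $\omega''$ reduces to the inequality $(2et+1)\log(e+1/t)\ge 2$, which is elementary (use $\log(e+1/t)\ge 1$ and distinguish whether $t\ge 1/(2e)$ or $t<1/(2e)$). Combined with $\omega(0)=0$ and monotonicity, concavity yields subadditivity: $\omega(s+t)\le\omega(s)+\omega(t)$ for all $s,t\ge 0$ by the standard chord argument.

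Next, define $\tilde p:\setR^n\to\setR$ by the McShane-type formula
\begin{align*}
\tilde p(x):=\inf_{y\in G}\{p(y)+\omega(|x-y|)\}.
\end{align*}
For $x\in G$, the choice $y=x$ yields $\tilde p(x)\le p(x)$, while the local log-Hölder continuity of $p$ gives $p(x)\le p(y)+\omega(|x-y|)$ for every $y\in G$, whence $\tilde p(x)\ge p(x)$, so $\tilde p|_G=p$. The bound $\tilde p\ge p^-$ is immediate from the infimum. For the log-Hölder continuity of $\tilde p$ on $\setR^n$, given $x_1,x_2\in\setR^n$ and $\varepsilon>0$, choose $y\in G$ with $p(y)+\omega(|x_2-y|)\le\tilde p(x_2)+\varepsilon$; then the triangle inequality together with the subadditivity of $\omega$ yields
\begin{align*}
\tilde p(x_1)\le p(y)+\omega(|x_2-y|)+\omega(|x_1-x_2|)\le\tilde p(x_2)+\omega(|x_1-x_2|)+\varepsilon.
\end{align*}
Letting $\varepsilon\to 0$ and exchanging the roles of $x_1$ and $x_2$ gives $|\tilde p(x_1)-\tilde p(x_2)|\le\omega(|x_1-x_2|)$. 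The log-Hölder decay of $\tilde p$ on $\setR^n$ follows by estimating $|\tilde p(x)-p_\infty|\le\omega(|x-y|)+|p(y)-p_\infty|$ for an appropriately chosen $y\in G$ and combining the boundedness of $\omega$ with the decay of $p$ on $G$, absorbing the contribution of $\omega$ into an enlarged constant $c_2'$.

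Finally, set $\bar p(x):=\min(p^+,\tilde p(x))$ for $x\in\setR^n$. Since $|\min(a,c)-\min(b,c)|\le|a-b|$, the truncation preserves log-Hölder continuity, so $\bar p\in\mathcal P^{\log}(\setR^n)$. By construction $\bar p|_G=p$ (because $p\le p^+$ on $G$), $\bar p^+=p^+$ (the truncation caps the supremum, which is already attained on $G$), and $\bar p^-=p^-$ (because $\bar p\ge p^-$ and $\bar p$ attains $p^-$ on $G$).

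The main technical obstacle I anticipate is the concavity argument for $\omega$: although the reduced inequality is elementary, the reduction itself requires careful second-derivative bookkeeping, and subadditivity is essential---without it the McShane extension would only satisfy a weaker quasi-subadditivity, and the log-Hölder constant would degrade in an uncontrolled way. A secondary concern is the log-Hölder decay step, where for points $x$ far from $G$ the quantity $\omega(|x-y|)$ is bounded but not small; the resulting decay constant for $\bar p$ must be enlarged accordingly, and one must verify that this enlargement is uniform in $x$.
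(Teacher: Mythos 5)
The paper does not actually prove this proposition; it is stated as a known result and attributed to the standard literature on variable exponent spaces (KR91, DHHR11, CUF13). Your McShane-type construction is precisely the standard proof found there (cf.\ DHHR11, Proposition~4.1.7), so your proposal fills in the cited result rather than taking a different route. The concavity computation for $\omega$ checks out: writing $f(t)=1/\log(e+1/t)$, one finds $f'(t)=\bigl[t(et+1)(\log(e+1/t))^2\bigr]^{-1}$, and the sign of $f''$ reduces to the nonnegativity of $\log(e+1/t)\bigl[(2et+1)\log(e+1/t)-2\bigr]$, which your two-case argument (split at $t=1/(2e)$, using $\log(3e)>2$ in the small-$t$ regime) settles correctly. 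Subadditivity via the chord argument, $\tilde p|_G=p$, the global local-log-Hölder bound $|\tilde p(x_1)-\tilde p(x_2)|\le\omega(|x_1-x_2|)$, and the truncation $\bar p=\min(p^+,\tilde p)$ all go through as written.

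The one place where the argument is stated too loosely is the log-Hölder decay step. You frame it as ``absorbing the contribution of $\omega$ into an enlarged constant $c_2'$,'' but the term $\omega(|x-y|)$ does \emph{not} tend to zero in the regime where the decay estimate is tight — it saturates at its limiting value — so merely enlarging $c_2$ is not enough. What is actually needed is to replace $p_\infty$ by a new reference value adapted to $\bar p$: with the paper's formulation $\log(e+1/|x|)$, the natural choice is $p_\infty^{\mathrm{new}}:=\bar p(0)$, after which the decay estimate $|\bar p(x)-\bar p(0)|\le\omega(|x|)$ is an immediate consequence of the global local-log-Hölder bound you already established and needs no separate absorption of $\omega$. (If one instead works with the more common formulation $\log(e+|x|)$, the appropriate choice is $p_\infty^{\mathrm{new}}:=\lim_{|x|\to\infty}\bar p(x)$, which again makes the decay follow from local log-Hölder continuity plus the rate at which $\omega(t)\to c_{\log}(p)$.) This is a cosmetic fix, not a structural gap: the extension you build does satisfy the decay condition, you just have to change $p_\infty$ rather than only $c_2$.
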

	 
	 The main reason for considering globally $\log$-Hölder continuous exponents is the boundedness
	 of the Hardy-Littlewood 
	 maximal operator.
	
	\begin{prop}\label{2.2}
		Let 
		$p\in \mathcal{P}^{\log}(\setR^n)$, $n\in \setN$, with $p^->1$. 
		Then, the Hardy-Littlewood 
		maximal operator $M_n:L^{p(\cdot)}(\setR^n)\to L^{p(\cdot)}(\setR^n)$, for every $f\in L^{p(\cdot)}(\setR^n)$
		given via
		\begin{align*}
		M_n(f)(x):=\sup_{r>0}{\fint_{B_r^n(x)}{\vert f(y)\vert\,dy}}:=\sup_{r>0}{\frac{1}{\vert B_r^n(x)\vert}\int_{B_r^n(x)}{\vert f(y)\vert\,dy}},
		\end{align*}
		is well-defined and bounded. In particular, there exists 
		a constant $c=c(c_{\log}(p) ,p^-,n)>0$, 
		such that for every $f\in L^{p(\cdot)}(\setR^n)$ it holds
		\begin{align*}
		\|M_n(f)\|_{L^{p(\cdot)}(\setR^n)}\leq c\|f\|_{L^{p(\cdot)}(\setR^n)}.
		\end{align*}
	\end{prop}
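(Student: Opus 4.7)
My plan is to reduce the norm bound to a modular estimate on the unit ball and then establish it via a pointwise inequality that controls $M_n f(x)$ in terms of a classical maximal function applied to $|f|^{p(\cdot)}$. By the homogeneity of the Luxembourg norm it suffices to exhibit a constant $C>0$, depending only on $n$, $p^-$ and $c_{\log}(p)$, such that $\rho_{p(\cdot)}(M_n f)\leq C$ whenever $\rho_{p(\cdot)}(f)\leq 1$. Working with non-negative $f$, I would split balls $B=B_r^n(x)$ into small ones ($r<1$) and large ones ($r\geq 1$) and exploit the two $\log$-Hölder hypotheses separately in the two regimes.

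The main technical step is a pointwise inequality of the form
\begin{align*}
(M_n f(x))^{p(x)} \;\leq\; C\,\bigl[M_n\bigl(|f|^{p(\cdot)/q}\bigr)(x)\bigr]^{q} + C\,h(x)
\end{align*}
for a suitably chosen $q\in (1,p^-]$ and some fixed $h\in L^1(\setR^n)\cap L^\infty(\setR^n)$ determined only by $p$ and $n$. To obtain this I would fix a ball $B\ni x$ and, using local $\log$-Hölder continuity, first prove the key modular control $|B|^{p(x)-p(y)}\leq C$ for $y\in B$ in the small-ball regime; combined with Jensen's inequality (applicable because $p(y)\geq p^->1$), this lets one absorb the varying exponent into an average of $|f|^{p(\cdot)}$ over $B$. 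A case distinction according to whether $\fint_B f\leq 1$ or not is what prevents subunity trouble when raising to the power $p(x)$. For $r\geq 1$, the $\log$-Hölder decay condition is invoked instead in order to compare $p(x)$ with $p_\infty$, generating an error term decaying like $(e+|x|)^{-np_\infty}$ which is both bounded and integrable and can therefore be absorbed into $h$.

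Once the pointwise inequality is in hand, integration over $\setR^n$ together with the classical strong-type $L^q(\setR^n)$ bound for the Hardy--Littlewood maximal operator, applied to $g:=|f|^{p(\cdot)/q}$ (which lies in $L^q(\setR^n)$ with $\|g\|_{L^q(\setR^n)}^q=\rho_{p(\cdot)}(f)\leq 1$), yields $\rho_{p(\cdot)}(M_n f)\leq C\,\rho_{p(\cdot)}(f)+C\,\|h\|_{L^1(\setR^n)}\leq C$, which is the required modular bound. The main obstacle is precisely the pointwise estimate: the $\log$-Hölder hypothesis is delicately calibrated so that $|B|^{|p(x)-p(y)|}$ stays bounded exactly in the regime where it must, and organising the case distinctions (small vs.\ large balls, small vs.\ large averages, $|x|$ bounded vs.\ $|x|$ large) while keeping all constants dependent only on $c_{\log}(p)$, $p^-$ and $n$ is what makes the proof, originally due to Diening, technically involved.
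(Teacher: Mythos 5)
Proposition~\ref{2.2} is stated in the paper without a proof of its own; the surrounding text refers the reader to the standard literature (in particular \cite{KR91,DHHR11,CUF13}), where the boundedness of $M_n$ on $L^{p(\cdot)}(\setR^n)$ is established. Your sketch is a correct summary of the Diening-style argument given there: reduction to a modular bound $\rho_{p(\cdot)}(M_n f)\leq C$ on $\{\rho_{p(\cdot)}(f)\leq 1\}$ by homogeneity, a key pointwise estimate of $(M_n f(x))^{p(x)}$ against $[M_n(|f|^{p(\cdot)/q})(x)]^q$ plus an $L^1\cap L^\infty$ error, the control $|B|^{|p(x)-p(y)|}\leq C$ from local $\log$-Hölder continuity for small balls, the decay condition producing the error term $h(x)\approx (e+|x|)^{-m}$ for large balls, the case analysis on the size of $\fint_B |f|$, and finally integration combined with the classical strong $L^q$ bound; the one technical refinement worth noting is that Jensen is applied with the constant exponent $p_B^-:=\inf_B p$ rather than the varying $p(y)$, and log-Hölder continuity is then used to pass from $p_B^-$ to $p(x)$, which is precisely where the case distinctions you already invoke enter.
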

	
	\begin{prop}\label{2.3}
		Let $p\in \mathcal{P}^{\log}(\setR^n)$, $n\in \setN$, with $p^->1$. 
		For the \textbf{standard 
		mollifier} ${\omega^n\in C_0^\infty(B_1^n(0))}$, given via $\omega^n(x):=c_{\omega^n}\exp\big(\frac{-1}{1-\vert x\vert^2}\big)$ if $x\in  B_1^n(0)$ and $\omega^n(x):=0$ if $x\in \setR^n$, where $c_{\omega^n}>0$ is chosen so that $\int_{\setR^n}{\omega^n(x)\,dx}=1$, we set $\omega_\vep^n(x)
		 :=\frac{1}{\vep^n}\omega^n(\frac{x}{\vep})$ for every 
		 $x\in \setR^n$ and $\vep>0$. 
		 Then, for every $f\in L^{p(\cdot)}(\setR^n)$ it holds:
		\begin{description}[{(iii)}]
			\item[(i)] There exists $K\!=\!K(\|\omega^n\|_{L^1(\setR^n)}, c_{\log}(p))\!>\!0$, such that $\sup_{\vep>0}{\|\omega_\vep^n\ast f\|_{L^{p(\cdot)}(\setR^n)}}
			\!\leq\! K\|f\|_{L^{p(\cdot)}(\setR^n)}$.
			\item[(ii)] 
			$\sup_{\vep> 0}{\vert \omega_\vep^n\ast f\vert}\leq 2M_n(f)$ 
			almost everywhere in $\setR^n$.
			\item[(iii)] 
			$ \omega_\vep^n\ast f\to f$ in $L^{p(\cdot)}(\setR^n)$ $(\vep\to 0)$.
		\end{description}
	\end{prop}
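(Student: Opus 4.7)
The plan is to establish the three claims in the order (ii), (i), (iii), since (i) is a direct consequence of (ii) combined with Proposition \ref{2.2}, and (iii) will rely crucially on the uniform bound in (i).

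For (ii), I would exploit that $\omega^n$ is radially symmetric and, because $r \mapsto -1/(1-r^2)$ is decreasing on $[0,1)$, radially non-increasing on $B_1^n(0)$. The layer-cake representation $\omega_\vep^n(y) = \int_0^{\infty}\chi_{\{\omega_\vep^n > s\}}(y)\,ds$, with sublevel sets that are balls centered at the origin, allows one to express the convolution $(\omega_\vep^n\ast f)(x)$ as a weighted superposition over concentric balls of the averages $\fint_{B_r^n(x)}|f(y)|\,dy$, each of which is dominated by $M_n(f)(x)$. Since the total weight equals $\|\omega_\vep^n\|_{L^1(\setR^n)}=1$, the claimed pointwise bound by $2M_n(f)$ follows uniformly in $\vep>0$ (the factor $2$ comfortably absorbs routine constant adjustments). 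Item (i) then follows by applying the variable-exponent norm to (ii) and invoking Proposition \ref{2.2}, which yields $\|\omega_\vep^n\ast f\|_{L^{p(\cdot)}(\setR^n)}\le 2c(c_{\log}(p),p^-,n)\|f\|_{L^{p(\cdot)}(\setR^n)}$ independently of $\vep$, so we may set $K:=2c(c_{\log}(p),p^-,n)$.

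For (iii), I would argue by density. For $M,R>0$ introduce the truncation $T_{M,R}f := f\,\chi_{\{|f|\le M\}\cap B_R^n(0)}$, which lies in $L^\infty(\setR^n)$ with compact support. Since $p^+<\infty$, the dominated convergence theorem applied to the modular $\rho_{p(\cdot)}$ yields $T_{M,R}f\to f$ in $L^{p(\cdot)}(\setR^n)$ as $M,R\to\infty$. For such a bounded compactly supported $g := T_{M,R}f$, classical properties of approximate identities furnish uniform convergence $\omega_\vep^n \ast g \to g$ on a slightly enlarged neighbourhood of $\textup{supp}(g)$, and on bounded sets the $L^\infty$-bound controls the $L^{p(\cdot)}$-norm (thanks to $p^+<\infty$), giving convergence in $L^{p(\cdot)}(\setR^n)$. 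Combining these ingredients with the triangle inequality
\begin{align*}
\|\omega_\vep^n \ast f - f\|_{L^{p(\cdot)}(\setR^n)} \le (K+1)\,\|f - T_{M,R}f\|_{L^{p(\cdot)}(\setR^n)} + \|\omega_\vep^n \ast T_{M,R}f - T_{M,R}f\|_{L^{p(\cdot)}(\setR^n)},
\end{align*}
where the first summand is controlled via (i), closes the argument upon first choosing $M,R$ sufficiently large and subsequently $\vep$ sufficiently small.

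The subtlest step is (ii): one must carefully verify that a radially non-increasing, non-negative kernel of unit $L^1$-mass produces a convolution pointwise dominated by (a multiple of) the Hardy-Littlewood maximal function, and that the bound is uniform in the scaling parameter $\vep$. The remaining steps reduce to standard bookkeeping with the uniform bound from (i), dominated convergence on the modular $\rho_{p(\cdot)}$, and the well-known behaviour of approximate identities on bounded compactly supported functions.
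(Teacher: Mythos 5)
The paper does not actually prove Proposition \ref{2.3}; it is recalled as a standard fact and the reader is referred to \cite{KR91,DHHR11,CUF13}. So I compare your argument only against itself.

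Your treatment of (ii) is correct and is the classical argument: the kernel is radially non-increasing, the layer-cake decomposition writes the convolution as a superposition of averages over balls centered at $x$, each dominated by $M_n(f)(x)$, and the total mass is $\|\omega_\vep^n\|_{L^1}=1$. This in fact gives the sharper bound $\sup_{\vep>0}|\omega_\vep^n\ast f|\le M_n(f)$; the factor $2$ in the statement is slack. Item (i) then follows immediately by applying the $L^{p(\cdot)}$-norm and invoking Proposition \ref{2.2}, as you say.

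There is a genuine gap in (iii). Your truncation $g:=T_{M,R}f=f\,\chi_{\{|f|\le M\}\cap B_R^n(0)}$ is bounded with compact support but in general not continuous, and for such $g$ the assertion that $\omega_\vep^n\ast g\to g$ \emph{uniformly} near $\operatorname{supp}(g)$ is false: take $g=\chi_{B_1^n(0)}$, for which $\omega_\vep^n\ast g$ is continuous while $g$ is not, so the convergence cannot be uniform across $\partial B_1^n(0)$. The conclusion you want (convergence in $L^{p(\cdot)}$) is still true, but needs a different justification. Two easy repairs: (a) since $g\in L^{p^+}(\setR^n)$, the classical result gives $\omega_\vep^n\ast g\to g$ in $L^{p^+}(\setR^n)$; all these functions are supported in the fixed compact set $K:=\operatorname{supp}(g)+\overline{B_1^n(0)}$ for $\vep\le 1$, and on the bounded set $K$ one has $L^{p^+}(K)\embedding L^{p(\cdot)}(K)$, so convergence passes to $L^{p(\cdot)}(\setR^n)$; or (b) replace the truncations by $C_c(\setR^n)$ functions, which are dense in $L^{p(\cdot)}(\setR^n)$ because $p^+<\infty$, and for continuous compactly supported $g$ the uniform convergence you invoke does hold. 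Either repair leaves the architecture of your proof of (iii) — uniform boundedness from (i), plus convergence on a dense class, plus the $(K+1)$-triangle estimate — intact.
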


	The following proposition expresses that under the assumption
        of log-Hölder continuity of the exponent, i.e., $p\in
        \mathcal{P}^{\log}(G)$ with $p^->1$, the spaces $X^{q(\cdot),p(\cdot)}_{\bfvarepsilon}(G)$ and $X^{q(\cdot),p(\cdot)}_{\nb}(G)^n$ coincide.
	
	\begin{prop}[Korn's inequality]\label{2.4}
		Let $G\subseteq \setR^n$, $n\in \setN$, be a bounded domain, 
		and $p\in\mathcal{P}^{\log}(G)$ with $p^->1$. Then, there exists a 
		constant $c>0$, such that for every ${\bff\in X^{q(\cdot),p(\cdot)}_{\bfvarepsilon}(G)}$ it holds $\bff\in X^{q(\cdot),p(\cdot)}_{\nb}(G)^n$ with
		\begin{align*}
		\|\nb \bff\|_{L^{p(\cdot)}(G)^{n\times n}}\leq 
		c\|\bfvarepsilon(\bff)\|_{L^{p(\cdot)}(G)^{n\times n}}.
		\end{align*}
	\end{prop}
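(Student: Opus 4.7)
The plan is to reduce the inequality to a pointwise estimate for smooth compactly supported functions and then invoke the boundedness of Calder\'on--Zygmund operators on variable exponent Lebesgue spaces. Since $X^{q(\cdot),p(\cdot)}_{\bfvarepsilon}(G)$ is defined as the closure of $C_0^\infty(G)^n$ with respect to $\|\cdot\|_{\widehat{X}^{q(\cdot),p(\cdot)}_{\bfvarepsilon}(G)}$, it suffices to prove, for a uniform constant $c>0$, that
\begin{align*}
\|\nb\boldsymbol{\varphi}\|_{L^{p(\cdot)}(G)^{n\times n}}\leq c\,\|\bfvarepsilon(\boldsymbol{\varphi})\|_{L^{p(\cdot)}(G)^{n\times n}}
\end{align*}
holds for every $\boldsymbol{\varphi}\in C_0^\infty(G)^n$. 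Indeed, applying this estimate to differences of an approximating sequence $(\boldsymbol{\varphi}_k)\subseteq C_0^\infty(G)^n$ with $\boldsymbol{\varphi}_k\to \bff$ in $\widehat{X}^{q(\cdot),p(\cdot)}_{\bfvarepsilon}(G)$ shows that $(\nb\boldsymbol{\varphi}_k)$ is Cauchy in $L^{p(\cdot)}(G)^{n\times n}$; identifying the distributional limit with $\nb\bff$ then yields $\bff\in X^{q(\cdot),p(\cdot)}_{\nb}(G)^n$ together with the asserted bound.

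For the test-function estimate I would extend $\boldsymbol{\varphi}$ by zero to all of $\setR^n$ and extend $p$ to some $\overline{p}\in \mathcal{P}^{\log}(\setR^n)$ with $\overline{p}^->1$ via Proposition~\ref{2.1.1}. The elementary identity
\begin{align*}
\pa_j\pa_k\varphi_l=\pa_j\varepsilon_{kl}(\boldsymbol{\varphi})+\pa_k\varepsilon_{jl}(\boldsymbol{\varphi})-\pa_l\varepsilon_{jk}(\boldsymbol{\varphi}),
\end{align*}
obtained by expanding $\varepsilon_{kl}(\boldsymbol{\varphi})=\frac{1}{2}(\pa_k\varphi_l+\pa_l\varphi_k)$, contracted in $j=k$ yields $-\Delta\varphi_l=-2\,\pa_k\varepsilon_{kl}(\boldsymbol{\varphi})+\pa_l\,\mathrm{tr}\,\bfvarepsilon(\boldsymbol{\varphi})$. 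Inverting $-\Delta$ on $\setR^n$ via Fourier analysis then produces the representation
\begin{align*}
\pa_m\varphi_l=-2\,R_mR_k\,\varepsilon_{kl}(\boldsymbol{\varphi})+R_mR_l\,\varepsilon_{kk}(\boldsymbol{\varphi})
\end{align*}
(with summation over $k$), where $R_j$ denotes the $j$-th Riesz transform on $\setR^n$. Thus each component of $\nb\boldsymbol{\varphi}$ is a finite linear combination of double Riesz transforms acting on components of $\bfvarepsilon(\boldsymbol{\varphi})$.

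The decisive analytic ingredient is then the boundedness $R_j:L^{\overline{p}(\cdot)}(\setR^n)\to L^{\overline{p}(\cdot)}(\setR^n)$. Since $\overline{p}\in\mathcal{P}^{\log}(\setR^n)$ with $\overline{p}^->1$, the Hardy--Littlewood maximal operator is bounded on $L^{\overline{p}(\cdot)}(\setR^n)$ by Proposition~\ref{2.2}, and the transfer of this bound to general Calder\'on--Zygmund operators follows from Rubio de Francia extrapolation adapted to variable exponents as developed in \cite{DHHR11,CUF13,DR03}. Combining this with the representation above produces the inequality on $\setR^n$, and restricting to $G$ (where $\boldsymbol{\varphi}$ is supported and $\overline{p}=p$) delivers the required estimate. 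The hard part is precisely this last input: while the maximal-function bound of Proposition~\ref{2.2} is the natural starting point, the passage from it to the $L^{p(\cdot)}$-boundedness of singular integrals is a genuinely non-trivial step and constitutes the real technical content of the variable-exponent Korn inequality of \cite{DR03}.
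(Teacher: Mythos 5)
The paper does not actually prove Proposition~\ref{2.4}: it is recalled without proof as a known result and attributed to \cite{DR03} (see also \cite{DHHR11,CUF13}), so there is no in-paper argument to compare against. Your sketch is nevertheless a correct and complete outline of the standard proof as it appears in that literature. The density reduction from $X^{q(\cdot),p(\cdot)}_{\bfvarepsilon}(G)$ to $C_0^\infty(G)^n$ is the right first step and is handled correctly, since the target space is defined precisely as that closure. The second-derivative identity $\pa_j\pa_k\varphi_l=\pa_j\varepsilon_{kl}(\boldsymbol{\varphi})+\pa_k\varepsilon_{jl}(\boldsymbol{\varphi})-\pa_l\varepsilon_{jk}(\boldsymbol{\varphi})$ is elementary and checks out, as does the Riesz-transform representation $\pa_m\varphi_l=-2R_mR_k\varepsilon_{kl}(\boldsymbol{\varphi})+R_mR_l\varepsilon_{kk}(\boldsymbol{\varphi})$ on the Fourier side, with $R_j$ having symbol $-i\xi_j/|\xi|$ and $(-\Delta)^{-1}\pa_m\pa_k$ having symbol $-\xi_m\xi_k/|\xi|^2 = R_mR_k$. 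Restricting the $\setR^n$-estimate back to $G$ is legitimate because both $\nb\boldsymbol{\varphi}$ and $\bfvarepsilon(\boldsymbol{\varphi})$ are supported in $G$, so their $L^{\overline{p}(\cdot)}(\setR^n)$-norms coincide with the corresponding $L^{p(\cdot)}(G)$-norms. You also correctly isolate the decisive analytic input — boundedness of the (iterated) Riesz transforms on $L^{\overline{p}(\cdot)}(\setR^n)$ for $\overline{p}\in\mathcal{P}^{\log}(\setR^n)$ with $\overline{p}^->1$ — and correctly flag that this is genuinely beyond the maximal-function bound of Proposition~\ref{2.2}: it requires either the original variable-exponent Calder\'on--Zygmund estimates of \cite{DR03} or the Rubio de Francia extrapolation framework developed in \cite{DHHR11,CUF13}. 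In short, your proposal reproduces, in both structure and content, what the cited reference actually does, while the present paper simply invokes the result.
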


	\section{Point-wise Poincar\'e inequality for the symmetric gradient 
		near the boundary of a bounded Lipschitz domain}
	\label{sec:3}
	
	The following inequality plays a central role in the later construction of 
	an appropriate smoothing operator. The method of incorporating only
	the symmetric gradient is based on ideas from \cite{CM19}. More precisely, 
	\cite[Inequality (2.12)]{CM19} is to weak for our approaches as the integral
	on right-hand side is with respect to the whole domain rather than a
	ball which is shrinking, when approaching the boundary of the domain, such as
	e.g. in \eqref{eq:pw}. In fact, this smallness near the boundary of the domain 
	is indispensable for the 
	viability of the smoothing method in Section \ref{sec:5}, as can be seen in \eqref{eq:5.4}.

	\begin{thm}\label{2.31}
		Let $\Omega\subseteq \setR^d$, $d\ge 2$, be a bounded 
		Lipschitz domain and $r(x):=\textup{dist}(x,\pa\Omega)$ for $x\in \Omega$.
		Then, there exist constants $c_0=c_0(d,\Omega),h_0=h_0(\Omega)>0$, such that 
		for every $\bfu\in X^{1,1}_{\bfvarepsilon}(G)$ and almost every 
		$x\in \Omega$ with $r(x)\leq h_0$ 
		there holds
		\begin{align}
		\vert \bfu(x)\vert \leq c_0\int_{B^d_{2r(x)}(x)\cap \Omega}
		{\frac{\vert \bfvarepsilon(\bfu)(y)\vert}{\vert x-y\vert^{d-1}}dy}.\label{eq:poin}
		\end{align}
		\end{thm}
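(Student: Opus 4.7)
The plan is to first prove the inequality for $\bfu\in C_0^\infty(\Omega)^d$ by a cone-averaging argument exploiting that the quadratic form $\omega^\top\nabla\bfu\,\omega$ depends only on $\bfvarepsilon(\bfu)$, and then to extend to $X^{1,1}_{\bfvarepsilon}(\Omega)$ by density. The key geometric input is the uniform exterior cone condition for bounded Lipschitz domains: there exist $\theta_0\in(0,\pi/2)$ and $h_0>0$ depending only on $\Omega$ such that for every $x\in\Omega$ with $r(x)\leq h_0$, upon choosing a closest boundary point $y_x\in\pa\Omega$ with $|x-y_x|=r(x)$ and setting $\nu_x:=(y_x-x)/r(x)$, the solid angle $\Sigma_x:=\{\omega\in S^{d-1}\fdg \omega\cdot\nu_x>\cos\theta_0\}$ has the property that the first exit time $T_x(\omega):=\inf\{t>0\fdg x+t\omega\notin\Omega\}$ satisfies $T_x(\omega)\leq 2r(x)$, while the segment $\{x+t\omega\fdg 0<t<T_x(\omega)\}$ stays in $\Omega$.

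For smooth $\bfu\in C_0^\infty(\Omega)^d$ and $\omega\in\Sigma_x$, I would apply the fundamental theorem of calculus to the scalar map $t\mapsto \omega\cdot\bfu(x+t\omega)$, which is smooth on $[0,T_x(\omega)]$ and vanishes at $t=T_x(\omega)$ since $\bfu$ has compact support in $\Omega$; because $\tfrac{d}{dt}[\omega\cdot\bfu(x+t\omega)]=\omega^\top\nabla\bfu(x+t\omega)\omega=\omega^\top\bfvarepsilon(\bfu)(x+t\omega)\omega$ (the skew part of $\nabla\bfu$ is killed by the symmetric tensor $\omega\omega^\top$), this yields $|\omega\cdot\bfu(x)|\leq\int_0^{T_x(\omega)}|\bfvarepsilon(\bfu)(x+t\omega)|\,dt$. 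To recover $|\bfu(x)|$ I would average over $\omega\in\Sigma_x$: since the matrix $\int_{\Sigma_x}\omega\otimes\omega\,d\sigma(\omega)$ is a rotation of a fixed cone-dependent matrix and therefore has least eigenvalue $c_1=c_1(\theta_0,d)>0$ uniformly in $x$, the bound $|\omega\cdot v|\geq(\omega\cdot v)^2/|v|$ gives $\int_{\Sigma_x}|\omega\cdot v|\,d\sigma(\omega)\geq c_1|v|$ for every $v\in\setR^d$. Applying this with $v=\bfu(x)$ and converting to polar coordinates $y=x+t\omega$, for which $d\sigma(\omega)\,dt=dy/|x-y|^{d-1}$, the conical region $K_x:=\{x+t\omega\fdg \omega\in\Sigma_x,\,0<t<T_x(\omega)\}$ is contained in $B^d_{2r(x)}(x)\cap\Omega$, producing \eqref{eq:poin} for smooth $\bfu$ with $c_0:=1/c_1$.

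For the extension to $\bfu\in X^{1,1}_{\bfvarepsilon}(\Omega)$, I would choose $\bfu_n\in C_0^\infty(\Omega)^d$ with $\bfu_n\to\bfu$ and $\bfvarepsilon(\bfu_n)\to\bfvarepsilon(\bfu)$ in $L^1$, then pass to a subsequence with pointwise a.e.\ convergence and $\sum_n\|\bfvarepsilon(\bfu_n)-\bfvarepsilon(\bfu)\|_{L^1(\Omega)}<\infty$, so that $g:=|\bfvarepsilon(\bfu)|+\sum_n|\bfvarepsilon(\bfu_n)-\bfvarepsilon(\bfu)|\in L^1(\Omega)$ dominates every $|\bfvarepsilon(\bfu_n)|$. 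Since $1/|x-\cdot|^{d-1}$ is $L^1_{\textup{loc}}$ and $\Omega$ is bounded, Fubini shows that the Riesz potential $x\mapsto\int_\Omega g(y)/|x-y|^{d-1}\,dy$ lies in $L^1(\Omega)$ and is therefore finite for a.e.\ $x$; dominated convergence then transfers \eqref{eq:poin} from $\bfu_n$ to $\bfu$ at a.e.\ $x$ with $r(x)\leq h_0$. The main obstacle is the first step, namely the quantitative verification of the exterior cone condition combining a lower bound on the aperture $\theta_0$ (needed to produce $c_1>0$) with the sharp exit-time bound $T_x(\omega)\leq 2r(x)$ (needed so that $K_x\subseteq B^d_{2r(x)}(x)$). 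This has to be established by working in a local Lipschitz chart around $y_x$ and choosing $\theta_0$ and $h_0$ relative to the Lipschitz constant of $\pa\Omega$ so that every ray in the solid cone of directions around $\nu_x$ meets $\pa\Omega$ within distance at most $2r(x)$.
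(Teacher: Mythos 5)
Your overall strategy matches the paper's: prove the inequality for $\bfu\in C_0^\infty(\Omega)^d$ by integrating $\tfrac{d}{dt}[\omega\cdot\bfu(x+t\omega)]=\omega^\top\bfvarepsilon(\bfu)(x+t\omega)\omega$ along rays and averaging over a solid angle, then extend by density. Your spanning step via the second-moment matrix $\int_{\Sigma_x}\omega\otimes\omega\,d\sigma(\omega)$ and the estimate $(\omega\cdot v)^2\le|v||\omega\cdot v|$ is in fact a tidier way of doing what the paper does with the explicit parametrizations $\bfPhi_1,\dots,\bfPhi_d$ and the Courant--Fischer min-max theorem, and your density step (subsequence, pointwise convergence, domination, Fubini for the Riesz potential) is correct.

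However, the geometric lemma — the one you yourself flag as the main obstacle — is false in the form you have stated it, and the cone axis is exactly where the paper does something different. You center $\Sigma_x$ on $\nu_x:=(y_x-x)/r(x)$, the unit vector towards the nearest boundary point, and claim that for a fixed aperture $\theta_0$ the exit time satisfies $T_x(\omega)\le 2r(x)$ for all $\omega\in\Sigma_x$. This fails at a re-entrant Lipschitz corner. Take $\Omega$ locally equal to $\{(s,t)\in\setR^2\fdg t<|s|\}$, so that $\Omega^c$ is the closed cone $\{t\ge|s|\}$ of half-aperture $\pi/4$. For $x=-r(\sin\gamma,\cos\gamma)$ with $|\gamma|<\pi/4$, the nearest boundary point is the origin, $r(x)=r$, and $\nu_x=(\sin\gamma,\cos\gamma)$ sweeps the entire open normal cone at that corner as $\gamma$ varies. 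Pick $\gamma$ close to $\pi/4$ and $\omega=(\sin\beta,\cos\beta)$ with $\beta$ slightly larger than $\pi/4$, so $\omega\in\Sigma_x$ for any fixed $\theta_0$; then the ray $x+t\omega$ never meets the graph $t=|s|$ at all — solving $-r\cos\gamma+t\cos\beta=|-r\sin\gamma+t\sin\beta|$ one checks that neither sign case admits a solution $t>0$ once $\gamma<\pi/4<\beta$ — so $T_x(\omega)$ is unbounded relative to $r(x)$, and $x+2r(x)\omega$ is still inside $\Omega$, where $\bfu$ need not vanish. Shrinking $\theta_0$ or $h_0$ does not help, because $\gamma$ may approach $\pi/4$, and the configuration is scale-invariant. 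The paper avoids this by never using $\nu_x$ as the cone axis: it locates the point $s_{x,\overline x}$ where $\partial B^d_{2r(x)}(x)$ meets the \emph{exterior cone axis} $\xi_{\overline x}$ through $\overline x$, and shows that a hyperspherical cap $\mathcal{K}(x,\overline x)\subseteq\partial B^d_{2r(x)}(x)\cap\Omega^c$ of fixed relative size, centered at $s_{x,\overline x}$, sits uniformly inside $\Omega^c$; that is the correct center for the averaging cone. Your argument is salvaged if you replace $\nu_x$ by the unit vector from $x$ towards $s_{x,\overline x}$ (equivalently, integrate rays from $x$ to the cap), since your second-moment inequality is insensitive to the cone axis — but as written the key geometric verification cannot go through.
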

	
	\begin{proof}
		We split the proof into three main steps:
		
		\textbf{Step 1:} 
		To begin with, we will show that for a bounded Lipschitz domain in $\setR^d$
		there exist constants  $h_0=h_0(\Omega),c_1=c_1(d,\Omega)>0$, 
		such that 
		for every $x\in\Omega$ with $r(x)\leq h_0$ the 
		intersection $\pa B^d_{2r(x)}(x)\cap \Omega^c$ 
		contains a hyperspherical cap of $\pa B^d_{2r(x)}(x)$ 
		with surface area greater or equal than $c_1(2r(x))^{d-1}$.
		
		To this end, we use that a bounded 
		Lipschitz domain $\Omega$  satisfies a uniform exterior cone property 
		(cf.~\cite[Theorem 1.2.2.2]{Gri85}), i.e., there exist a height $h>0$ and
		an opening angle $\theta\in\left(0,\frac{\pi}{2}\right)$, such that for every 
		$x\in\pa \Omega$ there exists some axis $\xi_x\in\mathbb{S}^{d-1}$ 
		with $x+\mathcal{C}(\xi_x,\theta,h)\subseteq \Omega^c$, where 
		$\mathcal{C}(\xi_x,\theta,h):=\{y\in\setR^d\fdg 
		y\cdot\xi_x>\vert y\vert \cos(\theta),y_d<h\}$
		is an open cone. 
		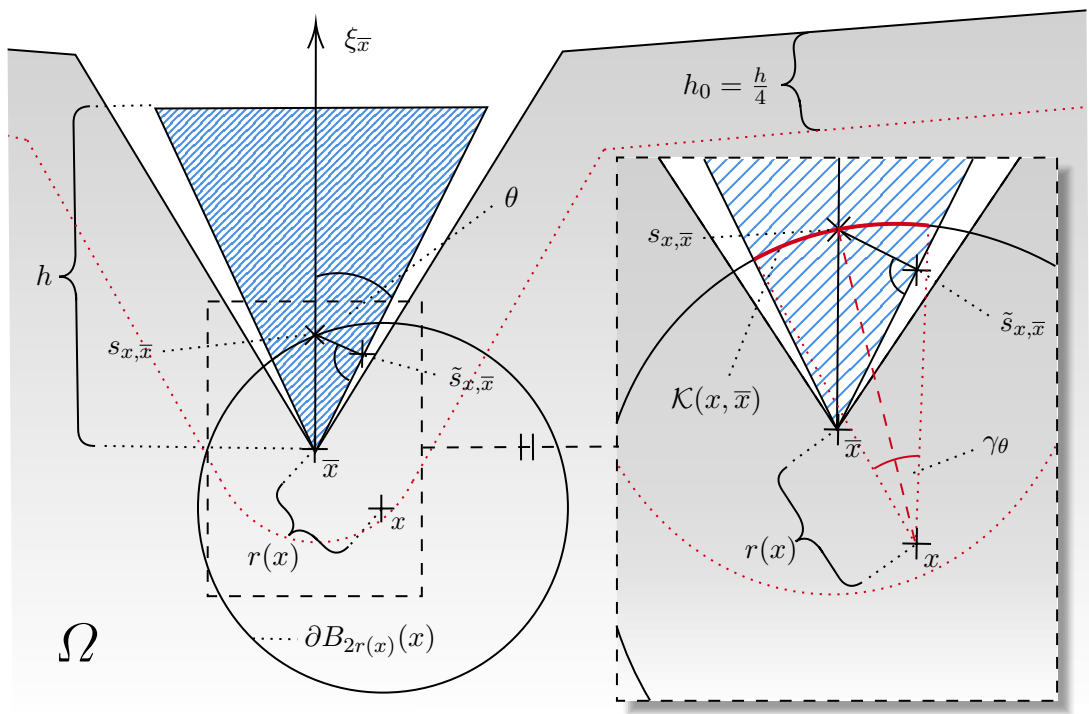
\begin{figure}[hbt!]
			\centering
			
			\tikzset {_10bcscmrx/.code = {\pgfsetadditionalshadetransform{ \pgftransformshift{\pgfpoint{0 bp } { 0 bp }  }  \pgftransformrotate{-90 }  \pgftransformscale{2 }  }}}
			\pgfdeclarehorizontalshading{_cmbmqkbv4}{150bp}{rgb(0bp)=(0.82,0.82,0.82);
				rgb(37.5bp)=(0.82,0.82,0.82);
				rgb(44.01786101715905bp)=(0.86,0.86,0.86);
				rgb(47.76786314589637bp)=(0.89,0.89,0.89);
				rgb(62.5bp)=(1,1,1);
				rgb(100bp)=(1,1,1)}
			
			
			\tikzset{
				pattern size/.store in=\mcSize, 
				pattern size = 5pt,
				pattern thickness/.store in=\mcThickness, 
				pattern thickness = 0.3pt,
				pattern radius/.store in=\mcRadius, 
				pattern radius = 1pt}
			\makeatletter
			\pgfutil@ifundefined{pgf@pattern@name@_9vu7l91ds}{
				\pgfdeclarepatternformonly[\mcThickness,\mcSize]{_9vu7l91ds}
				{\pgfqpoint{0pt}{0pt}}
				{\pgfpoint{\mcSize+\mcThickness}{\mcSize+\mcThickness}}
				{\pgfpoint{\mcSize}{\mcSize}}
				{
					\pgfsetcolor{\tikz@pattern@color}
					\pgfsetlinewidth{\mcThickness}
					\pgfpathmoveto{\pgfqpoint{0pt}{0pt}}
					\pgfpathlineto{\pgfpoint{\mcSize+\mcThickness}{\mcSize+\mcThickness}}
					\pgfusepath{stroke}
			}}
			\makeatother
			
			
			\tikzset {_7wbes2ceh/.code = {\pgfsetadditionalshadetransform{ \pgftransformshift{\pgfpoint{0 bp } { 0 bp }  }  \pgftransformrotate{-90 }  \pgftransformscale{2 }  }}}
			\pgfdeclarehorizontalshading{_6wtnomvnx}{150bp}{rgb(0bp)=(0.82,0.82,0.82);
				rgb(37.5bp)=(0.82,0.82,0.82);
				rgb(52.14285752602986bp)=(0.86,0.86,0.86);
				rgb(56.24860256910324bp)=(0.89,0.89,0.89);
				rgb(62.5bp)=(1,1,1);
				rgb(100bp)=(1,1,1)}
			
			
			\tikzset{
				pattern size/.store in=\mcSize, 
				pattern size = 5pt,
				pattern thickness/.store in=\mcThickness, 
				pattern thickness = 0.3pt,
				pattern radius/.store in=\mcRadius, 
				pattern radius = 1pt}
			\makeatletter
			\pgfutil@ifundefined{pgf@pattern@name@_sfjk5nbj7}{
				\pgfdeclarepatternformonly[\mcThickness,\mcSize]{_sfjk5nbj7}
				{\pgfqpoint{0pt}{0pt}}
				{\pgfpoint{\mcSize+\mcThickness}{\mcSize+\mcThickness}}
				{\pgfpoint{\mcSize}{\mcSize}}
				{
					\pgfsetcolor{\tikz@pattern@color}
					\pgfsetlinewidth{\mcThickness}
					\pgfpathmoveto{\pgfqpoint{0pt}{0pt}}
					\pgfpathlineto{\pgfpoint{\mcSize+\mcThickness}{\mcSize+\mcThickness}}
					\pgfusepath{stroke}
			}}
			\makeatother
			\tikzset{every picture/.style={line width=0.75pt}} 
			
			\begin{tikzpicture}[x=0.9pt,y=0.9pt,yscale=-1,xscale=1]
			
			\path  [shading=_cmbmqkbv4,_10bcscmrx] (561.89,1.79) -- (561.89,298.79) -- (102.49,300.54) -- (102.49,293.54) -- (100.78,18.26) -- (132.89,20.79) -- (233.69,187.93) -- (337.72,19.41) -- cycle ; 
			\draw   (561.89,1.79) -- (561.89,298.79) -- (102.49,300.54) -- (102.49,293.54) -- (100.78,18.26) -- (132.89,20.79) -- (233.69,187.93) -- (337.72,19.41) -- cycle ; 
			
			\draw [color={rgb, 255:red, 255; green, 255; blue, 255 }  ,draw opacity=1 ][line width=3.75]    (561.5,296) -- (102.49,300.54) ;
			\draw  [pattern=_9vu7l91ds,pattern size=2.8499999999999996pt,pattern thickness=0.75pt,pattern radius=0pt, pattern color={rgb, 255:red, 74; green, 144; blue, 226}] (233.69,187.93) -- (166.53,43.4) -- (305.98,42.87) -- cycle ;
			\draw   (228,186.64) -- (238.96,186.64)(233.48,180.84) -- (233.48,192.45) ;
			\draw   (256,211.64) -- (266.96,211.64)(261.48,205.84) -- (261.48,217.45) ;
			\draw  [dash pattern={on 0.84pt off 2.51pt}]  (233.5,186.54) -- (219.98,201.15) ;
			\draw  [dash pattern={on 0.84pt off 2.51pt}]  (248.63,226.63) -- (262.17,211.33) ;
			\draw   (184.42,211.33) .. controls (184.42,168.39) and (219.23,133.59) .. (262.17,133.59) .. controls (305.11,133.59) and (339.91,168.39) .. (339.91,211.33) .. controls (339.91,254.27) and (305.11,289.08) .. (262.17,289.08) .. controls (219.23,289.08) and (184.42,254.27) .. (184.42,211.33) -- cycle ;
			\draw    (233.69,187.93) -- (233.96,9.17) ;
			\draw [shift={(233.96,7.17)}, rotate = 450.09] [color={rgb, 255:red, 0; green, 0; blue, 0 }  ][line width=0.75]    (10.93,-3.29) .. controls (6.95,-1.4) and (3.31,-0.3) .. (0,0) .. controls (3.31,0.3) and (6.95,1.4) .. (10.93,3.29)   ;
			\draw  [dash pattern={on 0.84pt off 2.51pt}]  (141.47,42.5) -- (166.33,42) ;
			\draw  [dash pattern={on 0.84pt off 2.51pt}]  (145.17,185.33) -- (233.5,186.54) ;
			\draw [color={rgb, 255:red, 255; green, 255; blue, 255 }  ,draw opacity=1 ][line width=3.75]    (102.29,299.15) -- (561.7,297.4) ;
			\draw   (139,42.45) .. controls (134.33,42.52) and (132.04,44.89) .. (132.11,49.56) -- (132.93,104) .. controls (133.04,110.67) and (130.76,114.04) .. (126.09,114.11) .. controls (130.76,114.04) and (133.14,117.33) .. (133.24,124)(133.19,121) -- (134.06,178.44) .. controls (134.13,183.11) and (136.5,185.4) .. (141.17,185.33) ;
			\draw  [draw opacity=0] (234.19,114.08) .. controls (245.71,110.38) and (258.37,114.05) .. (266.15,123.21) -- (243.31,142.66) -- cycle ; \draw   (234.19,114.08) .. controls (245.71,110.38) and (258.37,114.05) .. (266.15,123.21) ;
			\draw  [dash pattern={on 0.84pt off 2.51pt}]  (248,129.82) -- (309.39,83.46) ;
			\draw   (229.59,134.79) -- (237.38,142.5)(237.57,134.52) -- (229.4,142.77) ;
			\draw  [draw opacity=0] (248.05,157.69) .. controls (247.85,157.61) and (247.65,157.52) .. (247.46,157.42) .. controls (242.34,154.83) and (240.32,148.52) .. (242.95,143.32) .. controls (243.06,143.09) and (243.18,142.87) .. (243.31,142.66) -- (252.21,148) -- cycle ; \draw   (248.05,157.69) .. controls (247.85,157.61) and (247.65,157.52) .. (247.46,157.42) .. controls (242.34,154.83) and (240.32,148.52) .. (242.95,143.32) .. controls (243.06,143.09) and (243.18,142.87) .. (243.31,142.66) ;
			\draw  [color={rgb, 255:red, 0; green, 0; blue, 0 }  ][line width=1.5] [line join = round][line cap = round] (247.28,149.57) .. controls (247.28,149.57) and (247.28,149.57) .. (247.28,149.57) ;
			\draw   (248,146.55) -- (258.96,146.74)(253.58,140.84) -- (253.38,152.45) ;
			\draw  [color={rgb, 255:red, 0; green, 0; blue, 0 }  ,draw opacity=1 ] (441,11.45) .. controls (436.37,12) and (434.32,14.59) .. (434.86,19.22) -- (435.28,22.79) .. controls (436.06,29.41) and (434.13,32.99) .. (429.5,33.54) .. controls (434.13,32.99) and (436.84,36.03) .. (437.62,42.65)(437.27,39.67) -- (438.04,46.22) .. controls (438.58,50.85) and (441.17,52.9) .. (445.81,52.36) ;
			\draw  [draw opacity=0][fill={rgb, 255:red, 255; green, 255; blue, 255 }  ,fill opacity=1 ][dash pattern={on 4.5pt off 4.5pt}][blur shadow={shadow xshift=4.5pt,shadow yshift=-4.5pt, shadow blur radius=2.25pt, shadow blur steps=4 ,shadow opacity=38}] (360.5,64) -- (545.49,64) -- (545.49,292.52) -- (360.5,292.52) -- cycle ;
			\path  [shading=_6wtnomvnx,_7wbes2ceh] (378,64) -- (452.99,178.26) -- (529.78,64.15) -- (545.49,64) -- (545.49,292.52) -- (360.5,292.52) -- (360.5,64) -- cycle ; 
			\draw  [dash pattern={on 4.5pt off 4.5pt}] (378,64) -- (452.99,178.26) -- (529.78,64.15) -- (545.49,64) -- (545.49,292.52) -- (360.5,292.52) -- (360.5,64) -- cycle ; 
			
			\draw    (378,64) -- (452.99,178.26) -- (529.78,64.15) ;
			\draw [color={rgb, 255:red, 0; green, 0; blue, 0 }  ,draw opacity=1 ][pattern=_sfjk5nbj7,pattern size=6pt,pattern thickness=0.75pt,pattern radius=0pt, pattern color={rgb, 255:red, 74; green, 144; blue, 226}]   (397.12,64.68) -- (452.99,178.26) -- (510.12,64.68) ;
			\draw  [dash pattern={on 4.5pt off 4.5pt}]  (397.39,64.08) -- (510.39,64.08) ;
			\draw   (447.46,178.34) -- (459.51,178.55)(453.6,171.84) -- (453.37,185.05) ;
			\draw   (480.46,111.34) -- (492.51,111.55)(486.6,104.84) -- (486.37,118.05) ;
			\draw  [draw opacity=0] (360.3,175.89) .. controls (367.93,153.35) and (381.8,133.02) .. (401.63,117.67) .. controls (443.19,85.5) and (499.39,84.19) .. (544.78,109.67) -- (486.08,226.78) -- cycle ; \draw   (360.3,175.89) .. controls (367.93,153.35) and (381.8,133.02) .. (401.63,117.67) .. controls (443.19,85.5) and (499.39,84.19) .. (544.78,109.67) ;
			\draw    (453.89,64.08) -- (452.99,178.26) ;
			\draw  [draw opacity=0] (374.44,292.38) .. controls (368.39,282.27) and (363.69,271.71) .. (360.32,260.95) -- (482.96,231.08) -- cycle ; \draw   (374.44,292.38) .. controls (368.39,282.27) and (363.69,271.71) .. (360.32,260.95) ;
			\draw   (448.71,89.51) -- (458.84,98.77)(458.51,88.96) -- (449.04,99.32) ;
			\draw    (453.44,94.32) -- (487.17,111.33) ;
			\draw  [draw opacity=0] (481.56,121.47) .. controls (481.36,121.38) and (481.16,121.29) .. (480.97,121.2) .. controls (475.85,118.61) and (473.83,112.3) .. (476.45,107.09) .. controls (476.53,106.94) and (476.62,106.79) .. (476.7,106.64) -- (485.72,111.78) -- cycle ; \draw   (481.56,121.47) .. controls (481.36,121.38) and (481.16,121.29) .. (480.97,121.2) .. controls (475.85,118.61) and (473.83,112.3) .. (476.45,107.09) .. controls (476.53,106.94) and (476.62,106.79) .. (476.7,106.64) ;
			\draw  [color={rgb, 255:red, 0; green, 0; blue, 0 }  ][line width=1.5] [line join = round][line cap = round] (481.28,113.57) .. controls (481.28,113.57) and (481.28,113.57) .. (481.28,113.57) ;
			\draw  [draw opacity=0][line width=1.5]  (418.37,106.76) .. controls (440.95,94.56) and (466.39,90.01) .. (491.47,92.62) -- (486.08,226.78) -- cycle ; \draw  [color={rgb, 255:red, 208; green, 2; blue, 27 }  ,draw opacity=1 ][line width=1.5]  (418.37,106.76) .. controls (440.95,94.56) and (466.39,90.01) .. (491.47,92.62) ;
			\draw [color={rgb, 255:red, 208; green, 2; blue, 27 }  ,draw opacity=1 ] [dash pattern={on 0.84pt off 2.51pt}]  (491.47,92.62) -- (486.08,226.78) ;
			\draw [color={rgb, 255:red, 208; green, 2; blue, 27 }  ,draw opacity=1 ] [dash pattern={on 0.84pt off 2.51pt}]  (418.37,106.76) -- (486.08,226.78) ;
			\draw  [draw opacity=0] (468.63,194.88) .. controls (474.01,191.5) and (480.52,189.52) .. (487.54,189.52) .. controls (487.55,189.52) and (487.56,189.52) .. (487.57,189.52) -- (487.54,220.26) -- cycle ; \draw  [color={rgb, 255:red, 208; green, 2; blue, 27 }  ,draw opacity=1 ] (468.63,194.88) .. controls (474.01,191.5) and (480.52,189.52) .. (487.54,189.52) .. controls (487.55,189.52) and (487.56,189.52) .. (487.57,189.52) ;
			\draw [color={rgb, 255:red, 0; green, 0; blue, 0 }  ,draw opacity=1 ] [dash pattern={on 0.84pt off 2.51pt}]  (510.27,187.04) -- (484.48,196.93) ;
			\draw  [dash pattern={on 0.84pt off 2.51pt}]  (434.47,191.69) -- (452.99,178.26) ;
			\draw  [dash pattern={on 0.84pt off 2.51pt}]  (466.47,240.69) -- (486.08,226.78) ;
			\draw   (431.98,194.15) .. controls (428.06,196.69) and (427.37,199.92) .. (429.91,203.83) -- (436.07,213.32) .. controls (439.7,218.91) and (439.56,222.97) .. (435.64,225.52) .. controls (439.56,222.97) and (443.33,224.5) .. (446.96,230.09)(445.33,227.58) -- (453.79,240.62) .. controls (456.33,244.54) and (459.56,245.23) .. (463.48,242.69) ;
			\draw [color={rgb, 255:red, 208; green, 2; blue, 27 }  ,draw opacity=1 ] [dash pattern={on 4.5pt off 4.5pt}]  (453.44,94.32) -- (486.08,226.78) ;
			\draw  [dash pattern={on 4.5pt off 4.5pt}]  (378,64) -- (397.39,64.08) ;
			\draw  [dash pattern={on 4.5pt off 4.5pt}]  (510.39,64.08) -- (529.78,64.15) ;
			\draw [color={rgb, 255:red, 255; green, 255; blue, 255 }  ,draw opacity=1 ][line width=5.25]    (560.72,-4.59) -- (562.86,304.28) ;
			\draw [color={rgb, 255:red, 208; green, 2; blue, 27 }  ,draw opacity=1 ] [dash pattern={on 0.84pt off 2.51pt}]  (100.55,53.95) -- (112.66,56.67) ;
			\draw [color={rgb, 255:red, 208; green, 2; blue, 27 }  ,draw opacity=1 ] [dash pattern={on 0.84pt off 2.51pt}]  (112.66,56.67) -- (193.99,199.79) ;
			\draw [color={rgb, 255:red, 208; green, 2; blue, 27 }  ,draw opacity=1 ] [dash pattern={on 0.84pt off 2.51pt}]  (355.07,60.67) -- (559.07,42.67) ;
			\draw [color={rgb, 255:red, 208; green, 2; blue, 27 }  ,draw opacity=1 ] [dash pattern={on 0.84pt off 2.51pt}]  (355.07,60.67) -- (274.66,200.21) ;
			\draw [color={rgb, 255:red, 255; green, 255; blue, 255 }  ,draw opacity=1 ][line width=5.25]    (101.28,-8.33) -- (103.66,305.69) ;
			\draw  [draw opacity=0][dash pattern={on 0.84pt off 2.51pt}] (274.66,200.21) .. controls (267.2,215.31) and (252.23,225.69) .. (234.88,225.86) .. controls (217.06,226.03) and (201.54,215.4) .. (193.99,199.79) -- (234.41,178.38) -- cycle ; \draw  [color={rgb, 255:red, 208; green, 2; blue, 27 }  ,draw opacity=1 ][dash pattern={on 0.84pt off 2.51pt}] (274.66,200.21) .. controls (267.2,215.31) and (252.23,225.69) .. (234.88,225.86) .. controls (217.06,226.03) and (201.54,215.4) .. (193.99,199.79) ;
			\draw   (219.98,201.15) .. controls (216.75,204.52) and (216.82,207.81) .. (220.19,211.04) -- (220.19,211.04) .. controls (225,215.65) and (225.8,219.64) .. (222.57,223.01) .. controls (225.8,219.64) and (229.82,220.26) .. (234.64,224.87)(232.47,222.8) -- (237.74,227.85) .. controls (241.11,231.08) and (244.41,231) .. (247.64,227.63) ;
			\draw  [dash pattern={on 4.5pt off 4.5pt}] (188.5,124.55) -- (278.5,124.55) -- (278.5,248.52) -- (188.5,248.52) -- cycle ;
			\draw  [dash pattern={on 4.5pt off 4.5pt}]  (279,186) -- (361.36,185.58) ;
			\draw [shift={(320.18,185.79)}, rotate = 539.71] [color={rgb, 255:red, 0; green, 0; blue, 0 }  ][line width=0.75]    (0,5.59) -- (0,-5.59)(-5.03,5.59) -- (-5.03,-5.59)   ;
			\draw [color={rgb, 255:red, 0; green, 0; blue, 0 }  ,draw opacity=1 ] [dash pattern={on 0.84pt off 2.51pt}]  (516.81,130.52) -- (487.17,111.33) ;
			\draw [color={rgb, 255:red, 0; green, 0; blue, 0 }  ,draw opacity=1 ] [dash pattern={on 0.84pt off 2.51pt}]  (453.44,94.32) -- (394.59,98) ;
			\draw [color={rgb, 255:red, 0; green, 0; blue, 0 }  ,draw opacity=1 ] [dash pattern={on 0.84pt off 2.51pt}]  (428.59,103) -- (407.59,156) ;
			\draw [color={rgb, 255:red, 0; green, 0; blue, 0 }  ,draw opacity=1 ] [dash pattern={on 0.84pt off 2.51pt}]  (284.17,155.33) -- (253.17,147) ;
			\draw [color={rgb, 255:red, 0; green, 0; blue, 0 }  ,draw opacity=1 ] [dash pattern={on 0.84pt off 2.51pt}]  (233.44,138.32) -- (167.27,145.81) ;
			\draw   (480.46,226.34) -- (492.51,226.55)(486.6,219.84) -- (486.37,233.05) ;
			\draw    (233.44,138.32) -- (253.17,147) ;
			\draw  [draw opacity=0][dash pattern={on 0.84pt off 2.51pt}] (544.84,185.52) .. controls (525.59,222.58) and (491.19,247.4) .. (451.73,247.79) .. controls (414.44,248.15) and (381.25,226.61) .. (360.85,193.31) -- (450.44,116.41) -- cycle ; \draw  [color={rgb, 255:red, 208; green, 2; blue, 27 }  ,draw opacity=1 ][dash pattern={on 0.84pt off 2.51pt}] (544.84,185.52) .. controls (525.59,222.58) and (491.19,247.4) .. (451.73,247.79) .. controls (414.44,248.15) and (381.25,226.61) .. (360.85,193.31) ;
			\draw [color={rgb, 255:red, 0; green, 0; blue, 0 }  ,draw opacity=1 ] [dash pattern={on 0.84pt off 2.51pt}]  (207.44,266.32) -- (226.17,266.67) ;
			
			\draw (264.17,214.33) node [anchor=north west][inner sep=0.75pt]    {$x$};
			\draw (235.5,189.54) node [anchor=north west][inner sep=0.75pt]  [font=\large]   {$\overline{x}$};
			\draw (204,225) node [anchor=north west][inner sep=0.75pt]  [font=\large]  {$r( x)$};
			\draw (244.77,7.77) node [anchor=north west][inner sep=0.75pt]  [font=\large]   {$\xi _{\overline{x}}$};
			\draw (115,108) node [anchor=north west][inner sep=0.75pt] [font=\large]    {$h$};
			\draw (312,75) node [anchor=north west][inner sep=0.75pt]   [font=\large]  {$\theta $};
			\draw (288.5,151) node [anchor=north west][inner sep=0.75pt]   [font=\large]  {$\tilde{s}_{x,\overline{x}}$};
			\draw (145,140) node [anchor=north west][inner sep=0.75pt]   [font=\large]  {$s_{x,\overline{x}}$};
			\draw  [draw opacity=0]  (134, 271) circle [x radius= 19.85, y radius= 19.85]   ;
			\draw (123,259) node [anchor=north west][inner sep=0.75pt]  [font=\huge]  {$\Omega $};
			\draw (387,25) node [anchor=north west][inner sep=0.75pt]  [font=\large,color={rgb, 255:red, 0; green, 0; blue, 0 }  ,opacity=1 ]  {$h_{0} =\frac{h}{4}$};
			\draw (488.08,229.78) node [anchor=north west][inner sep=0.75pt]   [font=\large]  {$x$};
			\draw (454.99,181.26) node [anchor=north west][inner sep=0.75pt]   [font=\large]  {$\overline{x}$};
			\draw (372,93) node [anchor=north west][inner sep=0.75pt]   [font=\large]  {$s_{x,\overline{x}}$};
			\draw (520.17,128) node [anchor=north west][inner sep=0.75pt]   [font=\large]  {$\tilde{s}_{x,\overline{x}}$};
			\draw (513,180) node [anchor=north west][inner sep=0.75pt]  [font=\large,color={rgb, 255:red, 0; green, 0; blue, 0 }  ,opacity=1 ]  {$\gamma _{\theta }$};
			\draw (413,222) node [anchor=north west][inner sep=0.75pt]   [font=\large]  {$r( x)$};
			\draw (382,159) node [anchor=north west][inner sep=0.75pt]   [font=\large]  {$\mathcal{K}( x,\overline{x})$};
			\draw (228,259) node [anchor=north west][inner sep=0.75pt]   [font=\large]  {$\partial B_{2r( x)}( x)$};
			\end{tikzpicture}
			\caption{Sketch of the construction in two dimensions.\\[3mm]}
		\end{figure}
	We set $h_0:=h_0(\Omega):=\frac{h}{4}$ and fix an arbitrary $x\in\Omega$ 
		with $r(x)\leq h_0$. As $\pa\Omega$ 
		is compact there exists $\overline{x}\in\pa\Omega$, such that
		\begin{align}
		\vert x-\overline{x}\vert =r(x).\label{eq:pp1}
		\end{align}
		The uniform exterior cone property yields some 
		$\xi_{\overline{x}}\in \mathbb{S}^{d-1}$ 
		such that $\overline{x}+\mathcal{C}(\xi_{\overline{x}},\theta,h)\subseteq \Omega^c$. 
		As $\Omega^c$ is closed, we even have
		\begin{align*}
		\overline{x}+\overline{\mathcal{C}(\xi_{\overline{x}},\theta,h)}\subseteq \Omega^c.
		\end{align*} 
		Due to $\vert x-\overline{x}\vert =r(x)\leq h_0=\frac{h}{4}$, there holds
		\begin{align*}
		\vert (\overline{x}+h\xi_{\overline{x}})-x\vert\ge h-\vert x-\overline{x}\vert\ge h-h_0
		= \frac{3h}{4}\ge 3h_0\ge 3r(x),
		\end{align*}
		i.e., $\overline{x}+h\xi_{\overline{x}}\notin B^d_{2r(x)}(x)$. 
		But since $\overline{x}\in B^d_{2r(x)}(x)$ (cf.~\eqref{eq:pp1}) 
		the sphere $\pa B^d_{2r(x)}(x)$ intersects the axis 
		$\overline{x}+\left(0,h\right)\xi_{\overline{x}}$ non-trivially, i.e., there exists some
		\begin{align}
		s_{x,\overline{x}}\in\pa B^d_{2r(x)}(x)\cap(\overline{x}+\left(0,h\right)\xi_{\overline{x}}).\label{eq:pp4}
		\end{align}
		Next, we want to show that $\pa B^d_{2r(x)}(x)\cap(\overline{x}+\mathcal{C}(\xi_{\overline{x}},\theta,h))$ 
		contains a hyperspherical cap of the desired size. 
		Due to $s_{x,\overline{x}}\in \pa B^d_{2r(x)}(x)$ 
		(cf.~\eqref{eq:pp4}) and $\overline{x}\in \pa B^d_{r(x)}(x)$ 
		(cf.~\eqref{eq:pp1}), there holds
		\begin{align}
		r(x)\leq \vert s_{x,\overline{x}}-\overline{x}\vert
		\leq \vert s_{x,\overline{x}}-x\vert+\vert x-\overline{x}\vert= 
		3	r(x).\label{eq:pp5}
		\end{align}
		Denote by $\mathcal{L}(\xi_{\overline{x}},\theta,h):=
		\{y\in \setR^d\fdg y\cdot\xi_{\overline{x}}=\vert y\vert\cos(\theta), y_d\leq h\}$ 
		the lateral surface of $\mathcal{C}(\xi_{\overline{x}},\theta,h)$.
		From the compactness of $\overline{x}+\mathcal{L}(\xi_{\overline{x}},\theta,h)$ 
		we obtain some point 
		$\tilde{s}_{x,\overline{x}}\in \overline{x}+\mathcal{L}(\xi_{\overline{x}},\theta,h)$, 
		such that 
		\begin{align*}
		\vert s_{x,\overline{x}}-\tilde{s}_{x,\overline{x}}\vert=
		\text{dist}(s_{x,\overline{x}},\overline{x}+\mathcal{L}(\xi_{\overline{x}},\theta,h)).
		\end{align*}
		Then, considering the right-angled triangle 
		$\Updelta (s_{x,\overline{x}},\overline{x},\tilde{s}_{x,\overline{x}})$, 
		with $\angle (s_{x,\overline{x}}-\tilde{s}_{x,\overline{x}}, \tilde{s}_{x,\overline{x}}-\overline{x})=\frac{\pi}{2}$
		and $\angle (s_{x,\overline{x}}-\overline{x}, \tilde{s}_{x,\overline{x}}-\overline{x})=\theta $, 
		also using \eqref{eq:pp5}, one readily sees that
		\begin{align}
		\begin{split}
		\text{dist}(s_{x,\overline{x}},\overline{x}+\mathcal{L}(\xi_{\overline{x}},\theta,h))&=\vert s_{x,\overline{x}}-\tilde{s}_{x,\overline{x}}\vert
		=\sin(\angle (s_{x,\overline{x}}-\overline{x}, \tilde{s}_{x,\overline{x}}-\overline{x}))
		\vert s_{x,\overline{x}}-\overline{x}\vert
		\\&=\sin(\theta)\vert s_{x,\overline{x}}-\overline{x}\vert
		\ge \sin(\theta)r(x).
		\end{split}\label{eq:pp7}
		\end{align}
		Denote by $\mathcal{B}(\xi_{\overline{x}},\theta,h):=
		\{y\in \setR^d\fdg y\cdot\xi_{\overline{x}}\ge\vert y\vert\cos(\theta), y_d= h\}$ 
		the basis of the cone $\mathcal{C}(\xi_{\overline{x}},\theta,h)$.  
		Due to \eqref{eq:pp5}, there holds
		\begin{align}
		\begin{split}
		\text{dist}(s_{x,\overline{x}},\overline{x}+\mathcal{B}(\xi_{\overline{x}},\theta,h))
		\ge h- \vert s_{x,\overline{x}}-\overline{x}\vert\ge h-3r(x)\ge h-\frac{3h}{4}=\frac{h}{4}\ge r(x).
		\end{split}\label{eq:pp7b}
		\end{align}
		Since $\pa\mathcal{C}(\xi_{\overline{x}},\theta,h)
		=\mathcal{L}(\xi_{\overline{x}},\theta,h)\cup \mathcal{B}(\xi_{\overline{x}},\theta,h)$, 
		\eqref{eq:pp7} and \eqref{eq:pp7b} imply
		\begin{align}
		\text{dist}(s_{x,\overline{x}},\overline{x}+\pa\mathcal{C}(\xi_{\overline{x}},\theta,h))
		\ge \sin(\theta)r(x).\label{eq:pp7c}
		\end{align}
		Denote by $\mathcal{K}(x,\overline{x}):=\{y\in \pa B^d_{2r(x)}(x)
		\fdg\angle(y-x,s_{x,\overline{x}}-x)\leq \gamma_\theta\}$ the hyperspherical~cap~of~$\pa B^d_{2r(x)}(x)$ 
		with centre $s_{x,\overline{x}}$ and radius $\nu_\theta2r(x)>0$, 
		where $\nu_\theta:=\sin(\gamma_\theta)>0$ and 
		$\gamma_\theta=2\arcsin(\frac{\sin(\theta)}{4})>0$ 
		denotes the opening angle. Then, according to \cite{Li11} there exists a 
		constant $c_1:=c_1(d,\gamma_\theta)>0$, depending only 
		on the dimension $d\in \setN$ and the opening angle $\gamma_\theta>0$, such that
		\begin{align}
		\mathscr{H}^{d-1}(\mathcal{K}(x,\overline{x}))=c_1(2r(x))^{d-1}.\label{eq:pp9}
		\end{align}
		It remains to show that 
		$\mathcal{K}(x,\overline{x})\subseteq \pa B^d_{2r(x)}(x)\cap \Omega^c$. 
		Since for every $y\in K_{x,\overline{x}}\subseteq \pa B_{2r(x)}(x)$ 
		the triangle $\Updelta(y,s_{x,\overline{x}},x)$ is isosceles, as 
		$\vert s_{x,\overline{x}}-x\vert =\vert y-x\vert =2r(x)$, 
		also using \eqref{eq:pp7c}, we infer that
		\begin{align*}
		\begin{split}
		\vert s_{x,\overline{x}}-y\vert &= 4r(x)
		\sin\left(\frac{\angle(s_{x,\overline{x}}-x,y-x)}{2}\right)
		\leq 4r(x)\sin\left(\frac{\gamma_{\theta}}{2}\right)
		\\&=r(x)\sin(\theta)
		\leq\text{dist}(s_{x,\overline{x}},\overline{x}
		+\pa\mathcal{C}(\xi_{\overline{x}},\theta,h)),
		\end{split}
		\end{align*}
		i.e., $\mathcal{K}(x,\overline{x})
		\subseteq \overline{B^d_{\text{dist}(s_{x,\overline{x}},\overline{x}
				+\pa\mathcal{C}(\xi_{\overline{x}},\theta,h))}(s_{x,\overline{x}})}
		\subseteq \overline{x}+\overline{\mathcal{C}(\xi_{\overline{x}},\theta,h)}
		\subseteq \Omega^c$ (cf.~\eqref{eq:pp1}) and 
		\begin{align}
		\mathcal{K}(x,\overline{x})\subseteq B^d_{2r(x)}(x)\cap\Omega^c.\label{eq:pp8}
		\end{align}
		
		\quad\textbf{Step 2:} This step shows that it suffices to treat the case ${\bfu\in C_0^{\infty}(\Omega)^d}$. In fact, assume
		that there exist constants $c_0=c_0(d,\Omega),h_0=h_0(\Omega)>0$, such that 
		for every $\bfu\in C_0^{\infty}(\Omega)^d$ and 
		$x\in \Omega$ with $r(x)\leq h_0$ 
		there holds
		\begin{align}
		\vert \bfu(x)\vert \leq c_0\int_{B^d_{2r(x)}(x)\cap \Omega}
		{\frac{\vert \bfvarepsilon(\bfu)(y)\vert}{\vert x-y\vert^{d-1}}\,dy}.\label{eq:approx.0}
		\end{align}
		Next, let ${\bfu\in X^{1,1}_{\bfvarepsilon}(\Omega)}$. By definition there exists a sequence $(\bfu_n)_{n\in \mathbb{N}}\subseteq  C_0^\infty(\Omega)^d$, 
		such that ${\bfu_n\to \bfu}$ in $X^{1,1}_{\bfvarepsilon}(\Omega)$ $(n\to\infty)$. 
		Without loss of generality we may assume that  
		$\bfu_n\to \bfu$ $(n\to\infty)$  almost everywhere in $\Omega$. On the other hand, defining for almost every $x\in \Omega$ the measurable functions $f_n(x):=\int_{B^d_{2r(x)}(x)\cap \Omega}
		{\frac{\vert \bfvarepsilon(\bfu_n)(y)\vert}{\vert x-y\vert^{d-1}}\,dy}$, $n\in \mathbb{N}$,  and $f(x):=\int_{B^d_{2r(x)}(x)\cap \Omega}
		{\frac{\vert \bfvarepsilon(\bfu)(y)\vert}{\vert x-y\vert^{d-1}}\,dy}$, we deduce by the weak-type $L^1$-estimate for the Riesz potential operator (cf.~\cite[Thm. 5.1.1]{St70}), that there exists a constant $c>0$, not depending on $n\in \mathbb{N}$, such that
		for every $\lambda >0$ there holds
		\begin{align*}
			\vert \{\vert f_n-f\vert>\lambda\}\vert \leq \frac{c}{\lambda}\|\bfvarepsilon(\bfu_n)-\bfvarepsilon(\bfu_n)\|_{L^1(\Omega)^{d\times d}}\overset{n\to\infty}{\to} 0,
		\end{align*}
		i.e., $ f_n\to f$ $(n\to \infty)$ in measure in $\Omega$. Thus, since $\vert \Omega\vert<\infty$, there exists a not relabelled subsequence, such that
		$f_n\to f$ $(n\to \infty)$ almost everywhere in $\Omega$. Putting everything together, since the sequence $(\bfu_n)_{n\in \mathbb{N}}\subseteq  C_0^\infty(\Omega)^d$ satisfies \eqref{eq:approx.0} for every $x\in \Omega$ with $r(x)\leq h_0$  and $n\in \mathbb{N}$, we conclude by passing for $n\to\infty$ in \eqref{eq:approx.0}, that $\bfu\in X^{1,1}_{\bfvarepsilon}(\Omega)$ satisfies \eqref{eq:approx.0} for almost every  $x\in \Omega$ with $r(x)\leq h_0$.\\[-3mm]

		\quad\textbf{Step 3:} Now we prove \eqref{eq:poin}. Due to step \textbf{2}, it suffices~to~treat~${\bfu\in C_0^\infty(\Omega)^d}$. 
		We fix $x\in \Omega$ with 
		$r(x)\leq h_0$ and consider functions 
		$\bfPhi_i=(\Phi_i^1,...,\Phi_i^d)^\top:\setR^{d-1}\to
                \mathbb{S}^{d-1}\subseteq \setR^d$, $i=1,...,d$, for every $\eta=(\eta_1,...,\eta_{d-1})^\top\in \setR^{d-1}$ and $j=1,...,d$
		defined as 
		\begin{align*}
		\Phi_d^j(\eta):=\begin{cases}
		\frac{1}{\sqrt{\vert\eta\vert^2+1}}&\text{ for }j=d\\
		\frac{\eta_j}{\sqrt{\vert\eta\vert^2+1}}&\text{ else }
		\end{cases}\qquad\text{ and }\qquad\Phi_i^j(\eta):=\begin{cases}\frac{-\eta_j}{\sqrt{\vert\eta\vert^2+1}}&\text{ for }j=i\\
		\frac{1}{\sqrt{\vert\eta\vert^2+1}}&\text{ for }j=d\\
		\frac{\eta_j}{\sqrt{\vert\eta\vert^2+1}}&\text{ else }
		\end{cases}.
		\end{align*}
		\begin{figure}[hbt!]
			\centering
			\tikzset{every picture/.style={line width=0.75pt}} 
			
			\begin{tikzpicture}[x=0.75pt,y=0.75pt,yscale=-1,xscale=1]
			
			\draw  [draw opacity=0][dash pattern={on 4.5pt off 4.5pt}] (55.92,201.98) .. controls (55.91,201.3) and (55.9,200.62) .. (55.9,199.94) .. controls (55.49,119.83) and (122.39,54.55) .. (205.33,54.13) .. controls (288.27,53.71) and (355.83,118.31) .. (356.23,198.42) .. controls (356.24,199.34) and (356.23,200.25) .. (356.22,201.17) -- (206.06,199.18) -- cycle ; \draw  [color={rgb, 255:red, 74; green, 144; blue, 226 }  ,draw opacity=1 ][dash pattern={on 4.5pt off 4.5pt}] (55.92,201.98) .. controls (55.91,201.3) and (55.9,200.62) .. (55.9,199.94) .. controls (55.49,119.83) and (122.39,54.55) .. (205.33,54.13) .. controls (288.27,53.71) and (355.83,118.31) .. (356.23,198.42) .. controls (356.24,199.34) and (356.23,200.25) .. (356.22,201.17) ;
			\draw    (205,204.37) -- (99.79,99.45) ;
			\draw [shift={(98.37,98.03)}, rotate = 404.91999999999996] [color={rgb, 255:red, 0; green, 0; blue, 0 }  ][line width=0.75]    (10.93,-3.29) .. controls (6.95,-1.4) and (3.31,-0.3) .. (0,0) .. controls (3.31,0.3) and (6.95,1.4) .. (10.93,3.29)   ;
			\draw    (324.87,78.99) .. controls (292.82,47.5) and (247.43,32.64) .. (212.37,31.94) ;
			\draw [shift={(210.77,31.92)}, rotate = 360.44] [color={rgb, 255:red, 0; green, 0; blue, 0 }  ][line width=0.75]    (10.93,-3.29) .. controls (6.95,-1.4) and (3.31,-0.3) .. (0,0) .. controls (3.31,0.3) and (6.95,1.4) .. (10.93,3.29)   ;
			\draw [shift={(324.87,78.99)}, rotate = 404.49] [color={rgb, 255:red, 0; green, 0; blue, 0 }  ][line width=0.75]    (0,5.59) -- (0,-5.59)   ;
			\draw [line width=0.75]    (206,204.37) -- (311.96,98.45) ;
			\draw [shift={(313.37,97.03)}, rotate = 495.01] [color={rgb, 255:red, 0; green, 0; blue, 0 }  ][line width=0.75]    (10.93,-3.29) .. controls (6.95,-1.4) and (3.31,-0.3) .. (0,0) .. controls (3.31,0.3) and (6.95,1.4) .. (10.93,3.29)   ;
			\draw [line width=0.75]  [dash pattern={on 0.84pt off 2.51pt}]  (98.63,98.07) -- (22.63,24.07) ;
			\draw    (80.36,85.34) .. controls (52.85,113.26) and (32.97,162.82) .. (32.63,198.45) ;
			\draw [shift={(32.63,200.07)}, rotate = 269.57] [color={rgb, 255:red, 0; green, 0; blue, 0 }  ][line width=0.75]    (10.93,-3.29) .. controls (6.95,-1.4) and (3.31,-0.3) .. (0,0) .. controls (3.31,0.3) and (6.95,1.4) .. (10.93,3.29)   ;
			\draw [shift={(80.36,85.34)}, rotate = 314.58000000000004] [color={rgb, 255:red, 0; green, 0; blue, 0 }  ][line width=0.75]    (0,5.59) -- (0,-5.59)   ;
			\draw    (85.87,79.99) .. controls (120.17,46.67) and (168.6,32.84) .. (199.01,32.02) ;
			\draw [shift={(200.85,31.98)}, rotate = 539.4200000000001] [color={rgb, 255:red, 0; green, 0; blue, 0 }  ][line width=0.75]    (10.93,-3.29) .. controls (6.95,-1.4) and (3.31,-0.3) .. (0,0) .. controls (3.31,0.3) and (6.95,1.4) .. (10.93,3.29)   ;
			\draw [shift={(85.87,79.99)}, rotate = 495.83] [color={rgb, 255:red, 0; green, 0; blue, 0 }  ][line width=0.75]    (0,5.59) -- (0,-5.59)   ;
			\draw  (184.5,204.24) -- (392.73,204.24)(205.32,14.04) -- (205.32,225.37) (385.73,199.24) -- (392.73,204.24) -- (385.73,209.24) (200.32,21.04) -- (205.32,14.04) -- (210.32,21.04) (280.32,199.24) -- (280.32,209.24)(355.32,199.24) -- (355.32,209.24)(200.32,129.24) -- (210.32,129.24)(200.32,54.24) -- (210.32,54.24) ;
			\draw   ;
			\draw  (226.5,204.22) -- (17.73,204.22)(205.62,14.04) -- (205.62,225.35) (24.73,199.22) -- (17.73,204.22) -- (24.73,209.22) (210.62,21.04) -- (205.62,14.04) -- (200.62,21.04) (130.62,199.22) -- (130.62,209.22)(55.62,199.22) -- (55.62,209.22)(210.62,129.22) -- (200.62,129.22)(210.62,54.22) -- (200.62,54.22) ;
			\draw   ;
			\draw    (331.01,85.31) .. controls (366.25,119.14) and (376.13,164.24) .. (377.38,199.03) ;
			\draw [shift={(377.43,200.61)}, rotate = 268.36] [color={rgb, 255:red, 0; green, 0; blue, 0 }  ][line width=0.75]    (10.93,-3.29) .. controls (6.95,-1.4) and (3.31,-0.3) .. (0,0) .. controls (3.31,0.3) and (6.95,1.4) .. (10.93,3.29)   ;
			\draw [shift={(331.01,85.31)}, rotate = 223.82999999999998] [color={rgb, 255:red, 0; green, 0; blue, 0 }  ][line width=0.75]    (0,5.59) -- (0,-5.59)   ;
			\draw [line width=0.75]  [dash pattern={on 0.84pt off 2.51pt}]  (312.87,97.76) -- (384.72,24.94) ;
			
			\draw (249,91.37) node [anchor=north west][inner sep=0.75pt]  [font=\scriptsize]  {$\Phi_1( \eta =1)$};
			\draw (112,91.37) node [anchor=north west][inner sep=0.75pt]  [font=\scriptsize]  {$\Phi_{2}( \eta =1)$};
			\draw (108.77,40.22) node [anchor=north west][inner sep=0.75pt]  [font=\scriptsize,rotate=-337.34]  {$\eta \rightarrow 0$};
			\draw (275.2,28.24) node [anchor=north west][inner sep=0.75pt]  [font=\scriptsize,rotate=-25.64]  {$\eta \rightarrow 0$};
			\draw (22.71,147.69) node [anchor=north west][inner sep=0.75pt]  [font=\scriptsize,rotate=-293.68]  {$\eta \rightarrow \infty $};
			\draw (48,211.98) node [anchor=north west][inner sep=0.75pt]  [font=\scriptsize]  {$-1$};
			\draw (191,50) node [anchor=north west][inner sep=0.75pt]  [font=\scriptsize]  {$1$};
			\draw (115,211.35) node [anchor=north west][inner sep=0.75pt]  [font=\scriptsize]  {$-0.5$};
			\draw (182,125) node [anchor=north west][inner sep=0.75pt]  [font=\scriptsize]  {$0.5$};
			\draw (272,210.35) node [anchor=north west][inner sep=0.75pt]  [font=\scriptsize]  {$0.5$};
			\draw (373.11,110.49) node [anchor=north west][inner sep=0.75pt]  [font=\scriptsize,rotate=-65.3]  {$\eta \rightarrow \infty $};
			\draw (351.58,211.98) node [anchor=north west][inner sep=0.75pt]  [font=\scriptsize]  {$1$};

			\end{tikzpicture}
				\caption{Behavior of $\boldsymbol{\Phi}_i:\mathbb{R}^{d-1}\to \mathbb{S}^{d-1}\subseteq \mathbb{R}^d$, $i=1,...,d$, for $d=2$.}
				\label{fig2}
		\end{figure}%
		It is not difficult to see, that the functions 
		$\bfPhi_i:\setR^{d-1}\to \setR^d$, $i=1,...,d$, 
		are smooth and injective. Moreover, their derivatives are for every
		$\eta\in \setR^{d-1}$ given via
		\begin{align*}
		(\textsf{D}\bfPhi_d)(\eta)=\frac{1}{(\vert\eta\vert^2+1)^{\frac{3}{2}}}\begin{pmatrix}
		\sum_{\substack{j=2}}^{d-1}{\eta_j^2}+1 & -\eta_1\eta_2 & \dots & -\eta_1\eta_{d-2} & -\eta_1\eta_{d-1}\\
		-\eta_2\eta_1 &		\sum_{\substack{j=1\\j\neq 2}}^{d-1}{\eta_j^2}+1 & 		-\eta_2\eta_3 & \dots & -\eta_2\eta_{d-1}
		\\
		\vdots & \ddots&  & 
		\\
		-\eta_{d-1}\eta_1 & -\eta_{d-1}\eta_2 & \dots & 	-\eta_{d-1}\eta_{d-2} & 		\sum_{j=1}^{d-2}{\eta_j^2}+1 
		\\
		-\eta_1 & -\eta_2 & \dots & -\eta_{d-2} & -\eta_{d-1}
		\end{pmatrix},
		\end{align*}
		i.e., $(\textsf{D}\bfPhi_d)(0)^\top=(\mathbb{I}_{d-1},\textbf{0})\in \setR^{(d-1)\times d}$ 
		and thus $\det((\textsf{D}\bfPhi_d)(0)^\top (\textsf{D}\bfPhi_d)(0))=1$. 
		Since \linebreak${(\textsf{D}\bfPhi_i)(\eta)=\bfE_i^\top (\textsf{D}\bfPhi_d)(\eta)}$ 
		for $\bfE_i=(\mathbf{e}_1,...,\mathbf{e}_{i-1},-\mathbf{e}_i,\mathbf{e}_{i+1},...,\mathbf{e}_d)\in \setR^{d\times d}$, $i=1,...,d$, ~we~even~have 
		\begin{align*}
		\det((\textsf{D}\bfPhi_i)(0)^\top (\textsf{D}\bfPhi_i)(0))
		=\det((\textsf{D}\bfPhi_d)(0)^\top \bfE_i\bfE_i^\top (\textsf{D}\bfPhi_d)(0))
		=\det((\textsf{D}\bfPhi_d)(0)^\top (\textsf{D}\bfPhi_d)(0))=1
		\end{align*}
		for every $i=1,...,d$. As $\det:\setR^{(d-1)\times (d-1)}\to \setR$ 
		and $\textsf{D}\bfPhi_i^\top \textsf{D}\bfPhi_i:\setR^{d-1}\to \setR^{(d-1)\times (d-1)}$, 
		$i=1,...,d$, are continuous, there exists a constant 
		$\alpha_0>0$, such that for every $\eta\in B^{d-1}_{\alpha_0}(0)$
		\begin{align*}
		\det((\textsf{D}\bfPhi_i)(\eta)^\top (\textsf{D}\bfPhi_i)(\eta))\ge \frac{1}{2}.
		\end{align*}
		Thus, $\text{rk}((\textsf{D}\bfPhi_i)(\eta))\!=\!d-1$ for every $\eta\!\in\! B^{d-1}_{\alpha_0}(0)$ 
		and $i\!=\!1,...,d$, i.e., ${\bfPhi_i\!:\!B^{d-1}_{\alpha_0}(0)\!\to\! \bfPhi_i(B^{d-1}_{\alpha_0}(0))}$,
		$i=1,...,d$, are immersions into $\mathbb{S}^{d-1}$. In addition, if we set 
		${\boldsymbol{\Upphi}=(\bfPhi_1,...,\bfPhi_d)^\top :\setR^{d-1}\to \setR^{d\times d}}$, 
	one easily sees that $	\det(\boldsymbol{\Upphi}(\eta))\neq 0$ for every $\eta\in (\setR\setminus\{0\})^{d-1}$, see e.g. Figure \ref{fig2}.
		Hence, for every $\eta \in (\setR\setminus\{0\})^{d-1}$
                we have ${\boldsymbol{\Upphi}(\eta) \in \text{GL}(d,\setR)}$.
		 If we set for $\alpha\in \left(0,\alpha_0\right)$
                 \begin{align*}
		Q_\alpha:=\Big\{\eta=(\eta_1,...,\eta_{d-1})^\top 
		\in B^{d-1}_\alpha(0)\;\Big|\;\vert\eta_i\vert>\frac{\alpha}{2d}\text{ for }i=1,...,d-1\Big\},
		\end{align*} 
		then, as $\vert\det\vert:\text{GL}(d,\setR)\to \setR_{> 0}$ and $\boldsymbol{\Upphi}:\overline{Q_\alpha}\subset(\setR\setminus\{0\})^{d-1} \to \text{GL}(d,\setR)$ 
		are continuous and for each $\alpha\in \left(0,\alpha_0\right)$ the image 
		$(\vert\det\vert\circ\boldsymbol{\Upphi})(\overline{Q_\alpha})$ is a compact subset of $\setR_{>0}$, 
		there exists a constant $\mu_\alpha>0$, depending on $\alpha\in \left(0,\alpha_0\right)$, 
		such that for each $\alpha\in \left(0,\alpha_0\right)$ and for every $\eta\in Q_\alpha$  it holds
		\begin{align}
		\vert \det(\boldsymbol{\Upphi}(\eta))\vert\ge \mu_\alpha.\label{eq:pp13}
		\end{align}
		On the other hand, there holds for every $\eta\in
                Q_\alpha$, $\alpha\in \left(0,\alpha_0\right)$ and
                $i=1,\ldots, d$
		\begin{align*}
		\angle(\bfPhi_i(\eta), \bfe_d)
		=\arccos((\vert\eta\vert^2+1)^{-\frac{1}{2}})
		<\arccos((\alpha^2+1)^{-\frac{1}{2}})\,.
		\end{align*}
		Hence, the sets $\bfPhi_i(Q_\alpha)$, $i=1,...,d$,
                are contained in the hyperspherical cap of
                $\mathbb{S}^{d-1}$ with centre $\bfe_d$ and opening
                angle $\arccos((\alpha^2+1)^{-1/2})$. Thus, for
                $\alpha_1\in \left(0,\alpha_0\right)$ sufficiently small the sets
                $\bfPhi_i(Q_{\alpha_1})$, $i=1,...,d$, are contained
                in the hyperspherical cap of $\mathbb{S}^{d-1}$ with
                centre $\bfe_d$ and surface area greater than $c_1(2r(x))^{d-1}$ (cf.~step
                \textbf{1}). Rescaling and translating
                $\mathcal{K}(x,\overline{x})$, it follows from
                \eqref{eq:pp9} and \eqref{eq:pp8} that 
		\begin{align*}
		\frac{1}{2r(x)}(\mathcal{K}(x,\overline{x})-x)
		\subseteq \mathbb{S}^{d-1},\qquad 
		\mathscr{H}^{d-1}\bigg(\frac{1}{2r(x)}(\mathcal{K}(x,\overline{x})-x)\bigg)
		=c_1.
		\end{align*}
		Hence, we can find an orthogonal matrix $\bfS_x\in \mathbb{R}^{d\times d}$, such that for every~${i=1,...,d}$~there~holds
		\begin{align}
		\bfS_x(\bfPhi_i(Q_{\alpha_1}))\subseteq 
		\frac{1}{2r(x)}(\mathcal{K}(x,\overline{x})-x).\label{eq:pp12}
		\end{align}
		But \eqref{eq:pp12} in conjunction with \eqref{eq:pp8} implies for 
		$\bfPhi_i^x:=\bfS_x\circ\bfPhi_i:Q_{\alpha_1}\to \mathbb{S}^{d-1}$, $i=1,...,d$, that
		\begin{align}
		x+2r(x)\bfPhi_i^x(Q_{\alpha_0})&
		\subseteq \mathcal{K}(x,\overline{x})
		\subseteq \pa B_{2r(x)}(x)\cap \Omega^c.\label{eq:pp15}
		\end{align}
		Apart from this, we have for every $\eta\in Q_{\alpha_1}$, $i=1,...,d$  and 
		$t\in \left[0,2r(x)\right]$ 
		\begin{align}
		\begin{split}
		\frac{d}{dt}\left[\bfu(x+t\bfPhi^x_i(\eta))\cdot\bfPhi^x_i(\eta)\right]
		&=(\nb \bfu)(x+t\bfPhi^x_i(\eta)) : \bfPhi^x_i(\eta)\otimes\bfPhi^x_i(\eta)
		\\&=\bfvarepsilon(\bfu)(x+t\bfPhi^x_i(\eta)): \bfPhi^x_i(\eta)\otimes\bfPhi^x_i(\eta),
		\end{split}\label{eq:pp15.0}
		\end{align}
		where we used that
                $\bfPhi^x_i(\eta)\otimes\bfPhi^x_i(\eta) \in \mathbb
                M^{d \times d}_\sym$ for every $\eta\in Q_{\alpha_1}$ and  $i=1,...,d$. 
	     We integrate \eqref{eq:pp15.0} with respect to $t\in \left[0,2r(x)\right]$ to obtain for every $\eta\in Q_{\alpha_1}$ and $i=1,...,d$
		\begin{align}
		\begin{split}
		\bfu(x)\cdot\bfPhi^x_i(\eta)=\bfu(x+2r(x)\bfPhi^x_i(\eta))
		\cdot\bfPhi^x_i(\eta)-\int_0^{2r(x)}
		{\!\!\bfvarepsilon(\bfu)(x+s\bfPhi^x_i(\eta)): \bfPhi^x_i(\eta)\otimes
			\bfPhi^x_i(\eta)\,ds}.
		\end{split}\label{eq:pp15.1}
		\end{align}
		Hence, using \eqref{eq:pp15}, i.e., $\bfu(x+2r(x)\bfPhi^x_i(\eta))=\mathbf{0}$ for every $\eta\in Q_{\alpha_1}$ and $i=1,...,d$, we infer from \eqref{eq:pp15.1} for every $\eta\in Q_{\alpha_1}$ and $i=1,...,d$ that
		\begin{align}
		\vert \bfu(x)\cdot\bfPhi^x_i(\eta)\vert\leq\int_0^{2r(x)}
		{\vert\bfvarepsilon(\bfu)(x+s\bfPhi^x_i(\eta))\vert\,ds}.\label{eq:pp15.2}
		\end{align}
		Furthermore, setting $c_{\alpha_1}
		:=\|\boldsymbol{\Upphi}\|_{L^\infty(Q_{\alpha_1},\setR^{d\times d})}^{d-1}$, 
		there holds for every $\eta\in Q_{\alpha_1}$ 
		\begin{align}
		\vert\det(\boldsymbol{\Upphi}(\eta))\vert
		\leq \lambda_{\text{min}}(\boldsymbol{\Upphi}(\eta)^\top \boldsymbol{\Upphi}(\eta))\|\boldsymbol{\Upphi}(\eta)\|_{\setR^{d\times d}}^{d-1}
		\leq c_{\alpha_1}\lambda_{\text{min}}(\boldsymbol{\Upphi}(\eta)^\top \boldsymbol{\Upphi}(\eta)),\label{eq:pp15.3}
		\end{align}
		where $\lambda_{\text{min}}(\bfA)\in \setR$ denotes the smallest 
		eigenvalue of a matrix $\bfA\in\setR^{d\times d}$. 
		With this we deduce for every $\eta\in Q_{\alpha_1}$ that
		\begin{align}
		\sum_{i=1}^d{\vert \bfu(x)\cdot\bfPhi_i^x(\eta)\vert^2}&
		=\sum_{i=1}^d{\vert (\bfS_x^\top \bfu(x))\cdot \bfPhi_i(\eta)\vert^2}
		=\sum_{i=1}^d{\vert (\boldsymbol{\Upphi}(\eta)\bfS_x^\top \bfu(x))_i\vert^2}\label{eq:pp15.4}
		\\&=(\boldsymbol{\Upphi}(\eta)\bfS_x^\top \bfu(x))\cdot (\boldsymbol{\Upphi}(\eta)\bfS_x^\top \textbf{u}(x))
		\ge \lambda_{\text{min}}(\boldsymbol{\Upphi}(\eta)^\top \boldsymbol{\Upphi}(\eta))\vert \bfS_x^\top \bfu(x))\vert^2\notag
		\\&= \lambda_{\text{min}}(\boldsymbol{\Upphi}(\eta)^\top \boldsymbol{\Upphi}(\eta))\vert \bfu(x)\vert^2
		\ge \frac{1}{c_{\alpha_1}}\vert \text{det}(\boldsymbol{\Upphi}(\eta))\vert\vert \bfu(x)\vert^2
		\ge \frac{\mu_{\alpha_1}}{c_{\alpha_1}}\vert \bfu(x)\vert^2,\notag
		\end{align}
		where we exploited the min-max theorem of Courant and Fischer 
		in the first inequality, \eqref{eq:pp15.3} in the second inequality and \eqref{eq:pp13} in the last inequality.
		In consequence, also using \eqref{eq:pp15.2}, we infer from \eqref{eq:pp15.4} for every $\eta\in Q_{\alpha_1}$
		\begin{align*}
		\begin{split}
		\bigg[\frac{\mu_{\alpha_1}}{c_{\alpha_1}}\bigg]^{\frac{1}{2}}\vert \bfu(x)\vert\leq \bigg[\sum_{i=1}^d{\vert \bfu(x)\cdot\bfPhi_i^x(\eta)\vert^2}\bigg]^{\frac{1}{2}}
		\leq 
		\sum_{i=1}^d{\vert \bfu(x)\cdot\bfPhi_i^x(\eta)\vert}
		\leq   
		\sum_{i=1}^d{\int_0^{2r(x)}
			{\vert\bfvarepsilon(\bfu)(x+s\bfPhi^x_i(\eta))\vert\, ds}}.
		\end{split}
		\end{align*}
		We integrate with respect to $\eta\in Q_{\alpha_1}$, 
		divide by $\vert Q_{\alpha_1}\vert$, apply the 
		transformation theorem, which is allowed since 
		$\bfPhi_i^x:Q_{\alpha_1}\to \Phi_i^x(Q_{\alpha_1})$, 
		$i=1,...,d$, are immersions, and the so-called 
		''onion formula'' to obtain
		\begin{align}
		\begin{split}
		\bigg[\frac{\mu_{\alpha_1}}{c_{\alpha_1}}\bigg]^{\frac{1}{2}}\vert \bfu(x)\vert
		&\leq 
		\sum_{i=1}^d{\fint_{Q_{\alpha_1}}{\int_0^{2r(x)}
				{\vert\bfvarepsilon(\bfu)(x+s\bfPhi^x_i(\eta))\vert \,ds}d\eta}}
		\\
		&\leq 2\sum_{i=1}^d{\fint_{Q_{\alpha_1}}
			{\int_0^{2r(x)}
				{\vert\bfvarepsilon(\bfu)(x+s\bfPhi^x_i(\eta))\vert
					\vert \det((\textsf{D}\bfPhi_i^x)(\eta)^\top (\textsf{D}\bfPhi_i^x)(\eta ))\vert \,ds}d\eta}}
		\\&= \frac{2}{\vert Q_{\alpha_1}\vert}
		\sum_{i=1}^d{\int_{\bfPhi_i^x(Q_{\alpha_1})}{\int_0^{2r(x)}
				{\vert\bfvarepsilon(\bfu)(x+s\xi)\vert \,ds}do(\xi)}}
		\\&\leq \frac{2d }{\vert Q_{\alpha_1}\vert}
		\int_{\mathbb{S}^{n-1}}
		{\int_0^{2r(x)}{s^{d-1}
				\frac{\vert\bfvarepsilon(\bfu)(x+s\xi)\vert}{s^{d-1}}\,ds}do(\xi)}
		\\&= \frac{2d }{\vert Q_{\alpha_1}\vert}
		\int_{B_{2r(x)}(x)\cap \Omega}
		{\frac{\vert\bfvarepsilon(\bfu)(y)\vert}{\vert x-y\vert^{d-1}}\,dy},
		\end{split}\label{final}
		\end{align}
		where we used \eqref{eq:pp12} and that 
		$\vert \det((\textsf{D}\bfPhi_i^x)(\eta)^\top (\textsf{D}\bfPhi_i^x)(\eta))\vert
		=\vert \det((\textsf{D}\bfPhi_i)(\eta)^\top (\textsf{D}\bfPhi_i)(\eta))\vert\ge \frac{1}{2}$ 
		for every $\eta\in Q_{\alpha_1}\subseteq B_{\alpha_0}^{d-1}(0)$ 
		in the third line. Eventually, observing that all constants in \eqref{final} solely depend on the Lipschitz characteristics of $\Omega$, we conclude the assertion.\hfill$\qed$
	\end{proof}

	\section{Variable exponent Bochner--Lebesgue spaces}
	\label{sec:4}
	In this section we introduce variable exponent Bochner--Lebesgue spaces, the appropriate substitute of usual Bochner--Lebesgue spaces for the treatment of unsteady problems in variable exponents spaces, such as the model problem \eqref{eq:model}. 
	
	Throughout the entire section, if nothing else is stated, then we assume that  $\Omega\subseteq\setR^d$, $d\ge 2$, 
	is a bounded domain, $I=\left(0,T\right)$, $T<\infty$,  $Q_T=I\times \Omega$,
	and $q,p\in \mathcal{P}^\infty(Q_T)$.

	\begin{defn}[Time slice spaces]\label{4.1} We define for almost every $t\in I$ the 
		\textbf{time slice spaces}
		\begin{align*}
		X^{q,p}_{\nb}(t):=X^{q(t,\cdot),p(t,\cdot)}_{\nb}(\Omega)^d,\qquad
		X^{q,p}_{\bfvarepsilon}(t):= X^{q(t,\cdot),p(t,\cdot)}_{\bfvarepsilon}(\Omega).
		\end{align*}
		Furthermore, we define the \textbf{limiting time slice spaces}
		\begin{align*}
				X^{q,p}_+:= X^{q^+,p^+}_{\nb}(\Omega)^d,\qquad X^{q,p}_-:= X^{q^-,p^-}_{\nb}(\Omega)^d.
		\end{align*}
	\end{defn}

	\begin{rmk}
		\begin{description}[(iii)]
                \item[(i)] For time independent variable exponents
                  $q,p\in \mathcal{P}^\infty(\Omega)$, we define
                  $X^{q,p}_\nb:=X^{q(\cdot),p(\cdot)}_{\nb}(\Omega)^d$
                  and
                  $X^{q,p}_\bfvarepsilon:=X^{q(\cdot),p(\cdot)}_{\bfvarepsilon}(\Omega)$.
			\item[(ii)] Recall that if $p\in \mathcal{P}^{\log}(Q_T)$ with $p^->1$,
			in virtue of Korn's inequality (cf.~Proposition~\ref{2.4}),
			the spaces $X^{q,p}_\nb(t)$ and $X^{q,p}_\bfvarepsilon(t)$
			coincide for every $t\in I$, with a possibly
                        on $t\in I$
 			depending norm equivalence. Thus, for 
			$p\in \mathcal{P}^{\log}(Q_T)$ with $p^->1$, we set $X^{q,p}(t):=X^{q,p}_\bfvarepsilon(t)=X^{q,p}_\nb(t)$. For time independent variable exponents $q\in \mathcal{P}^\infty(\Omega)$ and $p\in \mathcal{P}^{\log}(\Omega)$ with $p^->1$, we set likewise $X^{q,p}:=X^{q,p}_\bfvarepsilon=X^{q,p}_\nabla$.
			\item[(iii)] There hold for almost every $t\in I$ the dense embeddings $X^{q,p}_+\hookrightarrow  X^{q,p}_\bfvarepsilon(t)\hookrightarrow X^{q,p}_\nb(t)\hookrightarrow X^{q,p}_-$.
		\end{description}
    \end{rmk}

	By means of the time slice spaces $X^{q,p}_\nb(t)$, $t\in I$, and $X^{q,p}_\bfvarepsilon(t)$, $t\in I$, we next introduce variable exponent Bochner--Lebesgue spaces.
		
	\begin{defn}\label{4.3}We define the \textbf{variable exponent Bochner--Lebesgue spaces}
		\begin{alignat*}{2}
		\bscal{X}_{\nb}^{q,p}(Q_T)
		&:=\{\bsu\in L^{q(\cdot,\cdot)}(Q_T)^d
		\fdg \nb\bsu\in L^{p(\cdot,\cdot)}(Q_T)^{d\times d},
		\bsu(t)\in X^{q,p}_{\nb}(t)\text{ for a.e. }t\in I\},\\
		\bscal{X}_{\bfvarepsilon}^{q,p}(Q_T)
		&:=\{\bsu\in L^{q(\cdot,\cdot)}(Q_T)^d
		\fdg \bfvarepsilon(\bsu)\in L^{p(\cdot,\cdot)}(Q_T,\mathbb{M}_{\sym}^{d\times d}),\;\bsu(t)\in X^{q,p}_{\bfvarepsilon}(t)\text{ for a.e. }t\in I\}.
		\end{alignat*}
		Moreover, we define the \textbf{limiting Bochner--Lebesgue spaces}
		\begin{align*}
		\bscal{X}^{q,p}_+(Q_T):=L^{\max\{p^+,q^+\}}(I,X^{q,p}_+),\qquad
		\bscal{X}^{q,p}_-(Q_T):=L^{\min\{p^-,q^-\}}(I,X^{q,p}_-).
		\end{align*} 
	\end{defn}
	
	Before we equip $\bscal{X}_{\nabla}^{q,p}(Q_T)$ and $\bscal{X}_{\bfvarepsilon}^{q,p}(Q_T)$ with appropriate norms, we deal
	with the question, whether there  exists a wide range of variable exponents for
	which the spaces $\bscal{X}_{\nabla}^{q,p}(Q_T)$ and $\bscal{X}_{\bfvarepsilon}^{q,p}(Q_T)$ coincide, i.e., whether a distinction of $\bscal{X}_{\nabla}^{q,p}(Q_T)$ and $\bscal{X}_{\bfvarepsilon}^{q,p}(Q_T)$ becomes unnecessary, if e.g. the variable exponents are smooth or not depending on time. Recall that for a $\log$-Hölder continuous exponent $p\in \mathcal{P}^{\log}(\Omega)$ with $p^->1$, the spaces $X^{q,p}_{\nabla}$ and $X^{q,p}_{\bfvarepsilon}$, i.e., the steady counterpart to $\bscal{X}_{\nabla}^{q,p}(Q_T)$ and $\bscal{X}_{\bfvarepsilon}^{q,p}(Q_T)$, respectively,  coincide in virtue of Korn's inequality (cf.~Proposition~\ref{2.4}). In addition, in the case of a constant exponent $p\in\left(1,\infty\right)$, Korn's inequality implies the norm equivalence $\|\nabla\cdot\|_{L^p(Q_T)^{d\times d}}
	\!\sim \!\|\bfvarepsilon(\cdot)\|_{L^p(Q_T)^{d\times d}}$ on
	$\bscal{X}_{\bfvarepsilon}^{q,p}(Q_T)$, and the spaces
	$\bscal{X}_{\nabla}^{q,p}(Q_T)$ and $\bscal{X}_{\bfvarepsilon}^{q,p}(Q_T)$ 
	coincide anew. Thus, one may wonder, whether this circumstance also holds true in the case of a non-constant exponent
	$p\in\mathcal{P}^{\log}(Q_T)$~with~$p^->1$.  
	Regrettably,  even if the variable exponent is smooth, and not depending on time, the answer is  negative. 
	This  can be traced back to the following phenomenon, 
	which occurs in variable exponent Bochner--Lebesgue spaces, and leads to the invalidity of a variety~of~inequalities.
	
	
		\begin{prop}[Wet blanket]\label{4.4}
		Let $\Omega\subseteq \setR^d$, $d\ge 2$, be an arbitrary bounded 
		 domain and $k,l,m,n\in\setN$. Moreover, 
		let $\bscal{F}\subseteq C_0^\infty(\Omega)^{m\times n}\times 
		C_0^\infty(\Omega)^{k\times l}$ be a family of function couples
		containing a couple $(\mathbf{G}_0,\bfF_0)^\top \in\bscal{F}$, such that $\bfF_0\not\equiv \mathbf{0}$ and
		$\textup{int}(\textup{supp}(\mathbf{G}_0))\setminus\textup{supp}(\bfF_0)\neq\emptyset$. 
		Then, for every $1<\alpha<\beta<\infty $ there exists a smooth exponent
		$p\in C^\infty(\setR^d)$, with $p|_{Q_T}\in \mathcal{P}^{\log}(Q_T)$\footnote{Here, we extend $p\in C^\infty(\setR^d)$ constantly in time, i.e., we set $p(t,x):=p(x)$ for all $(t,x)^\top\in Q_T$.}, and 
		$p^-=\alpha$, $p^+=\beta$, such that for every  $\varphi\in L^\alpha(I)\setminus L^\beta(I)$ it holds $\varphi\mathbf{F}_0\in L^{p(\cdot)}(Q_T)^{k\times l}$ and  $\varphi\mathbf{G}_0\notin L^{p(\cdot)}(Q_T)^{m\times n}$. In addition, we have
		\begin{align*}
		\sup_{\varphi\in C_0^\infty(I)}{\frac{\|\varphi \mathbf{G}_0\|_{L^{p(\cdot)}(Q_T)^{m\times n}}}
			{\|\varphi \bfF_0\|_{L^{p(\cdot)}(Q_T)^{k\times l}}}}=\infty,
		\end{align*}
		or equivalently, there is no constant $c>0$, such that $	\|\varphi \mathbf{G}_0\|_{L^{p(\cdot)}(Q_T)^{m\times n}}
		\leq c\|\varphi \bfF_0\|_{L^{p(\cdot)}(Q_T)^{k\times l}}$ for every 
		$\varphi\in C_0^\infty(I)$.
	\end{prop}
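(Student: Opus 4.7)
The plan is to engineer a smooth exponent $p$ which equals $\alpha$ on $\textup{supp}(\bfF_0)$ but reaches its maximum $\beta$ inside a ball on which $\mathbf{G}_0$ is bounded away from zero and $\bfF_0$ vanishes. This way, the modular of $\varphi\bfF_0$ reduces to an $L^\alpha$ quantity (making it finite), while the modular of $\varphi\mathbf{G}_0$ dominates an $L^\beta$ quantity (making it infinite).

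First, I would use the hypothesis $\textup{int}(\textup{supp}(\mathbf{G}_0))\setminus\textup{supp}(\bfF_0)\neq\emptyset$ together with the fact that $\{\mathbf{G}_0\neq\mathbf{0}\}$ is dense in $\textup{supp}(\mathbf{G}_0)$ to extract a point $x_1\in\textup{int}(\textup{supp}(\mathbf{G}_0))\setminus\textup{supp}(\bfF_0)$ with $\mathbf{G}_0(x_1)\neq\mathbf{0}$; by continuity I then obtain $\rho>0$ such that $\overline{B^d_\rho(x_1)}\subseteq \textup{int}(\textup{supp}(\mathbf{G}_0))\setminus\textup{supp}(\bfF_0)$ and $\vert\mathbf{G}_0\vert\ge c_0>0$ on $B^d_\rho(x_1)$. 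Next, I choose a cut-off $\eta\in C_0^\infty(B^d_\rho(x_1))$ with $0\le\eta\le 1$ and $\eta\equiv 1$ on $B^d_{\rho/2}(x_1)$, and set $p(x):=\alpha+(\beta-\alpha)\eta(x)$ for $x\in\setR^d$. Then $p\in C^\infty(\setR^d)$ with $p^-=\alpha$, $p^+=\beta$, $p\equiv\alpha$ on $\textup{supp}(\bfF_0)$ and $p\equiv\beta$ on $B^d_{\rho/2}(x_1)$. Since $p$ is Lipschitz on the bounded set $Q_T$ and constantly $\alpha$ outside a compact set, $p|_{Q_T}\in\mathcal{P}^{\log}(Q_T)$ with $p_\infty=\alpha$.

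Second, I would verify the two claimed properties by direct modular computations. For $\varphi\in L^\alpha(I)\setminus L^\beta(I)$ and any $\lambda>0$, the identity $p\equiv\alpha$ on $\textup{supp}(\bfF_0)$ together with Fubini yields
\begin{align*}
\rho_{p(\cdot,\cdot)}\bigl(\varphi\bfF_0/\lambda\bigr)=\lambda^{-\alpha}\|\varphi\|_{L^\alpha(I)}^\alpha\,\|\bfF_0\|_{L^\alpha(\Omega)^{k\times l}}^\alpha<\infty,
\end{align*}
whence $\varphi\bfF_0\in L^{p(\cdot,\cdot)}(Q_T)^{k\times l}$ with $\|\varphi\bfF_0\|_{L^{p(\cdot,\cdot)}(Q_T)^{k\times l}}\le\|\varphi\|_{L^\alpha(I)}\|\bfF_0\|_{L^\alpha(\Omega)^{k\times l}}$. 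On the other hand, restricting the modular to $I\times B^d_{\rho/2}(x_1)$, where $p\equiv\beta$ and $\vert\mathbf{G}_0\vert\ge c_0$,
\begin{align*}
\rho_{p(\cdot,\cdot)}\bigl(\varphi\mathbf{G}_0/\lambda\bigr)\ge (c_0/\lambda)^\beta\,\vert B^d_{\rho/2}(x_1)\vert\,\|\varphi\|_{L^\beta(I)}^\beta=\infty
\end{align*}
for every $\lambda>0$, so $\varphi\mathbf{G}_0\notin L^{p(\cdot,\cdot)}(Q_T)^{m\times n}$.

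For the blow-up of the supremum, the same two estimates applied to $\varphi\in C_0^\infty(I)$ give $\|\varphi\bfF_0\|_{L^{p(\cdot,\cdot)}(Q_T)^{k\times l}}\le \|\bfF_0\|_{L^\alpha(\Omega)^{k\times l}}\|\varphi\|_{L^\alpha(I)}$ and $\|\varphi\mathbf{G}_0\|_{L^{p(\cdot,\cdot)}(Q_T)^{m\times n}}\ge c_0\,\vert B^d_{\rho/2}(x_1)\vert^{1/\beta}\|\varphi\|_{L^\beta(I)}$. Picking a concentrating sequence $(\varphi_n)_{n\in\setN}\subseteq C_0^\infty(I)$ with $\|\varphi_n\|_{L^\alpha(I)}=1$ and $\|\varphi_n\|_{L^\beta(I)}\to\infty$ then forces the ratio to diverge. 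The only mildly delicate point is the very first step, namely isolating a ball inside $\textup{int}(\textup{supp}(\mathbf{G}_0))\setminus\textup{supp}(\bfF_0)$ on which $\mathbf{G}_0$ stays uniformly bounded away from zero; everything afterwards is a routine modular computation that exploits the fact that $p$ is constant on the two relevant regions.
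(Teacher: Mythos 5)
Your proof is correct and follows essentially the same strategy as the paper's: build a smooth exponent that is constant equal to $\alpha$ on $\textup{supp}(\bfF_0)$ and equal to $\beta$ on a region where $\mathbf{G}_0$ is nontrivial but $\bfF_0$ vanishes, then reduce both modular computations to constant-exponent integrals and split off the time factor by Fubini. The only cosmetic difference is the role-reversal in the construction — the paper mollifies a characteristic function of an $\vep$-thickening of $\text{int}(\text{supp}(\bfF_0))$ so that $p\equiv\alpha$ near $\bfF_0$ and $p\equiv\beta$ everywhere else, while you put $p\equiv\beta$ only on a small ball in $\textup{int}(\textup{supp}(\mathbf{G}_0))\setminus\textup{supp}(\bfF_0)$ where $\vert\mathbf{G}_0\vert\ge c_0>0$ and $p\equiv\alpha$ elsewhere — but both choices yield the same two estimates and the same conclusion.
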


	\begin{proof}
		Let us set $\Omega_1:=\text{int}(\text{supp}(\bfF_0))\neq \emptyset$,
		$\Omega_1^\vep:=\Omega_1+B_\vep^d(0)$ and $\Omega_2^\vep
		:=\Omega\setminus\overline{\Omega_1^\vep}$ for ${\vep>0}$. Since
		$\Omega_1\subset\subset\Omega$, we infer for $\vep>0$ sufficiently small that
		$\Omega_1^\vep\subset\subset\Omega$ and thus 
		$\Omega_2^\vep\neq \emptyset$. Moreover, since
		$\text{int}(\text{supp}(\mathbf{G}_0))\setminus\text{supp}(\bfF_0)\neq\emptyset$, 
		we obtain for a possibly smaller $\vep>0$ that 
		${\Omega_2^\vep\cap \text{int}(\text{supp}(\mathbf{G}_0))\neq \emptyset}$. Let us fix such an $\vep>0$ and define the variable exponent
		\begin{align*}
		p:=
		\alpha(\omega^d_{\frac{\vep}{2}}\ast \chi_{\Omega_1^{\vep/2}})+
		\beta[1-(\omega^d_{\frac{\vep}{2}}\ast \chi_{\Omega_1^{\vep/2}})],
		\end{align*}
		where $\omega^d_{\varepsilon}\in C_0^\infty(B_{\varepsilon}^d(0))$, $\varepsilon>0$, are the scaled standard mollifiers from Proposition \ref{2.3}.
		Then, we have $p\in C^\infty(\setR^d)$ with 
		$p(x)=\alpha$ if $x \in \Omega_1$, 
		$p(x)=\beta$ if $x \in \setR^d\setminus\overline{\Omega_1^\vep}$ and 
		$\alpha\leq p(x)\leq \beta$ if 
		$x\in\Omega_1^\vep\setminus \overline{\Omega_1}$. 
		In particular, there holds $p|_{Q_T}\in \mathcal{P}^{\log}(Q_T)$ and 
		for every $\varphi\in C_0^\infty(I)$
		\begin{align*}
		\rho_{p(\cdot)}(\varphi \bfF_0)
		=\int_I{\int_{\Omega_1}{\vert\varphi(t) \bfF_0(x)\vert^\alpha\,dx}\,dt}
		=\|\varphi\|_{L^\alpha(I)}^\alpha\|\bfF_0\|_{L^\alpha(\Omega_1)^{k\times l}}^\alpha,
		\end{align*}
		i.e., $\|\varphi \bfF_0\|_{L^{p(\cdot)}(Q_T)^{k\times l}}
		=\|\varphi\|_{L^\alpha(I)}\|\bfF_0\|_{L^\alpha(\Omega_1)^{k\times l}}$. 
		On the other hand, it holds~for~every~${\varphi\in C_0^\infty(I)}$
		\begin{align*}
		\rho_{p(\cdot)}(\varphi \mathbf{G}_0)
		\ge\int_I{\int_{\Omega_2^\vep}{\vert\varphi(t) \mathbf{G}_0(x)\vert^\beta\,dx}\,dt}
		=\|\varphi\|_{L^\beta(I)}^\beta\|\mathbf{G}_0\|_{L^\beta(\Omega_2^\vep)^{m\times n}}^\beta,
		\end{align*}
		i.e., $\|\varphi \mathbf{G}_0\|_{L^{p(\cdot)}(Q_T)^{m\times n}}
		\ge\|\varphi\|_{L^\beta(I)}\|\mathbf{G}_0\|_{L^\beta(\Omega_2^\vep)^{m\times n}}$. 
		Now,  take $\varphi\in L^\alpha(I)\setminus L^\beta(I)$~and~$(\varphi_n)_{n\in\setN}\subseteq C_0^\infty(I)$, such that 
		$\varphi_n\to\varphi$ in $L^\alpha(I)$ $(n\to\infty)$. Then, $\varphi\mathbf{F}_0\in L^{p(\cdot)}(Q_T)^{k\times l}$ and  $\varphi\mathbf{G}_0\notin L^{p(\cdot)}(Q_T)^{m\times n}$.
		Moreover, we have $\|\varphi_n\|_{L^\beta(I)}\to \infty$~$(n\to\infty)$ and thus
		\begin{align*}
		\frac{\|\varphi_n \mathbf{G}_0\|_{L^{p(\cdot)}(Q_T)^{m\times n}}}
		{\|\varphi_n \bfF_0\|_{L^{p(\cdot)}(Q_T)^{k\times l}}}
		\ge\frac{\|\varphi_n\|_{L^\beta(I)}\|\mathbf{G}_0\|_{L^\beta(\Omega_2^\vep)^{m\times n}}}
		{\|\varphi_n\|_{L^\alpha(I)}\|\bfF_0\|_{L^\alpha(\Omega_1)^{k\times l}}}
		\overset{n\to \infty}{\to }\infty,
		\end{align*}
		where we used that $\|\mathbf{G}_0\|_{L^\beta(\Omega_2^\vep)^{m\times n}}> 0$, 
		since $\Omega_2^\vep\cap \text{int}(\text{supp}(\mathbf{G}_0))\neq \emptyset$ is open. \hfill$\qed$
	\end{proof} 
	
	By means of Proposition \ref{4.4} we are able to prove the invalidity of a Korn type inequality in the framework of variable exponent Bochner--Lebesgue spaces, which entails a variety of difficulties.
	
	\begin{rmk}[Invalidity of Korn's inequality on $\bscal{X}^{q,p}_{\bfvarepsilon}(Q_T)$]\label{4.5}
		Let $\Omega\subseteq \setR^d$, $d\ge 2$, be an
                arbitrary bounded domain, $I=(0,T)$, $T<\infty$, a bounded interval
		and $Q_T=I\times \Omega$. Moreover, let 
		\begin{align*}
		\bscal{F}_{\textsf{Korn}}:=\{(\nb \bfu,\bfvarepsilon(\bfu))^\top 
		\fdg \bfu\in C_0^\infty(\Omega)^d\}\subseteq C_0^\infty(\Omega)^{d\times d}\times
		C_0^\infty(\Omega)^{d\times d} .
		\end{align*}
		Let $\eta\in C_0^\infty(\Omega)$ with $\eta\equiv 1$ on $G$, where $G\subset\subset \Omega$ is a domain,
		and $\bfA\in \mathbb{R}^{d\times d}\setminus\{\mathbf{0}\}$ be skew symmetric. 
		If we set $\bfu(x):=\eta(x)\bfA x$ for every $x\in \Omega$, then 
		$\bfu\in C_0^\infty(\Omega)^d$ with $\nb\bfu=\bfA$ and
		${\bfvarepsilon(\bfu)=\mathbf{0}}$ in $G$. In other words, we have
		$(\nb\bfu,\bfvarepsilon(\bfu))^\top \in \bscal{F}_{\textsf{Korn}}$ 
		with  $\bfvarepsilon(\bfu)\not\equiv  \mathbf{0}$ and  $\textup{int}(\textup{supp}(\nb\bfu))\setminus \textup{supp}(\bfvarepsilon(\bfu))\neq\emptyset$.
		In consequence, according to Proposition \ref{4.4}, there exists a smooth, time independent exponent $p\in C^\infty(\setR^d)$ with  
		$p^->1$, which does not admit a constant $c>0$, such that for every 
		${\bfphi\in C_0^\infty(Q_T)^d}$ it holds
		\begin{align*}
		\|\nb\bfphi\|_{L^{p(\cdot)}(Q_T)^{d\times d}}
		\leq c\|\bfvarepsilon(\bfphi)\|_{L^{p(\cdot)}(Q_T)^{d\times d}}.
		\end{align*}
		In particular, for every $\varphi\in L^{p^-}(I)\setminus L^{p^+}(I)$ there holds $\varphi\bfu\in \bscal{X}^{p^-,p}_{\bfvarepsilon}(Q_T)\setminus \bscal{X}^{p^-,p}_{\nabla}(Q_T)$, i.e., 
		we have $\bscal{X}^{p^-,p}_{\bfvarepsilon}(Q_T)\neq \bscal{X}^{p^-,p}_{\nabla}(Q_T)$.
                
                The situation is illustrated  in Figure
                \ref{Korn} for
                $\Omega:=B_{2.5}^2(0)$, 
                $G:=B_{0.6}^2(0)$,  $\bfu\in C_0^\infty(\Omega)^d$, given via $\bfu(x):=\eta(x)\bfA x$ for all $x\in \Omega$, where $\bfA:=\begin{psmallmatrix}0 & -1\\1 &
                0\end{psmallmatrix}$ and
                ${\eta:=\chi_{B_1^2(0)}\ast\omega_\varepsilon^2\in
                C_0^\infty(\Omega)}$ for ${\vep=0.4}$.
                \begin{figure}[ht]
                	\centering
                	\hspace*{-7mm}\includegraphics[width=15.5cm]{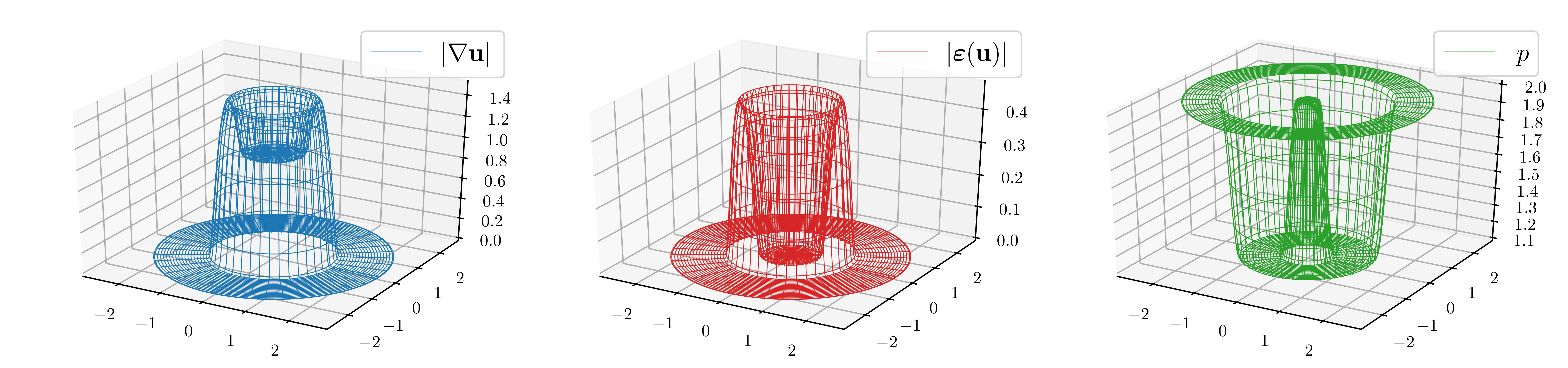}
                	\caption{Plots of $\vert \nb\bfu\vert\in C_0^\infty(\Omega)$ (blue/left), $\vert\bfvarepsilon(\bfu)\vert\in C_0^\infty(\Omega)$ (red/middle) and the exponent $p\in C^\infty(\mathbb{R}^2)$ (green/right) constructed according to Proposition \ref{4.4} for $d=2$,  $\alpha=1.1$ and $\beta=2$.}
                	\label{Korn}
                \end{figure}

            In Figure
            \ref{varphi} one can see  for $I:=(-1.5,1.5)$, the function
            $\varphi\in L^{1.1}(I)\setminus L^2(I)$, given via ${\varphi(t):=\chi_{\left(-1,1\right)}(t)\big(\vert t\vert^{-\frac{1}{2}}-1\big)}$ for $t\in I$, and its approximations ${(\varphi_n)_{n\in \setN}:=(\varphi\ast\omega_{2^{-n}}^1)_{n\in \setN}\subseteq C_0^\infty(I)}$. 
            According to Proposition \ref{4.4}, the sequence ${(\bfphi_n)_{n\in \setN}\subseteq C_0^\infty(Q_T)^d}$,  for every $(t,x)^\top\in Q_T$ and $n\in \setN$ given via $\bfphi_n(t,x):=\varphi_n(t)\bfu(x)$, satisfies 
            \begin{align*}
            	\frac{\|\nb\bfphi_n\|_{L^{p(\cdot)}(Q_T)^{d\times d}}}{\|\bfvarepsilon(\bfphi_n)\|_{L^{p(\cdot)}(Q_T)^{d\times d}}}\overset{n\to\infty}{\to }\infty.
            \end{align*}
            
            \vspace*{-5mm}
            \begin{figure}[ht]
            	\centering
            	\includegraphics[width=14.5cm]{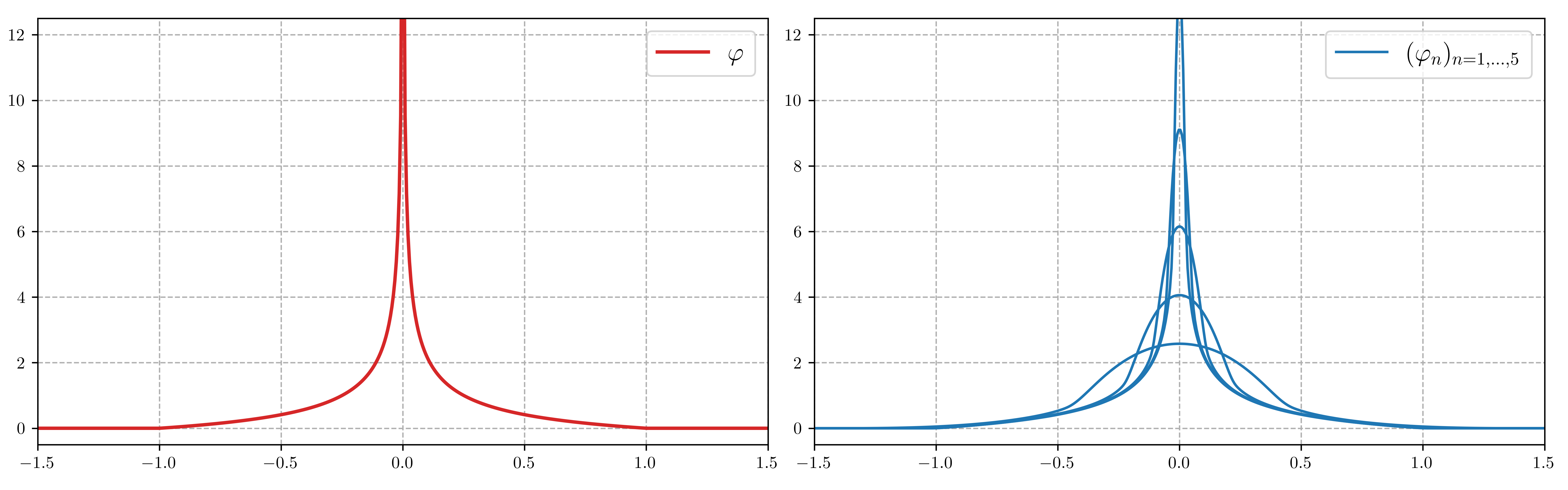}
            	\caption{Plots of $\varphi\in L^{1.1}(I)\setminus L^2(I)$ (red/left) and $(\varphi_n)_{n=1,...,5}\subseteq C_0^\infty(I)$ (blue/right).}
            	\label{varphi}
            \end{figure}
	\end{rmk}	
	
	In consequence, we have to distinguish between the spaces $\bscal{X}_{\nb}^{q,p}(Q_T)$ 
	and $\bscal{X}_{\bfvarepsilon}^{q,p}(Q_T)$, i.e., we have to equip
	$\bscal{X}_{\nb}^{q,p}(Q_T)$ and $\bscal{X}_{\bfvarepsilon}^{q,p}(Q_T)$ 
	with different norms, in order to guarantee at least the well-definedness of these norms.
	
	\begin{prop}\label{4.6}
		The spaces $\bscal{X}_{\nb}^{q,p}(Q_T)$ and $\bscal{X}_{\bfvarepsilon}^{q,p}(Q_T)$ equipped with the norms
		\begin{align*}
		\|\cdot\|_{\bscal{X}_{\nb}^{q,p}(Q_T)}
		&:=\|\cdot\|_{L^{q(\cdot,\cdot)}(Q_T)^d}
		+\|\nb\cdot\|_{L^{p(\cdot,\cdot)}(Q_T)^{d\times d}},
		\\\|\cdot\|_{\bscal{X}_{\bfvarepsilon}^{q,p}(Q_T)}
		&:=\|\cdot\|_{L^{q(\cdot,\cdot)}(Q_T)^d}
		+\|\bfvarepsilon(\cdot)\|_{L^{p(\cdot,\cdot)}(Q_T)^{d\times d}}
		\end{align*}
		are Banach spaces. In addition, we have $\bscal{X}_{+}^{q,p}(Q_T)\embedding \bscal{X}_{\nb}^{q,p}(Q_T)
		\embedding\bscal{X}_{\bfvarepsilon}^{q,p}(Q_T)\embedding\bscal{X}_{-}^{q,p}(Q_T)$.
	\end{prop}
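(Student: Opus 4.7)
The statement splits into verifying that $\|\cdot\|_{\bscal{X}^{q,p}_{\nb}(Q_T)}$ and $\|\cdot\|_{\bscal{X}^{q,p}_{\bfvarepsilon}(Q_T)}$ are complete norms, and then establishing the four-term embedding chain.

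First, the norm axioms transfer directly from the Luxembourg norms on the variable exponent Lebesgue spaces, with definiteness following from the $L^{q(\cdot,\cdot)}(Q_T)^d$-summand alone. For completeness of $\bscal{X}^{q,p}_{\bfvarepsilon}(Q_T)$ (the argument for $\bscal{X}^{q,p}_{\nb}(Q_T)$ being verbatim), I would start with a Cauchy sequence $(\bsu_n)_n$ and extract limits $\bsu\in L^{q(\cdot,\cdot)}(Q_T)^d$ and $\bscal{E}\in L^{p(\cdot,\cdot)}(Q_T,\mathbb{M}^{d\times d}_{\sym})$ of $(\bsu_n)_n$ and $(\bfvarepsilon(\bsu_n))_n$. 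The Fubini-type modular identity $\rho_{q(\cdot,\cdot)}(\bsu_n-\bsu)=\int_I\rho_{q(t,\cdot)}((\bsu_n-\bsu)(t))\,dt$, together with its analogue for the $\bfvarepsilon$-component, followed by two successive subsequence extractions, yields a common subsequence along which $\bsu_{n_k}(t)\to \bsu(t)$ in $L^{q(t,\cdot)}(\Omega)^d$ and $\bfvarepsilon(\bsu_{n_k})(t)\to\bscal{E}(t)$ in $L^{p(t,\cdot)}(\Omega,\mathbb{M}^{d\times d}_{\sym})$ for almost every $t\in I$.

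Second, I would exploit this slice-wise convergence to simultaneously identify $\bfvarepsilon(\bsu)=\bscal{E}$ and verify $\bsu(t)\in X^{q,p}_{\bfvarepsilon}(t)$ for a.e.~$t$. For each such $t$, the sequence $(\bsu_{n_k}(t))_k\subseteq X^{q,p}_{\bfvarepsilon}(t)$ is Cauchy in $\widehat{X}^{q(t,\cdot),p(t,\cdot)}_{\bfvarepsilon}(\Omega)$ with limit $\bsu(t)$ satisfying $\bfvarepsilon(\bsu(t))=\bscal{E}(t)$; since $X^{q,p}_{\bfvarepsilon}(t)$ is closed in $\widehat{X}^{q(t,\cdot),p(t,\cdot)}_{\bfvarepsilon}(\Omega)$, the limit inherits membership. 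Re-assembling the slice-wise identities returns $\bfvarepsilon(\bsu)=\bscal{E}$ in $L^{p(\cdot,\cdot)}(Q_T,\mathbb{M}^{d\times d}_{\sym})$, whence $\bsu\in\bscal{X}^{q,p}_{\bfvarepsilon}(Q_T)$ and $\bsu_n\to\bsu$ in its norm.

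Third, I would treat the three inclusions separately. The middle inclusion $\bscal{X}^{q,p}_{\nb}(Q_T)\embedding\bscal{X}^{q,p}_{\bfvarepsilon}(Q_T)$ is immediate from $|\bfvarepsilon(\bsu)|\leq|\nb\bsu|$ pointwise together with the corresponding slice-wise inclusion (a common $C_0^\infty(\Omega)^d$-approximating sequence suffices). For $\bscal{X}^{q,p}_+(Q_T)\embedding\bscal{X}^{q,p}_{\nb}(Q_T)$, the continuous embeddings $L^{q^+}(\Omega)\embedding L^{q(t,\cdot)}(\Omega)$ and $L^{p^+}(\Omega)\embedding L^{p(t,\cdot)}(\Omega)$ on the bounded domain $\Omega$, combined with a Hölder step in $t$ with exponent $\max\{p^+,q^+\}$, give the required bound. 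The final embedding $\bscal{X}^{q,p}_{\bfvarepsilon}(Q_T)\embedding\bscal{X}^{q,p}_-(Q_T)$ is the least routine: for a.e.~$t$, any $C_0^\infty(\Omega)^d$-approximating sequence of $\bsu(t)\in X^{q,p}_{\bfvarepsilon}(t)$ becomes, after the embedding into the $L^{p^-}$-scale, Cauchy in the $\bfvarepsilon$-norm, and the classical Korn inequality with constant exponent $p^->1$ upgrades this to Cauchy in the $\nb$-norm. Hence $\bsu(t)\in X^{q^-,p^-}_{\nb}(\Omega)^d=X^{q,p}_-$ with a norm estimate in terms of $\|\bsu(t)\|_{L^{q(t,\cdot)}(\Omega)^d}$ and $\|\bfvarepsilon(\bsu)(t)\|_{L^{p(t,\cdot)}(\Omega,\mathbb{M}^{d\times d}_{\sym})}$; integration in $t$ via Hölder with exponent $\min\{p^-,q^-\}$ completes the embedding.

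The main obstacle is the time-slice preservation step: without a \emph{common} subsequence giving slice-wise convergence in both $\bsu_n(t)$ and $\bfvarepsilon(\bsu_n)(t)$ simultaneously, one cannot identify the slice-wise limit inside the closed subspace $X^{q,p}_{\bfvarepsilon}(t)$, and the norm-limit $\bsu$ would then only satisfy $\bfvarepsilon(\bsu)\in L^{p(\cdot,\cdot)}(Q_T,\mathbb{M}^{d\times d}_{\sym})$ without the defining boundary-type membership condition.
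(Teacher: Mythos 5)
Your proof is correct and follows essentially the same route as the paper's: pass from norm convergence to modular convergence, slice in $t$, extract an a.e.-convergent subsequence, and use completeness/closedness of the time-slice spaces $X^{q,p}_{\bfvarepsilon}(t)$ to recover the boundary membership condition, then handle the outer embeddings via $L^{q^\pm}$-inclusions and constant-exponent Korn. The only cosmetic differences are that the paper extracts a single subsequence by summing the two modulars into one $L^1(I)$-convergent quantity (whereas you extract twice) and that it applies the inclusions $L^{q^+}(Q_T)\hookrightarrow L^{q(\cdot,\cdot)}(Q_T)\hookrightarrow L^{q^-}(Q_T)$ globally on the cylinder rather than slice by slice; both are interchangeable.
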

	
	\begin{proof}
			We only give a proof for $\bscal{X}_{\bfvarepsilon}^{q,p}(Q_T)$, 
		since all arguments transfer to $\bscal{X}_{\nb}^{q,p}(Q_T)$. 
		
		The norm properties follow from the definition. Hence, it remains to prove completeness. To this end, let $(\bsu_n)_{n\in\setN}
		\subseteq\bscal{X}_{\bfvarepsilon}^{q,p}(Q_T)$ be a Cauchy sequence, 
		i.e., also $(\bsu_n)_{n\in\setN}\subseteq L^{q(\cdot,\cdot)}(Q_T)^d$ 
		and \linebreak $(\bfvarepsilon(\bsu_n))_{n\in\setN}\subseteq
		L^{p(\cdot,\cdot)}(Q_T,\mathbb{M}^{d\times d}_{\sym})$ are Cauchy sequences. 
		Thus, as $L^{q(\cdot,\cdot)}(Q_T)^d$ and 
		$L^{p(\cdot,\cdot)}(Q_T,\mathbb{M}^{d\times d}_{\sym})$ are Banach spaces, there exist 
		elements $\bsu\in L^{q(\cdot,\cdot)}(Q_T)^d$ 
		and $\bscal{E}\in L^{p(\cdot,\cdot
			)}(Q_T,\mathbb{M}^{d\times d}_{\sym})$, such that
		\begin{align}
		\begin{split}
		\begin{alignedat}{2}
		\bsu_n&\overset{n\to\infty}{\to} \bsu&&\quad
		\text{ in }L^{q(\cdot,\cdot)}(Q_T)^d,\\
		\bfvarepsilon(\bsu_n)&
		\overset{n\to\infty}{\to} \bscal{E}&&\quad
		\text{ in }L^{p(\cdot,\cdot)}(Q_T,\mathbb{M}^{d\times d}_{\sym}).
		\end{alignedat}
		\end{split}\label{eq:4.6.1}
		\end{align}
		Since norm convergence implies modular convergence, we infer from \eqref{eq:4.6.1}
		\begin{align}
		\begin{split}
		\int_I\rho_{q(t,\cdot)}(\bsu_n(t)-\bsu(t))
		&+\rho_{p(t,\cdot)}(\bfvarepsilon(\bsu_n)(t)-\bscal{E}(t))\,dt
		\\&=\rho_{q(\cdot,\cdot)}(\bsu_n-\bsu)
		+\rho_{p(\cdot,\cdot)}(\bfvarepsilon(\bsu_n)-\bscal{E})
		\overset{n\to \infty}{\to }0.
		\end{split}\label{eq:4.6.2}
		\end{align}
		Thus, \eqref{eq:4.6.2} yields a subsequence 
		$(\bsu_n)_{n\in\Lambda}\subseteq 
		\bscal{X}^{q,p}_{\bfvarepsilon}(Q_T)$, with 
		$\Lambda\subseteq \setN$, such that for~almost~every~${t\in I}$
		\begin{align}
		\rho_{q(t,\cdot)}(\bsu_n(t)-\bsu(t))
		+\rho_{p(t,\cdot)}(\bfvarepsilon(\bsu_n)(t)-\bscal{E}(t))
		\overset{n\to\infty}{\to }0\quad(n\in\Lambda).\label{eq:4.6.3}
		\end{align}
	As conversely modular convergence implies norm convergence, 
		\eqref{eq:4.6.3} yields for almost every~${t\in I}$
		\begin{align}
		\begin{split}
		\begin{alignedat}{2}
		\bsu_n(t)&\overset{n\to\infty}{\to}\bsu(t)&&\quad
		\text{ in }L^{q(t,\cdot)}(\Omega)^d\quad\quad(n\in\Lambda),
		\\\bfvarepsilon(\bsu_n)(t)
		&\overset{n\to\infty}{\to}\bscal{E}(t)&&\quad
		\text{ in }L^{p(t,\cdot)}(\Omega,\mathbb{M}^{d\times d}_{\sym})\quad(n\in\Lambda).
		\end{alignedat}
		\end{split}\label{eq:4.6.4}
		\end{align}
		From \eqref{eq:4.6.4} we derive that the sequence $(\bsu_n(t))_{n\in\Lambda}\subseteq X^{q,p}_{\bfvarepsilon}(t)$ 
		is  for almost every $t\in I$ a Cauchy sequence. Since for almost every $t\in I$ 
		the space $X^{q,p}_{\bfvarepsilon}(t)$ is a Banach space, we obtain for almost every $t\in I$ that $\bsu(t)\in X^{q,p}_{\bfvarepsilon}(t)$ with $\bfvarepsilon(\bsu)(t)=\bscal{E}(t)$ in $L^{p(t,\cdot)}(\Omega,\mathbb{M}^{d\times d}_{\sym})$ and $\bsu_n(t)\to\bsu(t)$ in $X^{q,p}_{\bfvarepsilon}(t)$ $(\Lambda\ni n\to \infty)$.
		In other words, we have 
		$\bsu\in \bscal{X}^{q,p}_{\bfvarepsilon}(Q_T)$ 
		and $\bsu_n\to\bsu$ in
		$\bscal{X}^{q,p}_{\bfvarepsilon}(Q_T)$ $(n\to \infty)$, i.e., $\bscal{X}^{q,p}_{\bfvarepsilon}(Q_T)$ is complete. 
		
		The embedding $\bscal{X}_{+}^{q,p}(Q_T)\embedding \bscal{X}_{\nb}^{q,p}(Q_T)$ follows from the embeddings $L^{q^+}(Q_T)^d\embedding  L^{q(\cdot,\cdot)}(Q_T)^d$ and $L^{p^+}(Q_T)^{d\times d}\embedding  L^{p(\cdot,\cdot)}(Q_T)^{d\times d}$ (cf.~\cite[Theorem 3.3.1.]{DHHR11}). The embedding $\bscal{X}_{\nb}^{q,p}(Q_T)\embedding  \bscal{X}^{q,p}_{\bfvarepsilon}(Q_T)$ is evident. 
		The embedding $\bscal{X}^{q,p}_{\bfvarepsilon}(Q_T)\embedding \bscal{X}_{-}^{q,p}(Q_T)$ follows from the embeddings $  L^{q(\cdot,\cdot)}(Q_T)^d\embedding L^{q^-}(Q_T)^d$ and $ L^{p(\cdot,\cdot)}(Q_T,\mathbb{M}^{d\times d}_{\sym})^d\embedding L^{p^-}(Q_T,\mathbb{M}^{d\times d}_{\sym})^d$ as well as Korn's inequality for the constant exponent $p^-\in \left(1,\infty\right)$.
		\hfill$\qed$
	\end{proof}
	
	Henceforth, 
	 this article will primarily treat the space $\bscal{X}_{\bfvarepsilon}^{q,p}(Q_T)$, as this
	 space is of greater interest with regard to the model problem \eqref{eq:model}. 
	 However, we emphasize that all following definitions and results admit congruent adaptations 
	 to $\bscal{X}_{\nb}^{q,p}(Q_T)$.
	
	\begin{cor}\label{4.7}
		Let
		$(\bsu_n)_{n\in\setN}
		\subseteq \bscal{X}^{q,p}_{\bfvarepsilon}(Q_T)$ 
		be a sequence,  such that $\bsu_n\to\bsu\text{ in }\bscal{X}^{q,p}_{\bfvarepsilon}(Q_T)$ ${(n\to\infty)}$.
		Then, there exists a subsequence $(\bsu_n)_{n\in\Lambda}\subseteq \bscal{X}^{q,p}_{\bfvarepsilon}(Q_T)$, 
		with $\Lambda\subseteq \setN$, such that 
		$\bsu_n(t)\to \bsu(t)$ in $X^{q,p}_{\bfvarepsilon}(t)$ $(\Lambda\ni n\to\infty)$
		for almost every $t\in I$.
	\end{cor}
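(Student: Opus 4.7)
The plan is to mimic the extraction argument already used in the proof of Proposition~\ref{4.6}, only starting from a convergent sequence rather than a Cauchy sequence; the ingredients are all in place, so this is essentially a direct corollary.

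First I would unpack the assumption $\bsu_n\to \bsu$ in $\bscal{X}^{q,p}_{\bfvarepsilon}(Q_T)$ into the two constituent convergences
\begin{align*}
\bsu_n&\to \bsu\quad\text{in }L^{q(\cdot,\cdot)}(Q_T)^d,\\
\bfvarepsilon(\bsu_n)&\to \bfvarepsilon(\bsu)\quad\text{in }L^{p(\cdot,\cdot)}(Q_T,\mathbb{M}^{d\times d}_{\sym}).
\end{align*}
Using that norm convergence in a variable exponent Lebesgue space implies modular convergence, this yields
\begin{align*}
\rho_{q(\cdot,\cdot)}(\bsu_n-\bsu)+\rho_{p(\cdot,\cdot)}(\bfvarepsilon(\bsu_n)-\bfvarepsilon(\bsu))\overset{n\to\infty}{\to}0.
\end{align*}

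Next I would rewrite both modulars as iterated integrals via Fubini, so that
\begin{align*}
\int_I\bigl[\rho_{q(t,\cdot)}(\bsu_n(t)-\bsu(t))+\rho_{p(t,\cdot)}(\bfvarepsilon(\bsu_n)(t)-\bfvarepsilon(\bsu)(t))\bigr]\,dt\overset{n\to\infty}{\to}0.
\end{align*}
Since this is an $L^1(I)$-convergence of non-negative functions of $t$ to zero, a standard extraction argument yields a subsequence $(\bsu_n)_{n\in\Lambda}$, with $\Lambda\subseteq\setN$, along which the integrand converges to zero pointwise for almost every $t\in I$, i.e.\
\begin{align*}
\rho_{q(t,\cdot)}(\bsu_n(t)-\bsu(t))+\rho_{p(t,\cdot)}(\bfvarepsilon(\bsu_n)(t)-\bfvarepsilon(\bsu)(t))\overset{n\to\infty}{\to}0\qquad (n\in\Lambda).
\end{align*}

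Finally, since $q,p\in\mathcal{P}^\infty(Q_T)$, modular convergence to zero in bounded variable exponent Lebesgue spaces implies norm convergence; applied slice-by-slice this gives, for almost every $t\in I$,
\begin{align*}
\bsu_n(t)&\to\bsu(t)\quad\text{in }L^{q(t,\cdot)}(\Omega)^d,\\
\bfvarepsilon(\bsu_n)(t)&\to\bfvarepsilon(\bsu)(t)\quad\text{in }L^{p(t,\cdot)}(\Omega,\mathbb{M}^{d\times d}_{\sym})
\end{align*}
as $\Lambda\ni n\to\infty$, which is precisely $\bsu_n(t)\to\bsu(t)$ in $X^{q,p}_{\bfvarepsilon}(t)$. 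There is no real obstacle here; the only point to be careful about is using the boundedness $p^+,q^+<\infty$ to pass between norm and modular convergence in both directions, but this is guaranteed by $q,p\in\mathcal{P}^\infty(Q_T)$ as in the proof of Proposition~\ref{4.6}.
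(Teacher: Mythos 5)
Your proposal is correct and follows exactly the extraction argument that the paper uses in the proof of Proposition~\ref{4.6} (the chain \eqref{eq:4.6.1}--\eqref{eq:4.6.4}), which is indeed what the corollary is implicitly relying on. Your explicit note about needing $q^+,p^+<\infty$ for the equivalence of norm and modular convergence in both directions is the only non-trivial technical point, and you handle it correctly.
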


	\begin{prop}\label{4.8}
		The mapping $\bfPi_{\bfvarepsilon}:\bscal{X}^{q,p}_{\bfvarepsilon}(Q_T)
		\to L^{q(\cdot,\cdot)}(Q_T)^d\times L^{p(\cdot,\cdot)}(Q_T,\mathbb{M}_{\sym}^{d\times d})$,
		 given via
		\begin{align*}
			\bfPi_{\bfvarepsilon}\bsu
			:=(\bsu,\bfvarepsilon(\bsu))^\top 
			\quad\text{ in }L^{q(\cdot,\cdot)}(Q_T)^d\times L^{p(\cdot,\cdot)}(Q_T,\mathbb{M}_{\sym}^{d\times d})
		\end{align*}
		for every $\bsu\in \bscal{X}^{q,p}_{\bfvarepsilon}(Q_T)$, 
		is an isometric isomorphism into its range $R(\bfPi_{\bfvarepsilon})$. In particular, $\bscal{X}_{\bfvarepsilon}^{q,p}(Q_T)$ is separable. In addition, if $q^-,p^->1$, then $\bscal{X}_{\bfvarepsilon}^{q,p}(Q_T)$ is reflexive.
	\end{prop}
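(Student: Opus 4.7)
The plan is to verify the stated properties directly from the definition of the norm on $\bscal{X}_{\bfvarepsilon}^{q,p}(Q_T)$ given in Proposition~\ref{4.6}, and then transfer separability and reflexivity from the ambient product space to $\bscal{X}_{\bfvarepsilon}^{q,p}(Q_T)$ via the isometric image.

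First I would observe that $\bfPi_{\bfvarepsilon}$ is evidently linear, and by the very definition of the norms it is isometric:
\begin{align*}
\|\bfPi_{\bfvarepsilon}\bsu\|_{L^{q(\cdot,\cdot)}(Q_T)^d\times L^{p(\cdot,\cdot)}(Q_T,\mathbb{M}_{\sym}^{d\times d})}
=\|\bsu\|_{L^{q(\cdot,\cdot)}(Q_T)^d}+\|\bfvarepsilon(\bsu)\|_{L^{p(\cdot,\cdot)}(Q_T,\mathbb{M}_{\sym}^{d\times d})}
=\|\bsu\|_{\bscal{X}_{\bfvarepsilon}^{q,p}(Q_T)}.
\end{align*}
In particular, $\bfPi_{\bfvarepsilon}$ is injective, so it is a bijection onto its range. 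Since $\bscal{X}_{\bfvarepsilon}^{q,p}(Q_T)$ is complete (cf.~Proposition \ref{4.6}) and $\bfPi_{\bfvarepsilon}$ is an isometry, $R(\bfPi_{\bfvarepsilon})$ is a complete, hence closed, subspace of the product space.

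Next, I would invoke that $L^{q(\cdot,\cdot)}(Q_T)^d$ and $L^{p(\cdot,\cdot)}(Q_T,\mathbb{M}_{\sym}^{d\times d})$ are both separable variable exponent Lebesgue spaces (a standard property recorded in the references \cite{KR91,DHHR11,CUF13} underlying Subsection on variable exponent spaces). Consequently, their Cartesian product equipped with the sum norm is separable. Since separability passes to subspaces, $R(\bfPi_{\bfvarepsilon})$ is separable; because $\bfPi_{\bfvarepsilon}:\bscal{X}_{\bfvarepsilon}^{q,p}(Q_T)\to R(\bfPi_{\bfvarepsilon})$ is an isometric isomorphism, it follows that $\bscal{X}_{\bfvarepsilon}^{q,p}(Q_T)$ is separable as well.

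For the reflexivity statement under the assumption $q^-,p^->1$, I would appeal to Proposition~\ref{riesz}, which guarantees the reflexivity of $L^{q(\cdot,\cdot)}(Q_T)^d$ and $L^{p(\cdot,\cdot)}(Q_T,\mathbb{M}_{\sym}^{d\times d})$. The Cartesian product of two reflexive Banach spaces (with the sum norm) is reflexive, and a closed subspace of a reflexive Banach space is reflexive. Hence $R(\bfPi_{\bfvarepsilon})$ is reflexive, and via the isometric isomorphism $\bfPi_{\bfvarepsilon}$ this property is transferred to $\bscal{X}_{\bfvarepsilon}^{q,p}(Q_T)$. No real obstacle is anticipated here: the only point that requires a moment's attention is making sure the closedness of $R(\bfPi_{\bfvarepsilon})$ is stated as a direct consequence of the completeness result of Proposition~\ref{4.6}, so that the transfer of separability and reflexivity via closed subspaces is legitimate.
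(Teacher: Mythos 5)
Your proposal is correct and follows essentially the same route as the paper's proof: isometry from the definition of the norms, closedness of the range via the completeness established in Proposition~\ref{4.6}, and transfer of separability and reflexivity from the product space $L^{q(\cdot,\cdot)}(Q_T)^d\times L^{p(\cdot,\cdot)}(Q_T,\mathbb{M}_{\sym}^{d\times d})$ through the closed subspace $R(\bfPi_{\bfvarepsilon})$ and the isometric isomorphism. No gap.
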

	
	\begin{proof}
		Since $\bfPi_{\bfvarepsilon}:
		\bscal{X}_{\bfvarepsilon}^{q,p}(Q_T)\to 
		L^{q(\cdot,\cdot)}(Q_T)^d\times 
		L^{p(\cdot,\cdot)}(Q_T,\mathbb{M}_{\sym}^{d\times d})$ 
		is an isometry and $	\bscal{X}_{\bfvarepsilon}^{q,p}(Q_T)$ 
		a Banach space (cf.~Proposition~\ref{4.6}),  $R(\bfPi_{\bfvarepsilon})$ 
		is a closed subspace of ${L^{q(\cdot,\cdot)}(Q_T)^d\times L^{p(\cdot,\cdot)}(Q_T,\mathbb{M}_{\textup{sym}}^{d\times d})}$, i.e.,
		$R(\bfPi_{\bfvarepsilon})$ is a Banach space, 
		and therefore $\bfPi_{\bfvarepsilon}:	\bscal{X}_{\bfvarepsilon}^{q,p}(Q_T)\to R(\bfPi_{\bfvarepsilon})$ 
		an isometric isomorphism. 
		As $L^{q(\cdot,\cdot)}(Q_T)^d$ and $L^{p(\cdot,\cdot)}(Q_T,\mathbb{M}_{\sym}^{d\times d})$ are separable, the product 
		${L^{q(\cdot,\cdot)}(Q_T)^d\times L^{p(\cdot,\cdot)}(Q_T,\mathbb{M}_{\sym}^{d\times d})}$ is separable as well. Thus, $R(\bfPi_{\bfvarepsilon})$ inherits the separability of 
		${L^{q(\cdot,\cdot)}(Q_T)^d\times L^{p(\cdot,\cdot)}(Q_T,\mathbb{M}_{\sym}^{d\times d})}$, 
		and in virtue of the isomorphism $\bfPi_{\bfvarepsilon}:
		\bscal{X}_{\bfvarepsilon}^{q,p}(Q_T)\to R(\bfPi_{\bfvarepsilon})$ also $	\bscal{X}_{\bfvarepsilon}^{q,p}(Q_T)$. If additionally  $q^-,p^->1$, then the spaces $L^{q(\cdot,\cdot)}(Q_T)^d$ and $L^{p(\cdot,\cdot)}(Q_T,\mathbb{M}_{\sym}^{d\times d})$, and thus also the product $L^{q(\cdot,\cdot)}(Q_T)^d\times L^{p(\cdot,\cdot)}(Q_T,\mathbb{M}_{\sym}^{d\times d})$, are reflexive (cf.~Proposition~\ref{riesz}). 
		Hence, the range $R(\bfPi_{\bfvarepsilon})$ is reflexive and, in virtue of the  isomorphism 
		${\bfPi_{\bfvarepsilon}:
			\bscal{X}_{\bfvarepsilon}^{q,p}(Q_T)\to R(\bfPi_{\bfvarepsilon})}$, also $	\bscal{X}_{\bfvarepsilon}^{q,p}(Q_T)$.~\hfill$\qed$
	\end{proof}
	
	\begin{prop}[Characterization of 
		$\bscal{X}^{q,p}_{\bfvarepsilon}(Q_T)^*$]\label{4.9} Let $q^-,p^->1$. Then, the operator
	    $\bscal{J}_{\bfvarepsilon}:
	   L^{q'(\cdot,\cdot)}(Q_T)^d\times 
	   L^{p'(\cdot,\cdot)}(Q_T,\mathbb{M}_{\sym}^{d\times d})\to \bscal{X}^{q,p}_{\bfvarepsilon}(Q_T)^*$,
		given via
		\begin{align*}
		\langle	\bscal{J}_{\bfvarepsilon}(\bsf,
		\bsF),\bsu\rangle
		_{\bscal{X}^{q,p}_{\bfvarepsilon}(Q_T)}&:=( \bsf,\bsu)_{L^{q(\cdot,\cdot)}(Q_T)^d}
	 	+(\bsF,
	 	\bfvarepsilon(\bsu))_{L^{p(\cdot,\cdot)}(Q_T)^{d\times d}}
		\end{align*}
		for every $\bsf\in L^{q'(\cdot,\cdot)}(Q_T)^d$, 
		$\bsF\in L^{p'(\cdot,\cdot)}(Q_T,\mathbb{M}_{\sym}^{d\times d})$ 
		and $\bsu\in\bscal{X}^{q,p}_{\bfvarepsilon}(Q_T)$, 
		is well-defined, linear and Lipschitz continuous with constant $2$.  In addition, for every $\bsu^*\in \bscal{X}^{q,p}_{\bfvarepsilon}(Q_T)^*$ there exist $\bsf\in L^{q'(\cdot,\cdot)}(Q_T)^d$, 
		$\bsF\in L^{p'(\cdot,\cdot)}(Q_T,\mathbb{M}_{\sym}^{d\times d})$, such that $\bsu^*=\bscal{J}_{\bfvarepsilon}(\bsf,\bsF)$ in $\bscal{X}^{q,p}_{\bfvarepsilon}(Q_T)^*$ and
		\begin{align}
		\frac{1}{2}\|\bsu^*\|_{\bscal{X}^{q,p}_{\bfvarepsilon}(Q_T)^*}\leq \|\bsf\|_{L^{q'(\cdot,\cdot)}(Q_T)^d}+\|\bsF\|_{L^{p'(\cdot,\cdot)}(Q_T)^{d\times d}}\leq 2\|\bsu^*\|_{\bscal{X}^{q,p}_{\bfvarepsilon}(Q_T)^*}.\label{eq:4.9}
		\end{align}
	\end{prop}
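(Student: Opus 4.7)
The plan is to proceed in three stages: establish well-definedness and Lipschitz continuity of $\bscal{J}_{\bfvarepsilon}$, then surjectivity via a Hahn--Banach extension combined with the dual characterization of variable exponent Lebesgue spaces (Proposition \ref{riesz}), and finally assemble the two-sided norm estimate \eqref{eq:4.9}.

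First, I would verify that $\bscal{J}_{\bfvarepsilon}(\bsf,\bsF)$ lies in $\bscal{X}^{q,p}_{\bfvarepsilon}(Q_T)^*$ for every admissible pair $(\bsf,\bsF)^\top$. Linearity is immediate. For the Lipschitz bound with constant $2$, the key tool is H\"older's inequality in the variable exponent Lebesgue spaces $L^{q(\cdot,\cdot)}(Q_T)^d$ and $L^{p(\cdot,\cdot)}(Q_T,\mathbb{M}^{d\times d}_{\sym})$; each H\"older estimate contributes a factor $2$, and the sum structure of the norm on $\bscal{X}^{q,p}_{\bfvarepsilon}(Q_T)$ together with the trivial bound $\|\bfvarepsilon(\bsu)\|_{L^{p(\cdot,\cdot)}}\leq \|\bsu\|_{\bscal{X}^{q,p}_{\bfvarepsilon}(Q_T)}$ allows me to absorb both contributions into the single overall constant $2$.

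For the representation of an arbitrary $\bsu^*\in \bscal{X}^{q,p}_{\bfvarepsilon}(Q_T)^*$, the crucial ingredient is the isometric embedding $\bfPi_{\bfvarepsilon}:\bscal{X}^{q,p}_{\bfvarepsilon}(Q_T)\to L^{q(\cdot,\cdot)}(Q_T)^d\times L^{p(\cdot,\cdot)}(Q_T,\mathbb{M}^{d\times d}_{\sym})$ from Proposition \ref{4.8}, whose range $R(\bfPi_{\bfvarepsilon})$ is a closed subspace. The composition $\bsu^*\circ \bfPi_{\bfvarepsilon}^{-1}$ defines a bounded linear functional on $R(\bfPi_{\bfvarepsilon})$ whose operator norm coincides with $\|\bsu^*\|_{\bscal{X}^{q,p}_{\bfvarepsilon}(Q_T)^*}$. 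By the Hahn--Banach theorem, I would extend this to a norm-preserving bounded linear functional $\bsL$ on the whole product space. Restricting $\bsL$ to each factor yields continuous linear functionals on $L^{q(\cdot,\cdot)}(Q_T)^d$ and $L^{p(\cdot,\cdot)}(Q_T,\mathbb{M}^{d\times d}_{\sym})$ separately, and Proposition \ref{riesz} (which uses $q^-,p^->1$) represents these as integration against elements $\bsf\in L^{q'(\cdot,\cdot)}(Q_T)^d$ and $\bsF\in L^{p'(\cdot,\cdot)}(Q_T,\mathbb{M}^{d\times d}_{\sym})$.

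The identity $\bsu^*=\bscal{J}_{\bfvarepsilon}(\bsf,\bsF)$ in $\bscal{X}^{q,p}_{\bfvarepsilon}(Q_T)^*$ then follows by testing with $\bfPi_{\bfvarepsilon}\bsu=(\bsu,\bfvarepsilon(\bsu))^\top$ for $\bsu\in \bscal{X}^{q,p}_{\bfvarepsilon}(Q_T)$ and unwinding the definitions. The left inequality in \eqref{eq:4.9} is a direct reading of the Lipschitz bound from the first stage, while the right inequality combines the Hahn--Banach norm-preservation with the upper bound $\|f\|_{L^{p'(\cdot)}}\leq 2\|f^*\|_{L^{p(\cdot)*}}$ from Proposition \ref{riesz}. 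I do not expect any genuine obstacle here: the structural argument is the standard Hahn--Banach plus isometric embedding routine, and the only mildly delicate point is the careful bookkeeping of the factors of $2$ produced by variable exponent H\"older on one side and the duality isomorphism $J$ of Proposition \ref{riesz} on the other.
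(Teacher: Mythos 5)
Your proposal follows essentially the same route as the paper's own proof: well-definedness and Lipschitz continuity from variable-exponent H\"older, then surjectivity via the isometric embedding $\bfPi_{\bfvarepsilon}$, a norm-preserving Hahn--Banach extension to $L^{q(\cdot,\cdot)}(Q_T)^d\times L^{p(\cdot,\cdot)}(Q_T,\mathbb{M}^{d\times d}_{\sym})$, and Proposition~\ref{riesz} to realize that extension as $\bsJ_{q,p}(\bsf,\bsF)$. Your phrasing of ``restricting $\bsL$ to each factor'' is precisely the content of the isomorphism $\bsJ_{q,p}$ the paper isolates in Remark~\ref{4.9help}, so the two arguments are the same.

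One small point deserves a bit more care than your final paragraph lets on. If you literally restrict the extension $\bsL$ to each factor and then apply the estimate $\|f\|_{L^{p'(\cdot)}}\le 2\|f^*\|$ coordinatewise, you get $\|\bsf\|\le 2\|\bsL\|$ and $\|\bsF\|\le 2\|\bsL\|$ separately (the dual of the $\ell^1$-sum norm on the product is the $\ell^\infty$-max norm, so $\max(\|\bsL|_{L^q}\|,\|\bsL|_{L^p}\|)\le\|\bsL\|$, not the sum). Adding these naively produces $\|\bsf\|+\|\bsF\|\le 4\|\bsu^*\|$, not the claimed $2\|\bsu^*\|$. The paper sidesteps this by directly quoting the two-sided estimate \eqref{eq:4.9help} for $\bsJ_{q,p}$ and invoking the chain $\|\bsu^*\|=\|\widetilde\bsu\|=\|\widehat\bsu\|=\|\bsJ_{q,p}(\bsf,\bsF)\|$; so if you want to land exactly the stated constants, route through that remark rather than through coordinatewise restriction, and be explicit about why the factor-of-$2$ loss from H\"older is not incurred twice.
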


	\begin{rmk}\label{4.9help}
		The operator $\bscal{J}_{\bfvarepsilon}:
		L^{q'(\cdot,\cdot)}(Q_T)^d\times 
		L^{p'(\cdot,\cdot)}(Q_T,\mathbb{M}_{\sym}^{d\times
                  d})\to \bscal{X}^{q,p}_{\bfvarepsilon}(Q_T)^*$ is
                closely related to the adjoint operator  $\bfPi_{\bfvarepsilon}^*:(L^{q(\cdot,\cdot)}(Q_T)^d\times L^{p(\cdot,\cdot)}(Q_T,\mathbb{M}_{\sym}^{d\times d}))^*\to \bscal{X}^{q,p}_{\bfvarepsilon}(Q_T)^*$. In fact, consider  $\bsJ_{q,p}:L^{q'(\cdot,\cdot)}(Q_T)^d\times 
		L^{p'(\cdot,\cdot)}(Q_T,\mathbb{M}_{\sym}^{d\times d})\to (L^{q(\cdot,\cdot)}(Q_T)^d\times 
		L^{p(\cdot,\cdot)}(Q_T,\mathbb{M}_{\sym}^{d\times d}))^*$, given via
		\begin{align*}
			\langle \bsJ_{q,p}(\bsf,\bsF),(\bsg,\bsG)^\top\rangle_{L^{q(\cdot,\cdot)}(Q_T)^d\times 
				L^{p(\cdot,\cdot)}(Q_T,\mathbb{M}_{\sym}^{d\times d})}:=(\bsf,\bsg)_{L^{q(\cdot,\cdot)}(Q_T)^d}+(\bsF,\bsG)_{L^{p(\cdot,\cdot)}(Q_T)^{d\times d}}
		\end{align*}
		for every $\bsf \in L^{q'(\cdot,\cdot)}(Q_T)^d$, $\bsg\in L^{q(\cdot,\cdot)}(Q_T)^d$, $\bsF\in L^{p'(\cdot,\cdot)}(Q_T,\mathbb{M}_{\sym}^{d\times d})$ and $\bsG\in L^{p(\cdot,\cdot)}(Q_T,\mathbb{M}_{\sym}^{d\times d})$. Using Proposition \ref{riesz}, we deduce for every $\bsf \in L^{q'(\cdot,\cdot)}(Q_T)^d$ and $\bsF\in L^{p'(\cdot,\cdot)}(Q_T,\mathbb{M}_{\sym}^{d\times d})$ that
		\begin{align}
		\begin{split}
			\frac{1}{2} \|(\bsf,\bsF)^\top\|_{L^{q'(\cdot,\cdot)}(Q_T)^d\times L^{p'(\cdot,\cdot)}(Q_T)^{d\times d}}
			&\leq \| \bsJ_{q,p}(\bsf,\bsF)\|_{(L^{q(\cdot,\cdot)}(Q_T)^d\times 
				L^{p(\cdot,\cdot)}(Q_T,\mathbb{M}_{\sym}^{d\times d}))^*}\\&\leq 2\|(\bsf,\bsF)^\top\|_{L^{q'(\cdot,\cdot)}(Q_T)^d\times L^{p'(\cdot,\cdot)}(Q_T)^{d\times d}},
			\end{split}\label{eq:4.9help}
		\end{align}
		i.e., $\bsJ_{q,p}:L^{q'(\cdot,\cdot)}(Q_T)^d\times 
		L^{p'(\cdot,\cdot)}(Q_T,\mathbb{M}_{\sym}^{d\times d})\to (L^{q(\cdot,\cdot)}(Q_T)^d\times 
		L^{p(\cdot,\cdot)}(Q_T,\mathbb{M}_{\sym}^{d\times d}))^*$ is an isomorphism. Moreover, we have $\bscal{J}_{\bfvarepsilon}=\bfPi_{\bfvarepsilon}^*\circ\bsJ_{q,p}$, i.e., $\bscal{J}_{\bfvarepsilon}$ and $\bfPi_{\bfvarepsilon}^*$ coincide up to~the~isomorphism~$\bsJ_{q,p}$.
	\end{rmk}

	\begin{proof}(of Proposition \ref{4.9})
		Well-definedness, linearity and Lipschitz continuity
                with constant $2$ follow from Hölder's inequality. So,
                let us prove the surjectivity including
                \eqref{eq:4.9}. For ${\bsu^*\in
                \bscal{X}^{q,p}_{\bfvarepsilon}(Q_T)^*}$ we define
                $\widetilde \bsu \in (R(\bfPi_{\bfvarepsilon}))^*$ via
                $\langle \widetilde \bsu ,
                \bfPi_{\bfvarepsilon}\bsu\rangle _{R(\bfPi_{\bfvarepsilon})}:=\langle  \bsu^* ,
                \bsu\rangle_{\bscal{X}^{q,p}_{\bfvarepsilon}(Q_T)}
                $ for every $\bsu\in \bscal{X}^{q,p}_{\bfvarepsilon}(Q_T)$. Since $\bfPi_{\bfvarepsilon}$ is an isometric
                isomorphism, we have $\norm{\widetilde \bsu }_{(R(\bfPi_{\bfvarepsilon}))^*}=\norm{\bsu ^*}_{\bscal{X}^{q,p}_{\bfvarepsilon}(Q_T)^*}$.
                The Hahn--Banach theorem yields a norm preserving
                extension $\widehat \bsu \in ( L^{q(\cdot,\cdot)}(Q_T)^d\times 
		L^{p(\cdot,\cdot)}(Q_T,\mathbb{M}_{\sym}^{d\times d}))^*$ of $\widetilde \bsu \in
                (R(\bfPi_{\bfvarepsilon}))^*$. From
                Remark~\ref{4.9help} follows that there exist
                functions $\bsf\in L^{q'(\cdot,\cdot)}(Q_T)^d$ and
                $\bsF\in
                L^{p'(\cdot,\cdot)}(Q_T,\mathbb{M}_{\sym}^{d\times
                  d})$, such that $\bsJ_{q,p}(\bsf,\bsF) =\widehat
                \bsu$ in $(L^{q(\cdot,\cdot)}(Q_T)^d\times 
		L^{p(\cdot,\cdot)}(Q_T,\mathbb{M}_{\sym}^{d\times d}))^*$. 
                Consequently, there holds for every $\bsu\in\bscal{X}^{q,p}_{\bfvarepsilon}(Q_T)$
		\begin{align*}
		\langle \bsu^*,\bsu
		\rangle_{\bscal{X}^{q,p}_{\bfvarepsilon}(Q_T)}
                  &=\langle \widetilde \bsu, \bfPi_{\bfvarepsilon}\bsu
                    \rangle_{R(\bfPi_{\bfvarepsilon})}
                  \\
                  &=\langle \widehat \bsu, \bfPi_{\bfvarepsilon}\bsu
                    \rangle_{L^{q(\cdot,\cdot)}(Q_T)^d\times 
			L^{p(\cdot,\cdot)}(Q_T,\mathbb{M}_{\sym}^{d\times d})}
		\\&=
		\langle 	\bsJ_{q,p}
		(\bsf,\bsF)
		,\bfPi_{\bfvarepsilon}\bsu
		\rangle_{L^{q(\cdot,\cdot)}(Q_T)^d\times 
			L^{p(\cdot,\cdot)}(Q_T,\mathbb{M}_{\sym}^{d\times d})}
		\\&=(\bsf,\bsu)_{L^{q(\cdot,\cdot)}(Q_T)^d}+(\bsF,\bfvarepsilon(\bsu))_{L^{p(\cdot,\cdot)}(Q_T)^{d\times d}}
		\\&=\langle \bscal{J}_{\bfvarepsilon}
		(\bsf,\bsF),\bsu\rangle
		_{\bscal{X}^{q,p}_{\bfvarepsilon}(Q_T)},
		\end{align*}
		i.e., $\bsu^*=\bscal{J}_{\bfvarepsilon}
		(\bsf,\bsF)$ in $\bscal{X}^{q,p}_{\bfvarepsilon}(Q_T)^*$. 
		Thus, ${\bscal{J}_{\bfvarepsilon}:L^{q'(\cdot,\cdot)}(Q_T)^d\times
		L^{p'(\cdot,\cdot)}(Q_T,\mathbb{M}_{\sym}^{d\times d})\to
		\bscal{X}^{q,p}_{\bfvarepsilon}(Q_T)^*}$ is surjective. 
		In addition, from the above follows that $\norm{\bsu
                  ^*}_{\bscal{X}^{q,p}_{\bfvarepsilon}(Q_T)^*}=\norm{\widetilde
                  \bsu }_{(R(\bfPi_{\bfvarepsilon}))^*}=$ \linebreak$\norm{\widehat \bsu }_{(L^{q(\cdot,\cdot)}(Q_T)^d\times 
			L^{p(\cdot,\cdot)}(Q_T,\mathbb{M}_{\sym}^{d\times d}))^*}=\norm{\bsJ_{q,p}
		(\bsf,\bsF)}_{(L^{q(\cdot,\cdot)}(Q_T)^d\times 
			L^{p(\cdot,\cdot)}(Q_T,\mathbb{M}_{\sym}^{d\times
                          d}))^*}$ and \eqref{eq:4.9} follows from \eqref{eq:4.9help}.\hfill$\qed$
\end{proof}

	In the same manner, we can characterize the dual space of $X_{\bfvarepsilon}^{q,p}=X^{q(\cdot),p(\cdot)}_{\bfvarepsilon}(\Omega)$.
	
	\begin{cor}\label{3.8}
		Let $q,p\in \mathcal{P}^\infty(\Omega)$. Then,  the mapping ${\Pi_{\bfvarepsilon}:X_{\bfvarepsilon}^{q,p}\to L^{q(\cdot)}(\Omega)^d\times L^{p(\cdot)}(\Omega,\mathbb{M}^{d\times d}_{\sym})}$, for every $\bfu\in X_{\bfvarepsilon}^{q,p}$  given via
		\begin{align*}
		\Pi_{\bfvarepsilon}\bfu:=(\bfu,\bfvarepsilon(\bfu))^\top\quad\textup{ in }L^{q(\cdot)}(\Omega)^d\times L^{p(\cdot)}(\Omega,\mathbb{M}^{d\times d}_{\sym}),
		\end{align*}
		is an isometric isomorphism. In particular,  $X_{\bfvarepsilon}^{q,p}$ is separable. Furthermore, if $q^-,p^->1$, then $X_{\bfvarepsilon}^{q,p}$ is reflexive and the operator  $\mathcal{J}_{\bfvarepsilon}:L^{q'(\cdot)}(\Omega)^d\times L^{p'(\cdot)}(\Omega,\mathbb{M}^{d\times d}_{\sym})\to (X_{\bfvarepsilon}^{q,p})^*$, for every ${\bff\in L^{q'(\cdot)}(\Omega)^d}$, $\bfF\in L^{p'(\cdot)}(\Omega,\mathbb{M}^{d\times d}_{\sym})$ and $\bfu\in X_{\bfvarepsilon}^{q,p}$ given via 
		\begin{align*}
		\langle\mathcal{J}_{\bfvarepsilon}(\bff,\bfF),\bfu\rangle_{X_{\bfvarepsilon}^{q,p}}=(\bff,\bfu)_{L^{q(\cdot)}(\Omega)^d}+( \bfF,\bfvarepsilon(\bfu))_{L^{p(\cdot)}(\Omega)^{d\times d}}
		\end{align*}
		is well-defined, linear and Lipschitz continuous with constant $2$. In additon, for every ${\bfu^*\in (X_{\bfvarepsilon}^{q,p})^*}$ there exist $\bff\in L^{q'(\cdot)}(\Omega)^d$ and $\bfF\in L^{p'(\cdot)}(\Omega,\mathbb{M}^{d\times d}_{\sym})$, such that $\bfu^*=\mathcal{J}_{\bfvarepsilon}(\bff,\bfF)$ in $(X_{\bfvarepsilon}^{q,p})^*$ and
		\begin{align*}
		\frac{1}{2}\|\bfu^*\|_{(X_{\bfvarepsilon}^{q,p})^*}\leq\|\bff\|_{L^{q'(\cdot)}(\Omega)^d}+\|\bfF\|_{L^{p'(\cdot)}(\Omega)^{d\times d}}\leq 2	\|\bfu^*\|_{(X_{\bfvarepsilon}^{q,p})^*}.
		\end{align*}
	\end{cor}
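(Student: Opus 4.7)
The proof is a direct transcription of the arguments in Propositions \ref{4.8} and \ref{4.9} to the steady setting; the time variable plays no structural role in those proofs, so replacing $Q_T$ with $\Omega$ throughout is essentially mechanical. First, that $\Pi_{\bfvarepsilon}$ is an isometry is immediate from the definition of $\|\cdot\|_{\widehat{X}^{q(\cdot),p(\cdot)}_{\bfvarepsilon}(\Omega)}$ and the product norm on the target. Since $X_{\bfvarepsilon}^{q,p}$ is by definition a closed subspace of the Banach space $\widehat{X}^{q(\cdot),p(\cdot)}_{\bfvarepsilon}(\Omega)$, it is itself complete, and therefore $R(\Pi_{\bfvarepsilon})$ is closed in $L^{q(\cdot)}(\Omega)^d\times L^{p(\cdot)}(\Omega,\mathbb{M}^{d\times d}_{\sym})$. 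Hence $\Pi_{\bfvarepsilon}:X_{\bfvarepsilon}^{q,p}\to R(\Pi_{\bfvarepsilon})$ is an isometric isomorphism, which I then use to transport separability (always) and reflexivity (when $q^-,p^->1$) from the product target. Separability of the factors is standard for variable exponent Lebesgue spaces; reflexivity follows from Proposition~\ref{riesz} under the assumption $q^-,p^->1$ and is inherited by the closed subspace $R(\Pi_{\bfvarepsilon})$, and therefore by $X_{\bfvarepsilon}^{q,p}$.

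For the dual characterization, I would first introduce the auxiliary product pairing $J_{q,p}:L^{q'(\cdot)}(\Omega)^d\times L^{p'(\cdot)}(\Omega,\mathbb{M}^{d\times d}_{\sym})\to (L^{q(\cdot)}(\Omega)^d\times L^{p(\cdot)}(\Omega,\mathbb{M}^{d\times d}_{\sym}))^*$ exactly in the spirit of Remark~\ref{4.9help}; Proposition~\ref{riesz} applied componentwise shows that $J_{q,p}$ is an isomorphism satisfying the analogue of \eqref{eq:4.9help}. Well-definedness, linearity, and the $2$-Lipschitz bound for $\mathcal{J}_{\bfvarepsilon}$ then follow from H\"older's inequality. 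For surjectivity together with the two-sided norm estimate, given $\bfu^*\in (X_{\bfvarepsilon}^{q,p})^*$, I would pull it back along $\Pi_{\bfvarepsilon}^{-1}$ to a norm-preserving $\widetilde{\bfu}\in R(\Pi_{\bfvarepsilon})^*$, extend by Hahn--Banach to a norm-preserving $\widehat{\bfu}$ on the whole product, identify $\widehat{\bfu}=J_{q,p}(\bff,\bfF)$ via the isomorphism, and verify $\bfu^*=\mathcal{J}_{\bfvarepsilon}(\bff,\bfF)$ by unwinding the definitions; the claimed two-sided norm bound is then a direct consequence of the analogue of \eqref{eq:4.9help}.

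There is no genuine obstacle: every step is a verbatim reduction of the corresponding step in Propositions \ref{4.8} and \ref{4.9} to the time-independent setting, with measure-theoretic preliminaries replaced by their trivial pointwise counterparts. The only thing worth tracking carefully is the hypothesis bookkeeping: the isometric isomorphism and separability statements require only $q,p\in\mathcal{P}^\infty(\Omega)$, whereas reflexivity and the dual characterization through $\mathcal{J}_{\bfvarepsilon}$ require additionally $q^-,p^->1$ in order to invoke Proposition~\ref{riesz}.
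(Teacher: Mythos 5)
Your proof is correct and follows precisely the approach the paper intends: the paper itself does not supply a separate argument for Corollary~\ref{3.8}, stating only that it can be proved \textquotedblleft in the same manner\textquotedblright\ as Propositions~\ref{4.8} and~\ref{4.9} (together with Remark~\ref{4.9help}), and your transcription of those arguments to the time-independent setting, including the closed-range observation, the componentwise application of Proposition~\ref{riesz}, and the Hahn--Banach extension step, is exactly what is meant. Your bookkeeping of which claims need only $q,p\in\mathcal{P}^\infty(\Omega)$ and which additionally need $q^-,p^->1$ is also accurate.
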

	
	The following remark deals with the question of a well-posed definition
	of a time evaluation of functionals in $\bscal{X}^{q,p}_{\bfvarepsilon}(Q_T)^*$.
	\begin{rmk}
		[Time slices of functionals $\bsu^*\in \bscal{X}^{q,p}_{\bfvarepsilon}(Q_T)^*$]
		\label{3.9}Let $q^-,p^->1$.
		\begin{description}[{(ii)}]
			\item[(i)] Since for functions $\bsf\in
                          L^{q'(\cdot,\cdot)}(Q_T)^d$ and $\bsF\in
                          L^{p'(\cdot,\cdot)}(Q_T,\mathbb{M}_{\sym}^{d\times
                            d})$ it holds $\bsf(t)\in
                          L^{q'(t,\cdot)}(\Omega)^d$ and $\bsF(t)\in
                          L^{p'(t,\cdot)}(\Omega,\mathbb{M}_{\sym}^{d\times
                            d})$ for almost every $t\in I$ by Fubini's
                          theorem, Corollary \ref{3.8} yields
                          $\mathcal{J}_{\bfvarepsilon}(\bsf(t),\bsF(t))\in
                          X^{q,p}_{\bfvarepsilon}(t)^*$ for almost
                          every $t\in I$. Fubini's theorem also
                          implies, due to Hölder's inequality, for every
                          $\bsu\in
                          \bscal{X}^{q,p}_{\bfvarepsilon}(Q_T)$, i.e.,
                          $\bsu(t)\in X^{q,p}_{\bfvarepsilon}(t)$ for
                          almost every $t\in I$, that
                          $(t\mapsto\langle
                         \mathcal{J}_{\bfvarepsilon}(\bsf(t),\bsF(t)),\bsu(t)\rangle_{X^{q,p}_{\bfvarepsilon}(t)})=(t\mapsto(\bsf(t),\bsu(t))_{L^{q(t,\cdot)}(\Omega)^d}+(\bsF(t),\bfvarepsilon(\bsu(t)))_{L^{p(t,\cdot)}(\Omega)^{d\times
                              d}})$ belongs to $L^1(I)$. More precisely, we have
			\begin{align}
			\begin{split}
				\langle \bscal{J}_{\bfvarepsilon}(\bsf,\bsF),\bsu\rangle_{\bscal{X}^{q,p}_{\bfvarepsilon}(Q_T)}&=\int_I{(\bsf(t),\bsu(t))_{L^{q(t,\cdot)}(\Omega)^d}+(\bsF(t),\bfvarepsilon(\bsu(t)))_{L^{p(t,\cdot)}(\Omega)^{d\times d}}\,dt}\\&
				=\int_I{\langle\mathcal{J}_{\bfvarepsilon}(\bsf(t),\bsF(t)),\bsu(t)\rangle_{X^{q,p}_{\bfvarepsilon}(t)}\,dt}.
				\end{split}\label{eq:3.9.1}
			\end{align}
			\item[(ii)] At first sight, it is not clear, whether $\bsu^*\in \bscal{X}^{q,p}_{\bfvarepsilon}(Q_T)^*$ possesses time slices, in the sense that $\bsu^*(t)\in X^{q,p}_{\bfvarepsilon}(t)^*$ for almost every $t\in I$.  But since due to Proposition \ref{4.9} there exist $\bsf\in L^{q'(\cdot,\cdot)}(Q_T)^d$ and $\bsF\in L^{p'(\cdot,\cdot)}(Q_T,\mathbb{M}_{\sym}^{d\times d})$, such that $\bsu^*=\bscal{J}_{\bfvarepsilon}(\bsf,\bsF)$ in $\bscal{X}^{q,p}_{\bfvarepsilon}(Q_T)^*$, we are apt to define
			\begin{align}
			\bsu^*(t):=\mathcal{J}_{\bfvarepsilon}(\bsf(t),\bsF(t))\quad\textup{ in }X^{q,p}_{\bfvarepsilon}(t)^*\quad\text{ for a.e. }t\in I.\label{eq:2.22.1}
			\end{align}
			To this end, we have to clarify, whether \eqref{eq:2.22.1} is well-defined, i.e., independent of the choice of $\bsf\in L^{q'(\cdot,\cdot)}(Q_T)^d$ and $\bsF\in L^{p'(\cdot,\cdot)}(Q_T,\mathbb{M}_{\sym}^{d\times d})$. So, let us consider a further representation $\bsg\in L^{q'(\cdot,\cdot)}(Q_T)^d$ and $\bsG\in L^{p'(\cdot,\cdot)}(Q_T,\mathbb{M}_{\sym}^{d\times d})$, such that $\bsu^*=\bscal{J}_{\bfvarepsilon}(\bsg,\bsG)$ in $\bscal{X}^{q,p}_{\bfvarepsilon}(Q_T)^*$. Thus, testing by $\bsu:=\bfu\varphi\in \bscal{X}^{q,p}_{\bfvarepsilon}(Q_T)$, where $\bfu\in X_+^{q,p}$ and $\varphi\in C^\infty(I)$ are arbitrary, also using \eqref{eq:3.9.1}, we obtain
			\begin{align*}
			\int_I{\langle\mathcal{J}_{\bfvarepsilon}(\bsf(t),\bsF(t)),\bfu\rangle_{X^{q,p}_{\bfvarepsilon}(t)}\varphi(t)\,dt}=\int_I{\langle\mathcal{J}_{\bfvarepsilon}(\bsg(t),\bsG(t)),\bfu\rangle_{X^{q,p}_{\bfvarepsilon}(t)}\varphi(t)\,dt}, 
			\end{align*}
			i.e., $\langle\mathcal{J}_{\bfvarepsilon}(\bsf(t),\bsF(t)),\bfu\rangle_{X^{q,p}_{\bfvarepsilon}(t)}=\langle\mathcal{J}_{\bfvarepsilon}(\bsg(t),\bsG(t)),\bfu\rangle_{X^{q,p}_{\bfvarepsilon}(t)}$ for almost every $t\in I$. Since $X_+^{q,p}$ is for almost every $t\in I$ dense in $X^{q,p}_{\bfvarepsilon}(t)$, we conclude $\mathcal{J}_{\bfvarepsilon}(\bsf(t),\bsF(t))=\mathcal{J}_{\bfvarepsilon}(\bsg(t),\bsG(t))$ in $X^{q,p}_{\bfvarepsilon}(t)^*$ for almost every $t\in I$. Therefore, the time slices \eqref{eq:2.22.1} are well-defined.
		\end{description}
	\end{rmk}

	\section{Smoothing in $\bscal{X}^{q,p}_{\bfvarepsilon}(Q_T)$ and $\bscal{X}^{q,p}_{\bfvarepsilon}(Q_T)^*$}
	\label{sec:5}
	
	This section is concerned with the construction of an appropriate 
	smoothing operator for $\bscal{X}^{q,p}_{\bfvarepsilon}(Q_T)$ and $\bscal{X}^{q,p}_{\bfvarepsilon}(Q_T)^*$. We emphasize that the presented smoothing method  is not new and 
	is based on ideas from \cite{DNR12}. However, in \cite{DNR12}
        it is solely shown that the smoothing method applies to $\bscal{X}^{q,p}_{\nb}(Q_T)$. 
	An essential component of the argumentation was the pointwise Poincar\'e inequality \eqref{eq:pw}. We extend the method to the greater space $\bscal{X}^{q,p}_{\bfvarepsilon}(Q_T)$, by means of the pointwise Poincar\'e inequality for the symmetric gradient near the boundary of a bounded Lipschitz domain (cf.~Theorem~\ref{2.31}).

	In what follows, let 
	$\Omega\subseteq \setR^d$, $d\ge 2$, be a bounded Lipschitz domain, $I=\left(0,T\right)$,
	with ${0<T<\infty}$, $Q_T=I\times \Omega$ and $q,p\in \mathcal{P}^{\log}(Q_T)$ 
	with $q^-,p^->1$.

        For the constants $c_0,h_0>0$, provided by Theorem~\ref{2.31},
        we set $h_1:=\frac{h_0}{4}$ and 
	$(\eta_h)_{h>0}:=(        \omega_{h/2}^d\ast \chi_{\Omega_{5/2h}}
        )_{h>0}\subseteq C^\infty_0(\Omega)$. Thus, for every $h>0$ holds 
	$\text{supp}(\eta_h)\subset \Omega_{2h}$, $0\leq \eta_h\leq 1\text{ in }\Omega $, $\eta_h=1\text{ in }\Omega_{3h}$ and $\|\nb\eta_h\|
		_{L^\infty(\setR^d)^d}\leq \frac{c_\eta}{h}$,
	where $c_\eta>0$ is a constant not depending on $h>0$. Henceforth, we will always denote by $\bfomega:=\omega^{d+1}\in C_0^\infty(B_1^{d+1}(0))$
	the standard mollifier from Proposition \ref{2.3}. Likewise we
        define the family of scaled mollifiers $\bfomega_h\in
        C_0^\infty(B_h^{d+1}(0))$, $h>0$, by
        $\bfomega_h(t,x):=\frac{1}{h^{d+1}}\bfomega(\frac{t}{h},\frac{x}{h})$
        for every $(t,x)^\top\in \setR^{d+1}$. By $c_d>0$ we always denote a constant which depends
        only on the dimension $d\in \setN$ and $c_\eta>0$. 
        
        We frequently make use of the zero extension operator $\bscal{F}_{Q_T}:L^1(Q_T)^d\to L^1(\setR^{d+1})^d$,
	which is for every $\bsu\in L^1(Q_T)^d$ given via $\bscal{F}_{Q_T}\bsu:=\bsu$ in $Q_T$ and ${\bscal{F}_{Q_T}\bsu:=\mathbf{0}}$~in~$Q_T^c$. Apparently, $\bscal{F}_{Q_T}:L^1(Q_T)^d\to L^1(\setR^{d+1})^d$ is linear and Lipschitz continuous with constant $1$. It is not difficult to see that also $\bscal{F}_{Q_T}:\bscal{X}^{q,p}_{\bfvarepsilon}(Q_T)\to \bscal{X}^{\tilde{q},\tilde{p}}_{\bfvarepsilon}(\widetilde{Q}_T)$, where $\widetilde{Q}_T:=\tilde{I}\times \widetilde{\Omega}$ is an extended cylinder, with a bounded interval $\tilde{I}\supseteq I$ and a bounded domain $\widetilde{\Omega}\supset\Omega$, and $\tilde{q},\tilde{p}\in \mathcal{P}^{\log}(\widetilde{Q}_T)$ are extended exponents, with $\tilde{q}|_{Q_T}=q$, $\tilde{p}|_{Q_T}=p$, $q^-\leq \tilde{q}\leq q^+$ and   $p^-\leq \tilde{p}\leq p^+$ in $\widetilde{Q}_T$, is well-defined, linear and Lipschitz continuous with constant $1$. Consequently, the adjoint operator ${\bscal{F}_{Q_T}^*:\bscal{X}^{\tilde{q},\tilde{p}}_{\bfvarepsilon}(\widetilde{Q}_T)^*\to \bscal{X}^{q,p}_{\bfvarepsilon}(Q_T)^*}$  is also well-defined, linear and Lipschitz continuous with constant~$1$.

	\begin{prop}\label{5.1}  For $\bsu\in L^{q(\cdot,\cdot)}(Q_T)^d$ and $h>0$, we define the smoothing operator
		\begin{align}\label{def:Rh}
		\bscal{R}^h_{Q_T}\bsu:=\bfomega_{h}\ast(\eta_h\bscal{F}_{Q_T}\bsu)\in C^\infty(\mathbb{R}^{d+1})^d.
		\end{align}
		Then, for every ${\bsu\in L^{q(\cdot,\cdot)}(Q_T)^d}$ it holds:
		\begin{description}[{(iii)}]
			\item[(i)] $(\bscal{R}^h_{Q_T}\bsu)_{h>0}\subseteq C_0^\infty(\mathbb{R}^{d+1})^d$ with $\text{supp}(\bscal{R}^h_{Q_T}\bsu)\subseteq \left(-h,T+h\right)\times \Omega_h$~for~every~$h>0$.
			\item[(ii)] $\sup_{h>0}{\vert \bscal{R}^h_{Q_T}\bsu\vert} \leq 2 M_{d+1}(\bscal{F}_{Q_T}\bsu)$ almost everywhere in $\setR^{d+1}$.
			\item[(iii)] For an extension $\overline{q}\in\mathcal{P}^{\log}(\setR^{d+1})$ of $q\in\mathcal{P}^{\log}(Q_T)$, with $q^-\leq \overline{q}\leq q^+$ in $\setR^{d+1}$, there exist a constant $c_{\overline{q}}>0$ (depending on $\overline{q}\in\mathcal{P}^{\log}(\setR^{d+1})$), such that 
			\begin{align*}
			\sup_{h>0}{\| \bscal{R}^h_{Q_T}\bsu\|_{L^{{q}(\cdot,\cdot)}(Q_T)^d}}\leq \sup_{h>0}{\| \bscal{R}^h_{Q_T}\bsu\|_{L^{\overline{q}(\cdot,\cdot)}(\setR^{d+1})^d}}\leq c_{\overline{q}}\|\bsu\|_{L^{q(\cdot,\cdot)}(Q_T)^d}.
			\end{align*}
			In particular, $\bscal{R}^h_{Q_T}:L^{q(\cdot,\cdot)}(Q_T)^d\to L^{q(\cdot,\cdot)}(Q_T)^d$ is for every $h>0$ linear and bounded.
			\item[(iv)] $\bscal{R}^h_{Q_T}\bsu\to\bsu$ in $L^{q(\cdot,\cdot)}(Q_T)^d$ $(h\to 0)$.
		\end{description}
	\end{prop}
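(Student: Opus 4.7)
The four items are essentially standard consequences of the structure of the mollification together with the boundedness of the Hardy--Littlewood maximal operator in variable exponent spaces, so the strategy is to treat them in order, using each as a stepping stone for the next.

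For \textbf{(i)}, I would compute the convolution support directly. Since $\text{supp}(\bfomega_h)\subseteq \overline{B_h^{d+1}(0)}$ and $\text{supp}(\eta_h\bscal{F}_{Q_T}\bsu)\subseteq \overline{I}\times \overline{\Omega_{2h}}$ (using $\text{supp}(\eta_h)\subset \Omega_{2h}$ and $\text{supp}(\bscal{F}_{Q_T}\bsu)\subseteq \overline{Q_T}$), the convolution is supported in $[-h,T+h]\times (\overline{\Omega_{2h}}+\overline{B_h^d(0)})$. A one-line triangle-inequality calculation yields $\Omega_{2h}+B_h^d(0)\subseteq \Omega_h$, which closes the argument.

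For \textbf{(ii)}, the trivial pointwise bound $|\eta_h\bscal{F}_{Q_T}\bsu|\le |\bscal{F}_{Q_T}\bsu|$ (since $0\le \eta_h\le 1$) combined with the monotonicity of the Hardy--Littlewood maximal operator gives $M_{d+1}(\eta_h\bscal{F}_{Q_T}\bsu)\le M_{d+1}(\bscal{F}_{Q_T}\bsu)$, and Proposition~\ref{2.3}(ii) applied to $\eta_h\bscal{F}_{Q_T}\bsu$ yields the claim uniformly in $h>0$.

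For \textbf{(iii)}, I would chain together (ii), the variable-exponent boundedness of $M_{d+1}$ from Proposition~\ref{2.2} applied with the extended exponent $\overline{q}\in \mathcal{P}^{\log}(\setR^{d+1})$ provided by Proposition~\ref{2.1.1}, and the identity $\|\bscal{F}_{Q_T}\bsu\|_{L^{\overline{q}(\cdot,\cdot)}(\setR^{d+1})^d}=\|\bsu\|_{L^{q(\cdot,\cdot)}(Q_T)^d}$ (which holds because $\bscal{F}_{Q_T}\bsu$ vanishes outside $Q_T$ and $\overline{q}|_{Q_T}=q$). The first displayed inequality is just the monotonicity of the Luxembourg norm under restriction, since the modular is monotone in the domain of integration.

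The main obstacle lies in \textbf{(iv)}, which I would establish by the standard $\varepsilon/3$-approximation. First, I would take an approximating $\bfphi\in C_0^\infty(Q_T)^d$ with $\|\bsu-\bfphi\|_{L^{q(\cdot,\cdot)}(Q_T)^d}$ small; density of $C_0^\infty(Q_T)^d$ in $L^{q(\cdot,\cdot)}(Q_T)^d$ is available because $q\in \mathcal{P}^{\log}(Q_T)$ with $q^+<\infty$. For such a $\bfphi$ and all sufficiently small $h>0$ so that $\text{supp}(\bfphi)\subset\subset \Omega_{3h}\times I$, one has $\eta_h\bscal{F}_{Q_T}\bfphi=\bscal{F}_{Q_T}\bfphi$, hence $\bscal{R}^h_{Q_T}\bfphi=\bfomega_h\ast \bscal{F}_{Q_T}\bfphi\to \bscal{F}_{Q_T}\bfphi$ in $L^{\overline{q}(\cdot,\cdot)}(\setR^{d+1})^d$ as $h\to 0$ by Proposition~\ref{2.3}(iii); restricting to $Q_T$ gives $\bscal{R}^h_{Q_T}\bfphi\to \bfphi$ in $L^{q(\cdot,\cdot)}(Q_T)^d$. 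Finally, applying the uniform bound from \textbf{(iii)} to $\bsu-\bfphi$ transfers the convergence to arbitrary $\bsu$. The only technical subtlety is that one must invoke density of smooth compactly supported functions in the variable exponent Lebesgue space, which crucially uses the $\log$-Hölder continuity of $q$; everything else is a routine mollification estimate.
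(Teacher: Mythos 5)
Your items (i)–(iii) follow essentially the same route as the paper: direct support computation, $0\le\eta_h\le 1$ together with Proposition~\ref{2.3}(ii) and monotonicity of $M_{d+1}$, then Proposition~\ref{2.2} applied to the extension $\overline q$. Those parts match.

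For item (iv), you take a genuinely different route. The paper decomposes directly,
$\bscal{R}^h_{Q_T}\bsu-\bsu=\bfomega_h\ast[(\eta_h-1)\bscal{F}_{Q_T}\bsu]+[\bfomega_h\ast(\bscal{F}_{Q_T}\bsu)-\bscal{F}_{Q_T}\bsu]$,
bounds the first term by $K\|(1-\eta_h)\bscal{F}_{Q_T}\bsu\|$ via Proposition~\ref{2.3}(i), and lets this vanish by dominated convergence (for the modular), while the second vanishes by Proposition~\ref{2.3}(iii). You instead run an $\varepsilon/3$ density argument: approximate $\bsu$ by $\bfphi\in C_0^\infty(Q_T)^d$, observe that $\eta_h$ acts as the identity on $\bfphi$ for $h$ small, use Proposition~\ref{2.3}(iii) on $\bfphi$, and then transfer to $\bsu$ via the uniform operator bound from (iii). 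Both are correct. The paper's version is a little more self-contained, since it never needs density of $C_0^\infty(Q_T)^d$ in $L^{q(\cdot,\cdot)}(Q_T)^d$; your version has the virtue of being a completely standard template. One small inaccuracy in your commentary: density of $C_0^\infty$ in $L^{q(\cdot,\cdot)}$ does not in fact need $\log$-Hölder continuity, only $q^+<\infty$ (absolute continuity of the Luxembourg norm). The $\log$-Hölder hypothesis is what you need for Proposition~\ref{2.3}(iii), which you do use, so the argument stands; the attribution of where the hypothesis enters is just misplaced. Also a cosmetic typo: $\text{supp}(\bfphi)\subset\subset I\times\Omega_{3h}$, not $\Omega_{3h}\times I$.
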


	\begin{rmk}\label{5.1.1}
		The smoothing operator in Proposition \ref{5.1} admits congruent extensions
		to not relabelled smoothing operators for scalar functions, i.e., $\bscal{R}^h_{Q_T}:L^{q(\cdot,\cdot)}(Q_T)\to L^{\overline{q}(\cdot,\cdot)}(\mathbb{R}^{d+1})$, and tensor-valued functions, i.e., $\bscal{R}^h_{Q_T}:L^{q(\cdot,\cdot)}(Q_T)^{d\times d}\to L^{\overline{q}(\cdot,\cdot)}(\mathbb{R}^{d+1})^{d\times d}$.
	\end{rmk}

	\begin{proof}(of Proposition \ref{5.1})
		\textbf{ad (i)}
		As $\bscal{F}_{Q_T}\bsu\in L^1(\setR^{d+1})^d$, we have $(\bscal{R}^h_{Q_T}\bsu)_{h>0}\subseteq C^\infty(\setR^{d+1})^d$ by the standard theory of mollification, see e.g. \cite{AF03}. 
		In particular, we have for every $h>0$
		\begin{align*}
		\text{supp}(\bscal{R}^h_{Q_T}\bsu)\subseteq \text{supp}(\eta_h\bscal{F}_{Q_T}\bsu)+B_{h}^{d+1}(0)\subseteq (I\times \Omega_{2h})+B_{h}^{d+1}(0)\subseteq \left(-h,T+h\right)\times \Omega_h.
		\end{align*}
		
		\textbf{ad (ii)} From Proposition \ref{2.3} (ii) and $\sup_{h>0}{\| \eta_h\|_{L^\infty(\Omega)}}\leq 1$,~we~infer 
		\begin{align}\label{eq:max}
			\sup_{h>0}{\vert\bscal{R}^h_{Q_T}\bsu\vert}\leq \sup_{h>0}{2M_{d+1}( \eta_h\bscal{F}_{Q_T}\bsu)}\leq 2 M_{d+1}(\bscal{F}_{Q_T}\bsu) \quad\text{ a.e. in }\setR^{d+1}.
		\end{align}
		
		\textbf{ad (iii)} Let $\overline{q}\in
                \mathcal{P}^{\log}(\setR^{d+1})$ be an extension of
                $q\in \mathcal{P}^{\log}(Q_T)$ with $q^-\leq
                \overline{q}\leq q^+$ in $\setR^{d+1}$
                (cf.~Proposition~\ref{2.1.1}). In particular, we have
                $\bscal{F}_{Q_T}\bsu\in
                L^{\overline{q}(\cdot,\cdot)}(Q_T)^d$ and
                $\overline{q}^->1$. Therefore, assertion
                \textbf{(iii)}  follows from Proposition \ref{2.2} and
                \eqref{eq:max}.\\[-3mm]
		
		\textbf{ad (iv)} By Proposition \ref{2.3} (i) \& (iii)
                and Lebesgue's theorem on dominated convergence,~we~obtain
		\begin{align*}
		\|\bscal{R}^h_{Q_T}\bsu-\bsu\|_{L^{q(\cdot,\cdot)}(Q_T)^d}&\leq \|\bfomega_{h}\ast[\eta_h\bscal{F}_{Q_T}\bsu-\bscal{F}_{Q_T}\bsu]\|_{L^{q(\cdot,\cdot)}(\setR^{d+1} )^d}\\&\quad+\|\bfomega_h\ast (\bscal{F}_{Q_T}\bsu)-\bscal{F}_{Q_T}\bsu\|_{L^{q(\cdot,\cdot)}(\setR^{d+1})^d}\\&\leq K\|(1-\eta_h)\bscal{F}_{Q_T}\bsu\|_{L^{q(\cdot,\cdot)}(\setR^{d+1} )^d}\\&\quad+\|\bfomega_{h}\ast(\bscal{F}_{Q_T}\bsu)-\bscal{F}_{Q_T}\bsu\|_{L^{q(\cdot,\cdot)}(\setR^{d+1})^d}
		\overset{h\to 0}{\to}0.\tag*{$\qed$}
		\end{align*}
	\end{proof}

	We need another smoothing operator. Since for $q\in  \mathcal{P}^{\log}(Q_T)$ with $q^->1$ also
        $q'\in  \mathcal{P}^{\log}(Q_T)$ with $(q')^->1$,
        Proposition~\ref{5.1}~(iii) showed for every $h>0$ that
        ${\bscal{R}^h_{Q_T}:L^{q'(\cdot,\cdot)}(Q_T)^d\to
        L^{q'(\cdot,\cdot)}(Q_T)^d}$ is well-defined, linear and
        continuous.  As a consequence, we define for every $h>0$ the quasi adjoint operator ${(\bscal{R}^h_{Q_T})^\star:L^{q(\cdot,\cdot)}(Q_T)^d\to L^{q(\cdot,\cdot)}(Q_T)^d}$ via
        \begin{align}\label{def:R*}
( (\bscal{R}^h_{Q_T})^\star\bsu,\bsv)_{L^{q'(\cdot,\cdot)}(Q_T)^d}:=(\bsu, \bscal{R}^h_{Q_T}\bsv)_{L^{q'(\cdot,\cdot)}(Q_T)^d}
	\end{align}
	for every $\bsu\in L^{q(\cdot,\cdot)}(Q_T)^d$ and $\bsv\in
        L^{q'(\cdot,\cdot)}(Q_T)^d$.
        \begin{rmk}
          The operator $(\bscal{R}^h_{Q_T})^\star$ is related to the
          adjoint operator $(\bscal{R}^h_{Q_T})^*$ of the operator $\bscal{R}^h_{Q_T}:L^{q'(\cdot,\cdot)}(Q_T)^d\to
        L^{q'(\cdot,\cdot)}(Q_T)^d$ via $(\bscal{R}^h_{Q_T})^\star= J^{-1}
        \circ (\bscal{R}^h_{Q_T})^*\circ J$, where the isomorphism $J :L^{q(\cdot,\cdot)}(Q_T)^d\to
        (L^{q'(\cdot,\cdot)}(Q_T)^d)^*$ is defined via $\langle
        J\bsu,\bsv\rangle
        _{L^{q'(\cdot,\cdot)}(Q_T)^d}:=(\bsu,\bsv)_{L^{q'(\cdot,\cdot)}(Q_T)^d}$. This
        can be seen by straightforward calculations. Thus, the
        operator
        ${(\bscal{R}^h_{Q_T})^\star:L^{q(\cdot,\cdot)}(Q_T)^d\to
          L^{q(\cdot,\cdot)}(Q_T)^d}$ is well defined, linear and bounded.
        \end{rmk}
        
        The following proposition shows that $(\bscal{R}^h_{Q_T})^\star$ possesses similar properties {as $\bscal{R}^h_{Q_T}$.}

	\begin{prop}\label{5.2}  For every $\bsu\in L^{q(\cdot,\cdot)}(Q_T)^d$ it holds:
          \begin{description}[{(iii)}]
			\item[(i)] $(\bscal{R}^h_{Q_T})^\star\bsu=(\bfomega_{h}\ast (\bscal{F}_{Q_T}\bsu)) \eta_h$ almost everywhere in $Q_T$ for every $h>0$. In particular, setting $(\bscal{R}^h_{Q_T})^\star\bsu:=(\bfomega_{h}\ast (\bscal{F}_{Q_T}\bsu)) \eta_h$ almost everywhere in $Q_T^c$ for every $h>0$, we have $((\bscal{R}^h_{Q_T})^\star\bsu)_{h>0} \subseteq C^\infty_0(\setR^{d+1})^d$ with $\text{supp}((\bscal{R}^h_{Q_T})^\star\bsu)\subseteq\left(-h,T+h\right)\times \Omega_{2h}$~for~every~${h>0}$.
			\item[(ii)] $\sup_{h>0}{\vert (\bscal{R}^h_{Q_T})^\star\bsu\vert }\leq 2M_{d+1}(\bscal{F}_{Q_T}\bsu)$ almost everywhere in $\setR^{d+1}$.
			\item[(iii)] For an extension $\overline{q}\in\mathcal{P}^{\log}(\setR^{d+1})$ of $q\in\mathcal{P}^{\log}(Q_T)$, with $q^-\leq \overline{q}\leq q^+$ in $\setR^{d+1}$, there exist a constant $c_{\overline{q}}>0$ (depending on $\overline{q}\in\mathcal{P}^{\log}(\setR^{d+1})$), such that 
			\begin{align*}
			 \sup_{h>0}{\| (\bscal{R}^h_{Q_T})^\star\bsu\|_{L^{\overline{q}(\cdot,\cdot)}(\setR^{d+1})^d}}\leq c_{\overline{q}}\|\bsu\|_{L^{q(\cdot,\cdot)}(Q_T)^d}.
			\end{align*}
			\item[(iv)] $(\bscal{R}^h_{Q_T})^\star\bsu\to\bsu$ in $L^{q(\cdot,\cdot)}(Q_T)^d$ $(h\to 0)$.
		\end{description}
	\end{prop}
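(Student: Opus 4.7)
The plan is to first establish the explicit formula in (i), since with this representation in hand the remaining assertions (ii)--(iv) follow essentially from the corresponding arguments used for $\bscal{R}^h_{Q_T}$ in Proposition~\ref{5.1}.

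For (i), I would fix an arbitrary $\bsv\in L^{q'(\cdot,\cdot)}(Q_T)^d$ and unpack both sides of the defining relation \eqref{def:R*} using the explicit form \eqref{def:Rh}. Writing the convolution as an integral, the right-hand side reads
\begin{align*}
\int_{Q_T}\int_{\setR^{d+1}}\bsu(t,x)\cdot\bfomega_h(t-s,x-y)\,\eta_h(y)\,\bscal{F}_{Q_T}\bsv(s,y)\,ds\,dy\,dt\,dx.
\end{align*}
Since $\bfomega_h\eta_h$ is bounded with compact support, $Q_T$ has finite measure, and $\bsu,\bsv$ live in dual Lebesgue spaces, H\"older's inequality furnishes integrability and thus allows me to invoke Fubini's theorem. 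Exploiting the evenness of the standard mollifier~$\bfomega$ inherited by $\bfomega_h$, the inner integration in $(t,x)$ produces $(\bfomega_h\ast\bscal{F}_{Q_T}\bsu)(s,y)$; since $\bscal{F}_{Q_T}\bsv$ vanishes outside $Q_T$, the whole expression collapses to $(\eta_h(\bfomega_h\ast\bscal{F}_{Q_T}\bsu),\bsv)_{L^{q'(\cdot,\cdot)}(Q_T)^d}$. As $\bsv$ was arbitrary, the a.e.\ identity $(\bscal{R}^h_{Q_T})^\star\bsu=\eta_h(\bfomega_h\ast\bscal{F}_{Q_T}\bsu)$ on $Q_T$ follows. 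The smoothness of the extension and the support statement are then immediate from $\bfomega_h\ast\bscal{F}_{Q_T}\bsu\in C^\infty(\setR^{d+1})^d$ together with $\textup{supp}(\eta_h)\subseteq\Omega_{2h}$ and $\textup{supp}(\bfomega_h\ast\bscal{F}_{Q_T}\bsu)\subseteq[-h,T+h]\times\overline{\Omega+B_h^d(0)}$.

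With the formula from (i) in place, (ii) follows at once from $0\le\eta_h\le 1$ and Proposition~\ref{2.3}(ii). Assertion (iii) is then obtained by combining (ii) with Proposition~\ref{2.2} applied to the extended exponent $\overline{q}\in\mathcal{P}^{\log}(\setR^{d+1})$ (available by Proposition~\ref{2.1.1} with $\overline{q}^-\ge q^->1$), together with $\|\bscal{F}_{Q_T}\bsu\|_{L^{\overline{q}(\cdot,\cdot)}(\setR^{d+1})^d}=\|\bsu\|_{L^{q(\cdot,\cdot)}(Q_T)^d}$. For (iv), I would mimic the argument of Proposition~\ref{5.1}(iv) by writing on $Q_T$
\begin{align*}
(\bscal{R}^h_{Q_T})^\star\bsu-\bsu=\eta_h\bigl(\bfomega_h\ast\bscal{F}_{Q_T}\bsu-\bscal{F}_{Q_T}\bsu\bigr)+(\eta_h-1)\bsu,
\end{align*}
controlling the first summand in $L^{q(\cdot,\cdot)}(Q_T)^d$ by $\|\bfomega_h\ast\bscal{F}_{Q_T}\bsu-\bscal{F}_{Q_T}\bsu\|_{L^{\overline{q}(\cdot,\cdot)}(\setR^{d+1})^d}$, which vanishes as $h\to 0$ by Proposition~\ref{2.3}(iii), and the second summand by dominated convergence at the modular level, since $\eta_h\to 1$ a.e.\ in $\Omega$ while $|(\eta_h-1)\bsu|\le|\bsu|\in L^{q(\cdot,\cdot)}(Q_T)^d$.

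The only delicate point in the whole argument is the Fubini justification in step (i); it is routine thanks to the compact support of $\bfomega_h\eta_h$ and the finiteness of $|Q_T|$, but it is exactly this identification that permits the transfer of properties from $\bscal{R}^h_{Q_T}$ to its quasi-adjoint. Once (i) is secured, the rest of the proposition reduces to direct applications of Propositions~\ref{2.2} and~\ref{2.3}.
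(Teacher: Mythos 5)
Your proposal is correct and matches the paper's proof in all four parts: the same adjoint-unpacking with Fubini (relying on the evenness of $\bfomega_h$) to obtain $(\bscal{R}^h_{Q_T})^\star\bsu=\eta_h(\bfomega_h\ast\bscal{F}_{Q_T}\bsu)$, the same appeal to Proposition~\ref{2.3}~(ii) together with $0\le\eta_h\le1$ for~(ii), the same combination of~(ii) with Proposition~\ref{2.2} and the isometry of the zero extension for~(iii), and the same two-term decomposition $\eta_h(\bfomega_h\ast\bscal{F}_{Q_T}\bsu-\bscal{F}_{Q_T}\bsu)+(\eta_h-1)\bscal{F}_{Q_T}\bsu$ with Proposition~\ref{2.3}~(iii) plus dominated convergence for~(iv). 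You are somewhat more explicit than the paper about the Fubini justification in~(i), but this is merely filling in a routine detail the paper leaves implicit.
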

	
	\begin{rmk}\label{5.2.1}
		The smoothing operator $ (\bscal{R}^h_{Q_T})^\star$ defined in \eqref{def:R*} admits congruent extensions~to~not relabelled smoothing operators for scalar functions, i.e.,  ${(\bscal{R}^h_{Q_T})^\star\!:\!L^{q(\cdot,\cdot)}(Q_T)\!\to\! L^{\overline{q}(\cdot,\cdot)}(\mathbb{R}^{d+1})}$, and tensor-valued functions, i.e.,  $(\bscal{R}^h_{Q_T})^\star:L^{q(\cdot,\cdot)}(Q_T)^{d\times d}\to L^{\overline{q}(\cdot,\cdot)}(\mathbb{R}^{d+1})^{d\times d}$.
	\end{rmk}

	\begin{proof}(of Proposition \ref{5.2})
		\textbf{ad (i)} For every $\bsu\in L^{q(\cdot,\cdot)}(Q_T)^d$ and $\bsv\in L^{q'(\cdot,\cdot)}(Q_T)^d$, we infer 
		\begin{align*}
		((\bscal{R}^h_{Q_T})^\star\bsu,\bsv)_{L^{q'(\cdot,\cdot)}(Q_T)^d}=(\bsu,
                  \bfomega_h\ast(\eta_h\bscal{F}_{Q_T}\bsv))_{L^{q'(\cdot,\cdot)}(Q_T)^d}
                  =((\bfomega_h\ast(\bscal{F}_{Q_T}\bsu)) \eta_h,\bsv)_{L^{q'(\cdot,\cdot)}(Q_T)^d},
		\end{align*}
		which implies $(\bscal{R}^h_{Q_T})^\star\bsu=(\bfomega_h\ast(\bscal{F}_{Q_T}\bsu)) \eta_h\in C^\infty_0(\setR^{d+1})^d$. 
		Moreover, we get for every $h>0$ that $\text{supp}((\bscal{R}^h_{Q_T})^\star\bsu)\subseteq \left(-h,T+h\right)\times \Omega_{2h}$. \\[-3mm]
		
		\textbf{ad (ii)} Immediate consequence of Proposition \ref{2.3} (ii) and $\sup_{h>0}{\|\eta_h\|_{L^\infty(\Omega)^d}}\leq 1$. \\[-3mm]
		
		\textbf{ad (iii)} Follows as in Proposition \ref{5.1} (iii) from \textbf{(ii)} and Proposition \ref{2.2}. \\[-3mm]
		
		\textbf{ad (iv)} By Proposition \ref{2.3} (i) \& (iii) and Lebesgue's theorem on dominated convergence,~we~obtain
		\begin{align*}
		\|(\bscal{R}^h_{Q_T})^\star\bsu-\bsu\|_{L^{q(\cdot,\cdot)}(Q_T)^d}&\leq \|\eta_h[\bfomega_{h}\ast (\bscal{F}_{Q_T}\bsu)-\bscal{F}_{Q_T}\bsu]\|_{L^{q(\cdot,\cdot)}(\setR^{d+1})^d}\\&\quad+\|(1-\eta_h)\bscal{F}_{Q_T}\bsu\|_{L^{q(\cdot,\cdot)}(\setR^{d+1} )^d}\\&\leq \|\bfomega_{h}\ast(\bscal{F}_{Q_T}\bsu)-\bscal{F}_{Q_T}\bsu\|_{L^{q(\cdot,\cdot)}(\setR^{d+1})^d}\\&\quad+\|(1-\eta_h)\bscal{F}_{Q_T}\bsu\|_{L^{q(\cdot,\cdot)}(\setR^{d+1} )^d}
		\overset{h\to 0}{\to}0.\tag*{$\qed$}
		\end{align*}
	\end{proof}

	Let us now state the smoothing properties of $\bscal{R}^h_{Q_T}$ with respect to the space $\bscal{X}^{q,p}_{\bfvarepsilon}(Q_T)$.

	\begin{prop}[Smoothing in $\bscal{X}^{q,p}_{\bfvarepsilon}(Q_T)$]\label{5.3} For every $\bsu\in \bscal{X}^{q,p}_{\bfvarepsilon}(Q_T)$ it holds:
		\begin{description}[{(iii)}]
			\item[(i)] There exists a constant $c_d>0$ (not depending on $q,p\in \mathcal{P}^{\log}(Q_T)$), such that 
			\begin{align*}
				\sup_{h\in \left(0,h_1\right)}{\vert \bfvarepsilon(\bscal{R}^h_{Q_T}\bsu)\vert} \leq c_dM_{d+1}(\bscal{F}_{Q_T}\bfvarepsilon(\bsu))\quad\text{ a.e. in }\setR^{d+1}.
			\end{align*}
			\item[(ii)] For an extension $\overline{p}\in\mathcal{P}^{\log}(\setR^{d+1})$ of $p\in\mathcal{P}^{\log}(Q_T)$, with $p^-\leq \overline{p}\leq p^+$ in $\setR^{d+1}$, there exist constants $c_{\overline{p}}>0$ (depending on $\overline{p}\in\mathcal{P}^{\log}(\setR^{d+1})$), such that
			\begin{align*}
		\sup_{h\in \left(0,h_1\right)}{\| \bfvarepsilon(\bscal{R}^h_{Q_T}\bsu)\|_{L^{\overline{p}(\cdot,\cdot)}(\setR^{d+1})^{d\times d}}} \leq c_{\overline{p}}\| \bfvarepsilon(\bsu)\|_{L^{p(\cdot,\cdot)}(Q_T)^{d\times d}}.
			\end{align*}
			In particular, we have $\sup_{h\in \left(0,h_1\right)}{\|\bscal{R}^h_{Q_T}\bsu\|_{\bscal{X}^{q,p}_{\bfvarepsilon}(Q_T)}} \leq (c_{\overline{q}}+c_{\overline{p}})\|\bsu\|_{\bscal{X}^{q,p}_{\bfvarepsilon}(Q_T)}$, where $c_{\overline{q}}>0$ is from Proposition~\ref{5.1}~(iii).
			\item[(iii)] $\bscal{R}^h_{Q_T}\bsu\to \bsu$ in $\bscal{X}^{q,p}_{\bfvarepsilon}(Q_T)$ $(h\to 0)$, i.e., $C^\infty(\overline{I},C_0^\infty(\Omega)^d)$ is dense in $\bscal{X}^{q,p}_{\bfvarepsilon}(Q_T)$.
		\end{description}
	\end{prop}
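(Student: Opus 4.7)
The plan is to compute $\bfvarepsilon(\bscal{R}^h_{Q_T}\bsu)$ via the Leibniz rule and reduce (i)--(iii) to a single pointwise bound by $M_{d+1}(\bscal{F}_{Q_T}\bfvarepsilon(\bsu))$. Because $\bsu(t)\in X^{q,p}_{\bfvarepsilon}(t)$ is a limit of $C^\infty_0(\Omega)^d$-functions, its zero extension has weak symmetric gradient equal to $\bscal{F}_{Q_T}\bfvarepsilon(\bsu)(t)$; combined with $\eta_h\in C_0^\infty(\Omega)$, this yields the splitting $\bfvarepsilon(\bscal{R}^h_{Q_T}\bsu)=A_h+B_h$, where
\[
A_h:=\bfomega_h\ast\bigl[\eta_h\,\bscal{F}_{Q_T}\bfvarepsilon(\bsu)\bigr],\qquad B_h:=\bfomega_h\ast\bigl[\tfrac12\bigl(\nabla\eta_h\otimes\bscal{F}_{Q_T}\bsu+\bscal{F}_{Q_T}\bsu\otimes\nabla\eta_h\bigr)\bigr].
\]
Proposition~\ref{2.3}(ii) together with $0\le\eta_h\le 1$ immediately gives $|A_h|\le 2M_{d+1}(\bscal{F}_{Q_T}\bfvarepsilon(\bsu))$ a.e.\ in $\setR^{d+1}$, so the entire content of (i) is the pointwise estimate for $B_h$.

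For $B_h$, the key observation is that $\nabla\eta_h$ is supported in the thin strip $S_h:=\{y\in\Omega:2h\le r(y)\le 3h\}$, satisfies $|\nabla\eta_h|\le c_\eta/h$, and, since $h<h_1=h_0/4$, every $y\in S_h$ satisfies $r(y)\le 3h<h_0$. Thus Theorem~\ref{2.31} applies at every such $y$ and gives, for a.e.\ $s\in I$,
\[
|\bsu(s,y)|\le c_0\int_{B_{6h}^d(y)\cap\Omega}\frac{|\bfvarepsilon(\bsu)(s,z)|}{|y-z|^{d-1}}\,dz.
\]
Inserting this into $B_h(t,x)$, exchanging the order of integration via Fubini, and noting that $(s,y)\in B_h^{d+1}(t,x)$ with $z\in B_{6h}^d(y)$ forces $(s,z)\in B_{7h}^{d+1}(t,x)$, one is left with the inner spatial integral
\[
\int_{B_h^d(x)}\frac{dy}{|y-z|^{d-1}}\le c_d\,h,
\]
a standard Riesz-potential bound. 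This factor $h$ cancels the $1/h$ coming from $|\nabla\eta_h|$ and, together with the $h^{-(d+1)}$ size of $\bfomega_h$, yields $|B_h(t,x)|\le c_d\,M_{d+1}(\bscal{F}_{Q_T}\bfvarepsilon(\bsu))(t,x)$, which is (i). Then (ii) follows by applying Proposition~\ref{2.2} to a log-Hölder extension $\overline{p}\in\mathcal{P}^{\log}(\setR^{d+1})$ of $p$ and combining with Proposition~\ref{5.1}(iii) for the $L^{q(\cdot,\cdot)}$-part.

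For (iii), $\bscal{R}^h_{Q_T}\bsu\to\bsu$ in $L^{q(\cdot,\cdot)}(Q_T)^d$ is already Proposition~\ref{5.1}(iv). The term $A_h$ converges to $\bscal{F}_{Q_T}\bfvarepsilon(\bsu)$ in $L^{\overline{p}(\cdot,\cdot)}(\setR^{d+1})^{d\times d}$ by precisely the same reasoning as in Proposition~\ref{5.1}(iv). For $B_h$, note that at any $(t,x)\in Q_T$ with $r(x)>0$ the integrand in $B_h(t,x)$ has empty support as soon as $h<r(x)/4$, hence $B_h\to 0$ a.e.\ on $Q_T$; combined with the uniform majorant $c_d\,M_{d+1}(\bscal{F}_{Q_T}\bfvarepsilon(\bsu))\in L^{\overline{p}(\cdot,\cdot)}(\setR^{d+1})$ from (i), variable-exponent dominated convergence yields $B_h\to 0$ in $L^{p(\cdot,\cdot)}(Q_T)^{d\times d}$. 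Hence $\bscal{R}^h_{Q_T}\bsu\to\bsu$ in $\bscal{X}^{q,p}_{\bfvarepsilon}(Q_T)$, and the density statement follows because $\bscal{R}^h_{Q_T}\bsu|_{Q_T}\in C^\infty(\overline{I},C_0^\infty(\Omega)^d)$ by Proposition~\ref{5.1}(i).

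The main obstacle is the pointwise control of $B_h$: this is precisely the place where Theorem~\ref{2.31} is needed in the sharp form, with the ball $B_{2r(x)}^d(x)$ shrinking toward $\partial\Omega$, because only then does the factor $r(y)\sim h$ furnished by the theorem balance the troublesome $1/h$ from $|\nabla\eta_h|$. The weaker inequality of \cite{CM19}, whose right-hand side integrates over the whole of $\Omega$, would fail to deliver this cancellation and thus is insufficient for the argument.
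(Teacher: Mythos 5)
Your proof is correct and follows essentially the same route as the paper's: the identical Leibniz decomposition $\bfvarepsilon(\bscal{R}^h_{Q_T}\bsu)=\bscal{R}^h_{Q_T}(\bfvarepsilon(\bsu))+\bfomega_h\ast[(\bscal{F}_{Q_T}\bsu)\otimes\nabla\eta_h]^{\sym}$, the bound of the commutator term by inserting Theorem~\ref{2.31} at points of $\operatorname{supp}\nabla\eta_h$, Fubini plus the Riesz-potential estimate $\int_{B_h^d}|y-z|^{1-d}\,dy\lesssim h$ to cancel the $h^{-1}$ from $\nabla\eta_h$, and dominated convergence for (iii). The only differences are cosmetic constants in the ball inclusions (you use $B_{7h}^{d+1}$ where the paper uses $B_{11h}^{d+1}$), which do not affect the argument.
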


	\begin{proof}
		\textbf{ad (i)} We first calculate the symmetric
                gradient of $\bscal{R}^h_{Q_T}\bsu\in
                C_0^\infty(\setR^{d+1})^d$. Using differentiation under
                the integral sign and the product rule
                $\bfvarepsilon(\eta
                \bsv)=\eta\bfvarepsilon(\bsv)+[\bsv\otimes\nb\eta]^{\sym}$
                in $C^0(\setR^{d+1})^{d\times d}$ valid for every $\bsv\in C^1(\setR^{d+1})^d$ and $\eta\in C^1(\setR^d)$, we obtain for every $h >0$
		\begin{align}
		\bfvarepsilon(\bscal{R}^h_{Q_T}\bsu)=\bscal{R}^h_{Q_T}(\bfvarepsilon(\bsu))+\bfomega_{h}\ast[(\bscal{F}_{Q_T}\bsu)\otimes\nb\eta_h]^{\sym}\quad\text{ in }\setR^{d+1}.\label{eq:5.4.1}
		\end{align}
		Proposition \ref{5.1} (ii) and Remark \ref{5.1.1} immediately provide that
		\begin{align*}
			\sup_{h>0}{\vert \bscal{R}^h_{Q_T}(\bfvarepsilon(\bsu))\vert}\leq 2M_{d+1}(\bscal{F}_{Q_T}\bfvarepsilon(\bsu))\quad\text{ a.e. in }\setR^{d+1}.
		\end{align*}
		For the remaining term note that $\bfomega_{h}\ast[(\bscal{F}_{Q_T}\bsu)\otimes\nb\eta_h]^{\sym}=\boldsymbol{0}$ in $\setR\times \Omega_{4h}$, since $\nb\eta_h=\textbf{0}$~in~$ \Omega_{3h}$. On the other hand, using Theorem \ref{2.31}, $\|\bfomega_h\|_{L^\infty(\setR^{d+1})}\leq \frac{c_d}{h^{d+1}}$ and $\|\nb\eta_h\|_{L^\infty(\setR^d)^d}\leq \frac{c_d}{h}$~for~${h>0}$, we deduce that for every $t\in \setR$, $x\in\Omega_{4h}^c$ and $h\in \left(0,h_1\right)$, i.e., $r(x)=\textup{dist}(x,\pa\Omega)\leq 4h \leq 4h_1=h_0$, that
		\begin{align}
		\begin{split}
		\vert \bfomega_{h}\ast&[(\bscal{F}_{Q_T}\bsu)\otimes\nb\eta_h]^{\sym}(t,x)\vert\\&\leq \int_{B_{h}^{d+1}(t,x)}{\bfomega_{h}(t-s,x-y)\vert \nb\eta_h(s,y)\vert\vert (\bscal{F}_{Q_T}\bsu)(s,y)\vert\,dsdy}
		\\&\leq \frac{c_d}{h^{d+2}}\int_{B_{h}^1(t)}{\int_{B_{h}^{d}(x)}{\int_{B^d_{2r(y)}(y)}{\frac{\vert(\bscal{F}_{Q_T}\bfvarepsilon(\bsu))(s,z)\vert}{\vert y-z\vert^{d-1}}\,dz}\,dy}\,ds}
		\\&\leq \frac{c_d}{h^{d+2}}\int_{B_{h}^1(t)}{\int_{B^d_{11h}(x)}{\vert(\bscal{F}_{Q_T}\bfvarepsilon(\bsu))(s,z)\vert\bigg[\int_{B^d_{12h}(z)}{\frac{1}{\vert y-z\vert^{d-1}}\,dy}\bigg]\,dz}\,ds}
		\\&\leq \frac{c_d}{h^{d+1}}\int_{B_{11h}^{d+1}(t,x)}{\vert(\bscal{F}_{Q_T}\bfvarepsilon(\bsu))(s,z)\vert\,dsdz}\\&\leq c_dM_{d+1}(\bscal{F}_{Q_T}\bfvarepsilon(\bsu))(t,x),
		\end{split}\label{eq:5.4}
		\end{align}
	where we used that $B^d_{2r(y)}(y)\subseteq B^d_{11h}(x)$ and $B^d_{h}(x)\subseteq B^d_{12h}(z)$ for all $z\in B^d_{2r(y)}(y)$, $y\in B_{h}^d(x)$ and $x\in\Omega_{4h}^c$, as for $\tilde{y}\in B^d_{2r(y)}(y)$ it holds
	${\vert \tilde{y}-x\vert \leq 	\vert x-y\vert +	\vert
        y-\tilde{y}\vert \leq h+2r(y) \leq 3h+2r(x)\leq 11h}$ since
        $\operatorname{dist}(\cdot, \partial \Omega)$ is Lipschitz continuous with
        constant $1$, and likewise for $\tilde{z}\in B^d_{h}(x)$ it holds
        $\vert \tilde{z}-z\vert \leq \vert \tilde{z}-x\vert+\vert
        x-z\vert\leq h+\vert x-y\vert +\vert y-z\vert\leq 2h+2r(y)\leq
        12h$, as well as $	\int_{B_{12h}^d(z)}{\vert y-z\vert^{1-d}dy}=12h\mathscr{H}^{d-1}(\mathbb{S}^{d-1})$.
	Eventually, combining \eqref{eq:5.4.1}--\eqref{eq:5.4},~we~conclude~\textbf{(i)}. \\[-3mm]

	\textbf{ad (ii)} Using \textbf{(i)} 
        we can proceed as in the proof of  Proposition \ref{5.1} (iii). \\[-3mm]
	
		\textbf{ad (iii)} Proposition \ref{5.1} (iv) and Remark \ref{5.1.1} immediately provide
		\begin{align}
		\begin{split}
		\begin{alignedat}{2}
		\bscal{R}^h_{Q_T}\bsu&\overset{h\to 0}{\to }\bsu&&\quad\text{ in }L^{q(\cdot,\cdot)}(Q_T)^d,\\
		\bscal{R}^h_{Q_T}\bfvarepsilon(\bsu)&\overset{h\to 0}{\to }\bfvarepsilon(\bsu)&&\quad\text{ in }L^{p(\cdot,\cdot)}(Q_T,\mathbb{M}^{d\times d}_{\sym}).
		\end{alignedat}
		\end{split}\label{eq:5.4.3}
		\end{align}
		Due to $\bfomega_{h}\ast[(\bscal{F}_{Q_T}\bsu)\otimes\nb\eta_h]^{\sym}=\boldsymbol{0}$ in $\setR\times \Omega_{4h}$ for every $h>0$ and \eqref{eq:5.4}, there holds 
		\begin{align}
		\begin{split}
		\bfomega_{h}\ast[(\bscal{F}_{Q_T}\bsu)\otimes\nb\eta_h]^{\sym}\overset{h\to 0}{\to }\boldsymbol{0}\quad\text{ a.e. in }Q_T,\\\sup_{h\in \left(0,h_1\right)}\vert \bfomega_{h}\ast[(\bscal{F}_{Q_T}\bsu)\otimes\nb\eta_h]^{\sym}\vert \leq c_dM_{d+1}(\bscal{F}_{Q_T}\bfvarepsilon(\bsu))\quad\text{ a.e. in }Q_T.
		\end{split}\label{eq:5.4.5}
		\end{align}
		Hence, since $M_{d+1}(\bscal{F}_{Q_T}\bfvarepsilon(\bsu)) \in L^{\overline{p}(\cdot,\cdot)}(\setR^{d+1})$ due to Proposition \ref{2.2}, as ${\vert\bscal{F}_{Q_T}\bfvarepsilon(\bsu)\vert\!\in\! L^{\overline{p}(\cdot,\cdot)}(\setR^{d+1})}$ and $\overline{p}\ge p^->1$, we conclude from \eqref{eq:5.4.5} by Lebegue's dominated convergence theorem that
		\begin{align}
		\bfomega_{h}\ast[(\bscal{F}_{Q_T}\bsu)\otimes\nb\eta_h]^{\sym}\overset{h\to 0}{\to }\boldsymbol{0}\quad\text{ in } L^{p(\cdot,\cdot)}(Q_T,\mathbb{M}^{d\times d}_{\sym}).\label{eq:5.4.6}
		\end{align} 
		From \eqref{eq:5.4.1}, \eqref{eq:5.4.3} and
                \eqref{eq:5.4.6}  follows that $\bscal{R}^h_{Q_T}\bsu\to\bsu$ in $\bscal{X}^{q,p}_{\bfvarepsilon}(Q_T)$ $(h\to 0)$. \hfill$\qed$
	\end{proof}
	
	The same method as in the proofs of Proposition \ref{5.1} and Proposition \ref{5.3} can be used, in order to construct a smoothing operator for $X^{q,p}=X^{q(\cdot),p(\cdot)}_{\bfvarepsilon}(\Omega)= X^{q(\cdot),p(\cdot)}_{\nb}(\Omega)^d$.
	
	\begin{cor}\label{5.5} Let $\Omega\subseteq \setR^d$, $d\ge 2$, be a bounded Lipschitz domain and $q,p\in \mathcal{P}^{\log}(\Omega)$ with $q^-,p^->1$.  For $\bfu\in X^{q,p}$ and $h>0$, we define the smoothing operator
		\begin{align*}
		R^h_{\Omega}\bfu:=\omega_{h}^d\ast(\eta_hE_{\Omega}\bfu)\in C^\infty(\setR^d)^d,
		\end{align*}
		where $\omega^d\in C_0^\infty(B_1^d(0))$ is a standard mollifier from Proposition \ref{2.3} and  $E_{\Omega}\bfu\in L^1(\setR^d)^d$ is given via $E_{\Omega}\bfu:=\bfu$  in $\Omega$ and $E_{\Omega}\bfu:=\mathbf{0}$ in $\Omega^c$. Then, for every ${\bfu\in X^{q,p}}$ it holds:
		\begin{description}[{(iii)}]
			\item[(i)] $(R^h_{\Omega}\bfu)_{h>0}\subseteq
                          C_0^\infty(\Omega)^d$ with $\text{supp}({R}^h_{\Omega}\bfu)\subseteq \Omega_h$~for~every~$h>0$.
			\item[(ii)] There exists a constant $c_d>0$ (not depending on $q,p\in \mathcal{P}^{\log}(\Omega)$), such that
			\begin{alignat*}{2}
			\sup_{h>0}{\vert R^h_{\Omega}\bfu\vert }&\leq 2M_d(E_{\Omega}\bfu)&&\quad\text{ a.e. in }\setR^d,\\
			\sup_{h\in \left(0,h_1\right)}{\vert \bfvarepsilon(R^h_{\Omega}\bfu)\vert}&\leq c_dM_d(E_{\Omega}\bfvarepsilon(\bfu))&&\quad\text{ a.e. in }\setR^d.
			\end{alignat*}
			\item[(iii)] For extensions $\overline{q},
                          \overline{p}\in\mathcal{P}^{\log}(\setR^d)$
                          of $q, p\in\mathcal{P}^{\log}(\Omega)$,
                          resp., with $q^-\leq \overline{q}\leq
                          q^+$ and $p^-\leq \overline{p}\leq p^+$ in
                          $\setR^{d}$, there exist constants
                          $c_{\overline{q}}, c_{\overline{p}}>0$
                          (depending on $\overline{q}, \overline{p} \in\mathcal{P}^{\log}(\setR^d)$, resp.), such that 
			\begin{align*}
			\sup_{h>0}{\| {R}^h_{\Omega}\bfu\|_{L^{{q}(\cdot)}(\Omega)^d}}&\leq \sup_{h>0}{\| {R}^h_{\Omega}\bfu\|_{L^{\overline{q}(\cdot)}(\setR^{d})^d}}\leq c_{\overline{q}}\|\bfu\|_{L^{q(\cdot)}(\Omega)^d}\\
			\sup_{h\in \left(0,h_1\right)}{\|\bfvarepsilon(
                          R^h_{\Omega}\bfu)\|_{L^{{p}(\cdot)}(\Omega)^{d\times
                          d}}}&\leq \sup_{h\in \left(0,h_1\right)}{\|
                                \bfvarepsilon(R^h_{\Omega}\bfu)\|_{L^{\overline{p}(\cdot)}(\setR^{d})^{d\times
                                d}}}\leq
                                c_{\overline{p}}\|\bfvarepsilon(\bfu)\|_{L^{p(\cdot)}(\Omega)^{d\times
                                d}}.
			\end{align*}
			In particular,
                        ${R}^h_{\Omega}:X^{q,p}\to X^{q,p}$ is for every $h\in \left(0,h_1\right)$
                        linear and bounded.
			\item[(iv)] $R^h_{\Omega}\bfu\to \bfu$ in $X^{q,p}$ $(h\to 0)$.
		\end{description}
	\end{cor}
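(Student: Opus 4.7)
The plan is to adapt the proofs of Proposition \ref{5.1} and Proposition \ref{5.3} to the purely spatial setting, using the spatial mollifier $\omega_h^d$ and the cutoffs $\eta_h\in C_0^\infty(\Omega)$ in place of $\bfomega_h$ and $\eta_h$ from the space-time case. The key structural input is that every $\bfu\in X^{q,p}$ is by definition the $X^{q,p}$-limit of a sequence in $C_0^\infty(\Omega)^d$, so its zero extension $E_\Omega\bfu$ belongs to $W^{1,1}(\setR^d)^d$ with $\bfvarepsilon(E_\Omega\bfu)=E_\Omega\bfvarepsilon(\bfu)$ and $\nb(E_\Omega\bfu)=E_\Omega\nb\bfu$ almost everywhere on $\setR^d$ (no boundary contribution because of the vanishing trace).

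Assertion \textbf{(i)} is then immediate from standard mollification and the inclusion $\operatorname{supp}(R^h_\Omega\bfu)\subseteq\operatorname{supp}(\eta_hE_\Omega\bfu)+B_h^d(0)\subseteq\Omega_{2h}+B_h^d(0)\subseteq\Omega_h$. For the pointwise bounds in \textbf{(ii)}, the estimate on $|R^h_\Omega\bfu|$ follows from Proposition \ref{2.3}(ii) together with $\|\eta_h\|_\infty\le 1$, exactly as in Proposition \ref{5.1}(ii). For $\bfvarepsilon(R^h_\Omega\bfu)$, differentiation under the integral and the distributional product rule $\bfvarepsilon(\eta\bsv)=\eta\bfvarepsilon(\bsv)+[\bsv\otimes\nb\eta]^{\sym}$ give
\[
\bfvarepsilon(R^h_\Omega\bfu)=\omega_h^d\ast(\eta_hE_\Omega\bfvarepsilon(\bfu))+\omega_h^d\ast[E_\Omega\bfu\otimes\nb\eta_h]^{\sym}\quad\text{ in }\setR^d.
\]
The first summand is pointwise dominated by $2M_d(E_\Omega\bfvarepsilon(\bfu))$ by Proposition \ref{2.3}(ii). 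The second summand vanishes on $\Omega_{4h}$ because $\nb\eta_h\equiv\mathbf 0$ on $\Omega_{3h}$, so for every $x\in\Omega_{4h}^c$ with $h\in(0,h_1)$ one has $r(x)\le 4h\le h_0$, and Theorem \ref{2.31} applies to $\bfu$. Inserting this pointwise Poincaré bound and retracing the chain of inclusions used in estimate (5.4) — now with balls in $\setR^d$ rather than $\setR^{d+1}$ — yields the pointwise bound by $c_dM_d(E_\Omega\bfvarepsilon(\bfu))$.

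Assertion \textbf{(iii)} then follows from \textbf{(ii)} by applying Proposition \ref{2.2} to the log-Hölder extensions $\overline{q},\overline{p}\in\mathcal P^{\log}(\setR^d)$ provided by Proposition \ref{2.1.1}. For the convergence \textbf{(iv)}, I would split
\[
R^h_\Omega\bfu-\bfu=\omega_h^d\ast(E_\Omega\bfu)-E_\Omega\bfu-\omega_h^d\ast((1-\eta_h)E_\Omega\bfu)\quad\text{ in }\Omega,
\]
and treat the analogous decomposition of $\omega_h^d\ast(\eta_hE_\Omega\bfvarepsilon(\bfu))-\bfvarepsilon(\bfu)$; both tend to zero in the respective $L^{s(\cdot)}$-spaces by Proposition \ref{2.3}(i),(iii) and dominated convergence. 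The remaining cross term $\omega_h^d\ast[E_\Omega\bfu\otimes\nb\eta_h]^{\sym}$ is supported in $\Omega\setminus\Omega_{4h}$ and pointwise dominated by $c_dM_d(E_\Omega\bfvarepsilon(\bfu))\in L^{\overline{p}(\cdot)}(\setR^d)$, hence vanishes in $L^{p(\cdot)}(\Omega)^{d\times d}$ as $h\to 0$ by Lebesgue's theorem. The main obstacle — and the only place where the machinery of this paper is genuinely needed — is precisely this cross term: without Korn's inequality, the only way to control $E_\Omega\bfu$ by $\bfvarepsilon(\bfu)$ on the shrinking boundary strip where $\nb\eta_h\ne\mathbf 0$ is through the pointwise Poincaré inequality of Theorem \ref{2.31}; every other step is a routine spatial version of the arguments already carried out for $\bscal{X}^{q,p}_{\bfvarepsilon}(Q_T)$.
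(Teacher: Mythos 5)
Your proposal is correct and is essentially the proof the paper intends: the text immediately preceding Corollary \ref{5.5} says explicitly that the same method as in Propositions \ref{5.1} and \ref{5.3} applies, with the space-time mollifier $\bfomega_h$, cutoff $\eta_h$ and zero-extension $\bscal{F}_{Q_T}$ replaced by their purely spatial analogues $\omega^d_h$, $\eta_h$ and $E_\Omega$. Your decomposition $\bfvarepsilon(R^h_\Omega\bfu)=\omega^d_h\ast(\eta_h E_\Omega\bfvarepsilon(\bfu))+\omega^d_h\ast[E_\Omega\bfu\otimes\nb\eta_h]^{\sym}$, the use of Theorem \ref{2.31} on the boundary strip where $\nb\eta_h\neq\mathbf 0$ (note that the relevant points $y$ there satisfy $r(y)\le 3h<h_0$, so the Poincar\'e inequality is indeed applicable), Proposition \ref{2.2} for the norm bounds, and the dominated-convergence argument for (iv) all mirror \eqref{eq:5.4.1}--\eqref{eq:5.4.6} step by step.
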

The next proposition collects the properties of the smoothing operator
$(\bscal{R}^h_{Q_T})^\star$ with respect to the space $\bscal{X}^{q,p}_{\bfvarepsilon}(Q_T)$.
	\begin{prop}\label{5.6} For every $\bsu\in \bscal{X}^{q,p}_{\bfvarepsilon}(Q_T)$ it holds:
		\begin{description}[{(iii)}]
			\item[(i)] There exists a constant $c_d>0$ (not depending on $q,p\in \mathcal{P}^{\log}(Q_T)$), such that
			\begin{alignat*}{2}
		\sup_{h\in \left(0,h_1\right)}{\vert \bfvarepsilon((\bscal{R}^h_{Q_T})^\star\bsu)\vert} \leq c_dM_{d+1}(\bscal{F}_{Q_T}\bfvarepsilon(\bsu))\quad\text{ a.e. in }\setR^{d+1}.
			\end{alignat*}
			\item[(ii)] For an extension $\overline{p}\in\mathcal{P}^{\log}(\setR^{d+1})$ of $p\in\mathcal{P}^{\log}(Q_T)$, with $p^-\leq \overline{p}\leq p^+$ in $\setR^{d+1}$, there exist a constant $c_{\overline{p}}>0$ (depending on $\overline{p}\in\mathcal{P}^{\log}(\setR^{d+1})$), such that 
			\begin{align*}
		\sup_{h\in \left(0,h_1\right)}{\| \bfvarepsilon((\bscal{R}^h_{Q_T})^\star\bsu)\|_{L^{\overline{p}(\cdot,\cdot)}(\setR^{d+1})^{d\times d}}} \leq c_{\overline{p}}\| \bfvarepsilon(\bsu)\|_{L^{p(\cdot,\cdot)}(Q_T)^{d\times d}}.
			\end{align*}
			In particular, we have $\sup_{h\in \left(0,h_1\right)}{\|(\bscal{R}^h_{Q_T})^\star\bsu\|_{\bscal{X}^{q,p}_{\bfvarepsilon}(Q_T)}} \leq (c_{\overline{q}}+c_{\overline{p}})\|\bsu\|_{\bscal{X}^{q,p}_{\bfvarepsilon}(Q_T)}$, where $c_{\overline{q}}>0$ is from Proposition \ref{5.2} (iii).\\[-3mm]
				\item[(iii)] $(\bscal{R}^h_{Q_T})^\star\bsu\to \bsu$ in $\bscal{X}^{q,p}_{\bfvarepsilon}(Q_T)$ $(h\to 0)$.
		\end{description}
	\end{prop}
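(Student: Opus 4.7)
The plan is to mirror the proof of Proposition \ref{5.3}, exploiting the explicit formula $(\bscal{R}^h_{Q_T})^\star\bsu = \eta_h(\bfomega_h \ast \bscal{F}_{Q_T}\bsu)$ from Proposition \ref{5.2} (i). Applying the product rule $\bfvarepsilon(\eta \bsv) = \eta\,\bfvarepsilon(\bsv) + [\bsv\otimes\nb\eta]^{\sym}$ with $\bsv := \bfomega_h \ast \bscal{F}_{Q_T}\bsu$, and noting that each time slice $\bsu(t)\in X^{q,p}_{\bfvarepsilon}(t)$ has vanishing trace on $\partial\Omega$ so that $\bfvarepsilon(\bscal{F}_{Q_T}\bsu) = \bscal{F}_{Q_T}\bfvarepsilon(\bsu)$ distributionally on $\setR^{d+1}$, I obtain
\begin{align*}
\bfvarepsilon((\bscal{R}^h_{Q_T})^\star\bsu) = \eta_h\,\bfomega_h \ast \bscal{F}_{Q_T}\bfvarepsilon(\bsu) + [(\bfomega_h \ast \bscal{F}_{Q_T}\bsu) \otimes \nb\eta_h]^{\sym}\quad\text{in }\setR^{d+1}.
\end{align*}

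For \textbf{(i)}, the first summand is bounded pointwise by $2M_{d+1}(\bscal{F}_{Q_T}\bfvarepsilon(\bsu))$ by Proposition \ref{2.3} (ii) together with $0\leq\eta_h\leq 1$. The second summand vanishes on $\setR\times\Omega_{3h}$ since $\nb\eta_h\equiv\mathbf{0}$ there, while on its support one has $x\in\Omega_{2h}\setminus\Omega_{3h}$, i.e.~$r(x)\in[2h,3h]$. For $h\in(0,h_1)$ this forces $r(y)\leq 4h\leq h_0$ for every $y\in B_h^d(x)$, which is exactly the regime in which the pointwise Poincaré inequality of Theorem \ref{2.31} applies to the integrand of $\vert\bfomega_h \ast \bscal{F}_{Q_T}\bsu\vert(t,x)$ in the variable $y$. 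Combining this with $\|\nb\eta_h\|_{L^\infty(\setR^d)^d}\leq c_d/h$, the chain of estimates carried out in \eqref{eq:5.4} goes through verbatim and yields the pointwise bound $\leq c_d M_{d+1}(\bscal{F}_{Q_T}\bfvarepsilon(\bsu))(t,x)$. This step is the main obstacle, and it is exactly what motivated the development of Theorem~\ref{2.31}: controlling $\vert\bsu\vert$ by an integral of $\vert\bfvarepsilon(\bsu)\vert$ over a ball shrinking to the boundary.

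Statement \textbf{(ii)} is then an immediate consequence of \textbf{(i)} together with the $L^{\overline{p}(\cdot,\cdot)}(\setR^{d+1})$-boundedness of $M_{d+1}$ (Proposition \ref{2.2}, valid since $\overline{p}^-=p^->1$), combined with Proposition \ref{5.2} (iii) to handle the $L^{q(\cdot,\cdot)}$-part of the norm on $\bscal{X}^{q,p}_{\bfvarepsilon}(Q_T)$.

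For \textbf{(iii)}, I use the decomposition once more: the first summand equals $(\bscal{R}^h_{Q_T})^\star(\bfvarepsilon(\bsu))$, which converges to $\bfvarepsilon(\bsu)$ in $L^{p(\cdot,\cdot)}(Q_T,\mathbb{M}^{d\times d}_{\sym})$ by Proposition \ref{5.2} (iv) and Remark \ref{5.2.1}. The second summand tends to $\mathbf{0}$ almost everywhere on $Q_T$ (each interior point lies in $\Omega_{3h}$ for all sufficiently small $h$) and is dominated uniformly in $h\in(0,h_1)$ by $c_d M_{d+1}(\bscal{F}_{Q_T}\bfvarepsilon(\bsu))\in L^{\overline{p}(\cdot,\cdot)}(\setR^{d+1})$, so Lebesgue's dominated convergence theorem gives its convergence to $\mathbf{0}$ in $L^{p(\cdot,\cdot)}(Q_T,\mathbb{M}^{d\times d}_{\sym})$. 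Together with Proposition \ref{5.2} (iv) for the $L^{q(\cdot,\cdot)}$-part, this produces the desired convergence in $\bscal{X}^{q,p}_{\bfvarepsilon}(Q_T)$.
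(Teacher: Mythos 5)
Your proposal is correct and matches the paper's own proof almost exactly: the same decomposition $\bfvarepsilon((\bscal{R}^h_{Q_T})^\star\bsu)=(\bscal{R}^h_{Q_T})^\star(\bfvarepsilon(\bsu))+[\nb\eta_h\otimes(\bfomega_h\ast\bscal{F}_{Q_T}\bsu)]^{\sym}$ via the product rule and Proposition~\ref{5.2}~(i), the same pointwise bound $2M_{d+1}$ on the first summand from Proposition~\ref{5.2}~(ii), and the same treatment of the commutator term on $\supp(\nb\eta_h)\subset\Omega_{2h}\setminus\Omega_{3h}$ via Theorem~\ref{2.31} (noting $r(y)\le 4h\le h_0$ for $y\in B_h^d(x)$), followed by the maximal-function bound, Proposition~\ref{2.2}, and dominated convergence for (ii) and (iii). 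The only cosmetic difference is that you spell out the identity $\eta_h(\bfomega_h\ast\bscal{F}_{Q_T}\bfvarepsilon(\bsu))=(\bscal{R}^h_{Q_T})^\star(\bfvarepsilon(\bsu))$ and the trace fact $\bfvarepsilon(\bscal{F}_{Q_T}\bsu)=\bscal{F}_{Q_T}\bfvarepsilon(\bsu)$, which the paper leaves implicit.
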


	\begin{proof}
		\textbf{ad (i)} We calculate the symmetric gradient
                of $(\bscal{R}^h_{Q_T})^\star\bsu\in
                C_0^\infty(\setR^{d+1})^d$ for $h>0$. Using anew the product rule
                $\bfvarepsilon(\eta
                \bfu)=\eta\bfvarepsilon(\bfu)+[\bfu\otimes\nb\eta]^{\sym}$
                in $C^0(\setR^{d+1})^d$ valid for every ${\bfu\in
                  C^1(\setR^{d+1})^{d\times d}}$ and $\eta\in
                C^1(\setR^d)$, as well as Proposition \ref{5.2} (i), we obtain for every $h>0$
		\begin{align*}
		\bfvarepsilon((\bscal{R}^h_{Q_T})^\star\bsu)=(\bscal{R}^h_{Q_T})^\star(\bfvarepsilon(\bsu))+[\nb\eta_h\otimes(\bfomega_{h}\ast(\bscal{F}_{Q_T}\bsu))]^{\sym}\quad\text{ in }\setR^{d+1}.
		\end{align*}
		Proposition \ref{5.2} (ii) and Remark \ref{5.2.1} provide
		\begin{align*}
		\sup_{h>0}{\vert (\bscal{R}^h_{Q_T})^\star(\bfvarepsilon(\bsu))\vert }\leq 2M_{d+1}(\bscal{F}_{Q_T}\bfvarepsilon(\bsu))\quad\textup{ a.e. in }\setR^{d+1}.
		\end{align*}
		 For the remaining term note that $[\nb\eta_h\otimes(\bfomega_{h}\ast(\bscal{F}_{Q_T}\bsu))]^{\sym}=\boldsymbol{0}$ in $\setR\times \Omega_{3h}$ for every $h>0$. On the other hand, using \eqref{eq:5.4}, we infer for every $t\in \setR$, $x\in \Omega_{3h}^c$ and $h\in \left(0,h_1\right)$, i.e., $r(x)=\textup{dist}(x,\partial\Omega)\leq 3h\leq 3h_1=\frac{3}{4}h_1<h_0$, that 
		\begin{align}
		\begin{split}
		\vert [\nb\eta_h\otimes(\bfomega_{h}&\ast(\bscal{F}_{Q_T}\bsu))]^{\sym}(t,x)\vert\\&\leq \vert \nb\eta_h(x)\vert\int_{B_{h}^{d+1}(t,x)}{\bfomega_{h}(t-s,x-y)\vert (\bscal{F}_{Q_T}\bsu)(s,y)\vert\,dsdy}\\&\leq \frac{c_d}{h^{d+2}}\int_{B_{h}^1(t)}{\int_{B_{h}^{d}(x)}{\int_{B^d_{2r(y)}(y)}{\frac{\vert(\bscal{F}_{Q_T}\bfvarepsilon(\bsu))(s,z)\vert}{\vert y-z\vert^{d-1}}\,dz}\,dy}\,ds}
		\\&\leq  c_dM_{d+1}(\bscal{F}_{Q_T}\bfvarepsilon(\bsu))(t,x).
		\end{split}\label{eq:5.7}
		\end{align}
		
		\textbf{ad (ii)} Follows from \textbf{(i)} as in the proof Proposition \ref{5.1}. \\[-3mm]
		
		\textbf{ad (iii)} Proposition \ref{5.2} (iv) and Remark \ref{5.2.1} provide $(\bscal{R}^h_{Q_T})^\star\bsu\to \bsu$ in $ L^{q(\cdot,\cdot)}(Q_T)^d$ and $(\bscal{R}^h_{Q_T})^\star\bfvarepsilon(\bsu)\to\bfvarepsilon(\bsu)$ in $ L^{p(\cdot,\cdot)}(Q_T,\mathbb{M}^{d\times d}_{\sym})$ $(h\to 0)$. Due to $[\nb\eta_h\otimes(\bfomega_{h}\ast(\bscal{F}_{Q_T}\bsu))]^{\sym}=\boldsymbol{0}$ in $\setR\times \Omega_{3h}$ for every $h>0$ and \eqref{eq:5.7}, we conclude, using the  Lebegue dominated convergence theorem
		\begin{align*}
		[\nb\eta_h\otimes(\bfomega_{h}\ast(\bscal{F}_{Q_T}\bsu))]^{\sym}\overset{h\to 0}{\to }\boldsymbol{0}\quad\text{ in } L^{p(\cdot,\cdot)}(Q_T,\mathbb{M}^{d\times d}_{\sym}).
		\end{align*} 
		All things considered, we proved $(\bscal{R}^h_{Q_T})^\star\bsu\to \bsu$ in $\bscal{X}^{q,p}_{\bfvarepsilon}(Q_T)$ $(h\to 0)$.\hfill$\qed$
	\end{proof}

Next, we show that even $C^\infty_0(Q_T)^d$ functions are dense in $\bscal{X}^{q,p}_{\bfvarepsilon}(Q_T)$.

	\begin{prop}\label{5.8} 
		Let $(\varphi_h)_{h>0}\subseteq C^\infty_0(I)$ a family of cut-off functions, such that for every $h>0$ it holds
		$\textup{supp}(\varphi_h)\subset I_{2h}:=\left(2h,T-2h\right)$, $0\leq \varphi_h\leq 1$ and $\varphi_h\to 1$ $(h\to 0)$ almost everywhere~in~$I$. For $\bsu\in \bscal{X}^{q,p}_{\bfvarepsilon}(Q_T)$ and $h>0$, we define the smoothing operator
		\begin{align*}
		\mathring{\bscal{R}}^h_{Q_T}\bsu:=\bscal{R}^h_{Q_T}(\varphi_h\bsu)\in C^\infty_0(\mathbb{R}^{d+1})^d.
		\end{align*}
		Then, for every $\bsu\in \bscal{X}^{q,p}_{\bfvarepsilon}(Q_T)$ it holds:
		\begin{description}[{(iii)}]
			\item[(i)]
                          $(\mathring{\bscal{R}}^h_{Q_T}\bsu)_{h>0}\subseteq
                          C^\infty_0(Q_T)^d$ with
                          $\text{supp}(\bscal{R}^h_{Q_T}\bsu)\subseteq
                          (h,T-h)\times \Omega_h$~for~every~$h>0$.
			\item[(ii)] There exists a constant $c_d>0$ (not depending on $q,p\in \mathcal{P}^{\log}(Q_T)$), such that
			\begin{alignat*}{2}
			\sup_{h>0}{\vert \mathring{\bscal{R}}^h_{Q_T}\bsu\vert }&\leq 2M_{d+1}( \bscal{F}_{Q_T}\bsu)&&\quad\text{ a.e. in  }Q_T,\\\sup_{h\in \left(0,h_1\right)}{\vert \bfvarepsilon(\mathring{\bscal{R}}^h_{Q_T}\bsu)\vert }&\leq c_dM_{d+1}( \bscal{F}_{Q_T}\bfvarepsilon(\bsu))&&\quad\textup{ a.e. in }Q_T.
			\end{alignat*}
			\item[(iii)] For extensions $\overline{q},
                          \overline{p}\in\mathcal{P}^{\log}(\setR^{d+1})$
                          of $q, p\in\mathcal{P}^{\log}(Q_T)$,
                          respectively, with $q^-\leq \overline{q}\leq
                          q^+$ and $p^-\leq \overline{p}\leq p^+$ in
                          $\setR^{d+1}$, there exist constants
                          $c_{\overline{q}}, c_{\overline{p}}>0$
                          (depending on $\overline{q}, \overline{p} \in\mathcal{P}^{\log}(\setR^{d+1})$), such that 
			\begin{align*}
                          \sup_{h>0}{\| \mathring {\bscal{R}}^h_{Q_T}\bsu\|_{L^{{q}(\cdot,\cdot)}(Q_T)^d}}
                          &\leq \sup_{h>0}{\|\mathring {\bscal{R}}^h_{Q_T}\bsu\|_{L^{\overline{q}(\cdot,\cdot)}(\setR^{d+1})^d}}\leq
                            c_{\overline{q}}\|\bsu\|_{L^{q(\cdot,\cdot)}(Q_T)^d}
                          \\
                          \sup_{h\in
                          	\left(0,h_1\right)}{\|\bfvarepsilon(\mathring{\bscal{R}}^h_{Q_T}\bsu)
                          \|_{L^{p(\cdot,\cdot)}(Q_T)^{d\times d}}}&\leq \sup_{h\in
                          \left(0,h_1\right)}{\|
                                \bfvarepsilon(\mathring{\bscal {R}}^h_{Q_T}\bsu)\|_{L^{p(\cdot,\cdot)}(\setR^{d+1})^{d\times d}}}\leq
                                c_{\overline{p}}\|\bfvarepsilon(\bsu)\|_{L^{p(\cdot,\cdot)}(Q_T)^{d\times d}}.
			\end{align*}
			In particular,
                        $\mathring {\bscal{R}}^h_{Q_T}: \bscal{X}^{q,p}_{\bfvarepsilon}(Q_T) \to \bscal{X}^{q,p}_{\bfvarepsilon}(Q_T) $ is for every $h\in \left(0,h_1\right)$
                        linear and bounded.
			\item[(iv)] $\mathring {\bscal{R}}^h_{Q_T}\bsu\to \bsu$ in $\bscal{X}^{q,p}_{\bfvarepsilon}(Q_T)$ $(h\to 0)$, i.e., $C^\infty_0(Q_T)^d$ is dense in $\bscal{X}^{q,p}_{\bfvarepsilon}(Q_T)$.
		\end{description}
	\end{prop}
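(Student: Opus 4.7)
The plan is to reduce all four assertions to the corresponding statements for $\bscal{R}^h_{Q_T}$ from Propositions~\ref{5.1} and~\ref{5.3}, exploiting the fact that $\varphi_h=\varphi_h(t)$ depends only on time and satisfies $0\le\varphi_h\le 1$ with $\varphi_h\to 1$ a.e.~in $I$. In particular, since $\bfvarepsilon$ acts only in the spatial variables, we have the identity $\bfvarepsilon(\varphi_h\bsu)=\varphi_h\,\bfvarepsilon(\bsu)$ a.e.~in $Q_T$, so that $\varphi_h\bsu\in\bscal{X}^{q,p}_{\bfvarepsilon}(Q_T)$ with $\|\varphi_h\bsu\|_{\bscal{X}^{q,p}_{\bfvarepsilon}(Q_T)}\le\|\bsu\|_{\bscal{X}^{q,p}_{\bfvarepsilon}(Q_T)}$, uniformly in $h>0$. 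This observation is the key; the remainder is essentially bookkeeping.

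For \textbf{(i)} I would observe that $\mathrm{supp}(\eta_h\bscal{F}_{Q_T}(\varphi_h\bsu))\subseteq\overline{I_{2h}}\times\overline{\Omega_{2h}}$, so convolution with $\bfomega_h$ (supported in $B_h^{d+1}(0)$) yields $\mathrm{supp}(\mathring{\bscal{R}}^h_{Q_T}\bsu)\subseteq (h,T-h)\times\Omega_h\subset Q_T$, i.e., $\mathring{\bscal{R}}^h_{Q_T}\bsu\in C_0^\infty(Q_T)^d$. For \textbf{(ii)}, applying Proposition~\ref{5.1}~(ii) and Proposition~\ref{5.3}~(i) to $\varphi_h\bsu$, together with monotonicity of $M_{d+1}$ and $|\bscal{F}_{Q_T}(\varphi_h\bsu)|\le |\bscal{F}_{Q_T}\bsu|$, $|\bscal{F}_{Q_T}\bfvarepsilon(\varphi_h\bsu)|\le|\bscal{F}_{Q_T}\bfvarepsilon(\bsu)|$ a.e.~in $\setR^{d+1}$, yields the two pointwise inequalities. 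Assertion \textbf{(iii)} then follows from \textbf{(ii)} exactly as in the proofs of Propositions~\ref{5.1}~(iii) and~\ref{5.3}~(ii), i.e., by the boundedness of the Hardy--Littlewood maximal operator on $L^{\overline q(\cdot,\cdot)}(\setR^{d+1})$ and $L^{\overline p(\cdot,\cdot)}(\setR^{d+1})$ (cf.~Proposition~\ref{2.2}).

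For \textbf{(iv)} I would use the triangle inequality
\begin{align*}
\|\mathring{\bscal{R}}^h_{Q_T}\bsu-\bsu\|_{\bscal{X}^{q,p}_{\bfvarepsilon}(Q_T)}
\le \|\bscal{R}^h_{Q_T}((\varphi_h-1)\bsu)\|_{\bscal{X}^{q,p}_{\bfvarepsilon}(Q_T)}+\|\bscal{R}^h_{Q_T}\bsu-\bsu\|_{\bscal{X}^{q,p}_{\bfvarepsilon}(Q_T)}.
\end{align*}
The second term vanishes as $h\to 0$ by Proposition~\ref{5.3}~(iii). For the first term, the uniform bound of Proposition~\ref{5.3}~(ii) gives
\begin{align*}
\|\bscal{R}^h_{Q_T}((\varphi_h-1)\bsu)\|_{\bscal{X}^{q,p}_{\bfvarepsilon}(Q_T)}\le (c_{\overline q}+c_{\overline p})\|(\varphi_h-1)\bsu\|_{\bscal{X}^{q,p}_{\bfvarepsilon}(Q_T)},
\end{align*}
so it suffices to show that $(\varphi_h-1)\bsu\to 0$ in $\bscal{X}^{q,p}_{\bfvarepsilon}(Q_T)$, equivalently that $(\varphi_h-1)\bsu\to 0$ in $L^{q(\cdot,\cdot)}(Q_T)^d$ and $(\varphi_h-1)\bfvarepsilon(\bsu)\to 0$ in $L^{p(\cdot,\cdot)}(Q_T,\mathbb{M}^{d\times d}_{\sym})$. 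Both convergences follow from Lebesgue's dominated convergence theorem at the modular level: the integrands are dominated by $|\bsu|^{q(\cdot,\cdot)}$ and $|\bfvarepsilon(\bsu)|^{p(\cdot,\cdot)}$, respectively, which lie in $L^1(Q_T)$ since $p^+,q^+<\infty$, and converge pointwise a.e.~to zero because $\varphi_h\to 1$ a.e.; modular convergence then implies norm convergence as $p^+,q^+<\infty$. Finally, $(\mathring{\bscal{R}}^h_{Q_T}\bsu)_{h>0}\subseteq C_0^\infty(Q_T)^d$ together with this convergence yields the density statement.

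The only mildly delicate point is the dominated convergence step in \textbf{(iv)}; all the rest is a direct reuse of the machinery built in Sections~\ref{sec:3}--\ref{sec:5}. Crucially, no new pointwise Poincar\'e argument is needed, since the symmetric gradient estimates for $\mathring{\bscal{R}}^h_{Q_T}\bsu$ are inherited from those of $\bscal{R}^h_{Q_T}$ applied to $\varphi_h\bsu$.
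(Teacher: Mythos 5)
Your proposal is correct and matches the paper's proof: part (i) via the same support computation, parts (ii)–(iii) by applying Propositions~\ref{5.1} and~\ref{5.3} to $\varphi_h\bsu$ and using $\sup_{h>0}\|\varphi_h\|_{L^\infty}\le 1$, and part (iv) via the identical decomposition $\mathring{\bscal{R}}^h_{Q_T}\bsu-\bsu=\bscal{R}^h_{Q_T}[(\varphi_h-1)\bsu]+(\bscal{R}^h_{Q_T}\bsu-\bsu)$ combined with Proposition~\ref{5.3}~(ii)--(iii) and dominated convergence. Your elaboration of the dominated-convergence step at the modular level is a legitimate spelling-out of what the paper leaves implicit.
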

	\begin{proof}
			\textbf{ad (i)} Proposition \ref{5.1} (i) guarantees that $(\mathring{\bscal{R}}^h_{Q_T}\bsu)_{h>0}\subseteq C^\infty_0(\setR^{d+1})^d$. Apart from that, there holds for every $h>0$
			\begin{align*}
					\text{supp}(\mathring{\bscal{R}}^h_{Q_T}\bsu)\subseteq \text{supp}(\varphi_h\eta_h(\bscal{F}_{Q_T}\bsu))+B_{h}^{d+1}(0)\subseteq (I_{2h}\times \Omega_{2h}) +B_{h}^{d+1}(0)\subset I_{h}\times \Omega_{h}\subset\subset Q_T,
			\end{align*}
			i.e., $(\mathring{\bscal{R}}^h_{Q_T}\bsu)_{h>0}\subseteq C^\infty_0(Q_T)^d$. \\[-3mm]
			
			\textbf{ad (ii) \& (iii)} Follow  from
                        Proposition \ref{5.3} and Proposition \ref{5.1} as $\sup_{h>0}{\| \varphi_h\|_{L^\infty(\setR)}}\leq 1$. \\[-3mm]
			
			\textbf{ad (iv)} Using Proposition \ref{5.3} (ii) \& (iii) and Lebesgue's theorem on dominated convergence, we infer
			\begin{align*}
				\|\mathring{\bscal{R}}^h_{Q_T}\bsu-\bsu\|_{\bscal{X}^{q,p}_{\bfvarepsilon}(Q_T)}&\leq\|\bscal{R}^h_{Q_T}[(1-\varphi_h)\bsu]\|_{\bscal{X}^{q,p}_{\bfvarepsilon}(Q_T)}+ \|\bscal{R}^h_{Q_T}\bsu-\bsu\|_{\bscal{X}^{q,p}_{\bfvarepsilon}(Q_T)}\\&\leq (c_{\overline{q}}+c_{\overline{p}})\|(1-\varphi_h)\bsu\|_{\bscal{X}^{q,p}_{\bfvarepsilon}(Q_T)}+ \|\bscal{R}^h_{Q_T}\bsu-\bsu\|_{\bscal{X}^{q,p}_{\bfvarepsilon}(Q_T)}\overset{h\to 0}{\to} 0.\tag*{$\qed$}
			\end{align*}
	\end{proof}
	Eventually, we extent the smoothing operator $\bscal{R}^h_{Q_T}$ by means of  Proposition \ref{4.9} to a smoothing operator for functionals in $\bscal{X}^{q,p}_{\bfvarepsilon}(Q_T)^*$.

	\begin{prop}[Smoothing in
          $\bscal{X}^{q,p}_{\bfvarepsilon}(Q_T)^*$]\label{5.9} For
          $h>0$, we define the smoothing operator
          $\bscal{R}^h_{Q_T}:\bscal{X}^{q,p}_{\bfvarepsilon}(Q_T)^*\to
          \bscal{X}^{q,p}_{\bfvarepsilon}(Q_T)^*$ via 
		\begin{align}\label{eq:R-bidual}
		\langle \bscal{R}^h_{Q_T}\bsu^*,\bsv\rangle
                  _{\bscal{X}^{q,p}_{\bfvarepsilon}(Q_T)}:=\langle
                  \bsu^*, (\bscal{R}^h_{Q_T})^\star\bsv \rangle
                  _{\bscal{X}^{q,p}_{\bfvarepsilon}(Q_T)} 
		\end{align}
		for every  $\bsu^*\in
                \bscal{X}^{q,p}_{\bfvarepsilon}(Q_T)^*$ and
                $\bsv \in \bscal{X}^{q,p}_{\bfvarepsilon}(Q_T)$. Then,  for every $\bsu^*\in \bscal{X}^{q,p}_{\bfvarepsilon}(Q_T)^*$ it holds:
		\begin{description}[{(iii)}]
			\item[(i)] There holds $\sup_{h\in
                            \left(0,h_1\right)}{\|\bscal{R}^h_{Q_T}\bsu^*\|_{\bscal{X}^{q,p}_{\bfvarepsilon}(Q_T)^*}}\leq
                          c_{\overline{q},\overline{p}}\|\bsu^*\|_{\bscal{X}^{q,p}_{\bfvarepsilon}(Q_T)^*}$,
                          where  $c_{\overline{q},\overline{p}}>0$ is the
                          constant from Proposition \ref{5.6} (ii).
			\item[(ii)] $(\bscal{R}^h_{Q_T}\bsu^*)_{h>0}\subseteq \bscal{C}^\infty_*(Q_T):=\bscal{J}_{\bfvarepsilon}(C^\infty(\overline{I},C_0^\infty(\Omega)^d)\times C_0^\infty(\overline{I},C^\infty(\Omega,\mathbb{M}^{d\times d}_{\sym})))$.
			\item[(iii)] $\bscal{R}^h_{Q_T}\bsu^*\to\bsu^*$ in $ \bscal{X}^{q,p}_{\bfvarepsilon}(Q_T)^*$ $(h\!\to\!0)$, i.e., $\bscal{C}^\infty_*(Q_T)$ is dense in~$\bscal{X}^{q,p}_{\bfvarepsilon}(Q_T)^*$.
		\end{description}
	\end{prop}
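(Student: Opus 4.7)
My plan is to reduce everything to the representation $\bsu^*=\bscal{J}_{\bfvarepsilon}(\bsf,\bsF)$ provided by Proposition~\ref{4.9} and then to transfer all mollifier and cutoff operations from the test function to the pair $(\bsf,\bsF)$ via convolution duality (valid because the standard mollifier $\bfomega$ is even). Part (i) will follow directly from the definition of $\bscal{R}^h_{Q_T}$ on the dual and Proposition~\ref{5.6}~(ii): for every $\bsv\in\bscal{X}^{q,p}_{\bfvarepsilon}(Q_T)$ with $\|\bsv\|_{\bscal{X}^{q,p}_{\bfvarepsilon}(Q_T)}\leq 1$,
\begin{align*}
|\langle \bscal{R}^h_{Q_T}\bsu^*,\bsv\rangle_{\bscal{X}^{q,p}_{\bfvarepsilon}(Q_T)}|
=|\langle \bsu^*,(\bscal{R}^h_{Q_T})^\star\bsv\rangle_{\bscal{X}^{q,p}_{\bfvarepsilon}(Q_T)}|
\leq c_{\overline q,\overline p}\|\bsu^*\|_{\bscal{X}^{q,p}_{\bfvarepsilon}(Q_T)^*},
\end{align*}
with $c_{\overline q,\overline p}:=c_{\overline q}+c_{\overline p}$ from Propositions~\ref{5.2}~(iii) and~\ref{5.6}~(ii).

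For (ii) I will use $(\bscal{R}^h_{Q_T})^\star\bsv=\eta_h(\bfomega_h\ast\bscal{F}_{Q_T}\bsv)$, the product rule, and the identity $\bfvarepsilon(\bscal{F}_{Q_T}\bsv)=\bscal{F}_{Q_T}\bfvarepsilon(\bsv)$ (valid because $\bsv(t)$ is a limit of $C_0^\infty(\Omega)^d$-functions for a.e.\ $t\in I$) to split $\bfvarepsilon((\bscal{R}^h_{Q_T})^\star\bsv)$ into an interior piece $\eta_h(\bfomega_h\ast\bscal{F}_{Q_T}\bfvarepsilon(\bsv))$ and a boundary-layer piece $[\nabla\eta_h\otimes(\bfomega_h\ast\bscal{F}_{Q_T}\bsv)]^\sym$. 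Moving the convolution off $\bsv$ in each pairing and exploiting the symmetry of $\bsF$ (so that the boundary-layer pairing becomes $\int_{Q_T}(\bsF\nabla\eta_h)\cdot(\bfomega_h\ast\bscal{F}_{Q_T}\bsv)$) will yield the explicit representation
\begin{align*}
\bscal{R}^h_{Q_T}\bsu^*=\bscal{J}_{\bfvarepsilon}(\bsg_h,\bsG_h),\quad
\bsg_h:=\bfomega_h\ast\bscal{F}_{Q_T}(\bsf\eta_h+\bsF\nabla\eta_h),\quad
\bsG_h:=\bfomega_h\ast\bscal{F}_{Q_T}(\bsF\eta_h).
\end{align*}
Both $\bsg_h$ and $\bsG_h$ are smooth by Proposition~\ref{5.1}~(i) (applied to the scalar/tensor extensions of Remark~\ref{5.1.1}) and supported in $(-h,T+h)\times\Omega_h$, placing $\bscal{R}^h_{Q_T}\bsu^*$ in $\bscal{C}^\infty_*(Q_T)$.

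For (iii) I will apply the same decomposition to $\bscal{R}^h_{Q_T}\bsu^*-\bsu^*$ and then take the supremum over $\|\bsv\|_{\bscal{X}^{q,p}_{\bfvarepsilon}(Q_T)}\leq 1$. The two non-boundary contributions are controlled, via Hölder's inequality, by $2\|\bfomega_h\ast\bscal{F}_{Q_T}(\bsf\eta_h)-\bsf\|_{L^{q'(\cdot,\cdot)}(Q_T)^d}$ and $2\|\bsG_h-\bsF\|_{L^{p'(\cdot,\cdot)}(Q_T)^{d\times d}}$, both vanishing as $h\to 0$ by the standard mollifier-cutoff convergence of Proposition~\ref{5.2}~(iv) together with Remark~\ref{5.2.1}. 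For the boundary-layer contribution
\begin{align*}
J_h(\bsv):=(\bsF,[\nabla\eta_h\otimes(\bfomega_h\ast\bscal{F}_{Q_T}\bsv)]^\sym)_{L^{p(\cdot,\cdot)}(Q_T)^{d\times d}},
\end{align*}
whose integrand is supported in $I\times(\Omega_{2h}\setminus\Omega_{3h})$, I will invoke the pointwise estimate derived in \eqref{eq:5.4} (which in turn rests on Theorem~\ref{2.31}): for $x\in\Omega_{4h}^c$ and $h\in(0,h_1)$ one has
\begin{align*}
|\nabla\eta_h(x)|\,|(\bfomega_h\ast\bscal{F}_{Q_T}\bsv)(t,x)|\leq c_d\, M_{d+1}(\bscal{F}_{Q_T}\bfvarepsilon(\bsv))(t,x).
\end{align*}
Hölder's inequality, the $L^{\overline p(\cdot,\cdot)}(\setR^{d+1})$-boundedness of $M_{d+1}$ (Proposition~\ref{2.2}), and the dominated-convergence statement $\bsF\chi_{I\times(\Omega_{2h}\setminus\Omega_{3h})}\to\mathbf{0}$ in $L^{p'(\cdot,\cdot)}(Q_T)^{d\times d}$ will then give $|J_h(\bsv)|\leq c\,\|\bsF\chi_{I\times(\Omega_{2h}\setminus\Omega_{3h})}\|_{L^{p'(\cdot,\cdot)}(Q_T)^{d\times d}}\|\bfvarepsilon(\bsv)\|_{L^{p(\cdot,\cdot)}(Q_T)^{d\times d}}\to 0$ uniformly in $\bsv$.

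The hard part will be this boundary-layer term $J_h(\bsv)$. Because $|\nabla\eta_h|\sim h^{-1}$, the associated representer $\bfomega_h\ast\bscal{F}_{Q_T}(\bsF\nabla\eta_h)$ generically blows up in $L^{q'(\cdot,\cdot)}(Q_T)^d$, so the dual-norm convergence $\bscal{R}^h_{Q_T}\bsu^*\to\bsu^*$ cannot be reduced to norm convergence of the representers $(\bsg_h,\bsG_h)$. The only available route is to absorb the $h^{-1}$ factor against the smallness of $\bsv$ itself near $\partial\Omega$, which is exactly what the pointwise Poincaré inequality for the symmetric gradient (Theorem~\ref{2.31}) provides; without it the whole dual smoothing scheme would collapse.
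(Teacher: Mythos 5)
Your proof is correct and follows essentially the same route as the paper: the same explicit representation $\bscal{R}^h_{Q_T}\bsu^*=\bscal{J}_{\bfvarepsilon}(\bscal{R}^h_{Q_T}\bsf+\bfomega_h\ast\bscal{F}_{Q_T}(\bsF\nabla\eta_h),\bscal{R}^h_{Q_T}\bsF)$, the same use of the pointwise Poincar\'e inequality (via the maximal-function bound~\eqref{eq:5.7}) to kill the $h^{-1}$ from $\nabla\eta_h$, and the same $\|\bsF\chi_{I\times\Omega_{3h}^c}\|_{L^{p'(\cdot,\cdot)}}\to 0$ conclusion. Only a cosmetic slip: the mollifier-cutoff convergence you need for $\bscal{R}^h_{Q_T}\bsf\to\bsf$ and $\bscal{R}^h_{Q_T}\bsF\to\bsF$ is Proposition~\ref{5.1}~(iv) and Remark~\ref{5.1.1} (not~\ref{5.2}~(iv)/\ref{5.2.1}), and the pointwise bound you quote is~\eqref{eq:5.7} rather than~\eqref{eq:5.4}.
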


	\begin{rmk}
		The operator defined on the left-hand side of
                \eqref{eq:R-bidual} is actually the operator
                $((\bscal{R}^h_{Q_T})^\star)^*:\bscal{X}^{q,p}_{\bfvarepsilon}(Q_T)^*\to
          \bscal{X}^{q,p}_{\bfvarepsilon}(Q_T)^*$, i.e., the adjoint
                operator of ${(\bscal{R}^h_{Q_T})^\star :\bscal{X}^{q,p}_{\bfvarepsilon}(Q_T)\to
          \bscal{X}^{q,p}_{\bfvarepsilon}(Q_T)}$, examined in
          Proposition \ref{5.6}. Nevertheless,
                we denote this operator by $\bscal{R}^h_{Q_T}$ as the
                operator defined in \eqref{def:Rh} and studied in
                Propositions \ref{5.1} and \ref{5.3}. This is
                motivated by the fact that $\bscal{R}^h_{Q_T}:\bscal{X}^{q,p}_{\bfvarepsilon}(Q_T)^*\to
		\bscal{X}^{q,p}_{\bfvarepsilon}(Q_T)^*$ can be seen as an extension of the smoothing operator $\bscal{R}^h_{Q_T}:L^{q'(\cdot,\cdot)}(Q_T)^d\to L^{q'(\cdot,\cdot)}(Q_T)^d$, in the sense that for every $\bsu \in L^{q'(\cdot,\cdot)}(Q_T)^d$ there holds the identity $ \bscal{R}^h_{Q_T}\big[\bscal{J}_{\bfvarepsilon}(\bsu,\boldsymbol{0})\big]=\bscal{J}_{\bfvarepsilon}(\bscal{R}^h_{Q_T}\bsu,\boldsymbol{0})$ in $\bscal{X}^{q,p}_{\bfvarepsilon}(Q_T)^*$. More precisely, we have for every $\bsu \in L^{q'(\cdot,\cdot)}(Q_T)^d$ and $\bsv \in  \bscal{X}^{q,p}_{\bfvarepsilon}(Q_T)$
		\begin{align*}
			\langle \bscal{R}^h_{Q_T}\big[\bscal{J}_{\bfvarepsilon}(\bsu,\boldsymbol{0})\big],\bsv\rangle
			_{\bscal{X}^{q,p}_{\bfvarepsilon}(Q_T)}&=\langle
			\bscal{J}_{\bfvarepsilon}(\bsu,\boldsymbol{0}),(\bscal{R}^h_{Q_T})^\star\bsv \rangle
			_{\bscal{X}^{q,p}_{\bfvarepsilon}(Q_T)}=(\bsu,(\bscal{R}^h_{Q_T})^\star\bsv)_{L^{q(\cdot,\cdot)}(Q_T)^d}\\&=(\bscal{R}^h_{Q_T}\bsu,\bsv)_{L^{q(\cdot,\cdot)}(Q_T)^d}=\langle \bscal{J}_{\bfvarepsilon}(\bscal{R}^h_{Q_T}\bsu,\boldsymbol{0}),\bsv\rangle_{\bscal{X}^{q,p}_{\bfvarepsilon}(Q_T)}.
		\end{align*}
	\end{rmk}

	\begin{proof}(of Proposition \ref{5.9})
          \textbf{ad (i)} We have, using Proposition \ref{5.6} (ii) 
          \begin{align*}
            \|\bscal{R}^h_{Q_T}\bsu^*\|_{\bscal{X}^{q,p}_{\bfvarepsilon}(Q_T)^*}
            &=\sup_{\|\bsv\|_{\bscal{X}^{q,p}_{\bfvarepsilon}(Q_T)}\le 1}
              \langle \bsu^*, (\bscal{R}^h_{Q_T})^\star\bsv 
              \rangle_{\bscal{X}^{q,p}_{\bfvarepsilon}(Q_T)}
            \\
            &\le \sup_{\|\bsv\|_{\bscal{X}^{q,p}_{\bfvarepsilon}(Q_T)}\le 1}\|\bsu^*\|_{\bscal{X}^{q,p}_{\bfvarepsilon}(Q_T)^*} 
              \|(\bscal{R}^h_{Q_T})^\star\bsv\|_{\bscal{X}^{q,p}_{\bfvarepsilon}(Q_T)}
            \\
            &\le c_{\overline{q},\overline{p}} \|\bsu^*\|_{\bscal{X}^{q,p}_{\bfvarepsilon}(Q_T)^*}.
          \end{align*}

          \textbf{ad (ii)} Let $\bsf\in L^{q'(\cdot,\cdot)}(Q_T)^d$
          and  $\bsF\in L^{p'(\cdot,\cdot)}(Q_T,\mathbb{M}_{\sym}^{d\times
            d})$ be such that
          $\bsu^*=\bscal{J}_{\bfvarepsilon}(\bsf,\bsF)$ in
          $\bscal{X}^{q,p}_{\bfvarepsilon}(Q_T)^*$. Then, we infer for
          every $\bfphi\in C_0^\infty(Q_T)^d$ and $h>0$
		\begin{align*}
		\langle \bscal{R}^h_{Q_T}\bsu^*,\bfphi\rangle_{\bscal{X}^{q,p}_{\bfvarepsilon}(Q_T)}&=\langle \bscal{J}_{\bfvarepsilon}(\bsf,\bsF),(\bscal{R}^h_{Q_T})^\star\bfphi\rangle_{\bscal{X}^{q,p}_{\bfvarepsilon}(Q_T)}\\&=\langle \bscal{J}_{\bfvarepsilon}(\bscal{R}^h_{Q_T}\bsf,\bscal{R}^h_{Q_T}\bsF),\bfphi\rangle_{\bscal{X}^{q,p}_{\bfvarepsilon}(Q_T)}\\&\quad+(\bsF,[\nb\eta_h\otimes(\bfomega_h\ast(\bscal{F}_{Q_T}\bfphi))]^{\sym})_{L^{p(\cdot,\cdot)}(Q_T)^{d\times d}},
		\end{align*}
		i.e., for all $h>0$ holds
                \begin{align}\label{eq:rep}
			\bscal{R}^h_{Q_T}\bsu^*=\bscal{J}_{\bfvarepsilon}(\bscal{R}^h_{Q_T}\bsf+\bfomega_h\ast(\bsF\nb\eta_h),\bscal{R}^h_{Q_T}\bsF)\quad\text{ in }\bscal{X}^{q,p}_{\bfvarepsilon}(Q_T)^*
		\end{align}
                by the density of $C_0^\infty(Q_T)^d$ in $\bscal{X}^{q,p}_{\bfvarepsilon}(Q_T)$ (cf.~Proposition~\ref{5.8}), and hence \textbf{(ii)}.\\[-3mm]
		
		\textbf{ad (iii)} 
		Let $\bsf\in L^{q'(\cdot,\cdot)}(Q_T)^d$ and $\bsF\in
                L^{p'(\cdot,\cdot)}(Q_T,\mathbb{M}^{d\times
                  d}_{\sym})$ be such that
                $\bsu^*=\bscal{J}_{\bfvarepsilon}(\bsf,\bsF)$ in
                $\bscal{X}^{q,p}_{\bfvarepsilon}(Q_T)^*$.
		Using the continuity of $\bscal{J}_{\bfvarepsilon}:L^{q'(\cdot,\cdot)}(Q_T)^d\times L^{p'(\cdot,\cdot)}(Q_T)^{d\times d}\to \bscal{X}^{q,p}_{\bfvarepsilon}(Q_T)^*$ (cf.~Proposition~\ref{4.9}), Proposition~\ref{5.1}~(iv) and Remark~\ref{5.1.1}, we get
		 \begin{align}\label{eq:conv}
		 \bscal{J}_{\bfvarepsilon}(\bscal{R}^h_{Q_T}\bsf,\bscal{R}^h_{Q_T}\bsF)\overset{h\to 0}{\to }\bsu^*\quad\text{ in }\bscal{X}^{q,p}_{\bfvarepsilon}(Q_T)^*.
		 \end{align}
		 Due to ${\sup_{h\in \left(0,h_1\right)}{\vert [\nabla\eta_h\otimes(\boldsymbol{\omega}_h\ast(\bscal{F}_{Q_T}\bsu))]^{\text{sym}}\vert\leq c_dM_{d+1}(\bscal{F}_{Q_T}\boldsymbol{ \varepsilon}(\bsx))}}$~in~$\mathbb{R}^{d+1}$ for every $\bsu\in \bscal{X}^{q,p}_{\bfvarepsilon}(Q_T)$ (cf.~\eqref{eq:5.7}),~we~have for every $\bsu\in \bscal{X}^{q,p}_{\bfvarepsilon}(Q_T)$ 
		 \begin{align*}
		 \begin{split}
		 \sup_{h\in \left(0,h_1\right)}{\|[\nb\eta_h\otimes(\bfomega_h\ast(\bscal{F}_{Q_T}\bsu))]^{\sym}\|_{L^{p(\cdot,\cdot)}(Q_T)^{d\times d}}}\leq c_{\overline{p}}\|\bfvarepsilon(\bsu)\|_{L^{p(\cdot,\cdot)}(Q_T)^{d\times d}}\leq c_{\overline{p}}\|\bsu\|_{\bscal{X}^{q,p}_{\bfvarepsilon}(Q_T)}.
		 \end{split}
		 \end{align*}
		Hence, since $[\nb\eta_h\otimes(\bfomega_h\ast(\bscal{F}_{Q_T}\bsu))]^{\sym}=\textbf{0}$ in $I\times\Omega_{3h}$ for every $h>0$ and $\bsu\in \bscal{X}^{q,p}_{\bfvarepsilon}(Q_T)$, there holds
		\begin{align*}
					\|\bscal{J}_{\bfvarepsilon}(\omega_h\ast(\bsF\nb\eta_h),\boldsymbol{0})\|_{\bscal{X}^{q,p}_{\bfvarepsilon}(Q_T)^*}&=\sup_{\substack{\bsu\in \bscal{X}^{q,p}_{\bfvarepsilon}(Q_T)\\\|\bsu\|_{\bscal{X}^{q,p}_{\bfvarepsilon}(Q_T)}\leq 1}}{\!\!(\bsF\chi_{I\times\Omega_{3h}^c},\nb\eta_h\otimes(\bfomega_h\ast(\bscal{F}_{Q_T}\bsu)))_{L^{p(\cdot,\cdot)}(Q_T)^{d\times d}}}\\&\leq c_{\overline{p}}\|\bsF\chi_{I\times\Omega_{3h}^c}\|_{L^{p'(\cdot,\cdot)}(Q_T)^{d\times d}}\overset{h\to 0}{\to }0.
		\end{align*}
                This together with \eqref{eq:conv} and \eqref{eq:rep}
                yields \textbf{(iii)}.\hfill $\qed$
	\end{proof}

	\section{Generalized time derivative and formula of integration by parts}
	\label{sec:6}
	
	Since the introduction of variable exponent Bochner--Lebsgue
        spaces was driven by the inability of usual Bochner--Lebesgue
        spaces to properly encode unsteady problems with variable
        exponent non-linearity, such as the model problem
        \eqref{eq:model}, the notion of a generalized time derivative
        which lives in Bochner--Lebesgue spaces from Subsection
        \ref{subsec:2.2}  is  inappropriate as well. It is for this
        reason why we next introduce a special notion of a generalized
        time derivative, which is now supposed to be a functional in
        $\bscal{X}^{q,p}_{\bfvarepsilon}(Q_T)^*$ and which is aligned
        with the usual notion of distributional derivatives on the
        time-space cylinder $Q_T$.
	
	In what follows, let $\Omega\subseteq \setR^d$, $d\ge 2$, be a bounded Lipschitz domain, $I=\left(0,T\right)$, $T<\infty$, $Q_T=I\times \Omega$ and $q,p\in \mathcal{P}^{\log}(Q_T)$ with $q^-,p^->1$, such that $X^{q,p}_-\embedding Y: =L^2(\Omega)^d$, e.g., if $p^-\ge \frac{2d}{d+2}$.
	
	\begin{defn}\label{6.1}  A function $\bsu\in\bscal{X}^{q,p}_{\bfvarepsilon}(Q_T)$ possesses a \textbf{generalized time derivative in} $\bscal{X}^{q,p}_{\bfvarepsilon}(Q_T)^*$, if there exists a functional $\bsu^*\in \bscal{X}^{q,p}_{\bfvarepsilon}(Q_T)^*$,  such that for every $\bfphi\in C^\infty_0(Q_T)^d$ it holds
		\begin{align}
		-\int_I{(\bsu(t),\pa_t\bfphi(t))_Y\,dt}=\langle\bsu^*,\bfphi\rangle_{\bscal{X}^{q,p}_{\bfvarepsilon}(Q_T)}.\label{eq:6.1}
		\end{align}
	In this case, we set $\frac{\textbf{d}\bsu}{\textbf{dt}}:=\bsu^*$ in $\bscal{X}^{q,p}_{\bfvarepsilon}(Q_T)^*$.
	\end{defn}

	\begin{lem}\label{6.2} The generalized time derivative in the sense of Definition \ref{6.1} is unique.
	\end{lem}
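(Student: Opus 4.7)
The plan is to argue by subtraction and invoke the density result already established in Proposition \ref{5.8}. Suppose $\bsu\in \bscal{X}^{q,p}_{\bfvarepsilon}(Q_T)$ admits two generalized time derivatives $\bsu_1^*,\bsu_2^*\in \bscal{X}^{q,p}_{\bfvarepsilon}(Q_T)^*$ in the sense of Definition \ref{6.1}. Setting $\bsw^*:=\bsu_1^*-\bsu_2^*\in \bscal{X}^{q,p}_{\bfvarepsilon}(Q_T)^*$, the defining identity \eqref{eq:6.1} applied to both functionals and subtracted yields
\begin{align*}
\langle \bsw^*,\bfphi\rangle_{\bscal{X}^{q,p}_{\bfvarepsilon}(Q_T)}=0\qquad\text{for every }\bfphi\in C_0^\infty(Q_T)^d.
\end{align*}

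Next, I would extend this vanishing property from the test class $C_0^\infty(Q_T)^d$ to all of $\bscal{X}^{q,p}_{\bfvarepsilon}(Q_T)$. For an arbitrary $\bsv\in \bscal{X}^{q,p}_{\bfvarepsilon}(Q_T)$, Proposition \ref{5.8} (iv) supplies a sequence $(\bfphi_n)_{n\in \setN}\subseteq C_0^\infty(Q_T)^d$ with $\bfphi_n\to\bsv$ in $\bscal{X}^{q,p}_{\bfvarepsilon}(Q_T)$ as $n\to\infty$ (e.g.\ $\bfphi_n:=\mathring{\bscal{R}}^{h_n}_{Q_T}\bsv$ for a null-sequence $h_n\downarrow 0$). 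By the continuity of the duality pairing,
\begin{align*}
\langle \bsw^*,\bsv\rangle_{\bscal{X}^{q,p}_{\bfvarepsilon}(Q_T)}=\lim_{n\to\infty}\langle \bsw^*,\bfphi_n\rangle_{\bscal{X}^{q,p}_{\bfvarepsilon}(Q_T)}=0.
\end{align*}
Since $\bsv$ was arbitrary, $\bsw^*=\boldsymbol{0}$ in $\bscal{X}^{q,p}_{\bfvarepsilon}(Q_T)^*$, i.e.\ $\bsu_1^*=\bsu_2^*$.

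There is essentially no obstacle beyond invoking the correct density statement; the main point is that the whole smoothing apparatus of Section~\ref{sec:5}, in particular Proposition \ref{5.8}, was set up precisely so that $C_0^\infty(Q_T)^d$ is a dense subset of $\bscal{X}^{q,p}_{\bfvarepsilon}(Q_T)$, and hence a separating family for its dual. Without this density the uniqueness statement would fail, which is why Lemma \ref{6.2} is placed immediately after Section \ref{sec:5}.
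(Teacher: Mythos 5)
Your proof is correct and follows the same route as the paper: subtract the two candidate derivatives, observe the difference annihilates $C_0^\infty(Q_T)^d$, and conclude by the density of $C_0^\infty(Q_T)^d$ in $\bscal{X}^{q,p}_{\bfvarepsilon}(Q_T)$ from Proposition \ref{5.8}. Your version merely spells out the approximation step more explicitly than the paper does.
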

	
	\begin{proof}
		Suppose that for $\bsu\in \bscal{X}^{q,p}_{\bfvarepsilon}(Q_T)$ there exist $\bsu^*_1,\bsu^*_2\in \bscal{X}^{q,p}_{\bfvarepsilon}(Q_T)^*$, such that for $i=1,2$ it holds \eqref{eq:6.1}. Thus, we have $\langle \bsu^*_1,\bfphi\rangle_{\bscal{X}^{q,p}_{\bfvarepsilon}(Q_T)}=\langle \bsu^*_2,\bfphi\rangle_{\bscal{X}^{q,p}_{\bfvarepsilon}(Q_T)}$ for every $\bfphi\in C^\infty_0(Q_T)^d$. Since the space $C^\infty_0(Q_T)^d$ is dense in $\bscal{X}^{q,p}_{\bfvarepsilon}(Q_T)$ (cf.~Proposition~\ref{5.8}), we infer  $\bsu^*_1=\bsu^*_2$ in $\bscal{X}^{q,p}_{\bfvarepsilon}(Q_T)^*$.\hfill$\qed$
	\end{proof}
	
	\begin{defn}\label{6.3}
		We define the \textbf{variable exponent Bochner--Sobolev space}
		\begin{align*}
		\bscal{W}^{q,p}_{\bfvarepsilon}(Q_T):=
		\bigg\{\bsu\in \bscal{X}^{q,p}_{\bfvarepsilon}(Q_T)\;\bigg|\; \exists\;\frac{\mathbf{d}\bsu}{\mathbf{dt}}\in 	\bscal{X}^{q,p}_{\bfvarepsilon}(Q_T)^*\bigg\}.
		\end{align*}
	\end{defn}
	
	\begin{prop}\label{6.4}
		The space $\bscal{W}_{\bfvarepsilon}^{q,p}(Q_T)$ equipped with the norm
		\begin{align*}
		\|\cdot\|_{\bscal{W}_{\bfvarepsilon}^{q,p}(Q_T)}:=\|\cdot\|_{\bscal{X}_{\bfvarepsilon}^{q,p}(Q_T)}+\left\|\frac{\mathbf{d}}{\mathbf{dt}}\cdot\right\|_{\bscal{X}_{\bfvarepsilon}^{q,p}(Q_T)^*}
		\end{align*}
		is a separable, reflexive Banach space.
	\end{prop}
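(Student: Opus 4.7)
The plan is to realize $\bscal{W}_{\bfvarepsilon}^{q,p}(Q_T)$ as a closed subspace of the product Banach space $\bscal{X}_{\bfvarepsilon}^{q,p}(Q_T)\times \bscal{X}_{\bfvarepsilon}^{q,p}(Q_T)^*$ via the graph embedding, and then inherit completeness, separability and reflexivity from that product. Concretely, I would introduce
\begin{align*}
\mathbf{J}:\bscal{W}_{\bfvarepsilon}^{q,p}(Q_T)\to \bscal{X}_{\bfvarepsilon}^{q,p}(Q_T)\times \bscal{X}_{\bfvarepsilon}^{q,p}(Q_T)^*,\qquad
\mathbf{J}\bsu:=\bigg(\bsu,\frac{\mathbf{d}\bsu}{\mathbf{dt}}\bigg)^\top,
\end{align*}
which by the very definition of $\|\cdot\|_{\bscal{W}_{\bfvarepsilon}^{q,p}(Q_T)}$ is an isometry into its range $R(\mathbf{J})$.

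Next, I would show that $R(\mathbf{J})$ is closed in the product space. To this end, let $(\bsu_n)_{n\in\setN}\subseteq \bscal{W}_{\bfvarepsilon}^{q,p}(Q_T)$ be such that $\mathbf{J}\bsu_n\to (\bsu,\bsu^*)^\top$ in $\bscal{X}_{\bfvarepsilon}^{q,p}(Q_T)\times \bscal{X}_{\bfvarepsilon}^{q,p}(Q_T)^*$ as $n\to \infty$. I need to check that $\bsu^*=\frac{\mathbf{d}\bsu}{\mathbf{dt}}$ in the sense of Definition \ref{6.1}. For any $\bfphi\in C_0^\infty(Q_T)^d$, the defining identity \eqref{eq:6.1} for $\bsu_n$ reads
\begin{align*}
-\int_I (\bsu_n(t),\pa_t\bfphi(t))_Y\,dt
=\bigg\langle\frac{\mathbf{d}\bsu_n}{\mathbf{dt}},\bfphi\bigg\rangle_{\bscal{X}^{q,p}_{\bfvarepsilon}(Q_T)}.
\end{align*}
The right-hand side converges to $\langle\bsu^*,\bfphi\rangle_{\bscal{X}^{q,p}_{\bfvarepsilon}(Q_T)}$, while on the left-hand side one exploits $X^{q,p}_-\hookrightarrow Y$ and $\bscal{X}_{\bfvarepsilon}^{q,p}(Q_T)\hookrightarrow \bscal{X}_-^{q,p}(Q_T)\subseteq L^{\min\{p^-,q^-\}}(I,Y)$ (cf.~Proposition \ref{4.6}) together with Hölder's inequality to pass to the limit. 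This yields \eqref{eq:6.1} for $\bsu^*$, i.e., $\bsu\in \bscal{W}_{\bfvarepsilon}^{q,p}(Q_T)$ with $\frac{\mathbf{d}\bsu}{\mathbf{dt}}=\bsu^*$, so $R(\mathbf{J})$ is indeed closed.

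Finally, I would transfer the structural properties. Since $\bscal{X}_{\bfvarepsilon}^{q,p}(Q_T)$ is a separable, reflexive Banach space by Proposition \ref{4.8}, the same holds for its dual $\bscal{X}_{\bfvarepsilon}^{q,p}(Q_T)^*$ (reflexivity is inherited, separability follows from reflexivity of $\bscal{X}_{\bfvarepsilon}^{q,p}(Q_T)$ combined with Proposition \ref{4.9}, which identifies $\bscal{X}_{\bfvarepsilon}^{q,p}(Q_T)^*$ up to isomorphism with a quotient of the separable product $L^{q'(\cdot,\cdot)}(Q_T)^d\times L^{p'(\cdot,\cdot)}(Q_T,\mathbb{M}_{\sym}^{d\times d})$). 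Hence the product space is a separable, reflexive Banach space, and $R(\mathbf{J})$ inherits all three properties as a closed subspace. The isometric isomorphism $\mathbf{J}:\bscal{W}_{\bfvarepsilon}^{q,p}(Q_T)\to R(\mathbf{J})$ then transports completeness, separability and reflexivity back to $\bscal{W}_{\bfvarepsilon}^{q,p}(Q_T)$.

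The only non-routine point is the closedness step, and here the potential obstacle is the passage to the limit in $-\int_I (\bsu_n(t),\pa_t\bfphi(t))_Y\,dt$: one must justify that convergence in $\bscal{X}_{\bfvarepsilon}^{q,p}(Q_T)$ is strong enough to control the $Y$-pairing, which is exactly where the hypothesis $X^{q,p}_-\hookrightarrow Y$ enters through the embedding chain of Proposition \ref{4.6}.
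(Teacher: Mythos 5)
Your proof is correct and follows essentially the same route as the paper: both realize $\bscal{W}_{\bfvarepsilon}^{q,p}(Q_T)$ isometrically as a closed subspace of $\bscal{X}_{\bfvarepsilon}^{q,p}(Q_T)\times\bscal{X}_{\bfvarepsilon}^{q,p}(Q_T)^*$ via the graph map, and both pass to the limit in the defining identity \eqref{eq:6.1} by means of the embedding $\bscal{X}_{\bfvarepsilon}^{q,p}(Q_T)\hookrightarrow L^{\min\{p^-,q^-\}}(I,Y)$ secured by Proposition~\ref{4.6} and the standing hypothesis $X^{q,p}_-\hookrightarrow Y$. The only cosmetic difference is that the paper proves completeness directly and then invokes the graph isometry only for reflexivity and separability, whereas you consolidate all three properties into the single closedness step — a slightly cleaner organization that additionally makes explicit the separability of $\bscal{X}_{\bfvarepsilon}^{q,p}(Q_T)^*$, which the paper leaves implicit.
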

	
	\begin{proof}
		
		\textbf{1. Completeness:} Let $(\bsu_n)_{n\in\setN}\!\subseteq\!\bscal{W}_{\bfvarepsilon}^{q,p}(Q_T)$ be a Cauchy sequence. Thus, $(\bsu_n)_{n\in\setN}\subseteq\bscal{X}_{\bfvarepsilon}^{q,p}(Q_T)$ and $(\frac{\textbf{d}\bsu_n}{\textbf{dt}})_{n\in\setN}\!\subseteq\!\bscal{X}_{\bfvarepsilon}^{q,p}(Q_T)^*$ are Cauchy sequences as well. Hence, there exist elements ${\bsu\!\in\! \bscal{X}_{\bfvarepsilon}^{q,p}(Q_T)}$ and $\bsu^*\!\in\! \bscal{X}_{\bfvarepsilon}^{q,p}(Q_T)^*$, such that
		\begin{align}
		\begin{split}
		\begin{alignedat}{2}
		\bsu_n\overset{n\to \infty}{\to }\bsu\quad\text{ in }\bscal{X}_{\bfvarepsilon}^{q,p}(Q_T),\qquad\frac{\textbf{d}\bsu_n}{\textbf{dt}}\overset{n\to \infty}{\to }\bsu^*\quad\text{ in }\bscal{X}_{\bfvarepsilon}^{q,p}(Q_T)^*.
		\end{alignedat}
		\end{split}\label{eq:6.4.1}
		\end{align}
		From \eqref{eq:6.4.1} we infer for every $\bfphi\in
                C_0^\infty(Q_T)^d$, using 
                $\bscal{X}_{\bfvarepsilon}^{q,p}(Q_T) \vnor L^{\min\{p^-,q^-\}}(I,Y)$, that
		\begin{align*}
		\begin{split}
		\langle \bsu^*,\bfphi\rangle_{\bscal{X}^{q,p}_{\bfvarepsilon}(Q_T)}\!\overset{n\to\infty}{\leftarrow}\!\left\langle \frac{\textbf{d}\bsu_n}{\textbf{dt}},\bfphi\right\rangle_{\bscal{X}^{q,p}_{\bfvarepsilon}(Q_T)}\!=\!-\int_I{(\bsu_n(s),\pa_t\bfphi(s))_Y\,ds}\!\overset{n\to\infty}{\to}\!-\int_I{(\bsu(s),\pa_t\bfphi(s))_Y\,ds},
		\end{split}
		\end{align*}
		i.e., $\bsu\!\in \!\bscal{W}^{q,p}_{\bfvarepsilon}(Q_T)$ with $\frac{\textbf{d}\bsu}{\textbf{dt}}\!=\!\bsu^*$ in $\bscal{X}^{q,p}_{\bfvarepsilon}(Q_T)^*$. In particular, \eqref{eq:6.4.1} now reads $\bsu_n\!\to \!\bsu$~in~$\bscal{W}^{q,p}_{\bfvarepsilon}(Q_T)$ $(n\to \infty)$. Altogether, $\bscal{W}^{q,p}_{\bfvarepsilon}(Q_T)$ is a Banach space. \\[-3mm]
		
		\textbf{2. Reflexivity and separability:} $\bscal{W}^{q,p}_{\bfvarepsilon}(Q_T)$ inherits the reflexivity and separability from the product space $\bscal{X}^{q,p}_{\bfvarepsilon}(Q_T)\times \bscal{X}^{q,p}_{\bfvarepsilon}(Q_T)^*$ in virtue of the isometric isomorphism
		\begin{align*}
		\boldsymbol{\Lambda}_{\bfvarepsilon}:\bscal{W}^{q,p}_{\bfvarepsilon}(Q_T)\to R(\boldsymbol{\Lambda}_{\bfvarepsilon})\subseteq\bscal{X}^{q,p}_{\bfvarepsilon}(Q_T)\times \bscal{X}^{q,p}_{\bfvarepsilon}(Q_T)^*,
		\end{align*}
		which is for every $\bsu\in \bscal{W}^{q,p}_{\bfvarepsilon}(Q_T)$ given via $\boldsymbol{\Lambda}_{\bfvarepsilon}\bsu:=(\bsu,\frac{\textbf{d}\bsu}{\textbf{dt}})^\top$ in $\bscal{X}^{q,p}_{\bfvarepsilon}(Q_T)\times \bscal{X}^{q,p}_{\bfvarepsilon}(Q_T)^*$.\hfill$\qed$
	\end{proof}
	
	Since by assumption $\mathbf{id}_{X_+^{q,p}}:X_+^{q,p}\to Y$ and $\mathbf{id}_{X_-^{q,p}}:X_-^{q,p}\to Y$  are embeddings, which are
	inevitably also dense, the corresponding  adjoint operators $(\mathbf{id}_{X_+^{q,p}})^*:Y^*\to (X_+^{q,p})^*$ and $(\mathbf{id}_{X_-^{q,p}})^*:Y^*\to (X_-^{q,p})^*$ are embeddings as well. Consequently, also the mappings
	\begin{align*}
	e_-&:=(\mathbf{id}_{X_+^{q,p}})^*R_Y\mathbf{id}_{X_-^{q,p}}:X_-^{q,p}\to (X_+^{q,p})^*,\\
	e_+&:=(\mathbf{id}_{X_-^{q,p}})^*R_Y\mathbf{id}_{X_+^{q,p}}:X_+^{q,p}\to (X_-^{q,p})^*,
	\end{align*}
	where $R_Y:Y\to Y^*$ is the Riesz isomorphism with respect to $Y$, are embeddings. This guarantees the well-posedness of  the following limiting Bochner--Sobolev spaces (cf.~Section~\ref{subsec:2.2}).
	
	\begin{defn}\label{6.5}
		We define the \textbf{limiting Bochner--Sobolev spaces}
		\begin{align*}
		\bscal{W}^{q,p}_+(Q_T)&:=W_{e_+}^{1,\max\{q^+,p^+\},\max\{(q^-)',(p^-)'\}}(I,X^{q,p}_+,X^{q,p}_-,Y),\\
		\bscal{W}^{q,p}_-(Q_T)&:=W_{e_-}^{1,\min\{q^-,p^-\},\min\{(q^+)',(p^+)'\}}(I,X^{q,p}_-,X^{q,p}_+,Y).
		\end{align*}
	\end{defn}

	The following proposition examines the exact relation between $\bscal{W}^{q,p}_{\bfvarepsilon}(Q_T)$ and $ \bscal{W}^{q,p}_-(Q_T)$.
	
	\begin{prop}\label{6.6}
		For $\bsu\in \bscal{X}^{q,p}_{\bfvarepsilon}(Q_T)$ and $\bsu^*\in \bscal{X}^{q,p}_{\bfvarepsilon}(Q_T)^*$ the following statements are equivalent:
		\begin{description}[(ii)]
			\item[(i)]  
			$\bsu\in \bscal{W}^{q,p}_{\bfvarepsilon}(Q_T)$ with $\frac{\mathbf{d}\bsu}{\mathbf{dt}}=\bsu^*$ in $\bscal{X}^{q,p}_{\bfvarepsilon}(Q_T)^*$.
			\item[(ii)] For every $\bfv\in X^{q,p}_+$ and
                          $\varphi\in C^\infty_0(I)$ there holds
			\begin{align*}
			-\int_I{(\bsu(s),\bfv)_Y\varphi^\prime(s)\,ds}=\int_I{\langle \bsu^*(s),\bfv\rangle_{X^{q,p}(s)}\varphi(s)\,ds},
			\end{align*}
i.e., $\bsu\in \bscal{W}^{q,p}_-(Q_T)$ with $\frac{d_{e_-}\bsu}{dt}=\bscal{J}_{X_+^{q,p}}^{-1}\big[(\mathbf{id}_{\bscal{X}^{q,p}_+(Q_T)})^*\bsu^*\big]$ in $L^{\min\{(q^+)',(p^+)'\}}(I,(X_+^{q,p})^*)$, where we denote by ${(\mathbf{id}_{\bscal{X}^{q,p}_+(Q_T)})^*}$ the adjoint  of ${\mathbf{id}_{\bscal{X}^{q,p}_+(Q_T)}:\bscal{X}^{q,p}_+(Q_T)\to \bscal{X}^{q,p}_{\bfvarepsilon}(Q_T)}$ and $\bscal{J}_{X_+^{q,p}}:L^{\min\{(q^+)',(p^+)'\}}(I,(X_+^{q,p})^*)\to \bscal{X}^{q,p}_+(Q_T)^*$ is the isomorphism from Proposition \ref{riesz-boch}.
		\end{description}
	\end{prop}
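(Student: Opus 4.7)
The proof proceeds by passing between test functions of separated form $\varphi(t)\bfv(x)$ and general test functions in $C^\infty_0(Q_T)^d$, followed by a bookkeeping step identifying the resulting integral identity with membership in $\bscal{W}^{q,p}_-(Q_T)$.

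For (i)$\Rightarrow$(ii), I would test \eqref{eq:6.1} against separable $\bfphi(t,x):=\varphi(t)\bfv(x)$ with $\varphi\in C^\infty_0(I)$ and $\bfv\in C^\infty_0(\Omega)^d$, noting that $\bfphi\in C^\infty_0(Q_T)^d$. This produces $-\int_I(\bsu(s),\bfv)_Y\varphi'(s)\,ds=\langle\bsu^*,\varphi\bfv\rangle_{\bscal{X}^{q,p}_{\bfvarepsilon}(Q_T)}$. Both sides are continuous linear functionals of $\bfv\in X^{q,p}_+$: the left via the embedding $X^{q,p}_+\hookrightarrow Y$ combined with $\bsu\in\bscal{X}^{q,p}_{\bfvarepsilon}(Q_T)\hookrightarrow L^1(I,Y)$, and the right via the continuity of $\bfv\mapsto\varphi\bfv$ from $X^{q,p}_+$ into $\bscal{X}^{q,p}_+(Q_T)\hookrightarrow\bscal{X}^{q,p}_{\bfvarepsilon}(Q_T)$ coming from Proposition~\ref{4.6}, together with the boundedness of $\bsu^*$. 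Density of $C^\infty_0(\Omega)^d$ in $X^{q,p}_+$ extends the identity to arbitrary $\bfv\in X^{q,p}_+$, and formula \eqref{eq:3.9.1} from Remark~\ref{3.9} rewrites the right-hand side in the form $\int_I\langle\bsu^*(s),\bfv\rangle_{X^{q,p}_{\bfvarepsilon}(s)}\varphi(s)\,ds$ required by (ii).

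For (ii)$\Rightarrow$(i), I would approximate arbitrary $\bfphi\in C^\infty_0(Q_T)^d$ by finite tensor-product sums $\bfphi_n:=\sum_{k=1}^{N_n}\varphi_{n,k}(t)\bfv_{n,k}(x)$ with $\varphi_{n,k}\in C^\infty_0(I)$, $\bfv_{n,k}\in C^\infty_0(\Omega)^d$, all supported in a fixed compact subset of $Q_T$, such that $\bfphi_n\to\bfphi$ and $\pa_t\bfphi_n\to\pa_t\bfphi$ uniformly on $\overline{Q_T}$. Such approximations exist by standard polynomial or truncated Fourier-series approximation on a box enclosing $\text{supp}(\bfphi)$, multiplied by a fixed separated cut-off $\chi(t)\eta(x)\in C^\infty_0(I)\otimes C^\infty_0(\Omega)$ equal to one near $\text{supp}(\bfphi)$. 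Applying (ii) to each summand and summing, using Remark~\ref{3.9} on the right-hand side, yields $-\int_I(\bsu(s),\pa_t\bfphi_n(s))_Y\,ds=\langle\bsu^*,\bfphi_n\rangle_{\bscal{X}^{q,p}_{\bfvarepsilon}(Q_T)}$. Uniform convergence with uniformly compact supports transfers to convergence of $\bfphi_n\to\bfphi$ in $\bscal{X}^{q,p}_{\bfvarepsilon}(Q_T)$ and of $\pa_t\bfphi_n\to\pa_t\bfphi$ in $L^\infty(Q_T)^d\subseteq L^2(I,Y)$, so passing to the limit produces \eqref{eq:6.1}.

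For the final identification of the integral identity with the assertion $\bsu\in\bscal{W}^{q,p}_-(Q_T)$ with the claimed derivative, I would invoke Proposition~\ref{riesz-boch}: the functional $(\mathbf{id}_{\bscal{X}^{q,p}_+(Q_T)})^*\bsu^*\in\bscal{X}^{q,p}_+(Q_T)^*$ admits a unique representative $\bsg:=\bscal{J}_{X^{q,p}_+}^{-1}[(\mathbf{id}_{\bscal{X}^{q,p}_+(Q_T)})^*\bsu^*]\in L^{\min\{(q^+)',(p^+)'\}}(I,(X^{q,p}_+)^*)$ satisfying $\int_I\langle\bsg(t),\bfv\rangle_{X^{q,p}_+}\varphi(t)\,dt=\langle\bsu^*,\varphi\bfv\rangle_{\bscal{X}^{q,p}_{\bfvarepsilon}(Q_T)}$ for every $\varphi\in C^\infty_0(I)$ and $\bfv\in X^{q,p}_+$. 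By Remark~\ref{3.9}, the right-hand side of this relation coincides with the right-hand side of the integral identity in (ii), so $\bsg$ is precisely the $e_-$-generalized time derivative of $\bsu$ in the sense of Subsection~\ref{subsec:2.2}. The main technical obstacle is the tensor-product approximation in (ii)$\Rightarrow$(i), where one must simultaneously control uniform convergence of $\bfphi_n$ and $\pa_t\bfphi_n$ with uniformly compact supports; the remaining steps are essentially an unwinding of dual representations.
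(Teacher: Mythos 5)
Your (i)$\Rightarrow$(ii) direction agrees with the paper's: test \eqref{eq:6.1} against separable $\varphi(t)\bfv(x)$, identify the right-hand side via Remark~\ref{3.9}, and (implicitly in the paper, explicitly in your proposal) extend from $\bfv\in C^\infty_0(\Omega)^d$ to $\bfv\in X^{q,p}_+$ by density. For (ii)$\Rightarrow$(i), however, you take a genuinely different route. The paper observes that any $\bfphi\in C^\infty_0(Q_T)^d$ belongs to $\bscal{W}^{q,p}_+(Q_T)$ with $\frac{d_{e_+}\bfphi}{dt}=e_+(\partial_t\bfphi)$ and then invokes the abstract integration-by-parts formula for non-symmetric Bochner--Sobolev pairs from \cite[Lemma A.1]{Wol17} to pair $\bfphi\in\bscal{W}^{q,p}_+(Q_T)$ directly against $\bsu\in\bscal{W}^{q,p}_-(Q_T)$, obtaining \eqref{eq:6.1} with no approximation step. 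You instead approximate $\bfphi$ by finite tensor-product sums and pass to the limit, avoiding the external integration-by-parts lemma at the cost of an explicit approximation. Both strategies are sound; yours is more elementary, the paper's is cleaner.

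Your approximation step, however, has a gap as stated. You require only that $\bfphi_n\to\bfphi$ and $\partial_t\bfphi_n\to\partial_t\bfphi$ uniformly with uniformly compact supports, and then assert that this yields $\bfphi_n\to\bfphi$ in $\bscal{X}^{q,p}_{\bfvarepsilon}(Q_T)$. That conclusion requires $\bfvarepsilon(\bfphi_n)\to\bfvarepsilon(\bfphi)$ in $L^{p(\cdot,\cdot)}(Q_T,\mathbb{M}^{d\times d}_{\sym})$, which your hypotheses do not control: uniform convergence of $\bfphi_n$ alone says nothing about the spatial derivatives. You must additionally require uniform convergence of $\nabla_x\bfphi_n$, i.e., $C^1$-uniform convergence of the approximants (to control $\bfvarepsilon(\bfphi_n)$) in addition to $\partial_t\bfphi_n$ (to control the time pairing). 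This is available from Weierstrass-type polynomial approximation, which can be arranged in any $C^k$ norm on a compact box enclosing $\textup{supp}(\bfphi)$ before applying the separated cut-off, so the repair is routine; but the convergence hypotheses you wrote down are insufficient, and the sentence ``uniform convergence with uniformly compact supports transfers to convergence in $\bscal{X}^{q,p}_{\bfvarepsilon}(Q_T)$'' is false as written.
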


\begin{proof}
	\textsf{\textbf{(i) $\boldsymbol{\Rightarrow}$ (ii)}} Due to $ \bscal{X}^{q,p}_{\bfvarepsilon}(Q_T)\embedding \bscal{X}^{q,p}_-(Q_T)$ (cf.~Proposition~\ref{4.6}), we have $\bsu\in \bscal{X}^{q,p}_-(Q_T)$.
	Choosing $\bfphi:=\bfv\varphi\in C_0^\infty(Q_T)^d$ for $\bfv\in C_0^\infty(\Omega)^d$ and $\varphi\in C_0^\infty(I)$ in \eqref{eq:6.1}, we obtain
	\begin{align*}
			-\int_I{(\bsu(s),\bfv)_Y\varphi^\prime(s)\,ds}&=	-\int_I{(\bsu(s),\partial_t\bfphi(s))_Y\,ds}\\&=\langle \bsu^*,\bfphi\rangle_{\bscal{X}^{q,p}_{\bfvarepsilon}(Q_T)}
				=\langle (\mathbf{id}_{\bscal{X}^{q,p}_+(Q_T)})^*\bfu^*,\bfphi\rangle_{\bscal{X}^{q,p}_+(Q_T)}\\&=\int_I{\langle \bscal{J}_{X_+^{q,p}}^{-1}\big[(\mathbf{id}_{\bscal{X}^{q,p}_+(Q_T)})^*\bsu^*\big](s),\bfv\rangle_{X_+^{q,p}}\varphi(s)\,ds},
	\end{align*}
	i.e., $\bsu\in \bscal{W}^{q,p}_-(Q_T)$ with $\frac{d_{e_-}\bsu}{dt}=\bscal{J}_{X_+^{q,p}}^{-1}\big[(\mathbf{id}_{\bscal{X}^{q,p}_+(Q_T)})^*\bsu^*\big]$ in $L^{\min\{(q^+)',(p^+)'\}}(I,(X_+^{q,p})^*)$ (cf.~\eqref{eq:gtd}).\\[-3mm]
	
	\textsf{\textbf{(ii) $\boldsymbol{\Rightarrow}$ (i)}} Let
        $\bfphi\in C_0^\infty(Q_T)^d$. Since $\bfphi\in
        \bscal{W}_+^{q,p}(Q_T)$ with
        $\frac{d_{e_+}\bfphi}{dt}=e_+(\partial_t\bfphi)$ in
        $\bscal{X}_-^{q,p}(Q_T)^*$, we obtain using \eqref{eq:iden},
        the integration by parts formula in time according to
        \cite[Lemma A.1]{Wol17} and $\frac{d_{e_-}\bsu}{dt}=\bscal{J}_{X_+^{q,p}}^{-1}\big[(\mathbf{id}_{\bscal{X}^{q,p}_+(Q_T)})^*\bsu^*\big]$ in $L^{\max\{(q^-)',(p^-)'\}}(I,(X_+^{q,p})^*)$
	\begin{align*}
	-\int_I{(\bsu(s),\partial_t\bfphi(s))_Y\,ds}&=	-\int_I{\langle e_+(\partial_t\bfphi(s)), \bsu(s)\rangle_{X_-^{q,p}}\,ds}=-\int_I{\bigg\langle\frac{d_{e_+}\bfphi}{dt}(s),\bsu(s)\bigg\rangle_{X_-^{q,p}}\,ds}\\&=\int_I{\bigg\langle\frac{d_{e_-}\bsu}{dt}(s),\bfphi(s)\bigg\rangle_{X_+^{q,p}}\,ds}=\langle\bsu^*,\bfphi\rangle_{\bscal{X}^{q,p}_{\bfvarepsilon}(Q_T)},
	\end{align*}
	i.e., $\bsu\in \bscal{W}^{q,p}_{\bfvarepsilon}(Q_T)$ with $\frac{\textbf{d}\bsu}{\textbf{dt}}=\bsu^*$ in $\bscal{X}^{q,p}_{\bfvarepsilon}(Q_T)^*$ (cf.~Definition~\ref{6.1}).\hfill$\qed$
\end{proof}

	\begin{rmk}\label{compare}
		Proposition \ref{6.6} also states that $
                \bscal{W}^{q,p}_{\bfvarepsilon}(Q_T)\subseteq
                \bscal{W}^{q,p}_-(Q_T)$. In particular, comparing the
                norms on $ \bscal{W}^{q,p}_{\bfvarepsilon}(Q_T)$  and
                $\bscal{W}^{q,p}_-(Q_T)$, it is readily seen that even
                ${\bscal{W}^{q,p}_{\bfvarepsilon}(Q_T)\embedding
                  \bscal{W}^{q,p}_-(Q_T)}$. Moreover, similar
                arguments as for Proposition \ref{6.6} show that $
                {\bscal{W}^{q,p}_+(Q_T)\embedding
                  \bscal{W}^{q,p}_{\bfvarepsilon}(Q_T)}$, i.e., in
                total we have the chain of  embeddings  
		\begin{align}
		\bscal{W}^{q,p}_+(Q_T)\embedding \bscal{W}^{q,p}_{\bfvarepsilon}(Q_T)\embedding \bscal{W}^{q,p}_-(Q_T).\label{eq:emb}
		\end{align}
	\end{rmk}

	The spaces $\bscal{W}^{q,p}_+(Q_T)$ and $\bscal{W}^{q,p}_-(Q_T)$ allow us to embed $\bscal{W}^{q,p}_{\bfvarepsilon}(Q_T)$ into the standard theory of Bochner--Sobolev spaces.
	To be more precise, \eqref{eq:emb} ensures in many situations access to the theory of Bochner--Sobolev spaces, even though $ \bscal{W}^{q,p}_{\bfvarepsilon}(Q_T)$  does not meet this framework. Above all, $\bscal{W}^{q,p}_+(Q_T)$ and $\bscal{W}^{q,p}_-(Q_T)$, i.e., particularly Proposition \ref{6.6}, enable us to extend the common method of in time extension via reflection to the framework of variable exponent Bochner--Sobolev spaces, which is crucial for the extension of the smoothing operator from Proposition \ref{5.1} to a smoothing operator for $ \bscal{W}^{q,p}_{\bfvarepsilon}(Q_T)$.
	
	\begin{prop}[Time extension via reflection]\label{6.7}
		For $\bsu\in \bscal{X}^{q,p}_{\bfvarepsilon}(Q_T)$ we define the \textbf{in time extension via reflection} for every $t\in 3I:=\left(-T,2T\right)$ by
		\begin{align}
		(\widehat{\bscal{F}}_T\bsu)(t):=\begin{cases}
		\bsu(-t)&\text{ for }t\in\left(-T,0\right]\\
		\bsu(t)&\text{ for }t\in\left(0,T\right)\\
		\bsu(2T-t)&\text{ for }t\in\left[T,2T\right)
		\end{cases}\quad\textup{ in }X^{q_T,p_T}(t),\label{extintime}
		\end{align}
		where for $Q_{3T}:=3I\times \Omega$ we set $q_T:=\widehat{\bscal{F}}_Tq\in \mathcal{P}^{\log}(Q_{3T})$ and $p_T:=\widehat{\bscal{F}}_Tp\in \mathcal{P}^{\log}(Q_{3T})$.
		Then, it holds:
		\begin{description}
			\item[(i)] $\widehat{\bscal{F}}_T: \bscal{X}^{q,p}_{\bfvarepsilon}(Q_T)\to \bscal{X}^{q_T,p_T}_{\bfvarepsilon}\!(Q_{3T})$ is well-defined, linear and Lipschitz~continuous with constant~$3$.
			\item[(ii)] $\widehat{\bscal{F}}_T:\bscal{W}^{q,p}_{\bfvarepsilon}(Q_T)\to \bscal{W}^{q_T,p_T}_{\bfvarepsilon}(Q_{3T})$ is well-defined, linear and Lipschitz continuous with constant~$12$. To be more precise, if $\bsu\in \bscal{W}^{q,p}_{\bfvarepsilon}(Q_T)$ with $\frac{\mathbf{d}\bsu}{\mathbf{dt}}=\bscal{J}_{\bfvarepsilon}(\bsf,\bsF)$ in $\bscal{X}^{q,p}_{\bfvarepsilon}(Q_T)^*$ for $\bsf\in L^{q'(\cdot,\cdot)}(Q_T)^d$ and $\bsF\in L^{p'(\cdot,\cdot)}(Q_T,\mathbb{M}_{\sym}^{d\times d})$, then $\widehat{\bscal{F}}_T\bsu\in \bscal{W}^{q_T,p_T}_{\bfvarepsilon}(Q_{3T})$ with
			\begin{align}
			\frac{\mathbf{d}\widehat{\bscal{F}}_T\bsu}{\mathbf{dt}}=\bscal{J}_{\bfvarepsilon}(\widehat{\bscal{F}}_T^-\bsf,\widehat{\bscal{F}}_T^-\bsF)\quad\textup{ in }\bscal{X}^{q_T,p_T}_{\bfvarepsilon}(Q_{3T})^*,\label{eq:6.7.a}
			\end{align}
			where $\widehat{\bscal{F}}_T^-\bsf\in L^{q_T'(\cdot,\cdot)}(Q_{3T})^d$ is defined as $(\widehat{\bscal{F}}_T^-\bsf)(t):=\bsf(t)$ if $t\in I$ and $(\widehat{\bscal{F}}_T^-\bsf)(t):=-(\widehat{\bscal{F}}_T\bsf)(t)$ if $t\in 3I \setminus \overline{I}$,
			and $\widehat{\bscal{F}}_T^-\bsF\in L^{p_T'(\cdot,\cdot)}(Q_{3T},\mathbb{M}^{d\times d}_{\sym})$ analogously.
		\end{description}
	\end{prop}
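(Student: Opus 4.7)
The plan is to prove (i) by a direct change-of-variables argument, and (ii) by applying Proposition \ref{6.6} together with the integration-by-parts formula from \cite[Lemma A.1]{Wol17} to a cleverly chosen auxiliary test function.

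For part (i), I would split the defining modulars over the three sub-intervals $(-T,0)$, $(0,T)$, and $(T,2T)$. The substitutions $s=-t$ on $(-T,0)$ and $s=2T-t$ on $(T,2T)$ reduce each reflected piece to the corresponding integral over $I$, because $q_T,p_T$ are defined by the same reflection of $q,p$. Since $\bfvarepsilon$ acts only in the spatial variable, $\bfvarepsilon(\widehat{\bscal{F}}_T\bsu)=\widehat{\bscal{F}}_T(\bfvarepsilon(\bsu))$, so the same splitting handles the symmetric gradient term. The time-slice condition $(\widehat{\bscal{F}}_T\bsu)(t)\in X^{q_T,p_T}(t)$ is immediate from Definition \ref{4.1}, and summing the three pieces gives the Lipschitz constant $3$.

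For part (ii), fix $\bfphi\in C_0^\infty(Q_{3T})^d$ and set
\begin{align*}
\bfpsi(s):=\bfphi(s)-\bfphi(-s)-\bfphi(2T-s),\quad s\in\overline I.
\end{align*}
Then $\bfpsi\in C^\infty(\overline I,C_0^\infty(\Omega)^d)\subseteq \bscal{W}^{q,p}_+(Q_T)$, and since $\bfphi$ vanishes near $t=-T,2T$, we obtain $\bfpsi(0)=-\bfphi(2T)=0$ and $\bfpsi(T)=-\bfphi(-T)=0$ in $Y$. By Proposition \ref{6.6}, $\bsu\in\bscal{W}^{q,p}_-(Q_T)$ with generalized derivative determined by $\bscal{J}_{\bfvarepsilon}(\bsf,\bsF)$, so the integration-by-parts formula from \cite[Lemma A.1]{Wol17} applied to the pair $(\bsu,\bfpsi)$, whose $Y$-valued endpoint traces vanish, yields
\begin{align*}
-\int_I (\bsu(s),\partial_s\bfpsi(s))_Y\,ds=\langle\bscal{J}_{\bfvarepsilon}(\bsf,\bsF),\bfpsi\rangle_{\bscal{X}^{q,p}_{\bfvarepsilon}(Q_T)}.
\end{align*}
Using $\partial_s[\bfphi(-s)]=-\partial_t\bfphi(-s)$ and $\partial_s[\bfphi(2T-s)]=-\partial_t\bfphi(2T-s)$ and substituting back $t=-s$ and $t=2T-s$ in each integral pairing $\bsu,\bsf,\bsF$ against the reflected summands of $\bfpsi$, the left-hand side turns into $-\int_{3I}(\widehat{\bscal{F}}_T\bsu(t),\partial_t\bfphi(t))_Y\,dt$, while the right-hand side turns into $\langle\bscal{J}_{\bfvarepsilon}(\widehat{\bscal{F}}_T^-\bsf,\widehat{\bscal{F}}_T^-\bsF),\bfphi\rangle_{\bscal{X}^{q_T,p_T}_{\bfvarepsilon}(Q_{3T})}$. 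The minus signs defining $\widehat{\bscal{F}}_T^-$ are produced precisely by the explicit minus signs in front of the two reflected summands of $\bfpsi$.

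The Lipschitz constant $12$ is then obtained by combining part (i) with Proposition \ref{4.9}: one chooses a representation of $\frac{\mathbf{d}\bsu}{\mathbf{dt}}$ satisfying $\|\bsf\|_{L^{q'(\cdot,\cdot)}(Q_T)^d}+\|\bsF\|_{L^{p'(\cdot,\cdot)}(Q_T)^{d\times d}}\leq 2\|\frac{\mathbf{d}\bsu}{\mathbf{dt}}\|_{\bscal{X}^{q,p}_{\bfvarepsilon}(Q_T)^*}$, reflection multiplies each of these norms by at most $3$ (by the analogue of (i) for the dual exponents), and a final factor of $2$ arises from the upper estimate in Proposition \ref{4.9} applied to $\bscal{J}_{\bfvarepsilon}(\widehat{\bscal{F}}_T^-\bsf,\widehat{\bscal{F}}_T^-\bsF)$. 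The main obstacle is the careful bookkeeping of sign flips and, especially, the verification that $\bfpsi$ genuinely qualifies as an admissible test function for \cite[Lemma A.1]{Wol17}, since the intrinsic generalized derivative $\frac{\mathbf{d}}{\mathbf{dt}}$ is defined via test functions in $C_0^\infty(Q_T)^d$ whereas $\bfpsi$ merely vanishes at $0$ and $T$ in $Y$; once this translation through Proposition \ref{6.6} is made, the rest is mechanical.
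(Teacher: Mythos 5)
Your argument is correct, and part (ii) takes a genuinely different route from the paper. The paper's proof transfers $\bsu$ via Proposition \ref{6.6} into the classical Bochner--Sobolev space $\bscal{W}^{q_T,p_T}_-(Q_{3T})$, cites \cite[Proposition 2.3.1]{Dro01} to obtain the reflection--extension identity for the classical generalized derivative, and then transfers back through Proposition \ref{6.6}; the constant $12$ is extracted afterwards via Proposition \ref{4.9}. You instead verify the defining identity for $\frac{\mathbf{d}\widehat{\bscal{F}}_T\bsu}{\mathbf{dt}}$ directly: for $\bfphi\in C_0^\infty(Q_{3T})^d$, your auxiliary function $\bfpsi(s)=\bfphi(s)-\bfphi(-s)-\bfphi(2T-s)$ lies in $C^\infty(\overline I,C_0^\infty(\Omega)^d)\subseteq\bscal{W}^{q,p}_+(Q_T)$ and vanishes at $s=0$ and $s=T$ in $Y$ because $\bfphi$ is compactly supported in $Q_{3T}$, so the integration-by-parts formula from \cite[Lemma A.1]{Wol17}, accessed through Proposition \ref{6.6}, applies to the pair $(\bsu,\bfpsi)$ with no boundary contribution; unfolding by the substitutions $s\mapsto -s$ and $s\mapsto 2T-s$ then reconstitutes the dual pairing over $Q_{3T}$, with the sign flips in $\widehat{\bscal{F}}_T^-$ produced exactly by the two minus signs in $\bfpsi$. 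Your approach is more self-contained in that it re-derives the reflection lemma rather than outsourcing it to Droniou, at the (small, and correctly flagged) cost of the admissibility check for $\bfpsi$. Part (i) and the bookkeeping of the Lipschitz constant $12=2\cdot 3\cdot 2$ are essentially the same as in the paper.
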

	
	\begin{proof}
		\textbf{ad (i)} Let $\bsu\in \bscal{X}^{q,p}_{\bfvarepsilon}(Q_T)$. Then, we have $\widehat{\bscal{F}}_T\bsu\in L^{q_T(\cdot,\cdot)}(Q_{3T})^d$, $\bfvarepsilon(\widehat{\bscal{F}}_T\bsu)=\widehat{\bscal{F}}_T\bfvarepsilon(\bsu)\in L^{p_T(\cdot,\cdot)}(Q_{3T},\mathbb{M}^{d\times d}_{\sym})$ and $(\widehat{\bscal{F}}_T\bsu)(t)\in X^{q_T,p_T}(t)$ for almost every $t\in 3I$, i.e., $\widehat{\bscal{F}}_T\bsu\in \bscal{X}_{\bfvarepsilon}^{q_T,p_T}(Q_{3T})$ and $\|\widehat{\bscal{F}}_T\bsu\|_{\bscal{X}_{\bfvarepsilon}^{q_T,p_T}(Q_{3T})}\leq 3\|\bsu\|_{\bscal{X}_{\bfvarepsilon}^{q,p}(Q_T)}$.\\[-3mm]
		
		\textbf{ad (ii)} Let $\bsf\in
                L^{q'(\cdot,\cdot)}(Q_T)^d$ and $\bsF\in
                L^{p'(\cdot,\cdot)}(Q_T,\mathbb{M}_{\sym}^{d\times
                  d})$ be such that $\frac{\mathbf{d}\bsu}{\mathbf{dt}}=\bscal{J}_{\bfvarepsilon}(\bsf,\bsF)$ in $\bscal{X}^{q,p}_{\bfvarepsilon}(Q_T)^*$. Proposition
                \ref{6.6} yields $\bsu\in  \bscal{W}^{q,p}_-(Q_T)$
                satisfying
                $\frac{d_{e_-}\bsu}{dt}=\bscal{J}_{X_+^{q,p}}^{-1} (\mathbf{id}_{\bscal{X}^{q,p}_{+}(Q_T)})^*\bscal{J}_{\bfvarepsilon}(\bsf,\bsF)$ 
                in $L^{\min\{(q^+)',(p^+)'\}}(I,(X_+^{q,p})^*)$,
                where $\bscal{J}_{X_+^{q,p}}: L^{\min\{(q^+)',(p^+)'\}}(I,(X_+^{q,p})^*)\to
                \bscal{X}^{q,p}_+(Q_T)^*$ is the isomorphism from
                Proposition \ref{riesz-boch}.  Therefore, \cite[Proposition 2.3.1]{Dro01} additionally provides
		$\widehat{\bscal{F}}_T\bsu\in \bscal{W}^{q_T,p_T}_-(Q_{3T})$ with
		\begin{align}
		\frac{d_{e_{-}}\widehat{\bscal{F}}_T\bsu}{dt}&=\bscal{F}^-_T\frac{d_{e_{-}}\bsu}{dt}\label{eq:6.7.3.1}\\&=\bscal{J}_{X_+^{q,p}}^{-1}\big[(\mathbf{id}_{\bscal{X}^{q_T,p_T}_+(Q_{3T})})^*\bscal{J}_{\bfvarepsilon}(\widehat{\bscal{F}}_T^-\bsf,\widehat{\bscal{F}}_T^-\bsF)\big]\quad\text{ in }L^{\min\{(q_T^+)',(p_T^+)'\}}(3I,(X_+^{q_T,p_T})^*).\notag
		\end{align}
		By means of Proposition \ref{6.6} we conclude from \eqref{eq:6.7.3.1}  that $\widehat{\bscal{F}}_T\bsu\in \bscal{W}^{q_T,p_T}_{\bfvarepsilon}(Q_{3T})$ with \eqref{eq:6.7.a}. Exploiting the Lipschitz continuity of $\bscal{J}_{\bfvarepsilon}:L^{q'_T(\cdot,\cdot)}(Q_{3T})^d\times L^{p'_T(\cdot,\cdot)}(Q_{3T},\mathbb{M}_{\sym}^{d\times d})\to \bscal{X}_{\bfvarepsilon}^{q_T,p_T}(Q_{3T})^*$ with constant $2$ (cf.~Proposition~\ref{4.9}), we deduce from \eqref{eq:6.7.a} that 
		\begin{align}
		\begin{split}
		\bigg\|\frac{\mathbf{d} \widehat{\bscal{F}}_T\bsu}{\mathbf{dt}}\bigg\|_{\bscal{X}_{\bfvarepsilon}^{q_T,p_T}(Q_{3T})^*}&\leq 2\big[\|\widehat{\bscal{F}}_T^-\bsf\|_{L^{q'_T(\cdot,\cdot)}(Q_T)^d}+\|\widehat{\bscal{F}}_T^-\bsF\|_{L^{p'_T(\cdot,\cdot)}(Q_T,\mathbb{M}_{\sym}^{d\times d})}\big]\\&= 6\big[\|\bsf\|_{L^{q'(\cdot,\cdot)}(Q_T)^d}+\|\bsF\|_{L^{p'(\cdot,\cdot)}(Q_T,\mathbb{M}_{\sym}^{d\times d})}\big].
		\end{split}\label{eq:6.7.3}
		\end{align}
		Apart from that, Proposition~\ref{4.9} guarantees the existence of functions $\bsf_0\in L^{q'(\cdot,\cdot)}(Q_T)^d$ and $\bsF_0\in L^{p'(\cdot,\cdot)}(Q_T,\mathbb{M}_{\sym}^{d\times d})$ satisfying $\frac{\mathbf{d}\bsu}{\mathbf{dt}}=\bscal{J}_{\bfvarepsilon}(\bsf_0,\bsF_0)$ in $\bscal{X}^{q,p}_{\bfvarepsilon}(Q_T)^*$ and
		\begin{align}
		\begin{split}
		\frac{1}{2}\bigg\|\frac{\mathbf{d}\bsu}{\mathbf{dt}}\bigg\|_{\bscal{X}_{\bfvarepsilon}^{q,p}(Q_T)^*}
		\leq \|\bsf_0\|_{L^{q'(\cdot,\cdot)}(Q_T)^d}+\|\bsF_0\|_{L^{p'(\cdot,\cdot)}(Q_T,\mathbb{M}_{\sym}^{d\times d})}
		\leq 2\bigg\|\frac{\mathbf{d} \bsu}{\mathbf{dt}}\bigg\|_{\bscal{X}_{\bfvarepsilon}^{q,p}(Q_T)^*}.
		\end{split}\label{eq:6.7.4}
		\end{align}
		Thus, using \eqref{eq:6.7.4} in \eqref{eq:6.7.3}, we obtain
		\begin{align}
		\begin{split}
		\bigg\|\frac{\mathbf{d} \widehat{\bscal{F}}_T\bsu}{\mathbf{dt}}\bigg\|_{\bscal{X}_{\bfvarepsilon}^{q_T,p_T}(Q_{3T})^*}
		&\leq 6\big[\|\bsf_0\|_{L^{q'(\cdot,\cdot)}(Q_T)^d}+\|\bsF_0\|_{L^{p'(\cdot,\cdot)}(Q_T,\mathbb{M}_{\sym}^{d\times d})}\big]
		\\&\leq 12\bigg\|\frac{\mathbf{d} \bsu}{\mathbf{dt}}\bigg\|_{\bscal{X}_{\bfvarepsilon}^{q,p}(Q_T)^*}.
		\end{split}\label{eq:6.7.5}
		\end{align}
		Combining \eqref{eq:6.7.5} and \textbf{(i)}, we conclude the Lipschitz continuity of ${\widehat{\bscal{F}}_T:\bscal{W}^{q,p}_{\bfvarepsilon}(Q_T)\to \bscal{W}^{q_T,p_T}_{\bfvarepsilon}(Q_{3T})}$.~\hfill$\qed$
	\end{proof}

	Combining Proposition \ref{6.7} and the smoothing operator from Proposition \ref{5.1} yields a smoothing operator
	for $\bscal{W}^{q,p}_{\bfvarepsilon}(Q_T)$.
	
	\begin{prop}[Smoothing in $\bscal{W}^{q,p}_{\bfvarepsilon}(Q_T)$]\label{6.8}
		For every $\bsu\in \bscal{W}^{q,p}_{\bfvarepsilon}(Q_T)$ and $h>0$, we define 
		\begin{align*}
			\widehat{\bscal{R}}_{Q_T}^h\bsu:=\bscal{R}_{Q_{3T}}^h(\widehat{\bscal{F}}_T\bsu)\in C^\infty(\overline{I},C_0^\infty(\Omega)^d).
		\end{align*}
		Then, for every $\bsu\in \bscal{W}^{q,p}_{\bfvarepsilon}(Q_T)$  it holds:
		\begin{description}[{(iii)}]
			\item[(i)] $(\widehat{\bscal{R}}_{Q_T}^h\bsu)_{h>0}\subseteq  \bscal{W}^{q,p}_{\bfvarepsilon}(Q_T)$ with 
			\begin{align*}
			\frac{\mathbf{d}}{\mathbf{dt}}\widehat{\bscal{R}}_{Q_T}^h\bsu=\bscal{F}_{Q_T}^*\bscal{R}_{Q_{3T}}^h\bigg[\frac{\mathbf{d}\widehat{\bscal{F}}_T\bsu}{\mathbf{dt}}\bigg]\quad\textup{ in }\bscal{X}_{\bfvarepsilon}^{q,p}(Q_T)^*
			\end{align*}
			for every $h>0$, where $\bscal{F}_{Q_T}^*:\bscal{X}_{\bfvarepsilon}^{q_T,p_T}(Q_{3T})^*\to \bscal{X}_{\bfvarepsilon}^{q,p}(Q_T)^*$ denotes the adjoint operator of the zero extension operator $\bscal{F}_{Q_T}:\bscal{X}_{\bfvarepsilon}^{q,p}(Q_T)\to \bscal{X}_{\bfvarepsilon}^{q_T,p_T}(Q_{3T})$. 
			\item[(ii)] $	\widehat{\bscal{R}}_{Q_T}^h\bsu\to\bsu$ in $\bscal{W}^{q,p}_{\bfvarepsilon}(Q_T)$ $(h\to 0)$, i.e., $C^\infty(\overline{I},C_0^\infty(\Omega)^d)$ is dense in $\bscal{W}^{q,p}_{\bfvarepsilon}(Q_T)$.
			\item[(iii)] For extensions $\overline{q}_T,\overline{p}_T\in \mathcal{P}^{\log}(\setR^{d+1})$ of $q_T,p_T\in \mathcal{P}^{\log}(Q_{3T})$, with ${q^-\leq\overline{q}_T\leq q^+}$ and ${p^-\leq\overline{p}_T\leq p^+}$ in $\setR^{d+1}$, there exists a constant $c_{\overline{q}_T,\overline{p}_T}>0$, such that 
			\begin{align*}
			\sup_{h\in \left(0,h_1\right)\cap I}{\big\|	\widehat{\bscal{R}}_{Q_T}^h\bsu\big\|_{\bscal{W}^{q,p}_{\bfvarepsilon}(Q_T)}}\leq c_{\overline{q}_T,\overline{p}_T}\|\bsu\|_{\bscal{W}^{q,p}_{\bfvarepsilon}(Q_T)}.
			\end{align*}
		\end{description}
	\end{prop}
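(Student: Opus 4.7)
The plan is to use the factorization $\widehat{\bscal{R}}_{Q_T}^h\bsu = \bscal{R}_{Q_{3T}}^h(\widehat{\bscal{F}}_T\bsu)\big|_{Q_T}$, which reduces all three claims to the established behavior of $\bscal{R}_{Q_{3T}}^h$ and $(\bscal{R}_{Q_{3T}}^h)^\star$ on $\bscal{X}^{q_T,p_T}_{\bfvarepsilon}(Q_{3T})$ and its dual (Propositions~\ref{5.3}, \ref{5.6}, \ref{5.9}) combined with the mapping properties of the reflection extension $\widehat{\bscal{F}}_T$ (Proposition~\ref{6.7}).

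For assertion \textbf{(i)}, since $\widehat{\bscal{R}}_{Q_T}^h\bsu$ is smooth, only the identification of its generalized time derivative is at stake; by density of $C_0^\infty(Q_T)^d$ in $\bscal{X}^{q,p}_{\bfvarepsilon}(Q_T)$ (Proposition~\ref{5.8}), it suffices to verify the claimed identity on $\bfphi \in C_0^\infty(Q_T)^d$. For such $\bfphi$, one extends by zero to $\bscal{F}_{Q_T}\bfphi \in C_0^\infty(Q_{3T})^d$ and computes via Fubini, using the explicit forms $\bscal{R}_{Q_{3T}}^h\bsw = \bfomega_h \ast (\eta_h \bscal{F}_{Q_{3T}}\bsw)$ and $(\bscal{R}_{Q_{3T}}^h)^\star\bsw = (\bfomega_h \ast \bscal{F}_{Q_{3T}}\bsw)\,\eta_h$:
\begin{align*}
-\int_I (\widehat{\bscal{R}}_{Q_T}^h\bsu, \partial_t\bfphi)_Y\, dt
&= -\int_{Q_{3T}} \bscal{R}_{Q_{3T}}^h(\widehat{\bscal{F}}_T\bsu) \cdot \partial_t(\bscal{F}_{Q_T}\bfphi)\, dx\,dt \\
&= -\int_{Q_{3T}} \widehat{\bscal{F}}_T\bsu \cdot (\bscal{R}_{Q_{3T}}^h)^\star(\bscal{F}_{Q_T}(\partial_t\bfphi))\, dx\,dt.
\end{align*}
Since $\eta_h$ is time-independent and space-time convolution commutes with $\partial_t$, one has $(\bscal{R}_{Q_{3T}}^h)^\star(\bscal{F}_{Q_T}(\partial_t\bfphi)) = \partial_t\bigl[(\bscal{R}_{Q_{3T}}^h)^\star \bscal{F}_{Q_T}\bfphi\bigr]$, and a support analysis based on Proposition~\ref{5.2}(i) shows the latter belongs to $C_0^\infty(Q_{3T})^d$, so it is an admissible test function for $\frac{\mathbf{d}\widehat{\bscal{F}}_T\bsu}{\mathbf{dt}}$ (which exists by Proposition~\ref{6.7}(ii)). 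Applying Definition~\ref{6.1} on $Q_{3T}$, then the defining identity~\eqref{eq:R-bidual} for the dual smoother, and finally the definition of $\bscal{F}_{Q_T}^*$, transforms the last line into $\langle \bscal{F}_{Q_T}^* \bscal{R}_{Q_{3T}}^h[\tfrac{\mathbf{d}\widehat{\bscal{F}}_T\bsu}{\mathbf{dt}}], \bfphi\rangle_{\bscal{X}^{q,p}_{\bfvarepsilon}(Q_T)}$, as claimed.

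For assertion \textbf{(ii)}, convergence in $\bscal{X}^{q,p}_{\bfvarepsilon}(Q_T)$ follows from Proposition~\ref{5.3}(iii) applied on $Q_{3T}$, using that restriction to $Q_T$ is norm-non-increasing (by the modular inequality) and $(\widehat{\bscal{F}}_T\bsu)\big|_{Q_T} = \bsu$. For the derivative, one first verifies directly from the definitions that $\bscal{F}_{Q_T}^*\frac{\mathbf{d}\widehat{\bscal{F}}_T\bsu}{\mathbf{dt}} = \frac{\mathbf{d}\bsu}{\mathbf{dt}}$ in $\bscal{X}^{q,p}_{\bfvarepsilon}(Q_T)^*$, whence by \textbf{(i)}
\begin{align*}
\tfrac{\mathbf{d}}{\mathbf{dt}}\widehat{\bscal{R}}_{Q_T}^h\bsu - \tfrac{\mathbf{d}\bsu}{\mathbf{dt}} = \bscal{F}_{Q_T}^*\Big(\bscal{R}^h_{Q_{3T}}\big[\tfrac{\mathbf{d}\widehat{\bscal{F}}_T\bsu}{\mathbf{dt}}\big] - \tfrac{\mathbf{d}\widehat{\bscal{F}}_T\bsu}{\mathbf{dt}}\Big),
\end{align*}
and Proposition~\ref{5.9}(iii) together with the Lipschitz continuity of $\bscal{F}_{Q_T}^*$ finishes the argument. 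Assertion \textbf{(iii)} then reduces to concatenating Proposition~\ref{5.3}(ii), Proposition~\ref{5.9}(i), Proposition~\ref{6.7}(ii), and the norm-non-increasing properties of restriction and of $\bscal{F}_{Q_T}^*$.

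The main obstacle is Step \textbf{(i)}: the two essential technical points are the commutation of $\partial_t$ with $(\bscal{R}_{Q_{3T}}^h)^\star$, which is valid precisely because $\eta_h$ is time-independent and $\bfomega_h$ is a fixed space-time kernel, and the support-calculus check that $(\bscal{R}_{Q_{3T}}^h)^\star(\bscal{F}_{Q_T}\bfphi) \in C_0^\infty(Q_{3T})^d$ for $\bfphi \in C_0^\infty(Q_T)^d$, which is needed in order to invoke the definition of $\frac{\mathbf{d}\widehat{\bscal{F}}_T\bsu}{\mathbf{dt}}$ against this particular test function.
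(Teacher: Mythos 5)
Your argument mirrors the paper's proof step by step: the same reduction to test functions $\bfphi\in C_0^\infty(Q_T)^d$, the same switch to the quasi-adjoint $(\bscal{R}_{Q_{3T}}^h)^\star$ via the product $(\cdot,\cdot)_{L^{q_T'}(Q_{3T})^d}$, the same commutation $\partial_t (\bscal{R}_{Q_{3T}}^h)^\star = (\bscal{R}_{Q_{3T}}^h)^\star \partial_t$ (using that $\eta_h$ is time-independent), the same support check so that Definition~\ref{6.1} applies on $Q_{3T}$, and the same invocation of \eqref{eq:R-bidual} plus the definition of $\bscal{F}_{Q_T}^*$, then Propositions~\ref{5.3}, \ref{5.9}, \ref{6.7} for (ii) and (iii). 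The proposal is correct and takes essentially the same approach; you also correctly isolate the two technical points (commutation and the support calculus needed to restrict $h$) that the paper handles.
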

	
	\begin{proof}
		\textbf{ad (i)} Due to $(\bscal{R}_{Q_{3T}}^h(\widehat{\bscal{F}}_T\bsu))_{h>0}\!\subseteq\! C_0^{\infty}(\setR^{d+1})^d$ with ${\textup{supp}(\bscal{R}_{Q_{3T}}^h(\widehat{\bscal{F}}_T\bsu))\!\subseteq\! \left(-T-h,2T+h\right)\!\times\! \Omega_{h}}$ for every $h>0$, we have $(\widehat{\bscal{R}}_{Q_T}^h\bsu)_{h>0}\subseteq C^\infty(\overline{I},C_0^\infty(\Omega)^d)\subseteq \bscal{W}^{q,p}_{\bfvarepsilon}(Q_T)$ with 
		\begin{align}
		\frac{\textbf{d}}{\textbf{dt}}\widehat{\bscal{R}}_{Q_T}^h\bsu=\bscal{J}_{\bfvarepsilon}(\pa_t\widehat{\bscal{R}}_{Q_T}^h\bsu,\boldsymbol{0})\quad\text{ in }\bscal{X}^{q,p}_{\bfvarepsilon}(Q_T)^*\label{eq:6.8.1}
		\end{align}
		for every $h>0$.
		Using \eqref{eq:6.8.1} and the formula of integration by parts in time for smooth vector fields, we infer for every $\bfphi\in C^\infty_0(Q_T)^d$ and $h>0$
		\begin{align}
		\begin{split}
		\bigg\langle \frac{\textbf{d}}{\textbf{dt}}\widehat{\bscal{R}}_{Q_T}^h\bsu,\bfphi\bigg\rangle_{\bscal{X}_{\bfvarepsilon}^{q,p}(Q_T)}&=(\pa_t\widehat{\bscal{R}}_{Q_T}^h\bsu,\bfphi)_{L^{q(\cdot,\cdot)}(Q_T)^d}\\&=-(\pa_t\bfphi, \widehat{\bscal{R}}_{Q_T}^h\bsu)_{L^{q(\cdot,\cdot)}(Q_T)^d}\\&=-(\bscal{F}_{Q_T}\pa_t\bfphi, \bscal{R}_{Q_{3T}}^h(\widehat{\bscal{F}}_T\bsu))_{L^{q_T(\cdot,\cdot)}(Q_{3T})^d}\\&=-((\bscal{R}^h_{Q_{3T}})^\star(\bscal{F}_{Q_T}\pa_t\bfphi),\widehat{\bscal{F}}_T\bsu)_{L^{q_T(\cdot,\cdot)}(Q_{3T})^d}
		\\&=-(\pa_t[(\bscal{R}^h_{Q_{3T}})^\star(\bscal{F}_{Q_T}\bfphi)],\widehat{\bscal{F}}_T\bsu)_{L^{q_T(\cdot,\cdot)}(Q_{3T})^d}\\&=-\int_{3I}{((\widehat{\bscal{F}}_T\bsu)(t),\pa_t[(\bscal{R}^h_{Q_{3T}})^\star(\bscal{F}_{Q_T}\bfphi)](t))_Y\,dt}.
		\end{split}\label{eq:6.8.2}
		\end{align}
		Since $((\bscal{R}^h_{Q_{3T}})^\star(\bscal{F}_{Q_T}\bfphi))_{h\in I}\subseteq  C^\infty_0(Q_{3T})^d$, as we have $((\bscal{R}^h_{Q_{3T}})^\star(\bscal{F}_{Q_T}\bfphi))_{h\in I}\subseteq  C^\infty_0(\mathbb{R}^{d+1})^d$ with ${\textup{supp}((\bscal{R}^h_{Q_{3T}})^\star(\bscal{F}_{Q_T}\bfphi))\subseteq \textup{supp}(\bfphi)+B_h^{d+1}(0)\subset\subset Q_{3T}}$ for every $h\in I$ due to Proposition~\ref{5.2}~(i), we further deduce from \eqref{eq:6.8.2} by means of Proposition \ref{6.7}~(ii) and Definition \ref{6.1} that for every $\bfphi\in C_0^\infty(Q_T)^d$  and $h\in I$ it holds
		\begin{align}
		\begin{split}
		\bigg\langle \frac{\textbf{d}}{\textbf{dt}}\widehat{\bscal{R}}_{Q_T}^h\bsu,\bfphi\bigg\rangle_{\bscal{X}_{\bfvarepsilon}^{q,p}(Q_T)}&=\bigg\langle \frac{\textbf{d}\widehat{\bscal{F}}_T\bsu}{\textbf{dt}},(\bscal{R}_{Q_{3T}}^h)^\star(\bscal{F}_{Q_T}\bfphi)\bigg\rangle_{\bscal{X}_{\bfvarepsilon}^{q_T,p_T}(Q_{3T})}\\&=\bigg\langle \bscal{F}_{Q_T}^*\bscal{R}_{Q_{3T}}^h\bigg[\frac{\textbf{d}\widehat{\bscal{F}}_T\bsu}{\textbf{dt}}\bigg],\bfphi\bigg\rangle_{\bscal{X}_{\bfvarepsilon}^{q,p}(Q_T)}.
		\end{split}\label{eq:6.8.3}
		\end{align}
		Due to the density of $C_0^\infty(Q_T)^d$ in $\bscal{X}_{\bfvarepsilon}^{q,p}(Q_T)$ (cf.~Proposition \ref{5.8}), we  conclude \textbf{(i)} from \eqref{eq:6.8.3}.\\[-3mm]
		
		\textbf{ad (ii)} Proposition \ref{5.3} (i) immediately provides $\bscal{R}_{Q_{3T}}^h(\widehat{\bscal{F}}_T\bsu)\to \widehat{\bscal{F}}_T\bsu$ in $\bscal{X}_{\bfvarepsilon}^{q_T,p_T}(Q_{3T})$~${(h\to 0)}$, from which we immediately infer that $\widehat{\bscal{R}}_{Q_T}^h\bsu\to \bsu$ in $\bscal{X}_{\bfvarepsilon}^{q,p}(Q_T)$ $(h\to 0)$.
		From \textbf{(i)}, Proposition~\ref{5.9}~(iii) and the continuity of $\bscal{F}_{Q_T}^*:\bscal{X}_{\bfvarepsilon}^{q_T,p_T}(Q_{3T})^*\to \bscal{X}_{\bfvarepsilon}^{q,p}(Q_T)^*$, we deduce
		\begin{align*}
		\frac{\textbf{d}}{\textbf{dt}}\widehat{\bscal{R}}_{Q_T}^h\bsu\overset{h\to 0}{\to }\bscal{F}_{Q_T}^*\frac{\textbf{d}\widehat{\bscal{F}}_T\bsu}{\textbf{dt}}=\frac{\textbf{d}\bsu}{\textbf{dt}}\quad\text{ in }\bscal{X}_{\bfvarepsilon}^{q,p}(Q_T)^*.
		\end{align*}
		
		\textbf{ad (iii)} Proposition \ref{5.1}~(iv) and Proposition \ref{5.3}~(iii) immediately provide for~every~${h\in \left(0,h_1\right)}$ 
		\begin{align}
		\big\|\widehat{\bscal{R}}_{Q_T}^h\bsu\big\|_{\bscal{X}^{q,p}_{\bfvarepsilon}(Q_T)}\leq c_{\overline{q}_T,\overline{p}_T}\|\widehat{\bscal{F}}_T\bsu\|_{\bscal{X}^{q_T,p_T}_{\bfvarepsilon}(Q_{3T})}.\label{eq:6.8.5}
		\end{align}
		Apart from that, we deduce from \textbf{(i)}, Proposition \ref{5.9} (ii), and the Lipschitz continuity of  $\bscal{F}_{Q_T}^*:\bscal{X}^{q_T,p_T}_{\bfvarepsilon}(Q_{3T})^*\to \bscal{X}^{q,p}_{\bfvarepsilon}(Q_T)^*$ (with constant $1$) for every $h\in \left(0,h_1\right)\cap I$ 
		\begin{align}
		\begin{split}
		\bigg\|\frac{\textbf{d}}{\textbf{dt}}\widehat{\bscal{R}}_{Q_T}^h\bsu\bigg\|_{\bscal{X}^{q,p}_{\bfvarepsilon}(Q_T)^*}&=\bigg\|\bscal{F}_{Q_T}^*\bscal{R}_{Q_{3T}}^h\bigg[\frac{\textbf{d}\widehat{\bscal{F}}_T\bsu}{\textbf{dt}}\bigg]\bigg\|_{\bscal{X}^{q,p}_{\bfvarepsilon}(Q_T)^*}\\&\leq \bigg\|\bscal{R}_{Q_{3T}}^h\bigg[\frac{\textbf{d}\widehat{\bscal{F}}_T\bsu}{\textbf{dt}}\bigg]\bigg\|_{\bscal{X}^{q_T,p_T}_{\bfvarepsilon}(Q_{3T})^*}\\&\leq c_{\overline{q}_T,\overline{p}_T}\bigg\|\frac{\textbf{d}\widehat{\bscal{F}}_T\bsu}{\textbf{dt}}\bigg\|_{\bscal{X}^{q_T,p_T}_{\bfvarepsilon}(Q_{3T})^*}.
		\end{split}\label{eq:6.8.6}
		\end{align}
		Finally, combining \eqref{eq:6.8.5}, \eqref{eq:6.8.6} and the Lipschitz continuity of $\widehat{\bscal{F}}_T:\bscal{W}^{q,p}_{\bfvarepsilon}(Q_T)\to \bscal{W}^{q_T,p_T}_{\bfvarepsilon}(Q_{3T})$ (cf.~Proposition \ref{6.7}~(ii)), we conclude \textbf{(iii)}.\hfill$\qed$
	\end{proof}
	
	\begin{prop}\label{6.9}
		Let $\bscal{Y}^0(Q_T):=C^0(\overline{I},Y)$. The following statements hold true:
		\begin{description}
			\item[(i)] Any function $\bsu\in \bscal{W}_{\bfvarepsilon}^{q,p}(Q_T)$ (defined almost everywhere) possesses a unique representation $\bsu_{c}\in \bscal{Y}^0(Q_T)$, and the resulting mapping $(\cdot)_{c}:\bscal{W}_{\bfvarepsilon}^{q,p}(Q_T)\to
			\bscal{Y}^0(Q_T)$ is an embedding.
			\item[(ii)] For every $\bsu,\bsv\in \bscal{W}_{\bfvarepsilon}^{q,p}(Q_T)$ and
			$t,t'\in \overline{I}$ with $t'\leq t$ it holds
			\begin{align}
			\int_{t'}^{t}{\left\langle
				\frac{\mathbf{d}\bsu}{\mathbf{dt}}(s),\bsv(s)\right\rangle_{X^{q,p}(s)}\!\!ds}
			=\left[(\bsu_c(s), 
			\bsv_c(s))_Y\right]^{s=t}_{s=t'}-\int_{t'}^{t}{\left\langle
				\frac{\mathbf{d}\bsv}{\mathbf{dt}}(s),\bsu(s)\right\rangle_{X^{q,p}(s)}\!\!ds}.\label{eq:6.9.a}
			\end{align}
		\end{description}  
	\end{prop}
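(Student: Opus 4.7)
The plan is to reduce both assertions to the case of smooth functions in $C^\infty(\overline{I}, C_0^\infty(\Omega)^d)$, where the formulas follow from classical calculus, and then extend to all of $\bscal{W}^{q,p}_{\bfvarepsilon}(Q_T)$ via the density result of Proposition~\ref{6.8}.

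First, for $\bfphi, \bfpsi \in C^\infty(\overline{I}, C_0^\infty(\Omega)^d)$, one has $\frac{\mathbf{d}\bfphi}{\mathbf{dt}} = \bscal{J}_{\bfvarepsilon}(\partial_t \bfphi, \boldsymbol{0})$ in $\bscal{X}^{q,p}_{\bfvarepsilon}(Q_T)^*$ (and analogously for $\bfpsi$), so by Remark~\ref{3.9}(i) the time slices simplify to $\langle \frac{\mathbf{d}\bfphi}{\mathbf{dt}}(s), \bfpsi(s)\rangle_{X^{q,p}(s)} = (\partial_s \bfphi(s), \bfpsi(s))_Y$ for almost every $s\in I$. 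Hence, formula~\eqref{eq:6.9.a} for smooth $\bfphi, \bfpsi$ reduces to integrating the classical product rule $\partial_s (\bfphi(s), \bfpsi(s))_Y = (\partial_s \bfphi(s), \bfpsi(s))_Y + (\bfphi(s), \partial_s \bfpsi(s))_Y$ over $(t', t)$.

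Second, I would establish the key a priori estimate $\|\bfphi\|_{C^0(\overline{I}, Y)} \le C\,\|\bfphi\|_{\bscal{W}^{q,p}_{\bfvarepsilon}(Q_T)}$ for smooth $\bfphi$. Setting $\bfpsi=\bfphi$ in Step~1 yields the identity $\|\bfphi(t)\|_Y^2 = \|\bfphi(r)\|_Y^2 + 2\int_r^t \langle \frac{\mathbf{d}\bfphi}{\mathbf{dt}}(s), \bfphi(s)\rangle_{X^{q,p}(s)}\,ds$ for all $t,r\in\overline{I}$. Since the assumption $X^{q,p}_-\embedding Y$ yields the uniform embedding $X^{q,p}(s)\embedding Y$, one obtains $\bscal{X}^{q,p}_{\bfvarepsilon}(Q_T)\embedding L^1(I,Y)$ (as $I$ is bounded). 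A Chebyshev-type argument then produces some $r\in I$ with $\|\bfphi(r)\|_Y \le \frac{1}{T}\|\bfphi\|_{L^1(I,Y)} \le C\,\|\bfphi\|_{\bscal{X}^{q,p}_{\bfvarepsilon}(Q_T)}$, and Hölder's inequality on the duality pairing yields the claimed estimate.

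Third, given $\bsu,\bsv\in \bscal{W}^{q,p}_{\bfvarepsilon}(Q_T)$, Proposition~\ref{6.8} provides smooth approximations $\bfphi_n\to \bsu$ and $\bfpsi_n\to \bsv$ in $\bscal{W}^{q,p}_{\bfvarepsilon}(Q_T)$. The estimate from Step~2 applied to differences shows $(\bfphi_n)$ is Cauchy in $C^0(\overline{I}, Y)$, hence converges to some $\bsu_c\in C^0(\overline{I}, Y)$, which coincides with $\bsu$ a.e.\ in $I$. Uniqueness and continuity of $(\cdot)_c$ follow immediately, establishing (i). For (ii), I would rewrite the left-hand side for the smooth approximations as $\langle \frac{\mathbf{d}\bfphi_n}{\mathbf{dt}}, \chi_{(t',t)}\bfpsi_n\rangle_{\bscal{X}^{q,p}_{\bfvarepsilon}(Q_T)}$ via~\eqref{eq:3.9.1} (noting that $\chi_{(t',t)}\bfpsi_n\in \bscal{X}^{q,p}_{\bfvarepsilon}(Q_T)$, since time-truncation preserves the space), pass to the limit using the continuity of the dual pairing, and handle the boundary terms using convergence $\bfphi_n\to\bsu_c$, $\bfpsi_n\to\bsv_c$ in $C^0(\overline{I}, Y)$.

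The main obstacle is Step~2: proving the uniform bound of $\|\cdot\|_{C^0(\overline{I}, Y)}$ by $\|\cdot\|_{\bscal{W}^{q,p}_{\bfvarepsilon}(Q_T)}$. Unlike the constant-exponent case where $\bscal{W}^{q,p}_{\bfvarepsilon}(Q_T)\embedding L^2(I,Y)$ holds for free, the variable exponent structure only yields $\bscal{X}^{q,p}_{\bfvarepsilon}(Q_T)\embedding L^{\min\{p^-,q^-\}}(I,Y)$ with $\min\{p^-,q^-\}$ possibly less than $2$, so the selection of a good initial time $r\in I$ via an $L^1$-average is essential and must be carried out carefully.
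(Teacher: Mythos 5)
Your proposal is correct and follows essentially the same strategy as the paper's proof: reduce to $C^\infty(\overline I, C_0^\infty(\Omega)^d)$ via the density from Proposition~\ref{6.8}, derive the energy identity for smooth functions, establish an a priori estimate in $\bscal{Y}^0(Q_T)$, deduce the Cauchy property and hence the existence of $\bsu_c$, and pass to the limit in the classical integration-by-parts formula for (ii). The only minor point of divergence is in the anchor step of your Step~2: the paper first applies $\sqrt{a^2+b^2}\le a+b$ to get a linear-in-$\|\cdot\|_Y$ inequality, then integrates it over $t'\in I$ and divides by $T$, whereas you select a single good time $r$ by a mean-value argument with $\|\bfphi(r)\|_Y\le T^{-1}\|\bfphi\|_{L^1(I,Y)}$ before taking square roots; both routes yield the same form of estimate (compare the paper's \eqref{eq:6.9.3}--\eqref{eq:6.9.6}) once one uses $\bscal{X}^{q,p}_\bfvarepsilon(Q_T)\embedding L^1(I,Y)$. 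Your use of $\chi_{(t',t)}\bfpsi_n$ to recast the time-localized integral as a full duality pairing on $\bscal{X}^{q,p}_\bfvarepsilon(Q_T)$ is exactly the observation needed to justify the limit passage in (ii), which the paper leaves implicit.
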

	
	\begin{proof} 
		\textbf{ad (i)} Let $\bsu\in \bscal{W}^{q,p}_{\bfvarepsilon}(Q_T)$. We choose a family of smooth approximations $(\bsu_h )_{h\in \left(0,h_1\right)}:=(\widehat{\bscal{R}}_{Q_T}^h\bsu)_{h\in \left(0,h_1\right)}\subseteq C^\infty(\overline{I},C_0^\infty(\Omega)^d)$ (cf.~Proposition \ref{6.8}), such that $\bsu_h\to\bsu$ in $\bscal{W}^{q,p}_{\bfvarepsilon}(Q_T)$ $(h\to 0)$.
		Then, for every $t,t'\in \overline I$ and $h',h\in \left(0,h_1\right)$ it holds
		\begin{align}
		\|\bsu_h(t)-\bsu_{h'}(t)\|_Y^2&=\|\bsu_h(t')-\bsu_{h'}(t')\|_Y^2\notag\\&\qquad+2\int_{t'}^{t}{\int_{\Omega}{\big(\partial_t\bsu_h(s,y)-\partial_t\bsu_{h'}(s,y)\big)\cdot(\bsu_h(s,y)-\bsu_{h'}(s,y))\,dy}\,ds}\notag\\&=\|\bsu_h(t')-\bsu_{h'}(t')\|_Y^2\label{eq:6.9.1}\\&\qquad+2\int_{t'}^{t}{\left\langle\frac{\textbf{d}\bsu_h}{\textbf{dt}}(s)-\frac{\textbf{d}\bsu_{h'}}{\textbf{dt}}(s),\bsu_h(s)-\bsu_{h'}(s)\right\rangle_{X^{q,p}(s)}\!\!ds}\notag\\&
		\leq \|\bsu_h(t')-\bsu_{h'}(t')\|_Y^2+2\bigg\|\frac{\textbf{d}\bsu_h}{\textbf{dt}}-\frac{\textbf{d}\bsu_{h'}}{\textbf{dt}}\bigg\|_{\bscal{X}^{q,p}_{\bfvarepsilon}(Q_T)^*}\|\bsu_h-\bsu_{h'}\|_{\bscal{X}^{q,p}_{\bfvarepsilon}(Q_T)}\notag,
		\end{align}
		where we used for the second equality \eqref{eq:6.8.1}
                as well as Proposition~\ref{4.9}, Remark \ref{3.9} and Corollary~\ref{3.8}.
		As $\sqrt{a^2+b^2}\leq a+b$ for every $a,b\ge 0$, we infer from \eqref{eq:6.9.1} for every $t,t'\in \overline I$ and $h',h\in \left(0,h_1\right)$
		\begin{align}
		\begin{split}
		\|\bsu_h(t)-\bsu_{h'}(t)\|_Y&\leq\|\bsu_h(t')-\bsu_{h'}(t')\|_Y\\&\quad+\sqrt{2}\bigg\|\frac{\textbf{d}\bsu_h}{\textbf{dt}}-\frac{\textbf{d}\bsu_{h'}}{\textbf{dt}}\bigg\|_{\bscal{X}^{q,p}_{\bfvarepsilon}(Q_T)^*}^{\frac{1}{2}}\|\bsu_h-\bsu_{h'}\|_{\bscal{X}^{q,p}_{\bfvarepsilon}(Q_T)}^{\frac{1}{2}}.
		\end{split}\label{eq:6.9.2}
		\end{align}
		Integrating \eqref{eq:6.9.2} with respect $t'\in I$ and dividing by $T>0$, yields further for every $t\in \overline I$ and $h',h\in \left(0,h_1\right)$
		\begin{align}
		\begin{split}
		\|\bsu_h(t)-\bsu_{h'}(t)\|_Y&\leq T^{-1}\|\bsu_h-\bsu_{h'}\|_{L^1(I,Y)}\\&\quad+\sqrt{2}\bigg\|\frac{\textbf{d}\bsu_h}{\textbf{dt}}-\frac{\textbf{d}\bsu_{h'}}{\textbf{dt}}\bigg\|_{\bscal{X}^{q,p}_{\bfvarepsilon}(Q_T)^*}^{\frac{1}{2}}\|\bsu_h-\bsu_{h'}\|_{\bscal{X}^{q,p}_{\bfvarepsilon}(Q_T)}^{\frac{1}{2}}.
		\end{split}
		\label{eq:6.9.3}
		\end{align}
		As $t\!\in\!\overline{I}$ was arbitrary in
                \eqref{eq:6.9.3} and
                $\bscal{X}^{q,p}_{\bfvarepsilon}(Q_T)\embedding
                L^1(I,Y)$, since
                $X^{q,p}_-\embedding Y$, we infer for all ${h',h \in \left(0,h_1\right)}$
		\begin{align}
		\begin{split}
		\|\bsu_h-\bsu_{h'}\|_{\bscal{Y}^0(Q_T)} &\leq T^{-\frac{1}{2}}2(1+\vert Q_T\vert)\|\bsu_h-\bsu_{h'}\|_{\bscal{X}^{q,p}_{\bfvarepsilon}(Q_T)}\\&\quad+\sqrt{2}\bigg\|\frac{\textbf{d}\bsu_h}{\textbf{dt}}-\frac{\textbf{d}\bsu_{h'}}{\textbf{dt}}\bigg\|_{\bscal{X}^{q,p}_{\bfvarepsilon}(Q_T)^*}^{\frac{1}{2}}\|\bsu_h-\bsu_{h'}\|_{\bscal{X}^{q,p}_{\bfvarepsilon}(Q_T)}^{\frac{1}{2}}.
		\end{split}\label{eq:6.9.4}
		\end{align}
		Hence, $(\bsu_h)_{h\in \left(0,h_1\right)}\subseteq \bscal{Y}^0(Q_T)$ is a Cauchy sequence, i.e., there exists $\bsu_c\in \bscal{Y}^0(Q_T)$, such that
		\begin{align}
		\bsu_h\overset{h\to 0}{\to}\bsu_c\quad\text{ in }\bscal{Y}^0(Q_T).\label{eq:6.9.5}
		\end{align}
		Since $\bscal{Y}^0(Q_T)\embedding L^1(I,Y)^d$, \eqref{eq:6.9.5} also gives us
		$\bsu_h\to\bsu_c$ in $L^1(I,Y)^d$ $(h\to 0)$. As simultaneously $\bsu_h\to \bsu$ in $\bscal{X}^{q,p}_{\bfvarepsilon}(Q_T)\embedding L^1(I,Y)^d$ $(h\to 0)$, we obtain $\bsu_c=\bsu$ almost everywhere in $Q_T$. Hence, each $\bsu\in \bscal{W}^{q,p}_{\bfvarepsilon}(Q_T)$ possesses a unique continuous representation $\bsu_c\in \bscal{Y}^0(Q_T)$, i.e., the mapping $(\cdot)_c:\bscal{W}^{q,p}_{\bfvarepsilon}(Q_T)\to \bscal{Y}^0(Q_T) $ is well-defined. If we repeat the steps \eqref{eq:6.9.1}--\eqref{eq:6.9.4} and replace $\bsu_h-\bsu_{h'}$ by $\bsu_h$ in doing so, we infer for every $h\in \left(0,h_1\right)$
		\begin{align}
		\|\bsu_h\|_{\bscal{Y}^0(Q_T)}\leq  T^{-\frac{1}{2}}2(1+\vert Q_T\vert)\|\bsu_h\|_{\bscal{X}^{q,p}_{\bfvarepsilon}(Q_T)}+\sqrt{2}\bigg\|\frac{\textbf{d}\bsu_h}{\textbf{dt}}\bigg\|_{\bscal{X}^{q,p}_{\bfvarepsilon}(Q_T)^*}^{\frac{1}{2}}\|\bsu_h\|_{\bscal{X}^{q,p}_{\bfvarepsilon}(Q_T)}^{\frac{1}{2}}.\label{eq:6.9.6}
		\end{align}
		By passing in \eqref{eq:6.9.6} for $h\to 0$, using $\bsu_h\to\bsu$ in $\bscal{W}^{q,p}_{\bfvarepsilon}(Q_T)$ $(h\to 0)$ and \eqref{eq:6.9.5}, we obtain 
		\begin{align}
		\|\bsu_c\|_{\bscal{Y}^0(Q_T)}\leq T^{-\frac{1}{2}}2(1+\vert Q_T\vert)\|\bsu\|_{\bscal{X}^{q,p}_{\bfvarepsilon}(Q_T)}+\sqrt{2}\bigg\|\frac{\textbf{d}\bsu}{\textbf{dt}}\bigg\|_{\bscal{X}^{q,p}_{\bfvarepsilon}(Q_T)^*}^{\frac{1}{2}}\|\bsu\|_{\bscal{X}^{q,p}_{\bfvarepsilon}(Q_T)}^{\frac{1}{2}},\label{eq:6.9.6.1}
		\end{align}
		i.e., $(\cdot)_c:\bscal{W}^{q,p}_{\bfvarepsilon}(Q_T)\to \bscal{Y}^0(Q_T) $ is an embedding.\\[-3mm]
		
		\textbf{ad (ii)} Let $\bsu,\bsv\in \bscal{W}^{q,p}_{\bfvarepsilon}(Q_T)$. We choose $(\bsu_h)_{h\in \left(0,h_1\right)}:=(\widehat{\bscal{R}}_{Q_T}^h\bsu)_{h\in \left(0,h_1\right)},(\bsv_h )_{h\in \left(0,h_1\right)}:=(\widehat{\bscal{R}}_{Q_T}^h\bsv)_{h\in \left(0,h_1\right)}\subseteq C^\infty(\overline{I},C_0^\infty(\Omega)^d)$, such that
		\begin{align}
		\bsu_h\overset{h\to 0}{\to }\bsu\quad\text{ in }\bscal{W}^{q,p}_{\bfvarepsilon}(Q_T),\qquad
		\bsv_h\overset{h\to 0}{\to }\bsv\quad\text{ in }\bscal{W}^{q,p}_{\bfvarepsilon}(Q_T).\label{eq:6.9.7}
		\end{align}
		Thanks to \textbf{(i)}, i.e., the continuity of $(\cdot)_c:\bscal{W}^{q,p}_{\bfvarepsilon}(Q_T)\to \bscal{Y}^0(Q_T) $, we even have 
		\begin{align}
		\bsu_h=(\bsu_h)_c\overset{h\to 0}{\to }\bsu_c\quad\text{ in } \bscal{Y}^0(Q_T),\qquad
		\bsv_h=(\bsv_h)_c\overset{h\to 0}{\to }\bsv_c\quad\text{ in } \bscal{Y}^0(Q_T).\label{eq:6.9.8}
		\end{align}
		Using the classical formula of integration by parts
                and
                $$
                \int_{t'}^{t}{\int_{\Omega}{\partial_t\bsu_h(s,y)\cdot
                    \bsv_h(s,y)\,dy}\,ds}=\int_{t'}^{t}{\left\langle\frac{\textbf{d}\bsu_h}{\textbf{dt}}(s),\bsv_h(s)\right\rangle_{X^{q,p}(s)}\,ds},
                $$
                (cf.~\eqref{eq:6.9.1}), we obtain for every $h\in \left(0,h_1\right)$
		\begin{align}
		\int_{t'}^{t}{\left\langle
			\frac{\textbf{d}\bsu_h}{\textbf{dt}}(s),\bsv_h(s)\right\rangle_{X^{q,p}(s)}\!\!\!ds}=\left[(\bsu_h(s),\bsv_h(s))_H\right]_{s=t'}^{s=t}-\int_{t'}^{t}{\left\langle
			\frac{\textbf{d}\bsv_h}{\textbf{dt}}(s),\bsu_h(s)\right\rangle_{X^{q,p}(s)}\!\!\!ds}.\label{eq:6.9.9}
		\end{align}
		Eventually, by passing in \eqref{eq:6.9.9} for $h\to 0$, using \eqref{eq:6.9.7} and \eqref{eq:6.9.8}, we obtain the desired formula of integration by parts  \eqref{eq:6.9.a}.\hfill$\qed$
	\end{proof}

	\section{Existence result}
	
	\label{sec:7}
	This section adresses an abstract existence result, which will be based on the theory of maximal monotone mappings. The basic idea consists in interpreting the generalized time derivative as a maximal monotone mapping and then to introduce a special notion of pseudo-monotonicity which is directly coupled to the time derivative. To the best of the author's knowledge this approach finds its origin in \cite{Lio69}, was extended by Brezis in \cite{Bre72}, and was first applied in the framework of variable exponent Bochner--Lebesgue spaces in the rather simplified setting of monotone operators by Alkhutov and Zhikov in \cite{AZ10}.
	
	In what follows, let $\Omega\subseteq \setR^d$ be a bounded Lipschitz domain, $I=\left(0,T\right)$, with $0<T<\infty$, and $q,p\in \mathcal{P}^{\log}(Q_T)$ with $q^-,p^-> 1$ such that $X^{q,p}_-\embedding Y =L^2(\Omega)^d$. 
	
	\begin{prop}\label{7.0}
		The   \textbf{initial trace operator} $\bfgamma_0:\bscal{W}^{q,p}_{\bfvarepsilon}(Q_T)\to Y$, given via
		${\bfgamma_0(\bsu):=\bsu_c(0)}$ in $Y$ for every $\bsu\in \bscal{W}^{q,p}_{\bfvarepsilon}(Q_T)$, is well-defined, linear and continuous. Moreover, it holds:
		\begin{description}[(iii)]
			\item[(i)] $R(\bfgamma_0)$ is dense in $Y$.
			\item[(ii)] $G((\frac{\bfd}{\bfd\bft},\bfgamma_0)^\top)$ is closed in $\bscal{X}^{q,p}_{\bfvarepsilon}(Q_T)\times\bscal{X}^{q,p}_{\bfvarepsilon}(Q_T)^*\times Y$.
			\item[(iii)] $\langle \frac{\bfd\bsu}{\bfd\bft},\bsu\rangle_{\bscal{X}^{q,p}_{\bfvarepsilon}(Q_T)}+\frac{1}{2}\|\bfgamma_0(\bsu)\|_Y^2\ge 0$ for every $\bsu\in  \bscal{W}^{q,p}_{\bfvarepsilon}(Q_T)$.
		\end{description}
	\end{prop}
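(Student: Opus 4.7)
The plan is to verify well-definedness, linearity, and continuity of $\bfgamma_0$ first, and then to prove (i)--(iii) as direct consequences of Proposition~\ref{6.9}. For well-definedness, every $\bsu \in \bscal{W}^{q,p}_{\bfvarepsilon}(Q_T)$ admits a unique continuous representative $\bsu_c \in \bscal{Y}^0(Q_T) = C^0(\overline{I},Y)$ by Proposition~\ref{6.9}~(i), so $\bsu_c(0) \in Y$ is unambiguous; linearity is clear from linearity of the representation map $(\cdot)_c$ and of the evaluation functional, and continuity follows by composing the continuous embedding $(\cdot)_c:\bscal{W}^{q,p}_{\bfvarepsilon}(Q_T) \to \bscal{Y}^0(Q_T)$ from Proposition~\ref{6.9}~(i) with the contractive evaluation $C^0(\overline{I},Y) \ni \bsv \mapsto \bsv(0) \in Y$.

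For (i) I would prove the stronger claim $C_0^\infty(\Omega)^d \subseteq R(\bfgamma_0)$, which already gives density by the density of $C_0^\infty(\Omega)^d$ in $Y = L^2(\Omega)^d$. Given $\bfu \in C_0^\infty(\Omega)^d$, I would set $\bsu(t,x) := \bfu(x)$ on $Q_T$; then $\bsu \in L^\infty(Q_T)^d$ and $\bfvarepsilon(\bsu) \in L^\infty(Q_T)^{d\times d}$, while $\bsu(t) = \bfu \in X_+^{q,p} \embedding X^{q,p}_{\bfvarepsilon}(t)$ for every $t \in I$, so $\bsu \in \bscal{X}^{q,p}_{\bfvarepsilon}(Q_T)$. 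For every $\bfphi \in C_0^\infty(Q_T)^d$ the compactness of $\textup{supp}(\bfphi)$ in time yields $-\int_I (\bsu(t),\partial_t\bfphi(t))_Y\,dt = -(\bfu,\int_I \partial_t \bfphi(t)\,dt)_Y = 0$, hence $\frac{\mathbf{d}\bsu}{\mathbf{dt}} = 0$ in $\bscal{X}^{q,p}_{\bfvarepsilon}(Q_T)^*$ and $\bsu \in \bscal{W}^{q,p}_{\bfvarepsilon}(Q_T)$. As the constant map $t \mapsto \bfu$ already belongs to $C^0(\overline{I},Y)$, it must coincide with $\bsu_c$, giving $\bfgamma_0(\bsu) = \bfu$.

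For (ii) I would take a sequence $(\bsu_n)_{n\in\setN} \subseteq \bscal{W}^{q,p}_{\bfvarepsilon}(Q_T)$ and a triple $(\bsu,\bsu^*,y_0)^\top \in \bscal{X}^{q,p}_{\bfvarepsilon}(Q_T)\times\bscal{X}^{q,p}_{\bfvarepsilon}(Q_T)^*\times Y$ with $\bsu_n \to \bsu$, $\frac{\mathbf{d}\bsu_n}{\mathbf{dt}} \to \bsu^*$, and $\bfgamma_0(\bsu_n)\to y_0$ in the respective spaces, and then use the chain of embeddings $\bscal{X}^{q,p}_{\bfvarepsilon}(Q_T) \embedding \bscal{X}^{q,p}_-(Q_T) \embedding L^1(I,Y)$ (the last one thanks to the assumption $X^{q,p}_- \embedding Y$) to pass to the limit in the defining identity \eqref{eq:6.1} for each $\bsu_n$. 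This yields $\bsu \in \bscal{W}^{q,p}_{\bfvarepsilon}(Q_T)$ with $\frac{\mathbf{d}\bsu}{\mathbf{dt}} = \bsu^*$; in particular $\bsu_n \to \bsu$ in $\bscal{W}^{q,p}_{\bfvarepsilon}(Q_T)$, and the already established continuity of $\bfgamma_0$ then forces $\bfgamma_0(\bsu) = y_0$.

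Finally, (iii) follows from the integration by parts formula in Proposition~\ref{6.9}~(ii) applied with $\bsv = \bsu$, $t' = 0$, $t = T$: the two duality pairings coincide, so the formula collapses to $2 \int_0^T \langle \frac{\mathbf{d}\bsu}{\mathbf{dt}}(s),\bsu(s)\rangle_{X^{q,p}(s)}\,ds = \|\bsu_c(T)\|_Y^2 - \|\bfgamma_0(\bsu)\|_Y^2$. Dividing by two and identifying the time integral with $\langle \frac{\mathbf{d}\bsu}{\mathbf{dt}},\bsu\rangle_{\bscal{X}^{q,p}_{\bfvarepsilon}(Q_T)}$ via Proposition~\ref{4.9} and Remark~\ref{3.9} yields precisely the claimed inequality, since the remaining $\frac{1}{2}\|\bsu_c(T)\|_Y^2$ is nonnegative. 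The most delicate step is the distributional limit passage in (ii), which hinges on the continuous embedding into $L^1(I,Y)$ guaranteed by $X^{q,p}_- \embedding Y$; the other parts are routine once Proposition~\ref{6.9} is available.
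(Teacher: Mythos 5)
Your proposal is correct and follows essentially the same route as the paper: well-definedness/linearity/continuity from Proposition~\ref{6.9}~(i), density of the range via constant-in-time functions, closedness of the graph by passing to the limit in the defining identity (which is exactly what the paper's reference to Proposition~\ref{6.4} amounts to), and (iii) directly from the integration-by-parts formula \eqref{eq:6.9.a}. The only cosmetic difference is that you take $\bfu\in C_0^\infty(\Omega)^d$ where the paper takes $\bfu\in X_+^{q,p}$; both are dense in $Y$ and your extra verification that $\frac{\mathbf{d}\bsu}{\mathbf{dt}}=\boldsymbol{0}$ for constant-in-time $\bsu$ is correct.
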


	\begin{proof}
		Well-definedness, linearity and continuity result from
                Proposition~\ref{6.9}~(i)  together with the
                properties of the strong convergence in $\bscal{Y}^0(Q_T)$. So, let us verify \textbf{(i)}--\textbf{(iii)}:
		
		\textbf{ad (i)} For arbitrary $\bfu\in X_+^{q,p}$ the function $\bsu\in  \bscal{W}^{q,p}_{\bfvarepsilon}(Q_T)$, given via $\bsu(t):=\bfu$ in $X_+^{q,p}$ for every $t\in I$, satisfies $\bfgamma_0(\bsu)=\bfu$. Thus, $ X_+^{q,p}\subseteq R(\bfgamma_0)$, i.e., $R(\bfgamma_0)$ is dense in $Y$, as $X_+^{q,p}$ is dense in $Y$.
		
		\textbf{ad (ii)} Let $((\bsu_n,\frac{\bfd\bsu_n}{\bfd\bft},\bfgamma_0(\bsu_n))^\top)_{n\in \setN}\subseteq G((\frac{\bfd}{\bfd\bft},\bfgamma_0)^\top)$ be a sequence, such that 
		\begin{align}
			\bigg(\bsu_n,\frac{\bfd\bsu_n}{\bfd\bft},\bfgamma_0(\bsu_n)\bigg)^\top\overset{n\to\infty}{\to }(\bsu,\bsu^*,\bfu_0)^\top\quad\textup{ in }\bscal{X}^{q,p}_{\bfvarepsilon}(Q_T)\times\bscal{X}^{q,p}_{\bfvarepsilon}(Q_T)^*\times Y.\label{eq:7.0.1}
		\end{align}
		Apparently, \eqref{eq:7.0.1} also implies that
                $(\bsu_n,\frac{\bfd\bsu_n}{\bfd\bft})^\top\to
                (\bsu,\bsu^*)^\top$ in
                ${\bscal{X}^{q,p}_{\bfvarepsilon}(Q_T)\times\bscal{X}^{q,p}_{\bfvarepsilon}(Q_T)^*}$ 
                ${(n\to \infty)}$, which 
		due to the closedness of $G(\frac{\bfd}{\bfd\bft})=\bscal{W}^{q,p}_{\bfvarepsilon}(Q_T)$ (cf.~Proposition~\ref{6.4}) yields that
		$\bsu\in \bscal{W}^{q,p}_{\bfvarepsilon}(Q_T)$ with
                $\frac{\bfd\bsu}{\bfd\bft}=\bsu^*$ in
                $\bscal{X}^{q,p}_{\bfvarepsilon}(Q_T)^*$ and
                $\bsu_n\to \bsu$ in
                $\bscal{W}^{q,p}_{\bfvarepsilon}(Q_T)$ $(n\to
                \infty)$. Since ${\bfgamma_0:\bscal{W}^{q,p}_{\bfvarepsilon}(Q_T)\to Y}$ is continuous, we conclude that ${\bfgamma_0(\bsu_n)\to \bfgamma_0(\bsu)}$ in $Y$ $(n\to\infty)$, i.e., $\bfgamma_0(\bsu):=\bfu_0$ in $Y$, as simultaneously $\bfgamma_0(\bsu_n)\to \bfu_0$ in $Y$ $(n\to\infty)$. Altogether,  $(\bsu,\bsu^*,\bfu_0)^\top=(\bsu,\frac{\bfd\bsu}{\bfd\bft},\bfgamma_0(\bsu))\in G((\frac{\bfd}{\bfd\bft},\bfgamma_0)^\top)$, i.e., $G((\frac{\bfd}{\bfd\bft},\bfgamma_0)^\top)$ is closed in $\bscal{X}^{q,p}_{\bfvarepsilon}(Q_T)\times\bscal{X}^{q,p}_{\bfvarepsilon}(Q_T)^*\times Y$.\\[-3mm]
		
		\textbf{ad (iii)} Immediate consequence of \eqref{eq:6.9.a}.\hfill$\qed$
	\end{proof}
	
	\begin{prop}[Maximal monotonicity of $\frac{\bfd}{\bfd\bft}$]
		\label{7.1}
		The generalized time derivative  \linebreak${\frac{\bfd}{\bfd\bft}:\bscal{W}^{q,p}_{\bfvarepsilon}(Q_T)\subseteq \bscal{X}^{q,p}_{\bfvarepsilon}(Q_T)\to \bscal{X}^{q,p}_{\bfvarepsilon}(Q_T)^*}$ is linear,  densely defined and its restriction to \linebreak${N(\bfgamma_0):=\{\bsu\in \bscal{W}^{q,p}_{\bfvarepsilon}(Q_T)\fdg\bsu_c(0)=\mathbf{0}\text{ in }Y\}}$ is maximal monotone.
	\end{prop}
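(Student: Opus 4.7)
Linearity of $\frac{\bfd}{\bfd\bft}$ is immediate from Definition~\ref{6.1}, and density of $\bscal{W}^{q,p}_{\bfvarepsilon}(Q_T)$ in $\bscal{X}^{q,p}_{\bfvarepsilon}(Q_T)$ follows at once from Proposition~\ref{5.8}, since $C_0^\infty(Q_T)^d\subseteq \bscal{W}^{q,p}_{\bfvarepsilon}(Q_T)$. To establish monotonicity of the restriction $L:=\frac{\bfd}{\bfd\bft}|_{N(\bfgamma_0)}$, I apply the integration by parts formula \eqref{eq:6.9.a} with $\bsu=\bsv\in N(\bfgamma_0)$, which yields $2\langle L\bsu,\bsu\rangle_{\bscal{X}^{q,p}_{\bfvarepsilon}(Q_T)}=\|\bsu_c(T)\|_Y^2-\|\bsu_c(0)\|_Y^2=\|\bsu_c(T)\|_Y^2\ge 0$; linearity upgrades this to monotonicity of $L$ as a linear relation.

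For maximal monotonicity I invoke the classical criterion of Brezis/Showalter: a closed, densely defined, linear monotone operator $L$ on a reflexive Banach space is maximal monotone provided its adjoint $L^*$ (viewed via the reflexive identification) is also monotone. Closedness of $L$ is straightforward: by Proposition~\ref{6.4} the space $\bscal{W}^{q,p}_{\bfvarepsilon}(Q_T)$ is complete in the graph norm, and by Proposition~\ref{7.0} the initial trace $\bfgamma_0$ is continuous, hence $N(\bfgamma_0)$ is closed. The hard part is to identify $L^*$. My claim is
\begin{align*}
D(L^*)=N(\bfgamma_T):=\{\bsv\in \bscal{W}^{q,p}_{\bfvarepsilon}(Q_T)\fdg \bsv_c(T)=\mathbf{0}\text{ in }Y\},\qquad L^*\bsv=-\tfrac{\bfd\bsv}{\bfd\bft}.
\end{align*}

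The inclusion ``$\supseteq$'' is handled by Proposition~\ref{6.9}(ii): for $\bsu\in N(\bfgamma_0)$ and $\bsv\in N(\bfgamma_T)$ the boundary term in \eqref{eq:6.9.a} vanishes, producing $\langle L\bsu,\bsv\rangle_{\bscal{X}^{q,p}_{\bfvarepsilon}(Q_T)}=\langle -\tfrac{\bfd\bsv}{\bfd\bft},\bsu\rangle_{\bscal{X}^{q,p}_{\bfvarepsilon}(Q_T)}$. For the reverse inclusion, let $\bsv\in D(L^*)$ with $L^*\bsv=\bsw^*$; testing against $\bfphi\in C_0^\infty(Q_T)^d\subseteq N(\bfgamma_0)$ shows that $\bsv$ admits a generalized time derivative equal to $-\bsw^*$, hence $\bsv\in \bscal{W}^{q,p}_{\bfvarepsilon}(Q_T)$. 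Applying \eqref{eq:6.9.a} once more and comparing with the defining relation of the adjoint yields $(\bsu_c(T),\bsv_c(T))_Y=0$ for every $\bsu\in N(\bfgamma_0)$; this is what I expect to be the main obstacle, and I overcome it by a surjectivity argument for the terminal trace: for arbitrary $\bfu\in X^{q,p}_+$ the linear interpolant $\bsu(t):=(t/T)\bfu$ lies in $\bscal{W}^{q,p}_{\bfvarepsilon}(Q_T)\cap N(\bfgamma_0)$ with $\bsu_c(T)=\bfu$, so the orthogonality forces $\bsv_c(T)\perp_Y X^{q,p}_+$, and density of $X^{q,p}_+$ in $Y$ yields $\bsv_c(T)=\mathbf{0}$.

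With the adjoint identified, monotonicity of $L^*$ follows from the same integration by parts argument used for $L$, now evaluated on $N(\bfgamma_T)$: for $\bsv\in N(\bfgamma_T)$ one gets $2\langle L^*\bsv,\bsv\rangle_{\bscal{X}^{q,p}_{\bfvarepsilon}(Q_T)}=-2\langle \tfrac{\bfd\bsv}{\bfd\bft},\bsv\rangle_{\bscal{X}^{q,p}_{\bfvarepsilon}(Q_T)}=\|\bsv_c(0)\|_Y^2\ge 0$. Invoking the Brezis/Showalter criterion completes the proof. The surjectivity of the terminal trace onto a dense subspace of $Y$, together with the careful bookkeeping of which boundary term survives in \eqref{eq:6.9.a}, is where all the structure developed in Sections~\ref{sec:5} and~\ref{sec:6} is needed; everything else is formal.
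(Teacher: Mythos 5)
Your argument is correct and follows a genuinely different route from the paper's. The paper verifies the definition of maximal monotonicity directly: given $(\bsu,\bsu^*)$ satisfying $\langle\bsu^*-\frac{\mathbf{d}\bsv}{\mathbf{dt}},\bsu-\bsv\rangle\ge 0$ for all $\bsv\in N(\bfgamma_0)$, it plugs in $\bsv=\bfu\varphi$ with $\bfu\in X_+^{q,p}$, $\varphi\in C_0^\infty(I)$, then takes $\bfu=\pm\tau\bfv$ and sends $\tau\to\infty$ to annihilate the quadratic term and extract the linear identity defining $\frac{\mathbf{d}\bsu}{\mathbf{dt}}=\bsu^*$; a final test with $\bfu_T^n\varphi$, $\bfu_T^n\to\bsu_c(T)$, $\varphi(0)=0$, $\varphi(T)=1$, forces $\bsu_c(0)=\mathbf{0}$. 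You instead invoke the Brezis--Showalter criterion (a closed, densely defined, linear monotone operator on a reflexive space is maximal monotone iff its adjoint is monotone), which obliges you to identify $L^*$ explicitly. The two approaches are essentially dual: the scaling trick $\tau\to\infty$ in the paper is precisely the device hidden inside the proof of the Brezis criterion, so you are trading a by-hand computation for a blackbox at the cost of an explicit characterisation of the adjoint. What your route buys is structural insight: you identify $L^*=-\frac{\mathbf{d}}{\mathbf{dt}}$ on $N(\bfgamma_T)$, which is interesting in its own right and clarifies the duality between initial and terminal conditions; what it costs is that you must verify both inclusions for $D(L^*)$, and the nontrivial one (that $\bsv\in D(L^*)$ forces $\bsv_c(T)=\mathbf{0}$) needs exactly the same surjectivity-of-the-terminal-trace argument that the paper uses for the initial trace, so no work is actually saved. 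Two small remarks: (i) you could cite Proposition~\ref{7.0}(ii) directly for closedness of $(L,\bfgamma_0)^\top$ rather than reassembling it from Propositions~\ref{6.4} and \ref{7.0}; (ii) for the linear interpolant $\bsu(t)=(t/T)\bfu$ you should justify that $\frac{\mathbf{d}\bsu}{\mathbf{dt}}$ defines a bounded functional on $\bscal{X}^{q,p}_{\bfvarepsilon}(Q_T)$ -- this follows from $X^{q,p}_{\bfvarepsilon}(t)\embedding Y$ uniformly in $t$ and H\"older in time, even when $\bfu\notin L^{q'(\cdot)}(\Omega)^d$ -- or simply take $\bfu\in C_0^\infty(\Omega)^d$, which is still dense in $Y$; the paper's own test functions $\bfu_T^n\varphi$ carry the same implicit justification.
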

	
	\begin{proof}
		\textbf{1. Linearity:} Follows right from the definition of $\frac{\bfd}{\bfd\bft}:\bscal{W}^{q,p}_{\bfvarepsilon}(Q_T)\subseteq \bscal{X}^{q,p}_{\bfvarepsilon}(Q_T)\to \bscal{X}^{q,p}_{\bfvarepsilon}(Q_T)^*$ (cf.~Definition~\ref{6.1}).\\[-3mm]
		
		\textbf{2. Density of domain of definition:} Since $C_0^\infty(Q_T)^d\subseteq \bscal{W}^{q,p}_{\bfvarepsilon}(Q_T)$ is dense in $\bscal{X}^{q,p}_{\bfvarepsilon}(Q_T)$ (cf.~Proposition~\ref{5.8}),  $\frac{\bfd}{\bfd\bft}:\bscal{W}^{q,p}_{\bfvarepsilon}(Q_T)\subseteq \bscal{X}^{q,p}_{\bfvarepsilon}(Q_T)\to \bscal{X}^{q,p}_{\bfvarepsilon}(Q_T)^*$ is densely defined. \\[-3mm]
		
		\textbf{3. Maximal monotonicity:}
		From \eqref{eq:6.9.a}  we obtain for every ${\bsu\in N(\bfgamma_0)}$ that
		\begin{align*}
		\bigg\langle \frac{\textbf{d}\bsu}{\textbf{dt}},\bsu\bigg\rangle_{\bscal{X}^{q,p}_{\bfvarepsilon}(Q_T)}= \frac{1}{2}\|\bsu_c(T)\|_Y^2\ge 0,
		\end{align*}i.e.,  $\frac{\textbf{d}}{\textbf{dt}}:N(\bfgamma_0)\subseteq \bscal{X}^{q,p}_{\bfvarepsilon}(Q_T)\to \bscal{X}^{q,p}_{\bfvarepsilon}(Q_T)^*$ is positive semi-definite, and therefore monotone, as it is also linear. For the maximal monotonicity, assume that $(\bsu,\bsu^*)^\top\in \bscal{X}^{q,p}_{\bfvarepsilon}(Q_T)\times \bscal{X}^{q,p}_{\bfvarepsilon}(Q_T)^*$ satisfies for every $\bsv\in N(\bfgamma_0)$ 
		\begin{align}
		\bigg\langle \bsu^*-\frac{\textbf{d}\bsv}{\textbf{dt}},\bsu-\bsv\bigg\rangle_{\bscal{X}^{q,p}_{\bfvarepsilon}(Q_T)}\ge 0.\label{eq:7.1}
		\end{align}
		Choosing $\bsv=\bfu\varphi\in N(\bfgamma_0)$ for arbitrary  $\bfu\in X_+^{q,p}$ and $\varphi\in C_0^\infty(I)$ in \eqref{eq:7.1}, also using that $\langle \frac{\textbf{d}\bsv}{\textbf{dt}},\bsv\rangle_{\bscal{X}^{q,p}_{\bfvarepsilon}(Q_T)}=\frac{1}{2}\|\bfu\|_Y^2(\varphi(T)-\varphi(0))=0$ due to \eqref{eq:6.9.a}, yields
		\begin{align}
		\langle \bsu^*,\bsu-\bsv\rangle_{\bscal{X}^{q,p}_{\bfvarepsilon}(Q_T)}\ge \bigg\langle\frac{\textbf{d}\bsv}{\textbf{dt}},\bsu\bigg\rangle_{\bscal{X}^{q,p}_{\bfvarepsilon}(Q_T)}.\label{eq:7.2}
		\end{align}
		Choosing now $\bfu=\pm\tau \bfv\in X_+^{q,p}$ for arbitrary $\bfv\in X_+^{q,p}$ and $\tau>0$ in \eqref{eq:7.2},  exploiting that $\frac{\bfd\bsv}{\bfd\bft}=\bscal{J}_{\bfvarepsilon}(\bfu\varphi',\boldsymbol{0})$ in $\bscal{X}^{q,p}_{\bfvarepsilon}(Q_T)^*$ and Remark \ref{3.9}, 	dividing by $\tau$ and passing for $\tau\to \infty$, further provides for every $\bfv\in X_+^{q,p}$ and $\varphi\in C^\infty_0(I)$
		\begin{align*}
		-\int_I{ (\bsu(s),\bfv)_Y\varphi^\prime(s)\,ds}=\int_I{\langle \bsu^*(s),\bfv\rangle_{X^{q,p}(s)}\varphi(s)\,ds},
		\end{align*}
		i.e., $\bsu\in \bscal{W}^{q,p}_{\bfvarepsilon}(Q_T)$ with $\frac{\textbf{d}\bsu}{\textbf{dt}}=\bsu^*$ in $\bscal{X}^{q,p}_{\bfvarepsilon}(Q_T)^*$ due to Proposition \ref{6.6}.
		Finally, let ${(\bfu_T^n)_{n\in\setN}\subseteq
                  X^{q,p}_+}$ be such that $\bfu_T^n\to \bsu_c(T)$ in $Y$ $(n\to\infty)$ and set $(\bsv_n)_{n\in\setN}:=(\bfu_T^n\varphi)_{n\in\setN}\subseteq N(\bfgamma_0)$ for $\varphi\in C^\infty(\overline{I})$ with $\varphi(0)=0$ and $\varphi(T)=1$. Then, we gain for every $n\in \setN$
		\begin{align}
		0\leq \bigg\langle \frac{\textbf{d}\bsu}{\textbf{dt}}-\frac{\textbf{d}\bsv_n}{\textbf{dt}},\bsu-\bsv_n\bigg\rangle_{\bscal{X}^{q,p}_{\bfvarepsilon}(Q_T)}=\frac{1}{2}\|\bsu_c(T)-\bfu_T^n\|_Y^2-\frac{1}{2}\|\bsu_c(0)\|_Y^2.\label{eq:7.3}
		\end{align}
		By passing  in \eqref{eq:7.3} for $n\to\infty$, we
                infer $0\leq -\frac{1}{2}\|\bsu_c(0)\|_Y^2$, i.e.,
                $\bsu_c(0)=\mathbf{0}$ in $Y$~and~${\bsu\in
                  N(\bfgamma_0)}$. In other words,
                $\frac{\textbf{d}}{\textbf{dt}}:N(\bfgamma_0
                )\subseteq \bscal{X}^{q,p}_{\bfvarepsilon}(Q_T)\to \bscal{X}^{q,p}_{\bfvarepsilon}(Q_T)^*$ is maximal monotone.\hfill$\qed$
	\end{proof}
	
	\begin{defn}[$\frac{\mathbf{d}}{\mathbf{dt}}$-pseudo-monotonicity]\label{7.2}An
          operator $\bscal{A}\!:\!\bscal{W}^{q,p}_{\bfvarepsilon}(Q_T)
         \!\subseteq \!\bscal{X}^{q,p}_{\bfvarepsilon}(Q_T)\!\to \!\bscal{X}^{q,p}_{\bfvarepsilon}(Q_T)^ *$ is said to be \textbf{$\frac{\mathbf{d}}{\mathbf{dt}}$-pseudo-monotone}, if for a sequence $(\bsu_n)_{n\in \setN}\subseteq \bscal{W}^{q,p}_{\bfvarepsilon}(Q_T)$, from
		\begin{align}
		\bsu_n\overset{n\to\infty}{\weakto}\bsu\quad\textup{ in }\bscal{W}^{q,p}_{\bfvarepsilon}(Q_T),\label{eq:7.2.a}\\
		\limsup_{n\to\infty}{\langle \bscal{A}\bsu_n,\bsu_n-\bsu\rangle_{\bscal{X}^{q,p}_{\bfvarepsilon}(Q_T)}}\leq 0,\label{eq:7.2.b}
		\end{align}
		it follows $\langle \bscal{A}\bsu,\bsu-\bsv\rangle_{\bscal{X}^{q,p}_{\bfvarepsilon}(Q_T)}\leq \liminf_{n\to\infty}{\langle \bscal{A}\bsu_n,\bsu_n-\bsv\rangle_{\bscal{X}^{q,p}_{\bfvarepsilon}(Q_T)}}$ for every $\bsv\in \bscal{X}^{q,p}_{\bfvarepsilon}(Q_T)$.
	\end{defn}	

	\begin{prop}\label{sum}
		If
                $\bscal{A},\bscal{B}:\bscal{W}^{q,p}_{\bfvarepsilon}(Q_T)
                \subseteq \bscal{X}^{q,p}_{\bfvarepsilon}(Q_T)\to \bscal{X}^{q,p}_{\bfvarepsilon}(Q_T)^*$ are $\frac{\mathbf{d}}{\mathbf{dt}}$-pseudo-monotone, then the sum $\bscal{A}+\bscal{B}:\bscal{W}^{q,p}_{\bfvarepsilon}(Q_T) \subseteq \bscal{X}^{q,p}_{\bfvarepsilon}(Q_T)\to \bscal{X}^{q,p}_{\bfvarepsilon}(Q_T)^*$ is $\frac{\mathbf{d}}{\mathbf{dt}}$-pseudo-monotone.
	\end{prop}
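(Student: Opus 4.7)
The strategy mirrors the classical argument for ordinary pseudo-monotone operators, adapted to the present setting where weak convergence is taken in $\bscal{W}^{q,p}_{\bfvarepsilon}(Q_T)$ rather than in $\bscal{X}^{q,p}_{\bfvarepsilon}(Q_T)$. Fix a sequence $(\bsu_n)_{n\in\setN}\subseteq\bscal{W}^{q,p}_{\bfvarepsilon}(Q_T)$ and $\bsu\in\bscal{W}^{q,p}_{\bfvarepsilon}(Q_T)$ satisfying \eqref{eq:7.2.a} and the analogue of \eqref{eq:7.2.b} with $\bscal{A}$ replaced by $\bscal{A}+\bscal{B}$. The plan is to first decouple the $\limsup$-inequality, i.e., to show that
\begin{align*}
\limsup_{n\to\infty}\langle\bscal{A}\bsu_n,\bsu_n-\bsu\rangle_{\bscal{X}^{q,p}_{\bfvarepsilon}(Q_T)}\leq 0,\qquad \limsup_{n\to\infty}\langle\bscal{B}\bsu_n,\bsu_n-\bsu\rangle_{\bscal{X}^{q,p}_{\bfvarepsilon}(Q_T)}\leq 0,
\end{align*}
and then apply the individual $\tfrac{\bfd}{\bfd\bft}$-pseudo-monotonicity of $\bscal{A}$ and $\bscal{B}$ to pass to the liminf-inequality for the sum.

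For the decoupling, I would argue by contradiction: assume that the first $\limsup$ is strictly positive. Extracting a subsequence (not relabelled) along which $\langle\bscal{A}\bsu_n,\bsu_n-\bsu\rangle_{\bscal{X}^{q,p}_{\bfvarepsilon}(Q_T)}$ converges to some $c>0$, the hypothesis on $\bscal{A}+\bscal{B}$ forces
\begin{align*}
\limsup_{n\to\infty}\langle\bscal{B}\bsu_n,\bsu_n-\bsu\rangle_{\bscal{X}^{q,p}_{\bfvarepsilon}(Q_T)}\leq -c<0.
\end{align*}
Along this subsequence we still have $\bsu_n\weakto \bsu$ in $\bscal{W}^{q,p}_{\bfvarepsilon}(Q_T)$, so $\tfrac{\bfd}{\bfd\bft}$-pseudo-monotonicity of $\bscal{B}$ applies; testing the resulting inequality with $\bsv=\bsu$ gives $0\leq \liminf_{n\to\infty}\langle\bscal{B}\bsu_n,\bsu_n-\bsu\rangle_{\bscal{X}^{q,p}_{\bfvarepsilon}(Q_T)}$, contradicting the previous strict upper bound. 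The symmetric argument handles $\bscal{B}$.

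With both $\limsup$-inequalities in hand, $\tfrac{\bfd}{\bfd\bft}$-pseudo-monotonicity of $\bscal{A}$ and $\bscal{B}$ yields, for every $\bsv\in\bscal{X}^{q,p}_{\bfvarepsilon}(Q_T)$,
\begin{align*}
\langle\bscal{A}\bsu,\bsu-\bsv\rangle_{\bscal{X}^{q,p}_{\bfvarepsilon}(Q_T)}&\leq \liminf_{n\to\infty}\langle\bscal{A}\bsu_n,\bsu_n-\bsv\rangle_{\bscal{X}^{q,p}_{\bfvarepsilon}(Q_T)},\\
\langle\bscal{B}\bsu,\bsu-\bsv\rangle_{\bscal{X}^{q,p}_{\bfvarepsilon}(Q_T)}&\leq \liminf_{n\to\infty}\langle\bscal{B}\bsu_n,\bsu_n-\bsv\rangle_{\bscal{X}^{q,p}_{\bfvarepsilon}(Q_T)}.
\end{align*}
Adding these and using the superadditivity of $\liminf$ produces the desired inequality for $\bscal{A}+\bscal{B}$. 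The only delicate point is the contradiction step, which requires that the extracted subsequence still satisfies the defining convergence \eqref{eq:7.2.a} in $\bscal{W}^{q,p}_{\bfvarepsilon}(Q_T)$; this is immediate since weak convergence is inherited by subsequences, so no new work beyond the standard subsequence principle is needed.
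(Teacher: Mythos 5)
Your proof is correct and follows essentially the same route as the paper's: set $a_n:=\langle\bscal{A}\bsu_n,\bsu_n-\bsu\rangle$, $b_n:=\langle\bscal{B}\bsu_n,\bsu_n-\bsu\rangle$, argue by contradiction via a subsequence to decouple the two $\limsup$'s, and then apply the $\tfrac{\mathbf{d}}{\mathbf{dt}}$-pseudo-monotonicity of each operator separately before summing the resulting $\liminf$-inequalities. The subsequence principle you invoke at the end is exactly the (minor) technical point the paper also implicitly uses, so there is nothing missing.
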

	
	\begin{proof}
          Let $(\bsu_n)_{n\in \setN}\subseteq\bscal{W}^{q,p}_{\bfvarepsilon}(Q_T)$ and
          ${\bsu \in \bscal{W}^{q,p}_{\bfvarepsilon}(Q_T)}$ satisfy
          \eqref{eq:7.2.a} and $\limsup_{n\to\infty}{\langle (\bscal{A}+\bscal{B})\bsu_n,}$
          $\bsu_n-\bsu\rangle_{\bscal{X}^{q,p}_{\bfvarepsilon}(Q_T)}\leq
          0$. Set ${a_n:=\langle \bscal{A}\bsu_n,\bsu_n-\bsu\rangle_{\bscal{X}^{q,p}_{\bfvarepsilon}(Q_T)}}$
          and $b_n:=\langle \bscal{B}\bsu_n,\bsu_n-\bsu\rangle_{\bscal{X}^{q,p}_{\bfvarepsilon}(Q_T)}$
          for $n\in\setN$.  Then, it holds
          $\limsup_{n\to\infty}{a_n}\leq 0$ and
          ${\limsup_{n\to\infty}{b_n}\leq 0}$. In fact, suppose, e.g.,
          that ${\limsup_{n\to\infty}{a_n}=a> 0}$. Then, there exists
          a subsequence, such that $a_{n_k}\to a$ $(k\to\infty)$, and
          thus
          ${\limsup_{k\to\infty}{b_{n_k}}\leq
            \limsup_{k\to\infty}{a_{n_k}+b_{n_k}}-\lim_{k\to\infty}{a_{n_k}}\leq-a<0}$,
          i.e., a contradiction, as then the
          $\frac{\mathbf{d}}{\mathbf{dt}}$-pseudo-monotonicity of
          $\bscal{B}: \bscal{W}^{q,p}_{\bfvarepsilon}(Q_T) \subseteq \bscal{X}^{q,p}_{\bfvarepsilon}(Q_T)\to
          \bscal{X}^{q,p}_{\bfvarepsilon}(Q_T)^*$ implies
          $0\leq \liminf_{\Lambda\ni n\to\infty}{b_n}< 0$. Thus, we
          have $\limsup_{n\to\infty}{a_n}\leq 0$ and
          $\limsup_{n\to\infty}{b_n}\leq 0$, and the
          $\frac{\mathbf{d}}{\mathbf{dt}}$-pseudo-monotonicity of
          $\bscal{A},\bscal{B}:
          \bscal{W}^{q,p}_{\bfvarepsilon}(Q_T) \subseteq \bscal{X}^{q,p}_{\bfvarepsilon}(Q_T)\to
          \bscal{X}^{q,p}_{\bfvarepsilon}(Q_T)^*$ provides
          $\langle
          \bscal{A}\bsu,\bsu-\bsv\rangle_{\bscal{X}^{q,p}_{\bfvarepsilon}(Q_T)}\leq
          \liminf_{n\to \infty}{\langle
            \bscal{A}\bsu_n,}$\linebreak $\bsu_n-\bsv\rangle_{\bscal{X}^{q,p}_{\bfvarepsilon}(Q_T)}$
          and
          $\langle
          \bscal{B}\bsu,\bsu-\bsv\rangle_{\bscal{X}^{q,p}_{\bfvarepsilon}(Q_T)}\leq
          \liminf_{n\to \infty}\langle
            \bscal{B}\bsu_n,$ $\bsu_n-\bsv\rangle_{\bscal{X}^{q,p}_{\bfvarepsilon}(Q_T)}$
          for every $\bsv\in
          \bscal{X}^{q,p}_{\bfvarepsilon}(Q_T)$. Summing these
          inequalities yields the assertion.\hfill$\qed$
	\end{proof}
	
	\begin{thm}\label{7.4}
		Let
                $\bscal{A}:\bscal{W}^{q,p}_{\bfvarepsilon}(Q_T)\subseteq
                \bscal{X}^{q,p}_{\bfvarepsilon}(Q_T)\to
                \bscal{X}^{q,p}_{\bfvarepsilon}(Q_T)^*$ be coercive,
                $\frac{\mathbf{d}}{\mathbf{dt}}$-pseudo-monotone
                operator and assume that there exist a bounded function $\psi:\setR_{\ge 0}\times \setR_{\ge 0}\to \setR_{\ge 0}$ and a constant $\theta\in\left[0,1\right)$, such that for every $\bsu\in \bscal{W}^{q,p}_{\bfvarepsilon}(Q_T)$ it holds
		\begin{align}
		\|\bscal{A}\bsu\|_{\bscal{X}^{q,p}_{\bfvarepsilon}(Q_T)^*}\leq \psi(\|\bsu\|_{\bscal{X}^{q,p}_{\bfvarepsilon}(Q_T)},\|\bsu_c(0)\|_{Y})+\theta \bigg\|\frac{\mathbf{d}\bsu}{\mathbf{dt}}\bigg\|_{\bscal{X}^{q,p}_{\bfvarepsilon}(Q_T)^*}.\label{bbdcon}
		\end{align}
		Then, for arbitrary $\bfu_0\in Y$ and $\bsu^*\in \bscal{X}^{q,p}_{\bfvarepsilon}(Q_T)^*$ there exists a solution $\bsu\in \bscal{W}^{q,p}_{\bfvarepsilon}(Q_T)$ of 
		\begin{alignat*}{2}
		\frac{\mathbf{d}\bsu}{\mathbf{dt}}+\bscal{A}\bsu&=\bsu^*&&\quad\textup{ in }\bscal{X}^{q,p}_{\bfvarepsilon}(Q_T)^*,\\
		\bsu_c(0)&=\bfu_0&&\quad\textup{ in }Y.
		\end{alignat*}
		Here, the initial condition has to be understood in the sense of the unique continuous representation $\bsu_c\in \bscal{Y}^0(Q_T)$ (cf.~Proposition \ref{6.9} (i)).
	\end{thm}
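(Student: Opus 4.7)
The plan is to invoke Proposition \ref{2.1a} with the following identifications: $X:=\bscal{X}^{q,p}_{\bfvarepsilon}(Q_T)$, which is reflexive since $q^-,p^->1$ (Proposition~\ref{4.8}); the Hilbert space $Y:=L^2(\Omega)^d$; the linear operator $L:=\frac{\mathbf{d}}{\mathbf{dt}}$ with $D(L)=\bscal{W}^{q,p}_{\bfvarepsilon}(Q_T)$; the initial trace $\gamma_0:=\bfgamma_0$; and $A:=\bscal{A}$. Once all hypotheses of Proposition~\ref{2.1a} are verified, its conclusion $R((L+A,\gamma_0)^\top)=X^*\times Y$ provides, for any prescribed data $\bsu^*\in X^*$ and $\bfu_0\in Y$, an element $\bsu\in\bscal{W}^{q,p}_{\bfvarepsilon}(Q_T)$ such that $\frac{\mathbf{d}\bsu}{\mathbf{dt}}+\bscal{A}\bsu=\bsu^*$ in $X^*$ and $\bsu_c(0)=\bfgamma_0(\bsu)=\bfu_0$ in $Y$.

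Almost all structural assumptions on $L$ and $\gamma_0$ have already been prepared: linearity, dense domain, and maximal monotonicity of $L$ on $N(\bfgamma_0)$ are Proposition~\ref{7.1}; linearity of $\gamma_0$, density of $R(\bfgamma_0)$ in $Y$, closedness of the joint graph $G((L,\gamma_0)^\top)$, and the positivity estimate with constant $c_0=\tfrac{1}{2}$ are furnished by Proposition~\ref{7.0}. The growth bound on $A$ is precisely the hypothesis \eqref{bbdcon}, and the coercivity of $\bscal{A}$ on $D(L)$ is assumed by hypothesis. The only remaining piece is to show that $\bscal{A}$ is \emph{pseudo-monotone with respect to $L$} in the sense of Proposition~\ref{2.1a}, starting from the $\frac{\mathbf{d}}{\mathbf{dt}}$-pseudo-monotonicity supplied by Definition~\ref{7.2}.

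The two notions differ in their hypotheses: Definition~\ref{7.2} asks for $\bsu_n\weakto\bsu$ in $\bscal{W}^{q,p}_{\bfvarepsilon}(Q_T)$, whereas Proposition~\ref{2.1a} only supplies $\bsu_n\weakto\bsu$ in $X$ together with $\sup_n\|L\bsu_n\|_{X^*}<\infty$. I will close this gap by a subsequence--subsequence argument: by reflexivity of $X^*$, every subsequence of $(L\bsu_n)$ admits a further sub-subsequence $L\bsu_{n_k}\weakto\chi$ in $X^*$; since the graph of $L$ is strongly closed by completeness of $\bscal{W}^{q,p}_{\bfvarepsilon}(Q_T)$ (Proposition~\ref{6.4}) and $L$ is linear, Mazur's lemma promotes this to weak closedness, forcing $\chi=L\bsu$ and hence $\bsu_{n_k}\weakto\bsu$ in $\bscal{W}^{q,p}_{\bfvarepsilon}(Q_T)$. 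Along such a sub-subsequence the assumption $\limsup_n\langle\bscal{A}\bsu_n,\bsu_n-\bsu\rangle_X\leq 0$ persists, so Definition~\ref{7.2} yields $\langle\bscal{A}\bsu,\bsu-\bsv\rangle_X\leq\liminf_{k\to\infty}\langle\bscal{A}\bsu_{n_k},\bsu_{n_k}-\bsv\rangle_X$ for each fixed $\bsv\in X$; the standard subsequence principle then promotes this to the required liminf bound for the original sequence.

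The main obstacle is really this structural upgrading step, which needs the three ingredients reflexivity of $X^*$, closedness of the graph of $L$, and linearity of $L$ to be orchestrated correctly. The substantive analytic input, namely the maximal monotonicity of $L$ on $N(\bfgamma_0)$ obtained via the integration-by-parts identity of Proposition~\ref{6.9}, has already been dispatched in Proposition~\ref{7.1}, so once the pseudo-monotonicity translation is in hand the theorem follows by direct application of Proposition~\ref{2.1a}.
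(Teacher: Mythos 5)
Your proposal is correct and follows essentially the same route as the paper: reduce Theorem \ref{7.4} to Proposition \ref{2.1a} via the identifications $X:=\bscal{X}^{q,p}_{\bfvarepsilon}(Q_T)$, $L:=\frac{\mathbf{d}}{\mathbf{dt}}$, $\gamma_0:=\bfgamma_0$, invoking Propositions \ref{7.0} and \ref{7.1} for the structural hypotheses, and then verify that the notion of pseudo-monotonicity with respect to $L$ from Proposition \ref{2.1a} coincides with $\frac{\mathbf{d}}{\mathbf{dt}}$-pseudo-monotonicity from Definition \ref{7.2}. The only difference is that the paper dispatches this last equivalence by citing a standard convergence principle (weak convergence in $\bscal{W}^{q,p}_{\bfvarepsilon}(Q_T)$ is equivalent to weak convergence in $\bscal{X}^{q,p}_{\bfvarepsilon}(Q_T)$ plus boundedness of the generalized time derivatives), whereas you make it explicit via reflexivity, weak closedness of the graph of $L$ by Mazur's lemma, and a subsequence--subsequence argument; this is a correct and slightly more detailed rendering of the same step.
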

	
	\begin{proof}
          Since $\bscal{W}^{q,p}_{\bfvarepsilon}(Q_T)$ is a reflexive
          Banach space (cf.~Proposition~\ref{6.4}), it is readily seen by means of the standard convergence principle (cf.~\cite[Kap. I, Lemma 5.4]{GGZ74}) that for a sequence
          $(\bsu_n)_{n\in \setN} \subseteq \bscal{W}^{q,p}_{\bfvarepsilon}(Q_T)$ it holds
          $\bsu_n\weakto \bsu$ in $\bscal{W}^{q,p}_{\bfvarepsilon}(Q_T)$ $(n\to \infty)$ if
          and only if $\bsu_n\weakto \bsu$ in $\bscal{X}^{q,p}_{\bfvarepsilon}(Q_T)$ $(n\to \infty)$ and
          $(\frac{\mathbf{d}\bsu_n}{\mathbf{dt}})_{n\in \setN}\subseteq \bscal{X}^{q,p}_{\bfvarepsilon}(Q_T)^*$ is
          bounded. Hence, the notion of pseudo-monotonicity with
          respect to $\frac{\mathbf{d}}{\mathbf{dt}}:D(\frac{\mathbf{d}}{\mathbf{dt}})=\bscal{W}^{q,p}_{\bfvarepsilon}(Q_T)\subseteq
          \bscal{X}^{q,p}_{\bfvarepsilon}(Q_T)\to \bscal{X}^{q,p}_{\bfvarepsilon}(Q_T)^*$ in
          Proposition \ref{2.1a} coincides with our notion of
          $\frac{\mathbf{d}}{\mathbf{dt}}$-pseudo-monotonicity
          in~Definition~\ref{7.2}. Putting everything together, the
          assertion follows from Proposition \ref{2.1a} in conjunction
          with Proposition \ref{7.0} and Proposition \ref{7.1}.~\hfill$\qed$
	\end{proof}
	
	\section{Application}
	\label{sec:8}
	In this final section we apply the developed theory of the preceding section on the model problem \eqref{eq:model}.
	
	Throughout the entire section, let $\Omega\subseteq\setR^d$, $d\ge 2$, be a bounded Lipschitz domain, $I=\left(0,T\right)$, with $0<T<\infty$, and $p\in \mathcal{P}^{\log}(Q_T)$ with $p^->\frac{2d}{d+2}$ and let $q=p^-$, i.e., particularly we have $X_-^{q,p}\embedding Y=L^2(\Omega)^d$.   
	
	\begin{prop}\label{ex1}
		Let $\bfS:Q_T\times \mathbb{M}^{d\times d}_{\sym}\to \mathbb{M}^{d\times d}_{\sym}$  be a mapping satisfying (\hyperlink{S.1}{S.1})--(\hyperlink{S.4}{S.4}). Then, the operator $\bscal{S}:\bscal{X}^{q,p}_{\bfvarepsilon}(Q_T)\to \bscal{X}^{q,p}_{\bfvarepsilon}(Q_T)^*$, for every $\bsu\in \bscal{X}^{q,p}_{\bfvarepsilon}(Q_T)$ given via
		\begin{align*}
			\bscal{S}\bsu:=\bscal{J}_{\bfvarepsilon}(\boldsymbol{0},\bfS(\cdot,\cdot,\bfvarepsilon(\bsu)))\quad\text{ in }\bscal{X}^{q,p}_{\bfvarepsilon}(Q_T)^*,
		\end{align*}
		is well-defined, bounded, continuous, monotone, coercive and  $\frac{\mathbf{d}}{\mathbf{dt}}$-pseudo-monotone.
	\end{prop}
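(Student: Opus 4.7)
The plan is to verify the six asserted properties of $\bscal{S}$ in the order well-definedness, boundedness, continuity, monotonicity, coercivity, and finally $\frac{\mathbf{d}}{\mathbf{dt}}$-pseudo-monotonicity, using the characterization of $\bscal{X}^{q,p}_{\bfvarepsilon}(Q_T)^*$ from Proposition \ref{4.9}. For well-definedness and boundedness, I would estimate $|\bfS(t,x,\bfvarepsilon(\bsu))|^{p'(t,x)}$ via (\hyperlink{S.2}{S.2}), exploiting the identity $(p(t,x)-1)p'(t,x)=p(t,x)$ to dominate it by a multiple of $(\delta+|\bfvarepsilon(\bsu)(t,x)|)^{p(t,x)} + \beta(t,x)^{p'(t,x)}$. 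Integrating and passing from modular to norm yields $\bfS(\cdot,\cdot,\bfvarepsilon(\bsu))\in L^{p'(\cdot,\cdot)}(Q_T,\mathbb{M}^{d\times d}_{\sym})$ with a bound depending only on $\|\bsu\|_{\bscal{X}^{q,p}_{\bfvarepsilon}(Q_T)}$ and $\|\beta\|_{L^{p'(\cdot,\cdot)}(Q_T)}$, and Proposition~\ref{4.9} turns this into boundedness of $\bscal{S}:\bscal{X}^{q,p}_{\bfvarepsilon}(Q_T)\to \bscal{X}^{q,p}_{\bfvarepsilon}(Q_T)^*$.

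For continuity, given $\bsu_n\to\bsu$ in $\bscal{X}^{q,p}_{\bfvarepsilon}(Q_T)$, I extract via Corollary \ref{4.7} a subsequence with $\bfvarepsilon(\bsu_n)\to\bfvarepsilon(\bsu)$ a.e.\ in $Q_T$. By (\hyperlink{S.1}{S.1}) we obtain $\bfS(\cdot,\cdot,\bfvarepsilon(\bsu_n))\to\bfS(\cdot,\cdot,\bfvarepsilon(\bsu))$ a.e., and the $L^{p(\cdot,\cdot)}$-equi-integrability of $(\bfvarepsilon(\bsu_n))_{n\in\setN}$ transported through (\hyperlink{S.2}{S.2}) yields $L^{p'(\cdot,\cdot)}$-equi-integrability of $(\bfS(\cdot,\cdot,\bfvarepsilon(\bsu_n)))_{n\in\setN}$. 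Vitali's convergence theorem then gives convergence in $L^{p'(\cdot,\cdot)}(Q_T,\mathbb{M}^{d\times d}_{\sym})$, hence $\bscal{S}\bsu_n\to\bscal{S}\bsu$ in $\bscal{X}^{q,p}_{\bfvarepsilon}(Q_T)^*$ along the subsequence; a standard contradiction argument extends this to the full sequence. Monotonicity is a pointwise integration of (\hyperlink{S.4}{S.4}). For coercivity, (\hyperlink{S.3}{S.3}) gives
\begin{align*}
	\langle \bscal{S}\bsu,\bsu\rangle_{\bscal{X}^{q,p}_{\bfvarepsilon}(Q_T)}\ge c_0\int_{Q_T}(\delta+|\bfvarepsilon(\bsu)|)^{p(t,x)-2}|\bfvarepsilon(\bsu)|^2\,d(t,x) - \|c_1\|_{L^1(Q_T)},
\end{align*}
and the elementary pointwise bound $(\delta+|\bfA|)^{p-2}|\bfA|^2\gtrsim |\bfA|^p - \delta^p$ transforms this into a lower bound by $c\,\rho_{p(\cdot,\cdot)}(\bfvarepsilon(\bsu)) - C$. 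Since $q=p^-$, Korn's inequality for constant exponents combined with Poincar\'e's inequality upgrades the $L^{p^-}(Q_T)^d$ part of $\|\bsu\|_{\bscal{X}^{q,p}_{\bfvarepsilon}(Q_T)}$ into a bound by $\|\bfvarepsilon(\bsu)\|_{L^{p^-}(Q_T)^{d\times d}}\le \|\bfvarepsilon(\bsu)\|_{L^{p(\cdot,\cdot)}(Q_T)^{d\times d}}(1+|Q_T|)$; the standard relation $\rho_{p(\cdot,\cdot)}(\bfF)\ge\min\{\|\bfF\|^{p^-},\|\bfF\|^{p^+}\}$ together with $p^->1$ yields coercivity.

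The key observation for $\frac{\mathbf{d}}{\mathbf{dt}}$-pseudo-monotonicity is that classical pseudo-monotonicity in the sense of Definition \ref{2.1}(iii) implies $\frac{\mathbf{d}}{\mathbf{dt}}$-pseudo-monotonicity in the sense of Definition \ref{7.2}, since weak convergence in $\bscal{W}^{q,p}_{\bfvarepsilon}(Q_T)$ entails weak convergence in $\bscal{X}^{q,p}_{\bfvarepsilon}(Q_T)$. I would therefore establish the stronger classical statement by invoking the well-known Brezis principle that a monotone and hemicontinuous operator on a reflexive Banach space is pseudo-monotone. Given $\bsu_n\weakto\bsu$ in $\bscal{X}^{q,p}_{\bfvarepsilon}(Q_T)$ with $\limsup_{n\to\infty}\langle \bscal{S}\bsu_n,\bsu_n-\bsu\rangle_{\bscal{X}^{q,p}_{\bfvarepsilon}(Q_T)}\le 0$, monotonicity and $\bscal{S}\bsu\in \bscal{X}^{q,p}_{\bfvarepsilon}(Q_T)^*$ force $\lim_{n\to\infty}\langle \bscal{S}\bsu_n,\bsu_n-\bsu\rangle_{\bscal{X}^{q,p}_{\bfvarepsilon}(Q_T)}=0$. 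Testing monotonicity with $\bsu-t(\bsu-\bsv)$ for $t\in (0,1)$ and $\bsv\in \bscal{X}^{q,p}_{\bfvarepsilon}(Q_T)$, taking $\liminf_{n\to\infty}$ and then $t\to 0^+$ (where hemicontinuity follows from the continuity established above), yields $\liminf_{n\to\infty}\langle \bscal{S}\bsu_n,\bsu-\bsv\rangle_{\bscal{X}^{q,p}_{\bfvarepsilon}(Q_T)}\ge \langle \bscal{S}\bsu,\bsu-\bsv\rangle_{\bscal{X}^{q,p}_{\bfvarepsilon}(Q_T)}$; adding the limit for $\bsu_n-\bsu$ delivers the claimed inequality.

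The main obstacle I anticipate is the coercivity step: one must handle the interplay between modular and norm on the variable exponent side, typically by case distinction according to whether $\|\bfvarepsilon(\bsu)\|_{L^{p(\cdot,\cdot)}(Q_T)^{d\times d}}\ge 1$ or not, and by carefully keeping track of the additive constants coming from the $\delta$-shift in (\hyperlink{S.3}{S.3}) and from $c_1$. All other steps are essentially adaptations of well-known arguments for constant exponent operators with $p(\cdot,\cdot)$-structure to the variable exponent Bochner--Lebesgue framework built in Section \ref{sec:4}.
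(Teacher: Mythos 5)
Your proposal is correct and follows essentially the same route as the paper's own proof: a pointwise modular estimate via (\hyperlink{S.2}{S.2}) for well-definedness and boundedness, a.e.\ convergence plus Vitali for continuity, direct integration of (\hyperlink{S.4}{S.4}) for monotonicity, the pointwise inequality $(\delta+a)^{p-2}a^2\gtrsim a^p-\delta^p$ plus the modular--norm relation for coercivity, and the observation that monotone $+$ continuous $\Rightarrow$ pseudo-monotone $\Rightarrow$ $\frac{\mathbf{d}}{\mathbf{dt}}$-pseudo-monotone. You merely spell out two steps that the paper leaves terse — the constant-exponent Korn/Poincar\'e argument showing why $q=p^-$ makes $\rho_{p(\cdot,\cdot)}(\bfvarepsilon(\bsu))$ dominate the full $\bscal{X}^{q,p}_{\bfvarepsilon}(Q_T)$-norm, and the Minty-type computation behind the monotone-plus-hemicontinuous implication — both of which are correct.
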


	\begin{proof}
		\textbf{1. Well-definedness:}  Let $\bsu\in \bscal{X}^{q,p}_{\bfvarepsilon}(Q_T)$.
		Since $\bfS:Q_T\times \mathbb{M}^{d\times d}_{\sym}\to 
		\mathbb{M}^{d\times d}_{\sym}$ is a Carath\'eodory mapping (cf.~(\hyperlink{S.1}{S.1})), the function $((t,x)^\top\mapsto\bfS(t,x,\bfvarepsilon(\bsu)(t,x))):Q_T\to \mathbb{M}^{d\times d}_{\sym}$ is Lebesgue measurable. Therefore, we can inspect it for integrability. In doing so, using (\hyperlink{S.2}{S.2}) and  repeatedly $(a+b)^s\!\leq \!2^s(a^s+b^s)$ for all $a,b\ge 0$, $s>0$, we obtain for almost every $(t,x)^\top\in Q_T$
		\begin{align}
		\begin{split}
		\vert\bfS(t,x,\bfvarepsilon(\bsu)(t,x))\vert^{p'(t,x)}&\leq \big[\alpha( \delta+\vert\bfvarepsilon(\bsu)(t,x)\vert)^{p(t,x)-1}+\beta(t,x)\big]^{p'(t,x)}
		\\&\leq 2^{p'(t,x)}\big[\alpha^{p'(t,x)}( \delta+\vert\bfvarepsilon(\bsu)(t,x)\vert)^{p(t,x)}+\beta(t,x)^{p'(t,x)}\big]
		\\&\leq 2^{q'}\big[\alpha^{q'}2^{p(t,x)}\big( \delta^{p(t,x)}+\vert\bfvarepsilon(\bsu)(t,x)\vert^{p(t,x)}\big)+\beta(t,x)^{p'(t,x)}\big]
		\\&\leq 2^{q'}\big[\alpha^{q'}2^{p^+}\big( (1+\delta)^{p^+}+\vert\bfvarepsilon(\bsu)(t,x)\vert^{p(t,x)}\big)+\beta(t,x)^{p'(t,x)}\big].
		\end{split}\label{eq:2.52.1}
		\end{align}
		In consequence, we have $\bfS(\cdot,\cdot,\bfvarepsilon(\bsu))\in L^{p'(\cdot,\cdot)}(Q_T,\mathbb{M}^{d\times d}_{\sym})$. Therefore, we conclude $\bscal{S}\bsu\in \bscal{X}^{q,p}_{\bfvarepsilon}(Q_T)^*$, i.e., $\bscal{S}:\bscal{X}^{q,p}_{\bfvarepsilon}(Q_T)\to\bscal{X}^{q,p}_{\bfvarepsilon}(Q_T)^*$ is well-defined. \\[-3mm]
		
		\textbf{2. Boundedness:} Since $\|\bsv\|_{\bscal{X}^{q,p}_{\bfvarepsilon}(Q_T)}\leq 1$ implies $\rho_{p(\cdot,\cdot)}(\bfvarepsilon(\bsv))\leq 2$ for every $\bsv\in \bscal{X}^{q,p}_{\bfvarepsilon}(Q_T)$, we infer from \eqref{eq:2.52.1} for every $\bsu\in \bscal{X}^{q,p}_{\bfvarepsilon}(Q_T)$ that
		\begin{align*}
		\|\bscal{S}\bsu\|_{\bscal{X}^{q,p}_{\bfvarepsilon}(Q_T)^*}&\leq\sup_{\substack{\bsv\in \bscal{X}^{q,p}_{\bfvarepsilon}(Q_T)\\\|\bsv\|_{\bscal{X}^{q,p}_{\bfvarepsilon}(Q_T)}\leq 1}}{\langle \bscal{S}\bsu,\bsv\rangle_{\bscal{X}^{q,p}_{\bfvarepsilon}(Q_T)}}
		\leq \rho_{p'(\cdot,\cdot)}(\bfS(\cdot,\cdot,\bfvarepsilon(\bsu)))+\rho_{p(\cdot,\cdot)}(\bfvarepsilon(\bsv))\\&
		\leq 2^{q'}\big[\alpha^{q'}2^{p^+}\big((1\!+\!\delta)^{p^+}\vert Q_T\vert+\rho_{p(\cdot,\cdot)}(\bfvarepsilon(\bsu))\big)+\rho_{p'(\cdot,\cdot)}(\beta)\big]+2,
		\end{align*}
		i.e., $\bscal{S}:\bscal{X}^{q,p}_{\bfvarepsilon}(Q_T)\to \bscal{X}^{q,p}_{\bfvarepsilon}(Q_T)^*$ is bounded.\\[-3mm]
		
		\textbf{3. Continuity:} Let $(\bsu_n)_{n\in
                  \setN}\subseteq
                \bscal{X}^{q,p}_{\bfvarepsilon}(Q_T)$ be a sequence,
                such that $\bsu_n\to\bsu$ in
                $\bscal{X}^{q,p}_{\bfvarepsilon}(Q_T)$
                $(n\to\infty)$. In particular, we have
		\begin{align}
		\bfvarepsilon(\bsu_n)\overset{n\to \infty}{\to}	\bfvarepsilon(\bsu)\quad\text{ in }L^{p(\cdot,\cdot)}(Q_T,\mathbb{M}^{d\times d}_{\sym}).\label{eq:2.52.4}
		\end{align}
		From \eqref{eq:2.52.4} we in turn deduce  the existence of a subsequence $(\bsu_n)_{n\in \Lambda}$, with $\Lambda\subseteq \setN$, such that
		\begin{align}
		\bfvarepsilon(\bsu_n)\overset{n\to \infty}{\to}	\bfvarepsilon(\bsu)\quad\text{ in }\mathbb{M}^{d\times d}_{\sym}\quad(n\in \Lambda)\quad\text{ a.e. in }Q_T.\label{eq:2.52.5}
		\end{align}
		Since $\bfS:Q_T\times \mathbb{M}^{d\times d}_{\sym}\to 
		\mathbb{M}^{d\times d}_{\sym}$ is a Carath\'eodory mapping (cf.~(\hyperlink{S.1}{S.1})), \eqref{eq:2.52.5}  leads to 
		\begin{align*}
		\bfS(\cdot,\cdot,\bfvarepsilon(\bsu_n))\overset{n\to \infty}{\to}\bfS(\cdot,\cdot,\bfvarepsilon(\bsu))\quad\text{ in }\mathbb{M}^{d\times d}_{\sym}\quad(n\in \Lambda)\quad\text{ a.e. in }Q_T.
		\end{align*}
		On the other hand, we have by \eqref{eq:2.52.1}, for almost every $(t,x)^\top\in Q_T$ and every $n\in \Lambda$
		\begin{align}
		\begin{split}
		\vert\bfS(t,x,\bfvarepsilon(\bsu_n)(t,x))\vert^{p'(t,x)}\leq 2^{q'}\big[\alpha^{q'}2^{p^+}\big( (1+\delta)^{p^+}+\vert\bfvarepsilon(\bsu_n)(t,x)\vert^{p(t,x)}\big)+\beta(t,x)^{p'(t,x)}\big].
		\end{split}\label{eq:2.52.7}
		\end{align}
		Combining \eqref{eq:2.52.4} and \eqref{eq:2.52.7}, we infer that $(\bfS(\cdot,\cdot,\bfvarepsilon(\bsu_n)))_{n\in\Lambda}\subseteq L^{p'(\cdot,\cdot)}(Q_T,\mathbb{M}^{d\times d}_{\sym})$ is $L^{p'(\cdot,\cdot)}(Q_T)$-uniformly integrable. As a result, Vitali's theorem yields
		\begin{align}
		\bfS(\cdot,\cdot,\bfvarepsilon(\bsu_n))\overset{n\to \infty}{\to}\bfS(\cdot,\cdot,\bfvarepsilon(\bsu))\quad\text{ in }L^{p'(\cdot,\cdot)}(Q_T,\mathbb{M}^{d\times d}_{\sym})\quad(n\in \Lambda).\label{eq:2.52.8}
		\end{align}
		By means of the standard convergence principle
                (cf.~\cite[Kap. I, Lemma 5.4]{GGZ74}), we conclude
                that \eqref{eq:2.52.8} holds true even if
                $\Lambda=\setN$. Since
                $\bscal{J}_{\bfvarepsilon}:L^{q'}(Q_T)^d\times
                L^{p'(\cdot,\cdot)}(Q_T,\mathbb{M}^{d\times
                  d}_{\sym})\to
                \bscal{X}^{q,p}_{\bfvarepsilon}(Q_T)^*$ is continuous
                (cf.~Proposition \ref{4.9}), we conclude that $\bscal{S}\bsu_n=\bscal{J}_{\bfvarepsilon}(\boldsymbol{0},\bfS(\cdot,\cdot,\bfvarepsilon(\bsu_n)))\to\bscal{J}_{\bfvarepsilon}(\boldsymbol{0},\bfS(\cdot,\cdot,\bfvarepsilon(\bsu)))=	\bscal{S}\bsu$ in $\bscal{X}^{q,p}_{\bfvarepsilon}(Q_T)^*$ $(n\to\infty)$, i.e., $\bscal{S}\bsu\in \bscal{X}^{q,p}_{\bfvarepsilon}(Q_T)^*$, i.e., $\bscal{S}:\bscal{X}^{q,p}_{\bfvarepsilon}(Q_T)\to\bscal{X}^{q,p}_{\bfvarepsilon}(Q_T)^*$ is continuous.\\[-3mm]
		
		\textbf{4. Monotonicity:} Follows immediately from condition  (\hyperlink{S.4}{S.4}).\\[-3mm]
		
		\textbf{5. Coercivity:} Using  (\hyperlink{S.3}{S.3}), we obtain for every $\bsu\in \bscal{X}^{q,p}_{\bfvarepsilon}(Q_T)$ and almost every $(t,x)^\top\in Q_T$
		\begin{align}
		\bfS(t,x,\bfvarepsilon(\bsu)(t,x)):\bfvarepsilon(\bsu)(t,x)\ge c_0(\delta+\vert \bfvarepsilon(\bsu)(t,x)\vert)^{p(t,x)-2}\vert \bfvarepsilon(\bsu)(t,x)\vert^2-c_1(t,x).\label{eq:2.52.9}
		\end{align}
		Next, using that $(\delta+a)^{p(t,x)-2}a^2\ge \frac{1}{2}a^{p(t,x)}-\delta^{p(t,x)}$\footnote{Here, we used for $a,\delta\ge 0$ that if $p\ge 2$ and $a\ge 0$, then it holds $(\delta+a)^{p-2}a^2\ge a^p$, and if $1<p<2$, then $(\delta+a)^{p-2}a^2\ge (\delta+a)^{p-2}\big(\frac{1}{2}(\delta +a)^2-\delta^2\big)\ge  \frac{1}{2}(\delta+a)^{p}-\delta^{p}\ge \frac{1}{2}a^{p}-\delta^{p}$, since $(\delta+a)^2\leq 2(\delta^2+a^2)$ and $(\delta+a)^{p-2}\leq \delta^{p-2}$.
		} for all $a\ge 0$ and  $(t,x)^\top\in Q_T$ in \eqref{eq:2.52.9}, we deduce further for every $\bsu\in \bscal{X}^{q,p}_{\bfvarepsilon}(Q_T)$ and almost every $(t,x)^\top\in Q_T$ that
		\begin{align}
		\bfS(t,x,\bfvarepsilon(\bsu)(t,x)):\bfvarepsilon(\bsu)(t,x)\ge \frac{c_0}{2}\vert \bfvarepsilon(\bsu)(t,x)\vert^{p(t,x)}-c_0\delta^{p(t,x)}-c_1(t,x).\label{eq:2.52.10}
		\end{align}
		Therefore, we infer from \eqref{eq:2.52.10} that for every $\bsu\in \bscal{X}^{q,p}_{\bfvarepsilon}(Q_T)$ it holds
		\begin{align*}
		\langle \bscal{S}\bsu,\bsu\rangle_{\bscal{X}^{q,p}_{\bfvarepsilon}(Q_T)}\ge \frac{c_0}{2}\rho_{p(\cdot,\cdot)}( \bfvarepsilon(\bsu))-c_0\rho_{p(\cdot,\cdot)}(\delta)-\|c_1\|_{L^1(Q_T)},
		\end{align*}
		i.e., $\bscal{S}:\bscal{X}^{q,p}_{\bfvarepsilon}(Q_T)\to
                \bscal{X}^{q,p}_{\bfvarepsilon}(Q_T)^*$ coercive, since $q=p^-$.\\[-3mm]
		
		\textbf{6. $\frac{\bfd}{\bfd\bft}$-pseudo-monotonicity:} Since $\bscal{S}:\bscal{X}^{q,p}_{\bfvarepsilon}(Q_T)\to \bscal{X}^{q,p}_{\bfvarepsilon}(Q_T)^*$  is monotone and continuous, it is also pseudo-monotone. In addition, it is readily seen that pseudo-monotonicity implies $\frac{\bfd}{\bfd\bft}$-pseudo-monotonicity.\hfill$\qed$
	\end{proof}

	\begin{prop}\label{ex2}
		Let $\bfb:Q_T\times \setR^d\to \setR^d$  be a mapping that satisfies (\hyperlink{B.1}{B.1})--(\hyperlink{B.3}{B.3}). Then, the operator $\bscal{B}:\bscal{W}^{q,p}_{\bfvarepsilon}(Q_T)\subseteq \bscal{X}^{q,p}_{\bfvarepsilon}(Q_T) \to \bscal{X}^{q,p}_{\bfvarepsilon}(Q_T)^*$, for every $\bsu\in \bscal{W}^{q,p}_{\bfvarepsilon}(Q_T)$ given via
		\begin{align*}
		\bscal{B}\bsu:=\bscal{J}_{\bfvarepsilon}(\mathbf{b}(\cdot,\cdot,\bsu),\boldsymbol{0})\quad\text{ in }\bscal{X}^{q,p}_{\bfvarepsilon}(Q_T)^*,
		\end{align*}
		is well-defined, $\frac{\mathbf{d}}{\mathbf{dt}}$-pseudo-monotone and satisfies the boundedness condition \eqref{bbdcon}.
	\end{prop}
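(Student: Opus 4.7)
The plan is to deduce the three claims from the parabolic Sobolev embedding available on $\bscal{W}^{q,p}_{\bfvarepsilon}(Q_T)$. By Proposition \ref{4.6} and the constant-exponent Korn inequality (Proposition \ref{2.4}), one has the continuous embedding $\bscal{X}^{q,p}_{\bfvarepsilon}(Q_T) \embedding L^{p^-}(I, W^{1,p^-}_0(\Omega)^d)$. Combined with the fact that every $\bsu \in \bscal{W}^{q,p}_{\bfvarepsilon}(Q_T)$ admits a unique continuous representative $\bsu_c \in C^0(\overline{I}, Y)$ (Proposition \ref{6.9}~(i)), the classical parabolic Sobolev interpolation (treating the cases $p^- < d$ and $p^- \geq d$ separately) yields
\begin{align*}
\bscal{W}^{q,p}_{\bfvarepsilon}(Q_T) \embedding L^{(p^-)_*}(Q_T)^d, \qquad \|\bsu\|_{L^{(p^-)_*}(Q_T)^d} \leq c\,\|\bsu_c\|_{L^\infty(I,Y)}^{\alpha}\|\bsu\|_{\bscal{X}^{q,p}_{\bfvarepsilon}(Q_T)}^{1-\alpha}
\end{align*}
for an exponent $\alpha \in (0,1)$ depending only on $d$ and $p^-$ (namely $\alpha = 2/(d+2)$ if $p^-<d$ and $\alpha = 2/(p^-+2)$ if $p^- \geq d$). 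Measurability of $(t,x)^\top \mapsto \bfb(t,x,\bsu(t,x))$ follows from (\hyperlink{B.1}{B.1}), while the growth bound (\hyperlink{B.2}{B.2}) together with $r(p^-)' < (p^-)_*$ and the above embedding yields $\bfb(\cdot,\cdot,\bsu) \in L^{(p^-)'}(Q_T)^d = L^{q'(\cdot,\cdot)}(Q_T)^d$. Proposition \ref{4.9} then guarantees $\bscal{B}\bsu \in \bscal{X}^{q,p}_{\bfvarepsilon}(Q_T)^*$, proving well-definedness.

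For the boundedness estimate \eqref{bbdcon}, the crucial additional input is the integration-by-parts identity \eqref{eq:6.9.a} applied to the pair $(\bsu,\bsu)$, which gives
\begin{align*}
\|\bsu_c\|_{L^\infty(I,Y)}^2 \leq \|\bsu_c(0)\|_Y^2 + 2\,\bigg\|\frac{\mathbf{d}\bsu}{\mathbf{dt}}\bigg\|_{\bscal{X}^{q,p}_{\bfvarepsilon}(Q_T)^*}\|\bsu\|_{\bscal{X}^{q,p}_{\bfvarepsilon}(Q_T)}.
\end{align*}
Inserting this into the parabolic interpolation inequality, combining with Proposition \ref{4.9} and (\hyperlink{B.2}{B.2}), and using the Hölder embedding $L^{(p^-)_*}(Q_T)^d \embedding L^{r(p^-)'}(Q_T)^d$ (since $Q_T$ is bounded), yields a bound
\begin{align*}
\|\bscal{B}\bsu\|_{\bscal{X}^{q,p}_{\bfvarepsilon}(Q_T)^*} \leq \widetilde{\psi}(\|\bsu\|_{\bscal{X}^{q,p}_{\bfvarepsilon}(Q_T)}, \|\bsu_c(0)\|_Y) + c\,\bigg\|\frac{\mathbf{d}\bsu}{\mathbf{dt}}\bigg\|_{\bscal{X}^{q,p}_{\bfvarepsilon}(Q_T)^*}^{\alpha r/2}\widetilde{P}(\|\bsu\|_{\bscal{X}^{q,p}_{\bfvarepsilon}(Q_T)})
\end{align*}
for a bounded function $\widetilde{\psi}$ and a polynomial $\widetilde{P}$. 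The strict subcriticality $r < (p^-)_*/(p^-)'$ in (\hyperlink{B.2}{B.2}) translates, through a direct computation in each of the two subcases of $p^-$, into $\alpha r/2 < 1$, and Young's inequality then absorbs the time-derivative factor with any preassigned prefactor $\theta \in (0,1)$, delivering \eqref{bbdcon}.

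For $\frac{\mathbf{d}}{\mathbf{dt}}$-pseudo-monotonicity, let $\bsu_n \weakto \bsu$ in $\bscal{W}^{q,p}_{\bfvarepsilon}(Q_T)$. By Remark \ref{compare}, $(\bsu_n)_{n \in \setN}$ is bounded in $\bscal{W}^{q,p}_-(Q_T)$; since $p^- > 2d/(d+2)$ makes the embedding $W^{1,p^-}_0(\Omega)^d \embedding Y$ compact via Rellich-Kondrachov, the classical Aubin-Lions-type compactness applied in the chain $X_-^{q,p} \embedding Y \embedding (X_+^{q,p})^*$ gives, along a subsequence, strong convergence $\bsu_n \to \bsu$ in $L^{p^-}(I, Y)$. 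Interpolating against the uniform bound $\sup_n \|\bsu_n\|_{L^{(p^-)_*}(Q_T)^d} < \infty$ established above yields $\bsu_n \to \bsu$ in $L^s(Q_T)^d$ for every $s < (p^-)_*$, in particular for $s = r(p^-)'$. Passing to a further subsequence to secure pointwise convergence a.e. in $Q_T$, and using (\hyperlink{B.1}{B.1}), (\hyperlink{B.2}{B.2}), and Vitali's theorem, we obtain $\bfb(\cdot,\cdot,\bsu_n) \to \bfb(\cdot,\cdot,\bsu)$ strongly in $L^{(p^-)'}(Q_T)^d$, so that by the continuity of $\bscal{J}_{\bfvarepsilon}$ (Proposition \ref{4.9}), $\bscal{B}\bsu_n \to \bscal{B}\bsu$ strongly in $\bscal{X}^{q,p}_{\bfvarepsilon}(Q_T)^*$. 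Together with $\bsu_n \weakto \bsu$, this forces $\langle \bscal{B}\bsu_n, \bsu_n - \bsv\rangle_{\bscal{X}^{q,p}_{\bfvarepsilon}(Q_T)} \to \langle \bscal{B}\bsu, \bsu - \bsv\rangle_{\bscal{X}^{q,p}_{\bfvarepsilon}(Q_T)}$ for every $\bsv$; the standard subsequence principle promotes this from the extracted subsequence to the whole sequence, establishing $\frac{\mathbf{d}}{\mathbf{dt}}$-pseudo-monotonicity (in the stronger form of sequential continuity).

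The principal technical obstacle is the boundedness estimate \eqref{bbdcon}: it requires precisely identifying the exponent $\alpha r/2$ of the time-derivative norm produced by the parabolic interpolation, and then invoking the strict subcriticality $r < (p^-)_*/(p^-)'$ from (\hyperlink{B.2}{B.2}) to verify $\alpha r/2 < 1$ in both subcases of $p^-$. Once this parabolic interpolation framework is set up, well-definedness and $\frac{\mathbf{d}}{\mathbf{dt}}$-pseudo-monotonicity follow from soft continuity arguments and the Aubin-Lions-type compactness, respectively.
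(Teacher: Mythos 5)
Your proof is correct and follows essentially the same route as the paper: parabolic Sobolev interpolation to control $\|\bsu\|_{L^{rq'}(Q_T)}$ by $\|\bsu\|_{\bscal{X}^{q,p}_\bfvarepsilon(Q_T)}$ and $\|\bsu_c\|_{L^\infty(I,Y)}$, then the $L^\infty(I,Y)$-estimate from Proposition \ref{6.9}~(i) (resp.\ \eqref{eq:6.9.a}) to introduce $\|\frac{\mathbf{d}\bsu}{\mathbf{dt}}\|$ with a fractional power, and Young's inequality combined with the subcriticality $r<(p^-)_*/(p^-)'$ to absorb the time derivative with small factor $\theta$. Your exponent bookkeeping ($\alpha=\delta_q/q_*$ with $\delta_q=2q/d$ for $q<d$ and $\delta_q=2$ for $q\ge d$, hence $\alpha r/2<1$) reproduces what the paper extracts from \cite[Lemma 3.89]{Ru04}, only derived by hand.

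The one genuine divergence is in the compactness mechanism for $\frac{\mathbf{d}}{\mathbf{dt}}$-pseudo-monotonicity. The paper applies Landes--Mustonen's compactness principle, which needs only (a) pointwise-in-time weak convergence $\bsu_n(t)\weakto\bsu(t)$ in $Y$ (supplied by the embedding $(\cdot)_c:\bscal{W}^{q,p}_\bfvarepsilon(Q_T)\to \bscal{Y}^0(Q_T)$) and (b) a uniform bound in $\bscal{X}^{q,p}_-(Q_T)\cap L^\infty(I,Y)$ to conclude a.e.\ convergence in $Q_T$, sidestepping any Aubin--Lions hypotheses on the time derivative. You instead invoke Aubin--Lions in the Gelfand triple $X^{q,p}_-\embedding Y\embedding (X^{q,p}_+)^*$, which is also available here: Remark \ref{compare} places $\bscal{W}^{q,p}_\bfvarepsilon(Q_T)\embedding\bscal{W}^{q,p}_-(Q_T)$, so the sequence of generalized time derivatives is bounded in $L^{\min\{(q^+)',(p^+)'\}}(I,(X^{q,p}_+)^*)$, and $p^->2d/(d+2)$ gives the needed compactness of $X^{q,p}_-\embedding Y$. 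Both arguments deliver a.e.\ convergence of a subsequence and then feed into Vitali with the $L^{q'}$-uniform integrability from (\hyperlink{B.2}{B.2}), so both establish the stronger conclusion that $\bscal{B}$ is sequentially weak-to-strong continuous on $\bscal{W}^{q,p}_\bfvarepsilon(Q_T)$. Landes--Mustonen is the marginally more economical tool here because the $\limsup$ hypothesis in Definition \ref{7.2} is never needed; your Aubin--Lions route arrives at the same place via a slightly heavier machine. A small cosmetic point: the inequality you write as $\|\bsu_c\|_{L^\infty(I,Y)}^2 \le \|\bsu_c(0)\|_Y^2 + 2\|\frac{\mathbf{d}\bsu}{\mathbf{dt}}\|\,\|\bsu\|$ should carry a constant on the last term, since passing from the integrated time-slice pairing to the product of global dual norms costs factors (the paper's \eqref{eq:6.9.6.1} has the correct constants); this does not affect the structure of your argument.
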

	
		\begin{proof}
		\textbf{1. Well-definedness:} Let $\bsu\in \bscal{W}^{q,p}_{\bfvarepsilon}(Q_T)$.
		Since $\bfb:Q_T\times \setR^d\to
		\setR^d$ is a Carath\'eodory~mapping (cf.~(\hyperlink{B.1}{B.1})), the function $((t,x)^\top\mapsto\bfb(t,x,\bsu(t,x))):Q_T\to \setR^d$ is Lebesgue measurable. In addition, using (\hyperlink{B.2}{B.2}) and that $(a+b)^r\leq 2^r(a^r+b^r)$ for all $a,b\ge 0$,~we~obtain
		\begin{align}
		\begin{split}
		\| \mathbf{b}(\cdot,\cdot,\bsu)\|_{L^{q'}(Q_T)^d}&\leq \gamma\|(1+\vert \bsu\vert)^r\|_{L^{q'}(Q_T)}+\|\eta\|_{L^{q'}(Q_T)^d}\\&=
		\gamma\|1+\vert \bsu\vert\|_{L^{rq'}(Q_T)}^r+\|\eta\|_{L^{q'}(Q_T)^d}\\&\leq 
		\gamma 2^r\big[\vert Q_T\vert^{\frac{1}{q'}}+\|\bsu\|_{L^{rq'}(Q_T)^d}^r\big]+\|\eta\|_{L^{q'}(Q_T)^d}.
		\end{split}\label{eq:2.53.0}
		\end{align}
		Owning to the embedding $\bscal{W}^{q,p}_{\bfvarepsilon}(Q_T)\embedding \bscal{X}^{q,p}_-(Q_T)\cap L^\infty(I,Y)\embedding L^s(Q_T)^d$ for all $s<q_*$ (cf.~\cite[Lemma 3.89]{Ru04}) and since $rq'<q_*$, we conclude from \eqref{eq:2.53.0} that $\mathbf{b}(\cdot,\cdot,\bsu)\in L^{q'}(Q_T)^d$, and therefore $\bscal{B}\bsu\in \bscal{X}^{q,p}_{\bfvarepsilon}(Q_T)^*$, i.e., $\bscal{B}:\bscal{W}^{q,p}_{\bfvarepsilon}(Q_T)\subseteq\bscal{X}^{q,p}_{\bfvarepsilon}(Q_T)\to \bscal{X}^{q,p}_{\bfvarepsilon}(Q_T)^*$ is well-defined.\\[-3mm]
		
		\textbf{2. $\frac{\mathbf{d}}{\mathbf{dt}}$-pseudo-monotonicity:} Let $(\bsu_n)_{n\in \setN}\subseteq \bscal{W}^{q,p}_{\bfvarepsilon}(Q_T)$ be a sequence, such that
		\begin{align}
		\begin{split}
		\bsu_n\overset{n\to\infty}{\weakto}\bsu\quad\text{ in }\bscal{W}^{q,p}_{\bfvarepsilon}(Q_T),\\
		\limsup_{n\to\infty}{\langle \bscal{B}\bsu_n,\bsu_n-\bsu\rangle_{\bscal{X}^{q,p}_{\bfvarepsilon}(Q_T)}}\leq 0.
		\end{split}\label{eq:2.53.1}
		\end{align}
		Owning to the embedding $(\cdot)_c:\bscal{W}^{q,p}_{\bfvarepsilon}(Q_T)\to \bscal{Y}^0(Q_T)$, we infer from \eqref{eq:2.53.1} that
		\begin{align*}
		(\bsu_n)_c(t)\overset{n\to\infty}{\weakto }\bsu_c(t)\quad\text{ in }Y\quad\text{ for all }t\in \overline{I},
		\end{align*}
		i.e., we have $\bsu_n(t)\weakto\bsu(t)$ in $Y$ $(n\to \infty)$ for almost every $t\in I$. Therefore, since $(\bsu_n)_{n\in \setN}$ is also bounded in $\bscal{X}^{q,p}_-(Q_T)\cap L^\infty(I,Y)$, due to $\bscal{W}^{q,p}_{\bfvarepsilon}(Q_T)\embedding \bscal{X}^{q,p}_-(Q_T)\cap L^\infty(I,Y)$, Landes' and Mustonen's compactness principle (cf.~\cite[Proposition 1]{LM87}) yields up to a subsequence
		\begin{align}
			\bsu_n\overset{n\to\infty}{\to}\bsu\quad\text{ in }\setR^d\quad\text{ a.e. in }Q_T.\label{eq:2.53.2.2}
		\end{align}
		In particular, \eqref{eq:2.53.2.2}  due to (\hyperlink{B.1}{B.1}) gives us
		\begin{align*}
		\mathbf{b}(\cdot,\cdot,\bsu_n)\overset{n\to\infty}{\to }\mathbf{b}(\cdot,\cdot,\bsu)\quad\text{ in }\setR^d\quad\text{ a.e. in }Q_T.
		\end{align*}
		Using anew that $\bscal{W}^{q,p}_{\bfvarepsilon}(Q_T)\embedding L^s(Q_T)^d$ for all $s<q_*$ and that $rq'<q_*$, we infer that the sequence  $(\bsu_n)_{n\in \setN}\subseteq L^{rq'}(Q_T)^d$ is uniformly integrable.  Proceeding as for \eqref{eq:2.53.0}, we get for every measurable set $K\subseteq Q_T$ and $n\in \setN$
		\begin{align*}
		\| \mathbf{b}(\cdot,\cdot,\bsu_n)\|_{L^{q'}(K)^d}\leq 
		\gamma 2^r\big[\vert K\vert^{\frac{1}{q'}}+\|\bsu_n\|_{L^{rq'}(K)^d}^r\big]+\|\eta\|_{L^{q'}(K)^d},
		\end{align*}
		i.e., $(\mathbf{b}(\cdot,\cdot,\bsu_n))_{n\in \setN}\subseteq L^{q'}(Q_T)^d$ is uniformly integrable. Hence, Vitali's theorem yields
		\begin{align*}
		\mathbf{b}(\cdot,\cdot,\bsu_n)\overset{n\to \infty}{\to }\mathbf{b}(\cdot,\cdot,\bsu)\quad\text{ in }L^{q'}(Q_T)^d,
		\end{align*}
		which by dint of the continuity of  $\bscal{J}_{\bfvarepsilon}:L^{q'}(Q_T)^d\times L^{p'(\cdot,\cdot)}(Q_T,\mathbb{M}^{d\times d}_{\sym})\to \bscal{X}^{q,p}_{\bfvarepsilon}(Q_T)^*$  provides
		\begin{align}
		\bscal{B}\bsu_n:=\bscal{J}_{\bfvarepsilon}(\mathbf{b}(\cdot,\cdot,\bsu_n),\mathbf{0})\overset{n\to\infty}{\to }\bscal{J}_{\bfvarepsilon}(\mathbf{b}(\cdot,\cdot,\bsu),\mathbf{0})=\bscal{B}\bsu\quad\text{ in }\bscal{X}^{q,p}_{\bfvarepsilon}(Q_T)^*.\label{eq:2.53.2.3}
		\end{align}
		Eventually, from \eqref{eq:2.53.2.3} one easily infers that for every $\bsv\in\bscal{X}^{\circ,p}_{\bfvarepsilon,\mathbf{c}}(Q_T)$
		\begin{align*}
		\langle \bscal{B}\bsu,\bsu-\bsv\rangle_{\bscal{X}^{q,p}_{\bfvarepsilon}(Q_T)}\leq \liminf_{n\to\infty}{\langle \bscal{B}\bsu_n,\bsu_n-\bsv\rangle_{\bscal{X}^{q,p}_{\bfvarepsilon}(Q_T)}}.
		\end{align*}
		
		\textbf{3. Boundedness condition \eqref{bbdcon}:}
		We exploit, see e.g. \cite[Lemma 3.89]{Ru04}, that there exist constants $c_q>0$ and ${\delta_q\in \left(0,2\right]}$, such that for every $\bsu\in \bscal{W}^{q,p}_{\bfvarepsilon}(Q_T)$ it holds
		\begin{align}
			\|\bsu\|_{L^{q_*}(Q_T)^d}^{q_*}\leq c_q\|\bfvarepsilon(\bsu)\|_{L^q(Q_T)^{d\times d}}^q\|\bsu\|_{L^\infty(I,Y)}^{\delta_q}\leq c_q\|\bsu\|_{\bscal{X}^{q,p}_{\bfvarepsilon}(Q_T)}^q\|\bsu\|_{L^\infty(I,Y)}^{\delta_q},\label{eq0}
		\end{align}
		where we used the classical constant exponent Korn's
                inequality in the first estimate.  
	Setting $\sigma:=\frac{\delta_qr}{q_*}$, inserting \eqref{eq0} in  \eqref{eq:2.53.0}, as $rq'<q_*$, and exploiting  the Lipschitz continuity of
		$\bscal{J}_{\boldsymbol{ \varepsilon}}\!:\!L^{q'}(Q_T)^d\times L^{p'(\cdot,\cdot)}(Q_T,\mathbb{M}_{\textup{sym}}^{d\times d})\!\to\! \bscal{X}^{q,p}_{\bfvarepsilon}(Q_T)^*$ (cf.~Prop.~\ref{4.9}), we get for every ${\bsu\in \bscal{W}^{q,p}_{\bfvarepsilon}(Q_T)}$
		\begin{align}
		\begin{split}
		\|\bscal{B}\bsu\|_{\bscal{X}^{q,p}_{\bfvarepsilon}(Q_T)^*}\leq 
		\gamma 2^{r+1} \big[\vert Q_T\vert^{\frac{1}{q'}}+c_q\|\bsu\|_{\bscal{X}^{q,p}_{\bfvarepsilon}(Q_T)}^{\frac{qr}{q_*}}\|\bsu\|_{L^\infty(I,Y)}^{\sigma}\big]+2\|\eta\|_{L^{q'}(Q_T)^d}.
		\end{split}\label{eq1}
		\end{align}
		In addition, using \eqref{eq:6.9.6.1} in \eqref{eq1} and setting $c_\sigma:=\big(\max\big\{T^{-\frac{1}{2}}2(1+\vert Q_T\vert),\sqrt{2}\big\}\big)^{\sigma}$, we further deduce for every $\bsu\in \bscal{W}^{q,p}_{\bfvarepsilon}(Q_T)$, also using  $(a+b)^{\sigma}\leq 2^{\sigma}(a^{\sigma}+b^{\sigma})$ for all $a,b\ge 0$, that 
		\begin{align}
		\|\bsu\|_{\bscal{X}^{q,p}_{\bfvarepsilon}(Q_T)}^{\frac{qr}{q_*}}\|\bsu\|_{L^\infty(I,Y)}^{\sigma}&\leq \|\bsu\|_{\bscal{X}^{q,p}_{\bfvarepsilon}(Q_T)}^{\frac{qr}{q_*}}c_\sigma 2^{\sigma}\bigg[\|\bsu\|_{\bscal{X}^{q,p}_{\bfvarepsilon}(Q_T)}^{\sigma}+\bigg\|\frac{\bfd\bsu}{\bfd\bft}\bigg\|_{\bscal{X}^{q,p}_{\bfvarepsilon}(Q_T)^*}^{\frac{\sigma}{2}}\|\bsu\|_{\bscal{X}^{q,p}_{\bfvarepsilon}(Q_T)}^{\frac{\sigma}{2}}\bigg]
		\notag\\&\leq  c_\sigma 2^{\sigma}(1+\|\bsu\|_{\bscal{X}^{q,p}_{\bfvarepsilon}(Q_T)})^{\frac{qr}{q_*}+\sigma}\bigg[1+\bigg\|\frac{\bfd\bsu}{\bfd\bft}\bigg\|_{\bscal{X}^{q,p}_{\bfvarepsilon}(Q_T)^*}^{\frac{\sigma}{2}}\bigg].\label{eq2}
		\\&\leq  c_\sigma 2^{\sigma}(1+c_{\sigma_0}(\vep))(1+\|\bsu\|_{\bscal{X}^{q,p}_{\bfvarepsilon}(Q_T)})^{(\frac{qr}{q_*}+\sigma)\sigma_0'}+\vep\bigg\|\frac{\bfd\bsu}{\bfd\bft}\bigg\|_{\bscal{X}^{q,p}_{\bfvarepsilon}(Q_T)^*},\notag
		\end{align}
		where we applied in the last inequality the $\vep$-Young inequality with respect to the exponent $\sigma_0=\frac{2}{\sigma}=\frac{2q_*}{\delta_qr}$ and the constant $c_{\sigma_0}(\vep):=(\sigma_0\vep)^{1-\sigma_0'}(\sigma_0')^{-1}$ for all $\vep>0$, which is allowed because $\sigma_0>1$, since  $\sigma<2$ due to $\frac{r}{q*}<\frac{1}{q'}<1$ and $\delta_q\leq 2$.
		Eventually, inserting \eqref{eq2} in \eqref{eq1}, we conclude the boundedness condition \eqref{bbdcon}.\hfill$\qed$
	\end{proof}

	\begin{thm}
		Let $\bfS:Q_T\times \mathbb{M}^{d\times d}_{\sym}\to \mathbb{M}^{d\times d}_{\sym}$  be a mapping  satisfying
	 conditions (\hyperlink{S.1}{S.1})--(\hyperlink{S.4}{S.4}) and
         ${\mathbf{b}:Q_T\times \setR^d\to \setR^d}$  a mapping
         satisfying conditions (\hyperlink{B.1}{B.1})--(\hyperlink{B.3}{B.3}). Then, for arbitrary ${\bfu_0\in Y}$, ${\bsf\in L^{q'}(Q_T)^d}$ and $\bsF\in L^{p'(\cdot,\cdot)}(Q_T,\mathbb{M}_{\sym}^{d\times d})$ there exists a  solution $\bsu\in \bscal{W}_{\bfvarepsilon}^{q,p}(Q_T)$ with a continuous representation $\bsu_c\in \bscal{Y}^0(Q_T)$ of the initial value problem
		\begin{alignat*}{2}
		\frac{\mathbf{d}\bsu}{\mathbf{dt}}+\bscal{S}\bsu+\bscal{B}\bsu&=\bscal{J}_{\bfvarepsilon}(\bsf,\bsF)&&\quad\textup{ in }\bscal{X}_{\bfvarepsilon}^{q,p}(Q_T)^*,\\
		\bsu_c(0)&=\bfu_0&&\quad \textup{ in }Y,
		\end{alignat*}
		or equivalently, for every $\bfphi\in C^\infty(\overline{Q_T})$ with  $\textup{supp}(\bfphi)\subset\subset \left[0,T\right)\times \Omega$ there holds
		\begin{align*}
		-\int_{Q_T}{\bsu(t,x)\cdot\pa_t\bfphi(t,x)\,dtdx}&+\int_{Q_T}{\mathbf{b}(t,x,\bsu(t,x))\cdot\bfphi(t,x)+\bfS(t,x,\bfvarepsilon(\bsu)(t,x)):\bfvarepsilon(\bfphi)(t,x)\,dtdx}\\&=(\bfu_0,\bfphi(0))_Y+\int_{Q_T}{\bsf(t,x)\cdot\bfphi(t,x)+\bsF(t,x):\bfvarepsilon(\bfphi)(t,x)\,dtdx}.
		\end{align*}
	\end{thm}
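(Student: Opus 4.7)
The plan is to apply the abstract existence result of Theorem~\ref{7.4} to the operator $\bscal{A}:=\bscal{S}+\bscal{B}:\bscal{W}_{\bfvarepsilon}^{q,p}(Q_T)\subseteq\bscal{X}_{\bfvarepsilon}^{q,p}(Q_T)\to\bscal{X}_{\bfvarepsilon}^{q,p}(Q_T)^*$ and to the right-hand side $\bsu^*:=\bscal{J}_{\bfvarepsilon}(\bsf,\bsF)\in\bscal{X}_{\bfvarepsilon}^{q,p}(Q_T)^*$. The operators $\bscal{S}$ and $\bscal{B}$ are exactly those furnished by Propositions~\ref{ex1} and \ref{ex2}, and together they encode the elliptic-type part and the lower-order perturbation of \eqref{eq:model}. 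Thus, I need to check that $\bscal{A}$ is $\frac{\mathbf{d}}{\mathbf{dt}}$-pseudo-monotone, coercive, and satisfies the growth condition \eqref{bbdcon}; Theorem~\ref{7.4} will then directly produce some $\bsu\in\bscal{W}_{\bfvarepsilon}^{q,p}(Q_T)$ with $\frac{\mathbf{d}\bsu}{\mathbf{dt}}+\bscal{A}\bsu=\bscal{J}_{\bfvarepsilon}(\bsf,\bsF)$ in $\bscal{X}_{\bfvarepsilon}^{q,p}(Q_T)^*$ and $\bsu_c(0)=\bfu_0$ in $Y$.

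The $\frac{\mathbf{d}}{\mathbf{dt}}$-pseudo-monotonicity of $\bscal{A}$ follows at once by combining Propositions~\ref{ex1} and \ref{ex2} with the stability under sums provided by Proposition~\ref{sum}. For the growth condition \eqref{bbdcon}, the boundedness of $\bscal{S}$ on $\bscal{X}_{\bfvarepsilon}^{q,p}(Q_T)$ (Proposition~\ref{ex1}) gives the estimate with $\theta=0$ and a purely $\|\bsu\|_{\bscal{X}_{\bfvarepsilon}^{q,p}(Q_T)}$-dependent $\psi$, while Proposition~\ref{ex2} furnishes the estimate for $\bscal{B}$ with some $\theta\in[0,1)$; adding the two yields \eqref{bbdcon} for $\bscal{A}$ with the same $\theta$.

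The coercivity is the delicate step, which I expect to be the main obstacle. Starting from the bound
\begin{align*}
\langle\bscal{S}\bsu,\bsu\rangle_{\bscal{X}_{\bfvarepsilon}^{q,p}(Q_T)}\ge\tfrac{c_0}{2}\rho_{p(\cdot,\cdot)}(\bfvarepsilon(\bsu))-c_0\rho_{p(\cdot,\cdot)}(\delta)-\|c_1\|_{L^1(Q_T)}
\end{align*}
obtained in step~5 of the proof of Proposition~\ref{ex1} via (S.3), together with $\langle\bscal{B}\bsu,\bsu\rangle_{\bscal{X}_{\bfvarepsilon}^{q,p}(Q_T)}\ge-\|c_2\|_{L^1(Q_T)}$ guaranteed by (B.3), I would exploit that every $\bsu\in\bscal{X}_{\bfvarepsilon}^{q,p}(Q_T)$ has zero boundary trace in each time slice, so that the classical constant-exponent Korn and Poincar\'e inequalities for $q=p^-$, together with the embedding $L^{p(\cdot,\cdot)}(Q_T)\embedding L^{p^-}(Q_T)$, give $\|\bsu\|_{\bscal{X}_{\bfvarepsilon}^{q,p}(Q_T)}\le c\,\|\bfvarepsilon(\bsu)\|_{L^{p(\cdot,\cdot)}(Q_T)^{d\times d}}$. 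Combining this with the modular-norm inequality $\rho_{p(\cdot,\cdot)}(\bfvarepsilon(\bsu))\ge\|\bfvarepsilon(\bsu)\|_{L^{p(\cdot,\cdot)}(Q_T)^{d\times d}}^{p^-}$ (valid whenever the norm is at least~$1$) and exploiting $p^->1$ yields $\langle\bscal{A}\bsu,\bsu\rangle_{\bscal{X}_{\bfvarepsilon}^{q,p}(Q_T)}/\|\bsu\|_{\bscal{X}_{\bfvarepsilon}^{q,p}(Q_T)}\to\infty$ as $\|\bsu\|_{\bscal{X}_{\bfvarepsilon}^{q,p}(Q_T)}\to\infty$, i.e., coercivity.

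Once Theorem~\ref{7.4} has delivered the abstract solution $\bsu\in\bscal{W}_{\bfvarepsilon}^{q,p}(Q_T)$, the equivalent weak formulation follows by testing the identity with $\bfphi\in C^\infty(\overline{Q_T})$ of compact support in $[0,T)\times\Omega$ (viewed as an element of $\bscal{W}_{\bfvarepsilon}^{q,p}(Q_T)$ with $\bfphi_c(T)=\mathbf{0}$ in $Y$), unfolding the definitions of $\bscal{S}$, $\bscal{B}$ and $\bscal{J}_{\bfvarepsilon}$ according to Remark~\ref{3.9}, and applying the integration-by-parts formula of Proposition~\ref{6.9}(ii) on $[0,T]$ to rewrite $\langle\frac{\mathbf{d}\bsu}{\mathbf{dt}},\bfphi\rangle_{\bscal{X}_{\bfvarepsilon}^{q,p}(Q_T)}$ as $-\int_{Q_T}\bsu\cdot\partial_t\bfphi\,dt\,dx-(\bfu_0,\bfphi(0))_Y$. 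Apart from the coercivity transfer from modular to norm, which is where the variable-exponent structure has to be handled carefully, the remainder is a mechanical concatenation of the machinery built up in Sections~\ref{sec:4}--\ref{sec:7}.
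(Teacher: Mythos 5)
Your proposal is correct and follows essentially the same route as the paper: verify $\frac{\mathbf{d}}{\mathbf{dt}}$-pseudo-monotonicity via Propositions~\ref{ex1}, \ref{ex2} and \ref{sum}, obtain coercivity of $\bscal{S}+\bscal{B}$ from the coercivity of $\bscal{S}$ together with the lower bound on $\bscal{B}$ from (\hyperlink{B.3}{B.3}), check the growth condition \eqref{bbdcon}, and conclude via Theorem~\ref{7.4}. You unpack the coercivity step (Korn/Poincar\'e for the constant exponent $q=p^-$ plus the modular--norm inequality) and the passage to the weak formulation in slightly more detail than the paper, which simply invokes the coercivity already established in Proposition~\ref{ex1}, but the underlying argument is the same.
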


	\begin{proof}
		According to Proposition \ref{ex1} and Proposition
		\ref{ex2} in conjunction with Proposition \ref{sum}, the sum ${\bscal{S}+\bscal{B}:\bscal{X}^{q,p}_{\bfvarepsilon}(Q_T)\cap L^\infty(I,Y)\to \bscal{X}^{q,p}_{\bfvarepsilon}(Q_T)^*}$ is $\frac{\mathbf{d}}{\mathbf{dt}}$-pseudo-monotone and satisfies the boundedness condition \eqref{bbdcon}.
		In addition, since $\bscal{S}:\bscal{X}^{q,p}_{\bfvarepsilon}(Q_T)\to \bscal{X}^{q,p}_{\bfvarepsilon}(Q_T)^*$ is coercive and the non-coercive part $\bscal{B}:\bscal{X}^{q,p}_{\bfvarepsilon}(Q_T)\cap L^\infty(I,Y)\to \bscal{X}^{q,p}_{\bfvarepsilon}(Q_T)^*$ satisfies  $\langle\bscal{B}\bsu,\bsu\rangle_{\bscal{X}^{q,p}_{\bfvarepsilon}(Q_T)}\ge-\|c_2\|_{L^1(Q_T)}$ for all $\bsu\in \bscal{X}^{q,p}_{\bfvarepsilon}(Q_T)\cap L^\infty(I,Y)$ (cf.~(\hyperlink{B.3}{B.3})), ${\bscal{S}+\bscal{B}:\bscal{X}^{q,p}_{\bfvarepsilon}(Q_T)\cap L^\infty(I,Y)\subseteq \bscal{X}^{q,p}_{\bfvarepsilon}(Q_T)\to \bscal{X}^{q,p}_{\bfvarepsilon}(Q_T)^*}$ is coercive. The assertion follows now by means of Theorem  \ref{7.4}.\hfill$\qed$
	\end{proof}
	
	\bibliographystyle{my-amsplain}
	\bibliography{literatur}

\end{document}